\documentclass[11pt]{amsart}
\usepackage[inner=3cm, outer=3cm, top=3cm, bottom=3cm]{geometry}
\usepackage{latexsym}
\usepackage{fullpage}
\usepackage{amscd}
\usepackage{float}
\usepackage{amsfonts}
\usepackage{pb-diagram}
 \usepackage{amsmath,amscd}
\usepackage{color,graphicx,times}
\usepackage{xcolor}
\usepackage[export]{adjustbox}
\usepackage{amsmath}
\usepackage{amssymb}
\usepackage{todonotes}
\usepackage{amsthm}
\usepackage{mathtools}
\usepackage{breqn}
\usepackage{physics}
\usepackage[T1]{fontenc}
\usepackage{pst-node}
\usepackage{tikz-cd}
\usepackage{subcaption}
\usepackage[utf8]{inputenx}
\usepackage{indentfirst}
\usepackage{microtype}
\usepackage{graphicx} 
\usepackage{enumerate}
\usepackage{mdframed}
\usepackage{breqn}

\usepackage{coffeestains}

\theoremstyle{plain}
\newtheorem{theorem}{Theorem}[section]
\newtheorem{corollary}[theorem]{Corollary}
\newtheorem{proposition}[theorem]{Proposition}

\theoremstyle{definition}
\newtheorem{definition}[theorem]{Definition}

\newtheorem{remark}[theorem]{Remark}
\newtheorem{remarks}[theorem]{Remarks}

\newcommand{\cI}{\mathcal{I}}
\newcommand{\cR}{\mathcal{R}}

\begin{document}

\title[Pure braids and the Bonded braid monoid]
  {On pure braids and bonded braids}

\author{Sofia Lambropoulou}
\address{School of Applied Mathematical and Physical Sciences, National Technical University of Athens, Zografou campus, GR-15780 Athens, Greece.}
\urladdr{http://www.math.ntua.gr/~sofia}

\author{Lucrezia Beatrice Lorenzi}
\address{Scuola Internazionale Superiore di Studi Avanzati, Via Bonomea 265, 34136 Trieste, Italy, \& School of Applied Mathematical and Physical Sciences, National Technical University of Athens, Zografou campus, GR-15780 Athens, Greece.}

\date{September 2026}
\keywords{braids, pure braids, bonded braid monoid, singular braid monoid}

\subjclass[2020]{57K10, 57K12, 57K14, 20F36, 20F38, 57K99, 92C40}

\setcounter{section}{-1}

\begin{abstract}

In this paper we investigate the relation between the bonded braid monoid $BB_n$ and the group of pure braids $P_n$, realised by an algebraic analogue of the topological tangle insertion, namely by replacing a bond by (a power of) a pure braid generator. To do so, we provide appropriate presentations for $P_n$ using cyclic relations and clasp generators, and extend them to presentations of $B_n$. We construct the monoid epimorphism $\Phi \colon BB_n \to B_n$, mapping bonds to pure braid generators, which is the main result of the paper. Using the isomorphism from the singular braid monoid $SB_n$ to the tight bonded braid monoid $BB_n$, we extend $\Phi$ to a new type of epimorphism $s\Phi \colon SB_n \to B_n$. The epimorphisms $\Phi$ and $s\Phi$ are extended to families of epimorphisms $\Phi_p$, $s\Phi_p$, $p \in \mathbb{Z}$.
\end{abstract}

\maketitle

\section*{Introduction} 

First-born in their family, \textit{bonded knotoids} were introduced in \cite{GGLDSK} as suitable topological models for folded proteins. Roughly, bonded knotoids are open knotted curves (\textit{knotoids}) endowed with non-intersecting arcs embedded in their complement, whose endpoints are attached to the knotoid. These arcs are named \textit{bonds}. \textit{Bonded knots} are classical knots endowed with bonds, and are included in the class of bonded knotoids in the form of bonded knot type knotoids, which have endpoints lying in the same connected region. Bonded knots are further discussed in \cite{G}. 

A natural direction would be to inquire the existence of an algebraic structure to support analogues of the Alexander and Markov theorems for bonded knots. \textit{Bonded braids} were introduced in \cite{DKL}, where the authors also discussed the topology of bonded knots, polynomial invariants, as well as statements and proofs for suitable Theorems of Alexander and Markov. Bonded braids are classical braids with extra structure, the \textit{bonds}: horizontal arcs embedded in the complementary of the braid, with endpoints lying on distinct strands. See an example in Figure~\ref{bondedbraidfig}. Bonded braids on the same number $n$ of strands have a monoid structure under the classical composition operation of vertical stacking of braids. The \textit{bonded braid monoid} $BB_n$ has a presentation with generators the classical braid crossings, their inverses (Figure~\ref{braidgensfig}) and all bonds, with relations deriving from all possible isotopy interactions among bonds and of bonds with crossings.

Singular braids on the same number $n$ of strands also form a monoid, the \textit{singular braid monoid} $SB_n$ \cite{Baez}, \cite{Birman}. In fact in \cite{DKL} it is proved that there exists an isomorphism from $SB_n \to BB_n$.

\begin{figure}[H]
\begin{center} 
    \includegraphics[width=1.5cm]{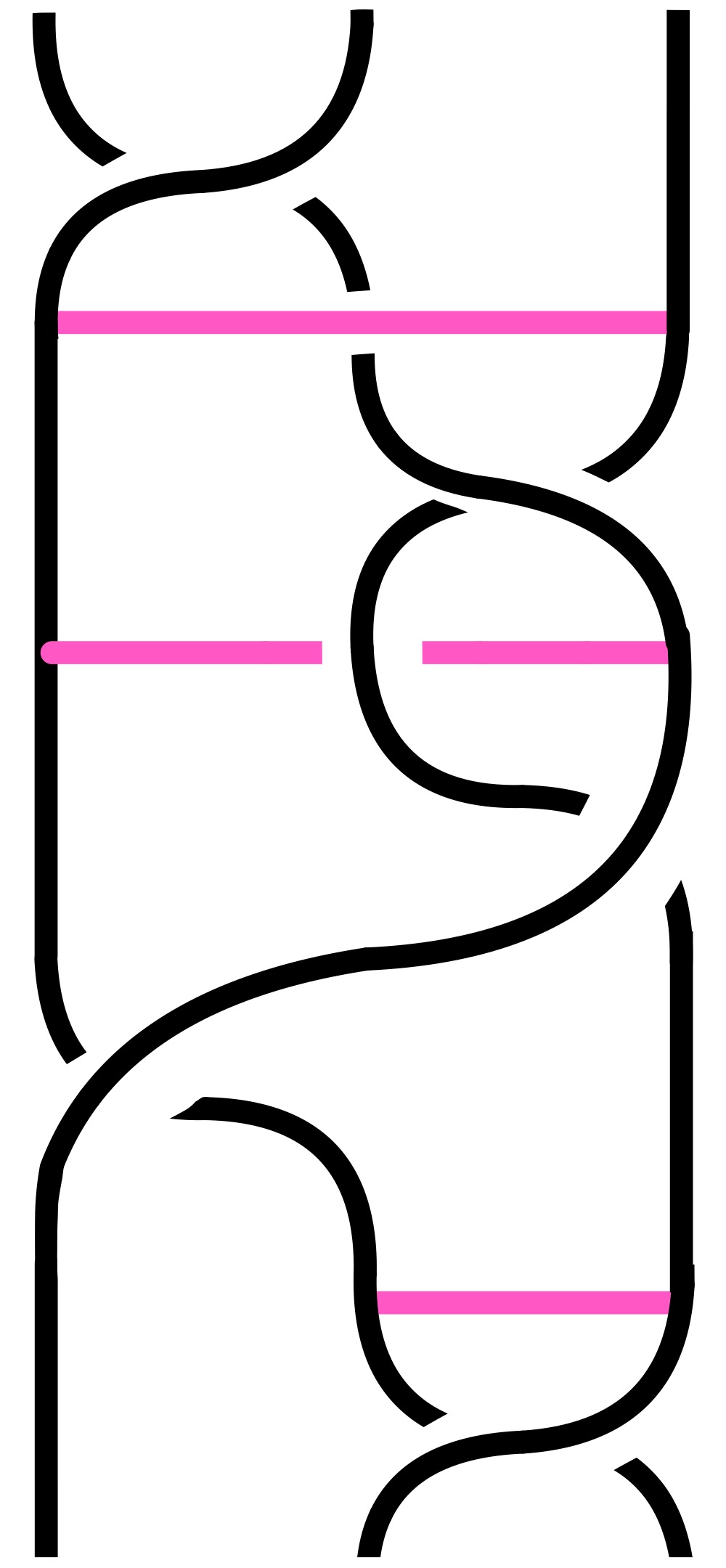}
\end{center}
\caption{ A bonded braid. }
\label{bondedbraidfig}
\end{figure}  

In this work we offer further insight on the bonded braid monoid by understanding its relation with a well-known mathematical object, namely the group of pure braids on $n$ strands, $P_n$. Our main result is the existence of an epimorphism $\Phi \colon BB_n \to B_n$ mapping bonds to pure braid generators. The motivation is to pave the road to studying bonded (and singular) knots by means of a tangle insertion. Topologically, in a tangle insertion one replaces a knot portion contained in a disc with a designated tangle that attaches compatibly on the boundary: if objects are oriented, orientation is also assumed to be compatible. For bonded knots, one can replace a disc neighbourhood of a bond with a designated tangle. Algebraically, in this work the bond generator $b_{i j}$  in the bonded braid monoid is replaced with Artin's pure braid generator $A_{i j}$ (or any power of it $A_{ij}^p$, $p \in \mathbb{Z}$):
\begin{equation*}
A_{ij} \coloneqq \sigma_i^{-1} \sigma_{i+1}^{-1} \cdots \sigma_{j-2}^{-1} \sigma_{j-1}^2 \sigma_{j-2} \cdots \sigma_i
\end{equation*}
This elementary pure braid takes the $i$-th strand, passes it above all those preceding the $j$-th, makes it turn around the $j$-th passing it under at first and then above it, and then returns the $i$-th strand back in its position by passing once again above all others on the way back (see Figure~\ref{Agen}). By conjugation, pure braid generators can also thread under intermediate strands (see Figure~\ref{conj}).
This choice of braid insertion stems from a fundamental property of bonds, that they preserve connectivity: bonds do not switch the indices of the bonded strands. Braids, however, carry with them a possibly non-trivial permutation datum of the strand indices. 

Alike $A_{ij}$, bonds in $BB_n$  can also pass under intermediate strands to obtain conjugate expressions. If a bond $b$ joining the $i$-th and $j$-th strands passes under a collection of intermediate strands with indices an ordered subset $\mathcal{I} = (l_1, \dots , l_k)$ of $\left\{i+1, \, \dots, j-1\right\}$, it will be denoted as $b_{i j}^{\mathcal{I}}$. Using the notation in \cite{DKL}, where to a bond is associated an ordered sequence of labels $u$ and $o$, denoting underpassings and overpassings of the bond, $\mathcal{I}$ is the set of indices of the $u$'s. 
With the above convention, the notation $b_{ij}$ means that the bond passes above all intermediate strands. The set of all bonds $b_{ij}^{\cI}$ form, together with the elementary braids, a generating set for $BB_n$ in \cite{DKL}. 

By an appropriate \textit{threading algorithm}  on the bonds $b_{ij}^{\cI}$ (Definition~\ref{threadingdef}), we show in this paper that the overpassing bonds $b_{ij}$ together with the elementary braids suffice for a presentation of $BB_n$ (Proposition~\ref{oversuff} and Theorem~\ref{oBB}). We proceed with showing in Theorem~\ref{mainthm} that there exists an epimorphism $\Phi$ of the bonded braid monoid $BB_n$ onto the braid group $B_n$, that sends the bonds $b_{ij}$ to pure braid generators $A_{ij}$ and extends naturally to mapping the general bonds $b_{i j}^{\mathcal{I}}$ to appropriate conjugate elements of $A_{ij}$:
$$\Phi(\sigma_k^{\pm 1}) = \sigma_k^{\pm 1}, \quad \Phi(b_{ij}) = A_{ij}, \quad \Phi(b_{ij}^{(l_1\cdots l_k)}) = A_{i,l_1} \, \cdots \, A_{i,l_k} \, A_{ij} \, A_{i,l_k}^{-1} \, \cdots \, A_{i,l_1}^{-1}$$
The surjection of $\Phi$ is straightforward by the identical mappings of crossings. Furthermore $\Phi$ is far from being injective, since for example the braid $A_{ij}$ and the bond $b_{ij}$ are both mapped to $A_{ij}$.

To prove that $\Phi$ is a homomorphism we use for $BB_n$ the presentation in the overpassing bonds, while for $B_n$ we derive in Section~\ref{sectionalternative} a suitable presentation from a presentation of $P_n$, which we call the \textit{cyclic presentation}. In this presentation, Artin's original relations \cite{Art} are modified to focus on how generators commute with each other, rather than on formulating a combing algorithm. So, in this presentation for $B_n$, pure braid generators behave similarly to how bonds do in $BB_n$, thus clarifying how $\Phi$ acts. 

We further derive in Theorem~\ref{mainthm2} an infinitum of epimorphisms $\Phi_p$, $p \in \mathbb{Z}$, of $BB_n$ onto $B_n$ by mapping the bond $b_{ij}$ to the element $A_{ij}^p$, obtaining thus a multitude of algebraic tangle insertions for bonds, with $\Phi_0$ being the forgetful map.

\smallbreak
In \cite{DKL} the original presentation of $BB_n$ is reduced to having as bonded generators the \textit{tight bonds}, $b_i := b_{i,i+1}$, which are bonds attached to adjacent strands, with  a reduced set of relations. The epimorphism $\Phi$ (resp. $\Phi_p$) restricted to the tight bonds, maps $b_i$
to the \textit{clasp} $p_i:=\sigma_i^2$ (resp. to $\sigma_i^{2p} = p_i^{p}$). For understanding how these epimorphisms work, we provide a new, redundant, presentation of $B_n$, with generators elementary braidings and clasps. By noting that Artin's generators are conjugations of clasps, in Theorem~\ref{Pclaspres} we conclude that these provide a set of normal generators for $P_n$, which we complete to a full presentation of $B_n$ in Theorem~\ref{braidpreswclasp}. 

On the other hand, singular braids are classical braids with \textit{singular crossings}, where the two strands actually intersect transversally, such that a neighbourhood of the crossing lies flat in a disc centered at the intersection. Thanks to the isomorphism $s$ between $SB_n$ and $BB_n$, the epimorphisms $\Phi$ and $\Phi_p$ give rise to an abundance of new epimorphisms of the singular braid monoid $s\Phi \colon SB_n \to BB_n$ and $s \Phi_p \colon SB_n \to BB_n$ that, unlike the singular crossings, preserve the connectivity of strands.

Our results also extend to irredundant presentations of the tight bonded braid monoid, respectively the singular braid monoid, having only one tight bond $b_1$ (see \cite{DKL}), respectively one singular crossing $\tau_1$. The epimorphisms $\Phi, \Phi_p$ and $s\Phi$, $s\Phi_p$ can then connect $BB_n$, respectively $SB_n$, to a presentation for $B_n$ with only one clasp generator $p_1$, which mirrors the behaviour of the single tight bond or the single singular crossing generator.

\smallbreak
Here follows a summary of this work. In Section 1, we recall the basics for the braid group $B_n$ and the pure braid group $P_n$, Artin's classical presentation of the latter, and some useful properties, such as the combing of a pure braid and how conjugation behaves. 
In Section 2, we recover a known alternative presentation for $P_n$ which will link to bonds, and offer a presentation for $B_n$ constructed from the alternative presentation for $P_n$.
In Section 3 we introduce bonded braids and the bonded braid monoid $BB_n$ as from \cite{DKL}. Then, we give an alternative presentation for $BB_n$ in the overpassing bond generators and the classical crossings.
In Section 4 we state and prove our main result, namely the existence and well-posedness of the epimorphism $\Phi \colon BB_n \to B_n$ mapping (overpassing) bonds to pure braid generators, and state our results on $\Phi$, and we extend our results to the epimorphisms $\Phi_p$.
In Section 5, we work on a set of normal generators for $P_n$ in $B_n$, and study how the relations for the pure braid group are reduced when \textit{contracted}, so when the conjugation blocks are simplified. This leads to another alternative yet redundant presentation for $B_n$ in the clasp generators.
In Section 6, we get back to $BB_n$ and recall its presentation with only tight bonds and the \textit{irredundant} presentation with only one bond generator. For both presentations, we analyse the relation with the singular braid monoid $SB_n$.
In conclusion, in Section 7 we extend the results of Section 4 to the case of the tight bonded braid monoid and the singular braid monoid via the restrictions of $\Phi \colon BB_n \to B_n$ and $s\Phi \colon SB_n \to B_n$ (respectively, $\Phi_p$ and $s\Phi_p$).


\setcounter{section}{0}
\section{The braid group and the pure braid group}

The classical {\it braid group on $n$ strands}, $B_n$, is the Artin braid group generated by $\sigma_1, \ldots, \sigma_{n-1}$ and relations:
\begin{equation} \label{braidgrp} \begin{gathered}
    \sigma_i \, \sigma_j \, = \, \sigma_j \, \sigma_i \, \hspace{2mm} \text{for any} \hspace{2mm} |i-j|>1 \\
    \sigma_i \, \sigma_{i+1} \, \sigma_i \, = \, \sigma_{i+1} \, \sigma_i \, \sigma_{i+1} \hspace{2mm} \text{for} \hspace{2mm} 1 \leq i \leq n-2
\end{gathered}
\end{equation}
An \textit{algebraic braid} is a word in these generators or their inverses. Algebraic braids are in correspondence with \textit{geometric braids}, which  are regular projections of monotonic configurations of $n$ strands in a thickened disc.
Regular means that braids are projected transversally on a suitable plane such that they are Morse regular with respect to the up-down direction, with a finite amount of transversal double points, and points of order at most two.  The correspondence is given by sending the generator $\sigma_i \in B_n$ in the following elementary braid:

\begin{figure}[H]
\begin{center} 
    \includegraphics[width=4.5cm]{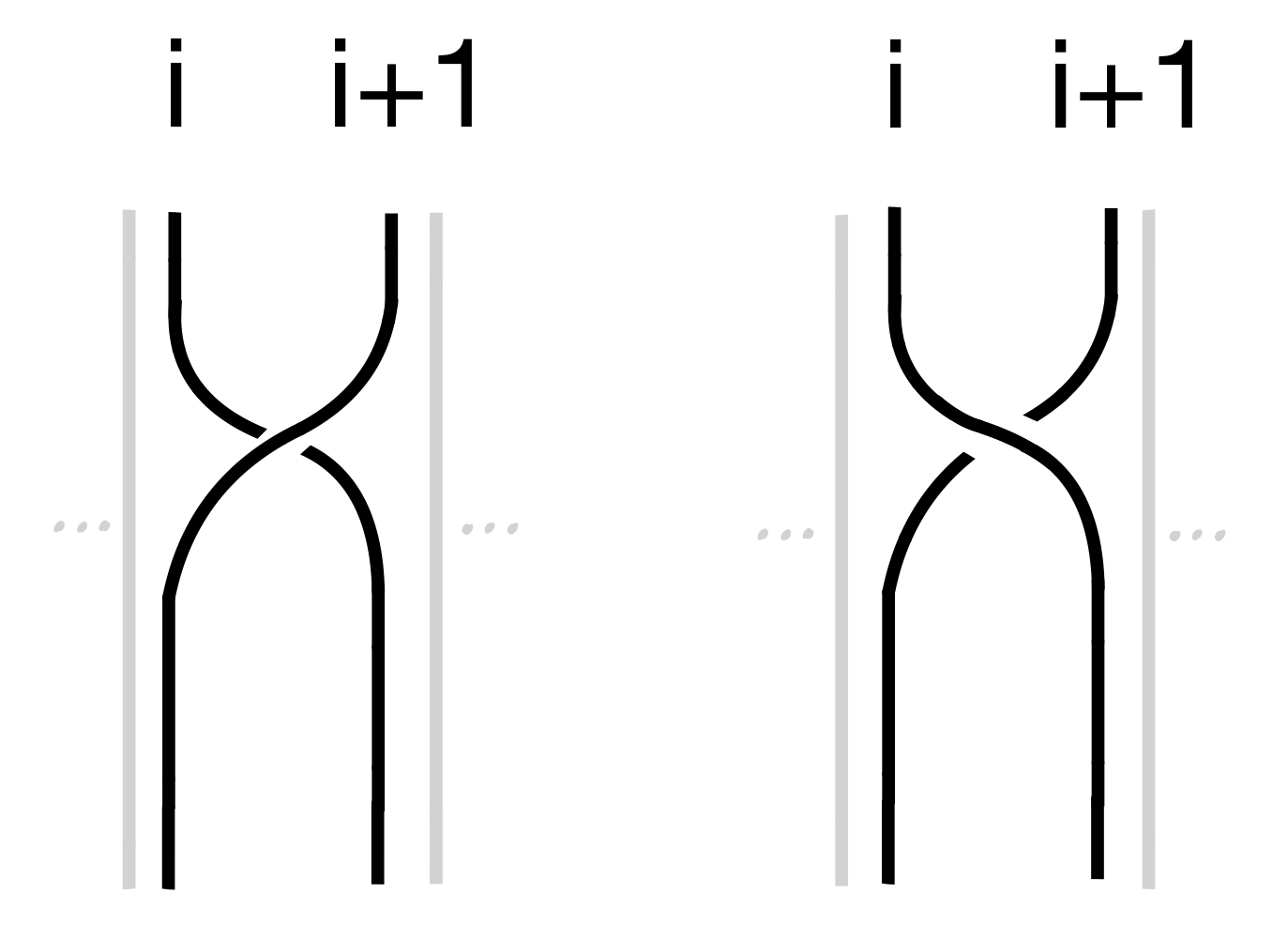}
\end{center}
\caption{ The i-th braid generator $\sigma_i$ (left) and its inverse $\sigma_i^{-1}$ (right).}
\label{braidgensfig}
\end{figure}  

The identity braid is given by $n$ straight strands. Composition is diagrammatically read from top to bottom. 
This type of two-dimensional pictures is often referred to as a braid diagram. The two relations in the braid group presentation graphically correspond to the ones depicted in Figure~\ref{braidrels}. The second one, with its derived relations, can also be referred to as the \textit{third Reidemeister move}.

\begin{figure}[H]
\begin{center} 
    \includegraphics[width=10cm]{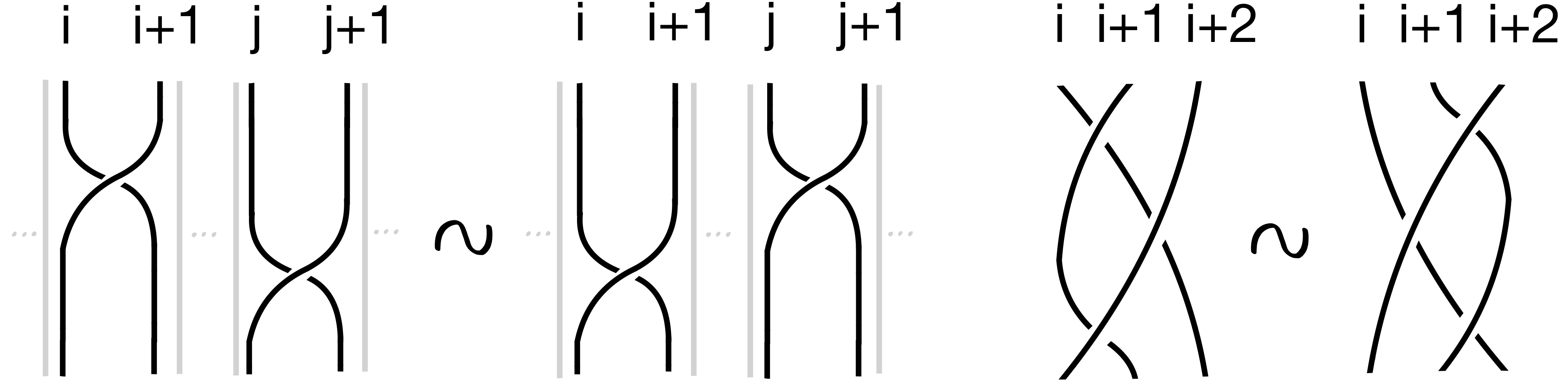}
\end{center}
\caption{ The braid relations. }
\label{braidrels}
\end{figure} 

A braid in the braid group on $n$ strands, $B_n$, carries the datum of a permutation in the symmetric group on $n$ objects, $S_n$, which sends $i$ to the position of the endpoint of the $i$-th strand (strands are numbered from left to right). Algebraically, the generator $\sigma_i$ is sent to the elementary transposition $s_i$ between $i$ and $i+1$.  In fact, this mapping is a group epimorphism, and a presentation for $S_n$ can be obtained by that of $B_n$ by imposing the additional relation $\sigma_i^2=1$, that is, $S_n = B_n/<\sigma_i^2=1>$. Namely: 
\begin{equation*} \label{Snpres}
    S_n = \langle \, s_1, \ldots, s_{n-1} \, | \, s_i s_j = s_j s_i \ \text{for} \ |i-j|>1; \ s_i s_{i+1} s_i = s_{i+1} s_i s_{i+1}, \ s_i^2 = 1 \ \forall \, i \, \rangle 
\end{equation*}
There are infinitely many different braids corresponding to the same permutation. Projecting a braid on its induced permutation gives rise to a surjection from $B_n $ to $ S_n$, which fits in a short exact sequence:
\begin{equation} \label{ses}
    1 \rightarrow P_n \hookrightarrow B_n \twoheadrightarrow S_n \rightarrow 1
\end{equation}
The normal subgroup $P_n$, the surjection's kernel, is the so-called \textit{pure braid group on n strands},  which consists in all braids in $B_n$ which surject to the trivial permutation. 

\subsection{Generators for $P_n$}

As a subgroup of $B_n$, there are several choices for sets of generators for $P_n$. The choice taken in \cite{Art}  is the following: 

\begin{equation} \label{AG}
    \left\{ \ A_{ij} = \sigma_i^{-1} \cdots \sigma_{j-2}^{-1} \, \sigma_{j-1}^2 \, \sigma_{j-2} \cdots \sigma_i \ \right\}_{1 \leq i<j \leq n}
\end{equation}

\noindent with inverses $A_{ij}^{-1} = \sigma_i^{-1} \cdots \sigma_{j-2}^{-1} \, \sigma_{j-1}^{-2} \, \sigma_{j-2} \cdots \sigma_i$. These are the {\it right pure braid generators} directed from left to right, as depicted in Figure~\ref{Agen}, and we shall also refer to them as  \textit{right loop generators}. 

\begin{figure}[h]
\begin{subfigure}{0.4\textwidth}
\includegraphics[width=0.69\linewidth]{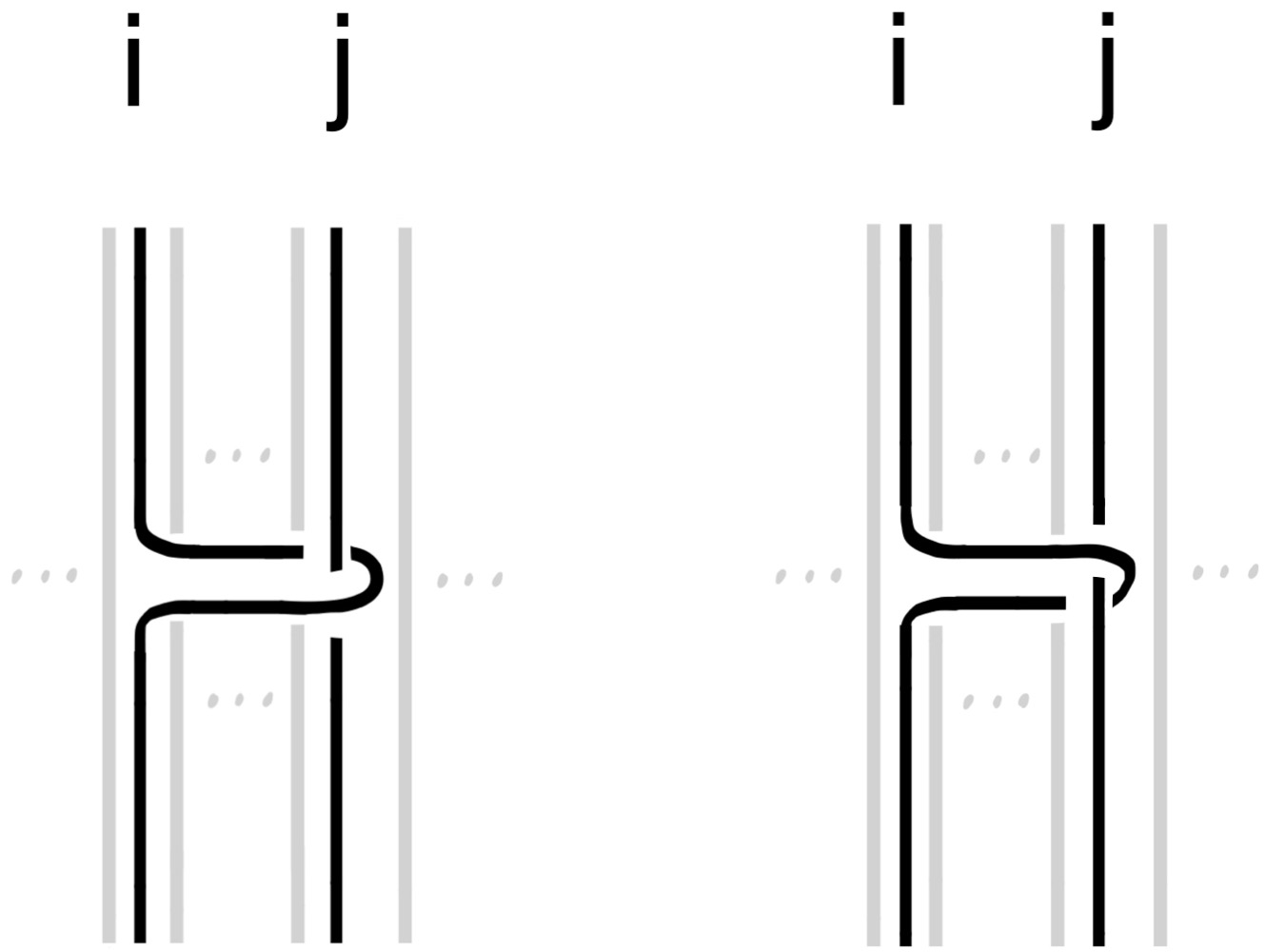}
    \caption{Right loop generators and their inverses.}
\label{Agen}
\end{subfigure}
\begin{subfigure}{0.4\textwidth}
\includegraphics[width=0.7\linewidth]{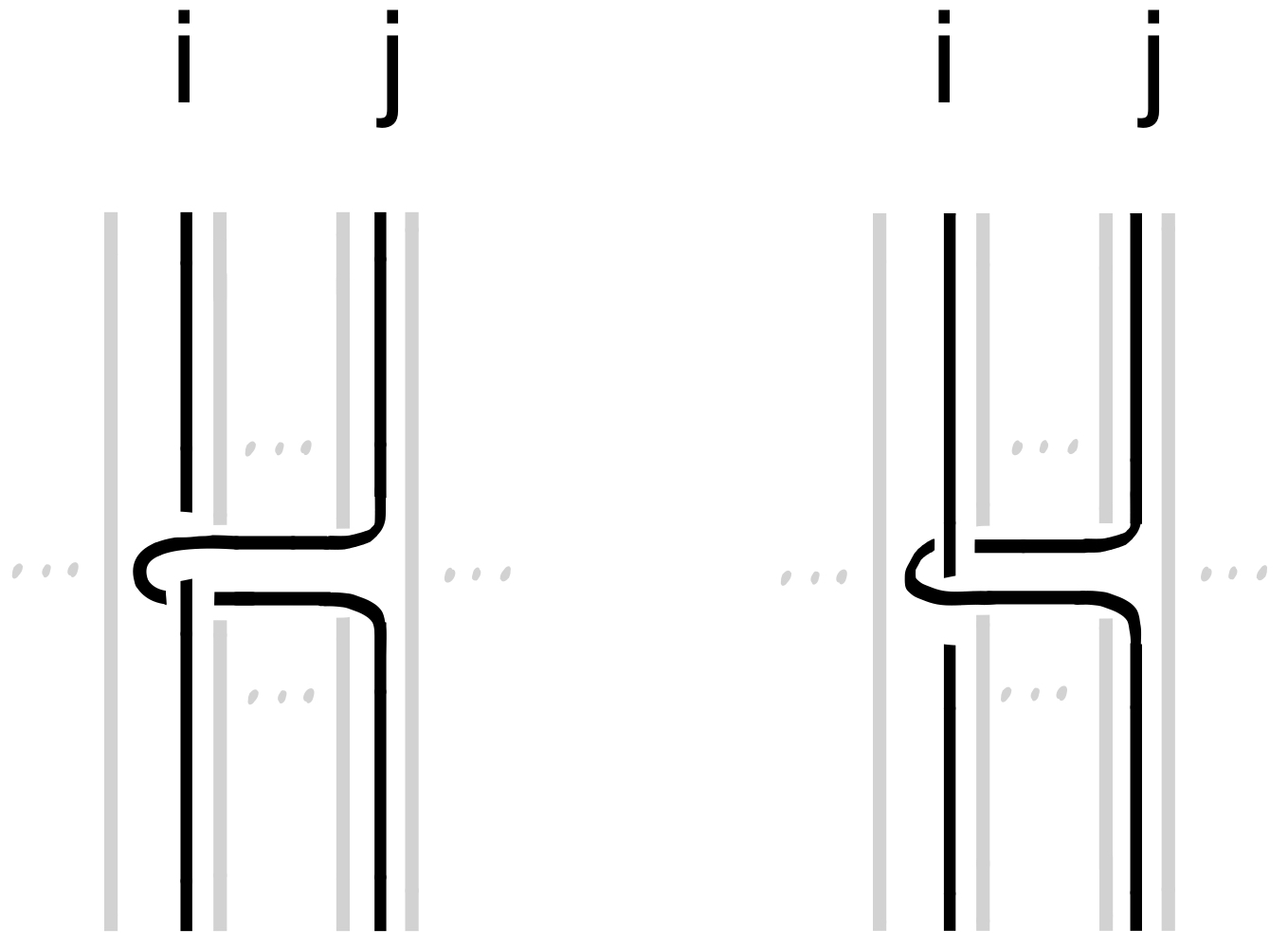}
    \caption{Left loop generators and their inverses.}
\label{Lgen}
\end{subfigure} 
\end{figure} 

\begin{remark} In the braid group, one can deduce the relations \begin{equation} \label{braidsqrels}
    \sigma_i \sigma_{i+1}^{\pm 2} \sigma_i^{-1} = \sigma_{i+1}^{-1} \sigma_i^{\pm 2} \sigma_{i+1} \quad \text{and} \quad \sigma_i^{-1} \sigma_{i+1}^{\pm 2} \sigma_i = \sigma_{i+1} \sigma_i^{\pm 2} \sigma_{i+1}^{-1}.
\end{equation}
Indeed, the first relation with $\sigma_{i+1}^2$ is equivalent to
\begin{equation*}
    \sigma_{i+1} \sigma_i \sigma_{i+1} \sigma_{i+1} = \sigma_i \sigma_i \sigma_{i+1} \sigma_i
\end{equation*}
Using the second of relations Eq.~\ref{braidgrp} for braids on both hand sides we obtain equality:

\begin{equation*}
\sigma_i\sigma_{i+1}\sigma_i\sigma_{i+1} = \sigma_i\sigma_{i+1}\sigma_i\sigma_{i+1}.
\end{equation*}

The proof for the negative exponent and both instances of the second equation in Eq.~\ref{braidsqrels} are analogues.
\end{remark}

Using these relations, we can prove that an equivalent set of generators for the pure braid group consists in the \textit{left pure braid generators} (or \textit{left loop generators}), depicted in Figure~\ref{Lgen}:

\begin{equation} \label{LG}
\left\{ A_{ij} \coloneqq \sigma_{j-1} \cdots \sigma_{i+1} \, \sigma_i^{2} \, \sigma_{i+1}^{-1} \cdots \sigma_{j-1}^{-1} \right\}_{1 \leq i<j \leq n}
\end{equation}

with inverses $A_{ij}^{-1} = \sigma_{j-1} \cdots \sigma_{i+1} \, \sigma_i^{-2} \, \sigma_{i+1}^{-1} \cdots \sigma_{j-1}^{-1}$, used in \cite{PhDLam}.

\begin{remark} Inverting a pure braid generator consists in inverting only the core double twist; the two blocks of overpassings remain the same. This is because $A_{ij}$ is a conjugate of the double twist. 
\end{remark}

\begin{remark} Note that imposing $\sigma_i^2=1$ for all indices trivialises all of the $A_{ij}$'s and their inverses: it neutralises the central $\sigma_{j-1}^{ \pm 2}$, and the blocks on its right and left are inverses to each other. More generally, any expression of the style $\sigma_i^{\varepsilon_i} \cdots \sigma_{j-2}^{\varepsilon_{j-2}} \sigma_{j-1}^{2(\varepsilon_{j-1})} \sigma_{j-2}^{\varepsilon'_{j-2}} \cdots \sigma_i^{\varepsilon'_i}$, with $\{\varepsilon_k, \, \varepsilon'_k\} = \{+,-\}$, will get cancelled by imposing $\sigma_i^2=1$. Remind that $B_n / \langle \, \sigma_1^2 \, \rangle \cong S_n$
\end{remark}

Any placement of the double twist in between the strands $i,j$ and above the intermediate ones is also equivalent to the pure braid generator $A_{ij}$, as one can see by geometric or algebraic means. See Figure~\ref{Eqgen}. The middle illustration of the figure presents a symmetric geometric representation for $A_{ij}$, which can be deduced algebraically by the iterations in Eq.~\ref{braidsqrels}. 

\begin{figure}[H]
\begin{center} 
    \includegraphics[width=8cm]{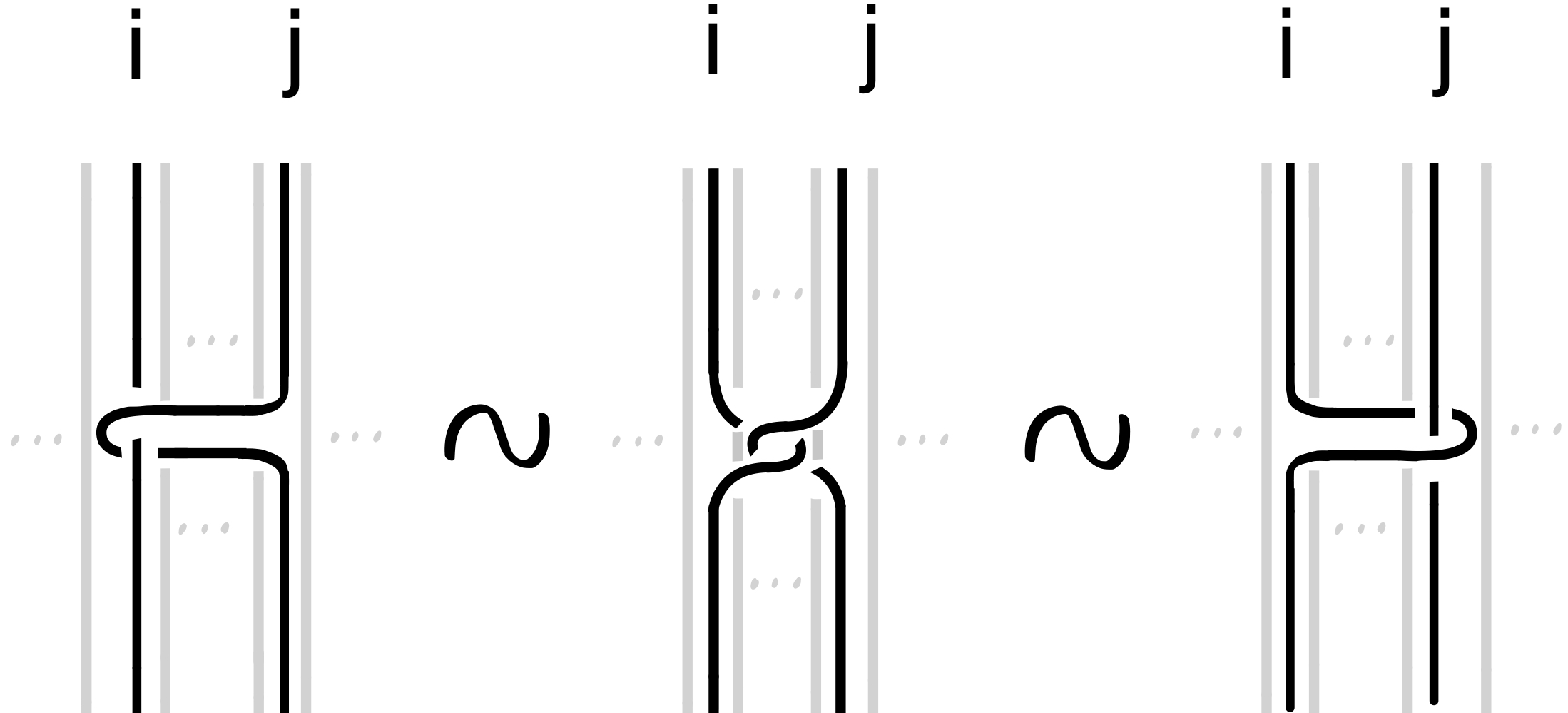}
\end{center}
\caption{Different isotopic placements of the double twist give rise to the left, symmetric and right generators.}
\label{Eqgen}
\end{figure}

\subsection{Presentations for $P_n$} 

A presentation for the pure braid group was found by Artin in the seminal article \cite{Art} using the right pure braid generators of Eq.~\ref{AG}. 
 
\begin{theorem}[Artin]\label{thm:Ar} The pure braid group on $n$ strands, $P_n$, is generated by the $\frac{n(n-1)}{2}$ elements $A_{lm}, \, 1 \leq l<m \leq n$, in Eq.~\ref{AG}, and their inverses, with relations 
\[ \begin{array}{lrcll}
      1)& \, A_{rs}A_{ij}A_{rs}^{-1} & = &A_{ij}   & \hspace{2mm} \text{for} \hspace{2mm} i<r<s<j \hspace{2mm} \text{,} \hspace{2mm} i<j<r<s; \\
     2)& \, A_{rs}A_{ir}A_{rs}^{-1} & = &  A_{is}^{-1}A_{ir}A_{is} & \hspace{2mm} \text{for} \hspace{2mm} i<r<s \\
     3)& \, A_{rs}A_{is}A_{rs}^{-1} & = &  A_{is}^{-1}A_{ir}^{-1}A_{is}A_{ir}A_{is} & \hspace{2mm} \text{for} \hspace{2mm} i<r<s \\
     4)& \, A_{rs}A_{ij}A_{rs}^{-1} & = &  A_{is}^{-1}A_{ir}^{-1}A_{is}A_{ir}A_{ij}A_{ir}^{-1}A_{is}^{-1}A_{ir}A_{is} & \hspace{2mm} \text{for}\hspace{2mm} i<r<j<s.
\end{array}
\]
\end{theorem}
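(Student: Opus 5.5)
The plan is the classical one: combing, via the semidirect product decomposition and the Reidemeister--Schreier method. Let $\pi_n \colon P_n \to P_{n-1}$ be the map that forgets the $n$-th strand. It is a well-defined surjective homomorphism: deleting a strand from a pure braid diagram still gives a pure braid diagram, isotopies descend, and the standard inclusion $P_{n-1}\hookrightarrow P_n$ splits it. Its kernel $U_n$ consists of pure braids whose first $n-1$ strands are trivial after forgetting the last strand. Geometrically, $U_n$ is the fundamental group of the $n-1$ times punctured disc, the punctures being the first $n-1$ strands and the loop being traced by the $n$-th strand. This gives the split exact sequence
\[ 1 \to U_n \to P_n \to P_{n-1} \to 1, \qquad U_n \cong F_{n-1}=\langle A_{1n},\dots,A_{n-1,n}\rangle ,\]
so $P_n \cong U_n \rtimes P_{n-1}$.

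Step one is to check that $U_n$ is free on $A_{1n},\dots,A_{n-1,n}$. These elements are exactly the standard loops of the $n$-th strand around each puncture, which is clear from the picture of the right loop generators in Figure~\ref{Agen}. Freeness follows from the identification of $U_n$ with the fundamental group of the fibre of the Fadell--Neuwirth fibration $\mathrm{Conf}_n(D)\to\mathrm{Conf}_{n-1}(D)$. Alternatively, one can argue via the Artin representation of $B_n$ as automorphisms of the free group.

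Step two is to use the standard presentation of a semidirect product. By induction, $P_{n-1}$ is presented by the generators $A_{lm}$ with $m\le n-1$ and relations 1)--4) among them. Then $P_n$ is presented by these generators together with $A_{1n},\dots,A_{n-1,n}$. Besides the inductive relations, the only extra relations needed are the conjugation rules $A_{rs}A_{jn}A_{rs}^{-1} = \phi_{rs}(A_{jn})$, for $r<s\le n-1$ and $j<n$, where $\phi_{rs}$ is the action on the free group $U_n$ (no relations among the $A_{jn}$ themselves, since $U_n$ is free). Writing $s$ for the new index $n$, these extra relations are instances of 1)--4) with largest index $s$ or $j$ equal to $n$. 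So the content of the theorem is that the action is exactly as listed.

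Step three, which I expect to be the main obstacle, is computing $\phi_{rs}(A_{jn})$ in all position cases: $j<r$, $j=r$, $r<j<s$, $j=s$, $s<j$. Each must be matched to the corresponding case of 1)--4) after relabeling, with $(i,j)$ playing the role of $(j,n)$ and taking into account which side the conjugation is written on. I would do this in the Artin representation, where each $A_{rs}$ acts as an explicit automorphism of $F_n=\langle x_1,\dots,x_n\rangle$ fixing the product $x_1\cdots x_n$. This reduces each case to a finite free-group computation. The delicate cases are $r<j<s$ (relation 4) and $j=s$ (relation 3), where the conjugating words $A_{is}^{-1}A_{ir}^{-1}A_{is}A_{ir}$ come out of the computation. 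As a check, I would confirm each identity geometrically by isotoping the loop of the last strand around the loop $A_{rs}$. Cases 1) and 2) are easy: disjoint or nested loops commute, and a shared endpoint produces a single conjugation.

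Finally, induction on $n$, starting from $P_2\cong\mathbb Z$ generated by $A_{12}$ with no relations, assembles the presentation. The total generator count is $\sum_{m=2}^n (m-1) = n(n-1)/2$.
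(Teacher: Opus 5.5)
\textbf{There is a genuine gap: you comb the wrong strand.} Your method (split extension with free kernel, compute the conjugation action, induct) is the combing argument the paper relies on in Remark~\ref{combing}; the paper itself only cites Artin. But your choice of strand breaks the claim in your Step two that the extension relations ``are instances of 1)--4) with largest index equal to $n$''.

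In every relation 1)--4), the conjugated generator has first index $i$ strictly smaller than the conjugator's first index $r$. The right-hand sides of 2)--4) are words in $A_{ir},A_{is},A_{ij}$, all sharing the \emph{smallest} index $i$. They describe how pure braids on strands $i+1,\dots,n$ act on the free group $\langle A_{i,i+1},\dots,A_{in}\rangle$, i.e.\ on the kernel of forgetting strand $i$. Your $U_n=\langle A_{1n},\dots,A_{n-1,n}\rangle$ is the kernel of forgetting the \emph{last} strand, so your relations have right-hand sides in the $A_{kn}$, and the case matching in Step three fails.

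Two cases show this concretely.
\begin{itemize}
\item In your case $j=r$ (with $r<s<n$) you will get $A_{rs}A_{rn}A_{rs}^{-1}=A_{sn}^{-1}A_{rn}A_{sn}$. The only relations of 1)--4) among $A_{rs},A_{rn},A_{sn}$ are 2) and 3) for the triple $(r,s,n)$:
\begin{itemize}
\item $A_{sn}A_{rs}A_{sn}^{-1}=A_{rn}^{-1}A_{rs}A_{rn}$;
\item $A_{sn}A_{rn}A_{sn}^{-1}=A_{rn}^{-1}A_{rs}^{-1}A_{rn}A_{rs}A_{rn}$.
\end{itemize}
In both, the conjugator $A_{sn}$ lies in $U_n$, not in $P_{n-1}$, and the right-hand sides are words involving $A_{rs}$. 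Your relator is neither of these, even up to inversion and cyclic permutation.
\item In your case $r<j<s<n$, your relation expresses $A_{rs}A_{jn}A_{rs}^{-1}$ through $A_{rn},A_{jn},A_{sn}$. Relation 4) on the indices $r<j<s<n$ instead conjugates $A_{rs}$ by $A_{jn}$ and involves $A_{rj}$ rather than $A_{sn}$. So the two can only be related by using other relations.
\end{itemize}
What your argument actually proves is the analogue of the theorem obtained by combing from the last strand (cf.\ Remark~\ref{automorphism}). To reach the stated presentation you would still need a Tietze argument showing the two relation sets are equivalent. In the alternating case that is real work, comparable to the manipulations of relation 4) in Section~\ref{sectionalternative}.

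\textbf{The fix is to split off the first strand.} Forgetting strand $1$ gives a split exact sequence with these ingredients:
\begin{itemize}
\item the kernel is free on $A_{12},\dots,A_{1n}$, which is exactly what the picture of the right loop generators shows: strand $1$ passing over strands $2,\dots,j-1$ and encircling strand $j$;
\item the quotient is the pure braid group on strands $2,\dots,n$, split by adding a straight strand on the left.
\end{itemize}
The extension relations are then $A_{rs}A_{1j}A_{rs}^{-1}=\phi_{rs}(A_{1j})$ for $2\le r<s\le n$. Your five position cases $j<r$, $j=r$, $r<j<s$, $j=s$, $j>s$ become verbatim relations 1) (with $i<j<r<s$), 2), 4), 3), 1) (with $i<r<s<j$) for $i=1$, with $1$-pure right-hand sides as required. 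Induction on strands $2,\dots,n$ supplies the relations with $i\ge 2$. This is the decomposition $P_n=F_{n-1}\rtimes P_{n-1}$ with $F_{n-1}=\langle A_{12},\dots,A_{1n}\rangle$ described in Remark~\ref{combing}. With that change, the rest of your plan goes through unchanged: freeness of the kernel via the fibration, the semidirect-product presentation, and computing the action in the Artin representation or geometrically.
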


The geometrical counterparts of the pure braid relations are shown in Figure~\ref{ArtRel}.

\begin{figure}[H]
\begin{center} 
    \includegraphics[width=17cm]{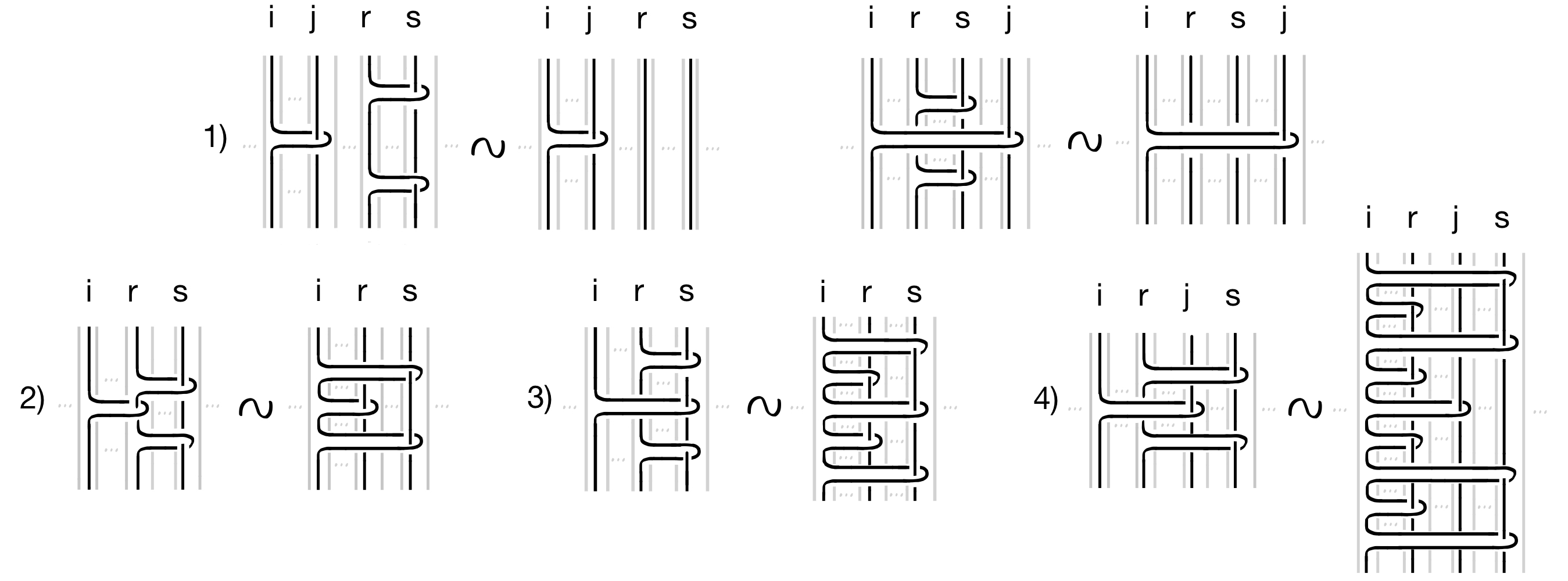}
\end{center}
\caption{Geometric interpretation of Artin's pure braid relations.}
\label{ArtRel}
\end{figure}

Note that the generators appearing in the braids on the right-hand sides have the same first index: this is because the relations show the result of the “combing algorithm” on the pure braids on the left-hand side. This combing algorithm will be explained in more detail below in Remark~\ref{combing}. 

A pure braid in $P_n$ is said to be \textit{$i$-pure} if it can be expressed as a word in the generators $A_{ij}$ for fixed $i$.
Two words are said to be {\it equivalent} if they represent the same element in the group $P_n$. Topologically, these words are said to be {\it isotopic}. Further, two (ordered) pairs of indices,  according to the same ordering relation, \textit{do not separate each other} if, imposing the order relation, at least one of the two pairs' elements are consecutive. Artin's theorem is actually more general, at least regarding the indices, and provides a wide set of relations holding in the pure braid group. More precisely: 

\begin{theorem}[Artin] \cite{Art}
    For $\varepsilon = \pm 1$, the braid $A_{rs}^{\varepsilon}A_{ij}A_{rs}^{- \varepsilon}$ in Eq.~\ref{AG}, is i-pure and can be expressed as:
    \[\begin{array}{lrl}
         1) &  A_{ij} & \text{ if all indices differ and the pairs } (i, j), \, (r, s) \text{  }  \\ & & \text{ do   not separate each other,} \\
         2) & A_{is}^{- \varepsilon} A_{ir} A_{is}^{\varepsilon} & \text{ if } j=r,  \\
         3) &  A_{is}^{-\varepsilon} A_{ir}^{-\varepsilon} A_{is} A_{ir}^{\varepsilon} A_{is}^{\varepsilon} & \text{ if } j = s,  \\
         4) & A_{is}^{-\varepsilon} A_{ir}^{-\varepsilon} A_{is}^{\varepsilon} A_{ir}^{\varepsilon} A_{ij} A_{ir}^{-\varepsilon} A_{is}^{-\varepsilon} A_{ir}^{\varepsilon} A_{is}^{\varepsilon} & \text{ if all indices differ and the pairs } (i, j), \, (r, s)  \\ & &
         \text{ separate each other}
    \end{array}\]
where in the expressions 2), 3), 4) we change, if necessary, first the indices (r, s) in such a way that
the arrangement (i, r, s) as compared with the natural arrangement of these three
indices is a permutation with the same sign as $\varepsilon$.
\end{theorem}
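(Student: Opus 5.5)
The plan is to derive the general statement from Theorem~\ref{thm:Ar} (the case $\varepsilon=+1$ with $i<r<s$ and the standard orderings), using two kinds of symmetry. First I split according to how $\{i,j\}$ and $\{r,s\}$ interlace. If all four indices differ and the pairs do not separate each other, then the configurations are $i<j<r<s$, $r<s<i<j$, $i<r<s<j$ and $r<i<j<s$. The first and third cases are relation 1) of Theorem~\ref{thm:Ar} directly. The other two come from rewriting relation 1) as $A_{ij}A_{rs}A_{ij}^{-1}=A_{rs}$ with the roles of the pairs swapped, since commutation is symmetric. Commutation also gives the $\varepsilon=-1$ version by conjugating both sides by $A_{rs}^{-1}$. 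This settles case 1).

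For cases 2)--4) with $\varepsilon=+1$ and $i<r<s$, the statement is exactly relations 2)--4) of Theorem~\ref{thm:Ar}. For $\varepsilon=-1$ I will not invert the formula by brute force. Instead I conjugate Artin's relation by $A_{rs}^{-1}$ and solve for $A_{rs}^{-1}A_{ij}A_{rs}$. For instance, from 2), $A_{ir}=A_{rs}^{-1}A_{is}^{-1}A_{ir}A_{is}A_{rs}$. I then need the conjugates $A_{rs}^{-1}A_{is}A_{rs}$ and $A_{rs}^{-1}A_{ir}A_{rs}$, which I obtain by inverting the $\varepsilon=+1$ formulas 2) and 3) as automorphisms of the free $i$-pure subgroup generated by $A_{ir},A_{is}$. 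Conjugation by $A_{rs}$ preserves the group $\langle A_{ir},A_{is}\rangle$ and acts on it by $A_{ir}\mapsto A_{is}^{-1}A_{ir}A_{is}$ and $A_{is}\mapsto A_{is}^{-1}A_{ir}^{-1}A_{is}A_{ir}A_{is}$. A short computation shows that the inverse automorphism is $A_{ir}\mapsto A_{ir}A_{is}A_{ir}A_{is}^{-1}A_{ir}^{-1}$ and $A_{is}\mapsto A_{ir}A_{is}A_{ir}^{-1}$. Substituting these into relation 4) gives the $\varepsilon=-1$ expressions, up to the relabelling clause.

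The relabelling clause covers the orderings where $i$ is not the smallest index, e.g.\ $r<i<s$ or $r<s<i$. Here I will swap $r$ and $s$ so that the sign of the arrangement $(i,r,s)$ matches $\varepsilon$. I expect to use the fact that, for the cyclic orderings, the element $A_{ir}A_{is}$ (suitably ordered) is central-like in the three-strand sub-braid on $i,r,s$. Concretely, $A_{ir}A_{is}A_{rs}$ in the appropriate order is the full twist of those three strands, which commutes with each of the three generators. Using this full-twist identity, a conjugation by $A_{rs}^{\varepsilon}$ can be traded for a conjugation by an inverse word in $A_{ir},A_{is}$. This is what produces the permuted-index formulas.

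The main obstacle is this bookkeeping step: checking that the three-strand full-twist identity holds for the generators $A_{ij}$ of Eq.~\ref{AG} when the indices are not in natural order. Here $A_{ij}$ with $i>j$ must be interpreted as $A_{ji}$. I would check the three-strand cases by hand using Eq.~\ref{braidsqrels}. I would then transport them to arbitrary index triples by noting that the strands strictly between them are passed uniformly over, so they do not affect the relation.
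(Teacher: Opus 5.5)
Your argument is fine for case 1), for every case with $i<r<s$ and either sign, and for the three-index cases 2), 3) in the other arrangements. It has a genuine gap in case 4) whenever $i$ is not the smallest of $i,r,s$. The paper gives no proof of this statement: it quotes Artin, noting only that for a \emph{presentation} it suffices to take $i$ smallest, $i<r<j<s$, $\varepsilon=+1$. So your argument must stand on its own.

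On the parts that work: your inverse automorphism is right, since conjugation by $A_{rs}$ acts on $\langle A_{ir},A_{is}\rangle$ as conjugation by $(A_{ir}A_{is})^{-1}$. The three-index cases in the other arrangements do follow from $T=A_{ab}A_{ac}A_{bc}$ with $\{a<b<c\}=\{i,r,s\}$. What you call the main obstacle is not one: centrality of $T$ in $\langle A_{ab},A_{ac},A_{bc}\rangle$ is immediate from the cyclic relations Eq.~\ref{cyclicrels} for the sorted triple, so no geometric transport argument is needed.

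The gap is case 4) in the configurations $r<j<s<i$, or $r<i<s$ with $j<r$ or $j>s$ (up to swapping $r,s$). Writing $A_{rs}^{\varepsilon}$ as $T^{\pm1}$ times a word in $A_{ir},A_{is}$ leaves you with $T^{\pm1}A_{ij}T^{\mp1}$. Here $T$ does not commute with $A_{ij}$, because $j$ lies outside the curve around $i,r,s$ that $T$ twists, and the loop of $A_{ij}$ crosses it. For instance, for $r<j<s<i$, commutation would turn your trade into $A_{rs}A_{ij}A_{rs}^{-1}=A_{is}^{-1}A_{ir}^{-1}A_{ij}A_{ir}A_{is}$. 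That contradicts formula 4), since $A_{ij}$ does not commute with $A_{is}A_{ir}$ in the free $i$-pure subgroup. Computing $TA_{ij}T^{-1}$ is a four-index conjugation of exactly the kind you are trying to establish, so the sketch is circular there.

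What is missing is a symmetry that moves $i$ to position $1$, and $B_n$ has one. Take $\delta=\sigma_{n-1}\cdots\sigma_1$. Then $\delta\sigma_m\delta^{-1}=\sigma_{m-1}$ for $m\ge 2$, and $\delta\sigma_1^2\delta^{-1}=\sigma_{n-1}\cdots\sigma_2\sigma_1^2\sigma_2^{-1}\cdots\sigma_{n-1}^{-1}=A_{1n}$. Using $M_1)$ of Proposition~\ref{redBn}, this gives $\delta A_{kl}\delta^{-1}=A_{k-1,l-1}$ for $2\le k<l$ and $\delta A_{1l}\delta^{-1}=A_{l-1,n}$. So conjugation by $\delta$ shifts all indices by $-1$ modulo $n$, with $A_{lk}:=A_{kl}$.

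The hypotheses of the statement are invariant under this shift. Separation of the two pairs, the coincidences $j=r$ and $j=s$, and the sign of the arrangement $(i,r,s)$ all depend only on cyclic order. Conjugating by $\delta^{i-1}$ therefore reduces every configuration to $i=1$. There $\varepsilon=+1$ forces $1<r<s$, and $r<j<s$ in case 4), which is exactly Theorem~\ref{thm:Ar}. And $\varepsilon=-1$ forces $1<s<r$, which is your inversion argument.

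A reflection symmetry would not suffice. For example, $\sigma_k\mapsto\sigma_{n-k}$ composed with word reversal sends $A_{kl}\mapsto A_{n+1-l,n+1-k}$, but it only handles $i$ largest, not $i$ in the middle. The rotation handles both, and it also makes your three-strand full-twist discussion unnecessary.
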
 

\noindent In \cite{Art} it is argued that it is sufficient to pick relations for $i$ the smallest index and for $i<r<j<s$, so as to have $\varepsilon = +1$ as in the statement of Theorem~\ref{thm:Ar}.

\begin{remark} \label{combing}
\textit{Artin's canonical form} is the result of \textit{Artin's combing algorithm}, which is related to the fact that the first indices of the words in the right-hand sides of all the relations in Theorem~\ref{thm:Ar} are the same. The combing of the $i$th strand may be regarded as a loop in the complement space of the other strands, and as such is an element of a free group, since the fundamental group of a punctured disc is free. This means that the words in the left-hand sides of the relations have canonical expressions as words in free groups. 
This is the main idea for finding the given presentations for $P_n$ (see analysis further below).  

The resulting \textit{canonical form} related to Theorem~\ref{thm:Ar} consists in dividing a pure braid in $P_n$ in $n-1$ blocks, such that in each one of them there is only one strand that moves around and braids with the others, which remain straight. In the first block, the first strand braids with the other $n-1$. In the second block, the second strand braids with the $n-2$ strands to its right: it is not allowed to braid with the first one. And so on: in the $i$-th block, the $i$th strand braids with the $n-i$ strands at its right, and not with those that have already carried out the algorithm.  The $i$-th block can be seen as a word in the free group $F_{n-i}$, generated by the pure braid generators $A_{ik}$ for $k = i+1, \dots, n$ to each of the $n-i$ strands that the $i$-th strand braids with. 

The given presentation for $P_n$ is then derived as follows: the pure braidings of the $1$st strand with the previous ones are words in the free group on $n-1$ generators $F_{n-1} = \langle A_{1, 2}, \ldots, A_{1,n}\rangle$. Then  $P_{n-1}$ acts on $F_{n-1}$ by conjugation, thus yielding the relations in the presentation. Hence:
 $$ P_n =  F_{n-1} \rtimes F_{n-2} \rtimes \cdots \rtimes F_1 = F_{n-1} \rtimes P_{n-1}, $$ 
where each $F_i$ is a free group on the generators \ $A_{i,i+1}, \ldots, A_{i,n}$ \
(the loop generators between the $(i+1)$-st strand and all its consecutive ones), and where the action is by conjugation. 
Analogous reasoning holds for the relations in Theorem~\ref{thm:Ar}.
\end{remark}

\begin{remark} \label{automorphism}
One can also implement a combing algorithm in $P_n$ by beginning from the last strand and working their way to the first one. The resulting canonical form is related to the left loop generators. There is an analogous of Theorem~\ref{thm:Ar} in the left pure braid generators \cite{Lam} related to this combing algorithm.
The set of relations for the left generators can be recovered from Theorem~\ref{thm:Ar} by applying on $P_n$ the automorphism $A_{ij} \to A_{n-j,\, n-i}^{-1}$. 
\end{remark}

\subsection{On conjugation} \label{onconj} 

The original presentation of $P_n$ focuses on how generators behave under conjugation by other generators.  What does conjugation look like topologically in the right pure braid generators? Recall Eq.~\ref{AG} and write the conjugation:
\begin{equation*}\begin{split}
    A_{ir} & A_{ij}A_{ir}^{-1} = \, (\sigma_i^{-1} \sigma_{i+1}^{-1} \cdots \sigma_{r-2}^{-1} \sigma_{r-1}^{2} \sigma_{r-2} \cdots \sigma_{i+1} \sigma_{i}) \\ &(\sigma_i^{-1} \sigma_{i+1}^{-1} \cdots \sigma_{r-1}^{-1} \sigma_r^{-1} \cdots \sigma_{j-2}^{-1} \sigma_{j-1}^2 \sigma_{j-2} \cdots \sigma_r \sigma_{r-1} \cdots \sigma_{i+1} \sigma_{i})(\sigma_i^{-1} \sigma_{i+1}^{-1} \cdots \sigma_{r-2}^{-1} \sigma_{r-1}^{-2} \sigma_{r-2} \cdots \sigma_{i+1} \sigma_{i}).
\end{split}
\end{equation*}
When $r<j$ there are two inverse sub-blocks: 
\[ (\sigma_{r-1} \sigma_{r-2} \cdots \sigma_{i+1} \sigma_{i})(\sigma_i^{-1} \sigma_{i+1}^{-1} \cdots \sigma_{r-1}^{-1}) \hspace{2mm}\text{and}  \hspace{2mm} (\sigma_{r-1} \sigma_{r-2} \cdots \sigma_{i+1} \sigma_{i})(\sigma_i^{-1} \sigma_{i+1}^{-1} \cdots \sigma_{r-1}^{-1}).\] Cancelling them, the expression becomes
\begin{equation} \label{conjA}
    A_{ir}A_{ij}A_{ir}^{-1} = \sigma_i^{-1} \sigma_{i+1}^{-1} \cdots \sigma_{r-2}^{-1} \sigma_{r-1} \sigma_r^{-1} \cdots \sigma_{j-2}^{-1} \sigma_{j-1}^{2} \sigma_{j-2} \cdots \sigma_r \sigma_{r-1}^{-1} \sigma_{r-2} \cdots \sigma_{i+1} \sigma_{i}
\end{equation}
 
Diagrammatically, the $i$-th strand passes in front of all the others but for the $r$-th, which it underpasses, then turns about the $j$-th strand, and goes back to the $i$th place overpassing all strands but the $r$-th again. See left-hand side of Figure~\ref{conj}. 
In the above expression, the exponents of the generator $\sigma_{r-1}$ (not of $\sigma_r$) are inverted with respect to the other generators. This could be counter-intuitive, since diagrammatically, the pure braid generator passes under the $r$-th strand, not the $(r-1)$-st one, but it is more obvious by observing that the crossing $\sigma_{i-1}$ involves the $i+1$-th and $i$-th strands, the former passing under the latter.

\begin{figure}[H]
\begin{center} 
    \includegraphics[width=12cm]{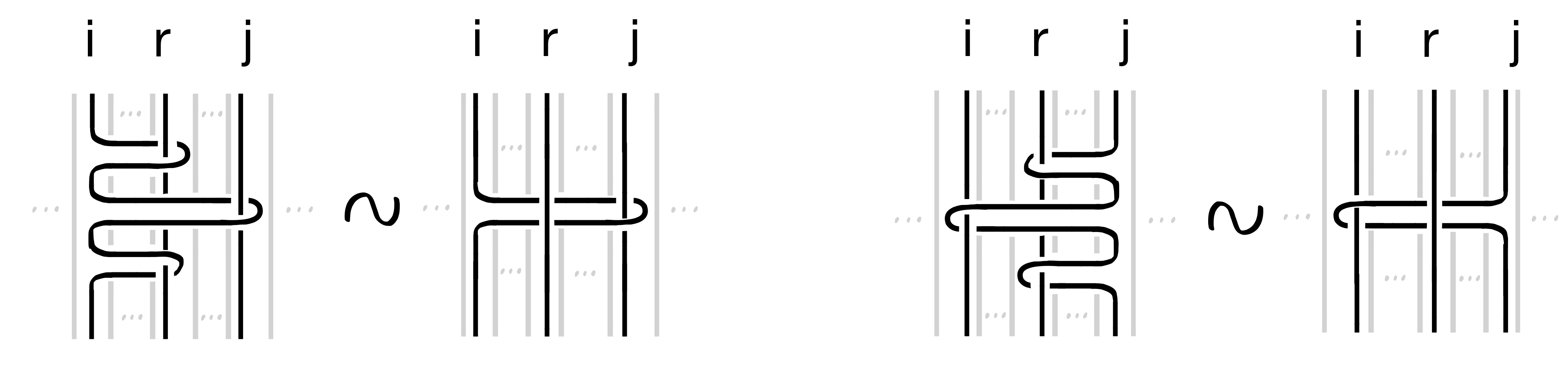}
\end{center}
\caption{ Conjugations and underpassings in the right generators and in the left ones respectively. }
\label{conj}
\end{figure}

Furthermore, if the $i$-th strand was to overpass the $r$-th on the going but to underpass it on the way back, it would be expressed as $A_{ij}A_{ir}^{-1}$, and similarly for the reverse passing, to underpass the $r$-th strand on the go and overpass it on the way back, $A_{ir} A_{ij}$. 

The conjugation $A_{ir}A_{ij}A_{ir}^{-1}$ for $ i < j < r$ does not give rise to a similar phenomenon, but it is part of Artin's relations as the right-hand side of relations 2).
In the left generators, the underpass under the strand $r$ for a generator $A_{ij}$ is expressed as the conjugation by $A_{rj}^{-1}$: $A_{rj}^{-1} A_{ij} A_{rj}$. 


\section{Alternative presentations for $P_n$ and $B_n$ with cyclic relations} \label{sectionalternative}

In this section we give an alternative presentation of the pure braid group $P_n$ in the right pure braid generators Eq.~\ref{AG}, with alternative relations that focus on how two pure braids commute, manipulating Artin's relations of Theorem~\ref{thm:Ar}. We note that this result appears also independently in \cite{Lee}. We, nevertheless, present our proof for the sake of completeness of our results.

\subsection{Manipulating Artin's relations}
Relation 1) in Theorem~\ref{thm:Ar} [ $A_{rs}A_{ij}A_{rs}^{-1} = A_{ij}$ ] is easily manipulated in the desired form $A_{ij} A_{rs} = A_{rs} A_{ij}$.

Relation 2) in Theorem~\ref{thm:Ar} $\big{[} A_{rs}A_{ir}A_{rs}^{-1}  =  A_{is}^{-1}A_{ir}A_{is}\big{]} $ is immediately equivalent to $A_{is}A_{rs}A_{ir} = A_{ir}A_{is} A_{rs}$. 

Relation 3) in Theorem~\ref{thm:Ar} $\big{[} A_{rs}A_{is}A_{rs}^{-1}  =   A_{is}^{-1}A_{ir}^{-1}A_{is}A_{ir}A_{is} \big{]} $ also reads $A_{ir}A_{is}A_{rs}A_{is} = A_{is}A_{ir}A_{is}A_{rs}$. Using now relation 2) on the left-hand side, relation 3)  becomes $A_{is}A_{rs}A_{ir}A_{is} = A_{is}A_{ir}A_{is}A_{rs}$, and by cancellation of the $A_{is}$ on the left of both words we obtain the combined relations: 
\begin{equation} \label{cyclicrels}
 A_{is}A_{rs}A_{ir}  = A_{rs}(A_{ir}A_{is}) = (A_{ir}A_{is})A_{rs}\hspace{5mm} \text{for} \hspace{5mm} i<r<s     
\end{equation}
so that these three cyclic permutations of generators are equivalent. Hence, by Tietze's theorem on finitely presented groups \cite{Tit}, relations 2) and 3) can be summed up in the above relations, which are illustrated in Figure~\ref{fig:cyclicrels} and proved topologically in Figure~\ref{sliding}:


\begin{figure}[H]
\begin{center} 
    \includegraphics[width=8cm]{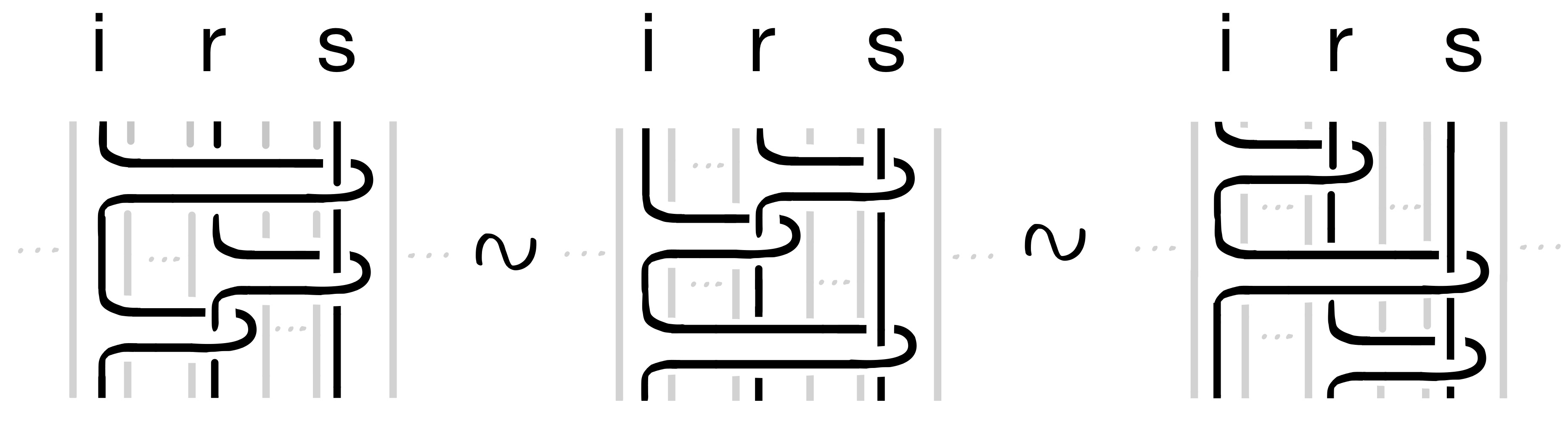}
\end{center}
\caption{The cyclic relations (\ref{cyclicrels}) for the pure braid group in the right generators.}
\label{fig:cyclicrels}
\end{figure}

\begin{figure}[H]
\begin{center} 
\includegraphics[width=17cm]{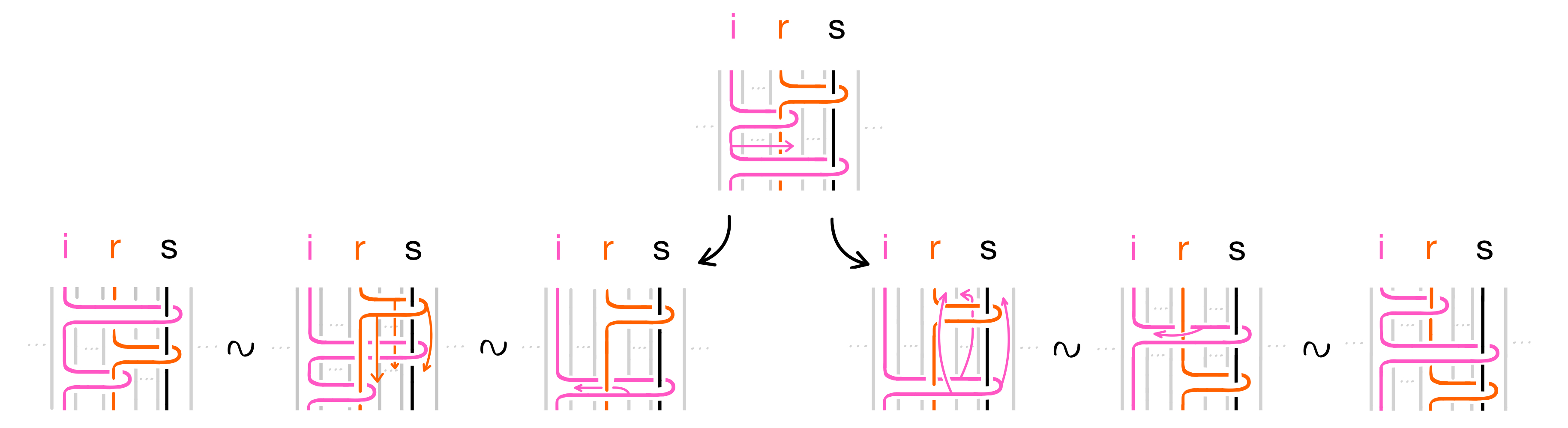}
\end{center}
\caption{Topological portrayal of the equalities in the cyclic relations}
\label{sliding}
\end{figure}

\begin{remark}
    From Figure~\ref{sliding}, the gist of the cyclic relations $(A_{ir} A_{is}) A_{rs} = A_{rs} (A_{ir} A_{is})$ can be seen to be captured in the commutativity of a standard pure generator with acomposition of two generators. The composition $(A_{ir} A_{is})$ winds around the $r$-th and $s$-th strands, and thus commutes with $A_{rs}$. 

Note that, even though the combined relations in Eq.~\ref{cyclicrels} focus on the commutation between generators, one can still extract some sort of combing algorithm. In fact, the combined relations show that, if there is a generator ($A_{rs}$) interfering with two other generators on the $i$-th strand ($A_{ir}$ and $A_{is}$, \, $i<r<s$), these latter two can be combed to the top of the pure braid or, equivalently, to the bottom.
\end{remark}

For what concerns relation 4) in Theorem~\ref{thm:Ar} $\big{[} A_{rs}A_{ij}A_{rs}^{-1}  =  A_{is}^{-1}A_{ir}^{-1}A_{is}A_{ir}A_{ij}A_{ir}^{-1}A_{is}^{-1}A_{ir}A_{is} \big{]}$, we claim that it is  equivalent to 
\begin{equation} \label{sliderelA}
(A_{rj} A_{rs} A_{rj}^{-1}) A_{ij}   = A_{ij} (A_{rj} A_{rs} A_{rj}^{-1}) \hspace{5mm} \text{for} \hspace{5mm} i<r<j<s     
\end{equation}
which regulates the commutation of a generator $A_{ij}$ overpassing/underpassing another one, $A_{rs}$, with their indices alternating, thus also referred to as a \textit{sliding} relation. Geometrically, the simplification is pictured in Figure~\ref{ArtAlt}.

\begin{figure}[H]
\begin{center} 
    \includegraphics[width=11cm]{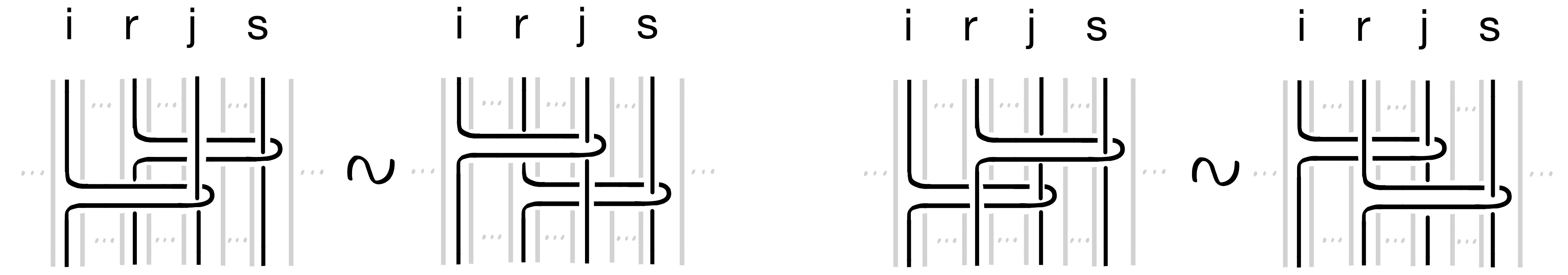}
\end{center}
\caption{The generator $A_{ij}$ slides over/under the generator $A_{rs}$ with alternating indices.}
\label{ArtAlt}
\end{figure}

\noindent Indeed, solving relation 4) with respect to $A_{ij}$ and following the steps illustrated in Figure~\ref{4Ralt1}, relation 4) can be rewritten as:
\begin{equation*}
    \begin{split}
        A_{ij} & = \underline{A_{rs}^{-1} A_{is}^{-1} A_{ir}^{-1} A_{is}}  A_{ir} A_{ij} A_{ir}^{-1} \underline{A_{is}^{-1} A_{ir} A_{is} A_{rs}}\\
        & \overset{\text{rel. 2)}}{=} A_{ir}^{-1} A_{rs}^{-1} \underline{A_{ir} A_{ij} A_{ir}^{-1}} A_{rs} A_{ir}\\
        & \overset{\text{rel. 3)}}{=} A_{ir}^{-1} \underline{A_{rs}^{-1} A_{rj}^{-1}} A_{ij} \underline{A_{rj} A_{rs}} A_{ir}\\
        & \overset{\text{rel. 3)}}{=} \underline{A_{ir}^{-1} A_{rj}^{-1}} A_{js}^{-1} A_{rs}^{-1} A_{js}  A_{ij} A_{js}^{-1} A_{rs} A_{js} \underline{A_{rj} A_{ir}}.  
    \end{split}
\end{equation*}

\begin{figure}[H]
\begin{center} 
    \includegraphics[width=11.5cm]{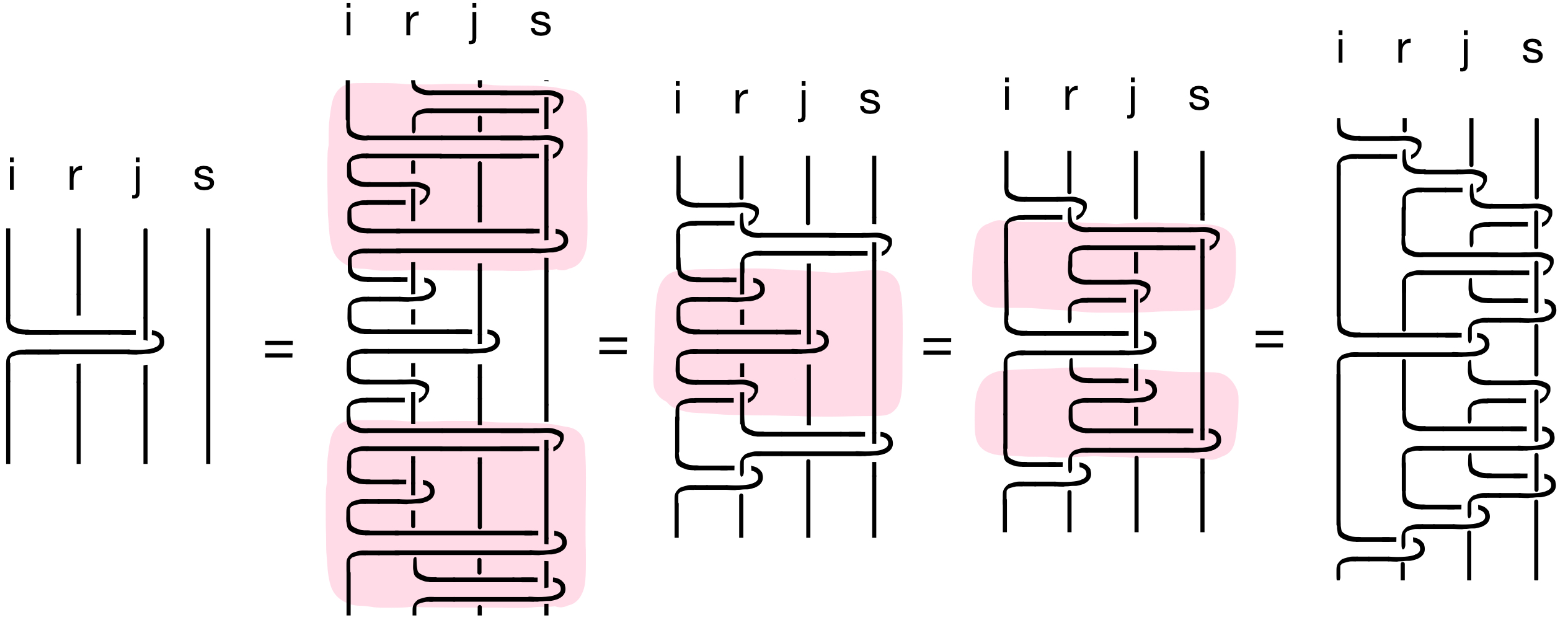}
\end{center}
\caption{Modifying Relation 4), part 1.}
\label{4Ralt1}
\end{figure}

Moving $ A_{ir}^{-1} A_{rj}^{-1}$ and $A_{rj} A_{ir}$  to the left-hand side, and proceeding as illustrated in Figure~\ref{4Ralt2}, we obtain the equivalent relation:
\begin{equation*}
    \begin{split}
       A_{rj} \underline{A_{ir} A_{ij} A_{ir}^{-1}} A_{rj}^{-1} & = A_{js}^{-1} A_{rs}^{-1} A_{js}  A_{ij} A_{js}^{-1} A_{rs} A_{js}\\
       \overset{\text{rel. 3)}}{\Longleftrightarrow}
       \underline{A_{rj} A_{rj}^{-1}} A_{ij} \underline{A_{rj} A_{rj}^{-1}} & = \underline{A_{js}^{-1} A_{rs}^{-1} A_{js}}  A_{ij} \underline{A_{js}^{-1} A_{rs} A_{js}}\\
       \overunderset{\text{cancel in l.h.s.}}{\text{apply rel. 2) on r.h.s.}}{\Longleftrightarrow} A_{ij} & = \underline{A_{rj} A_{rs}^{-1} A_{rj}^{-1}}  A_{ij} A_{rj} A_{rs} A_{rj}^{-1}\\ \overset{\text{moving to l.h.s.}}{\Longleftrightarrow} 
       A_{rj} A_{rs} A_{rj}^{-1} A_{ij} &  = A_{ij} A_{rj} A_{rs} A_{rj}^{-1}.
    \end{split}
\end{equation*}

\begin{figure}[H]
\begin{center} 
    \includegraphics[width=18cm]{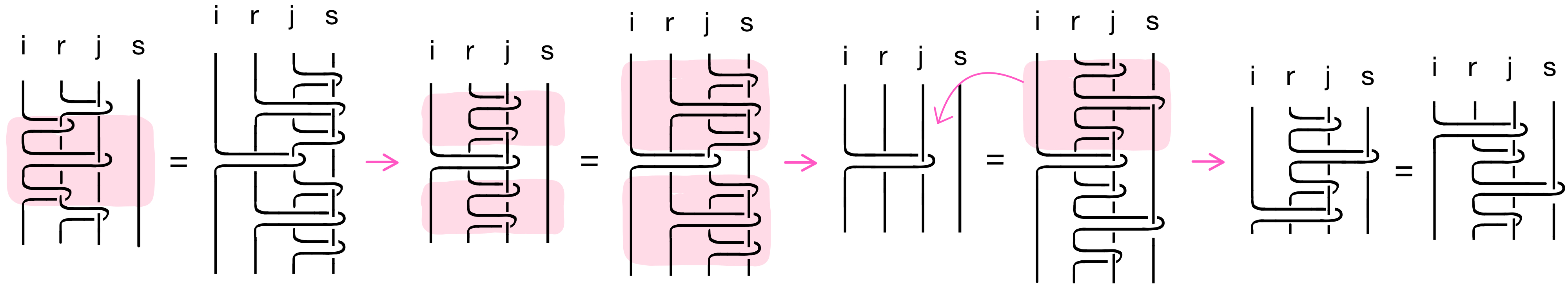}
\end{center}
\caption{Modifying Relation 4), part 2.}
\label{4Ralt2}
\end{figure}

Hence the alternative fourth relation. Therefore, we have proved the following:

\begin{proposition} \label{alternative} 
An alternative presentation with a set of cyclic relations for the pure braid group $P_n$ on $n$ strands on the right pure braid generators $A_{lm}, \, 1\leq l<m\leq n$, is given by
\[
\begin{array}{lrcll} 
         P_1) & A_{ij}A_{rs} & = & A_{rs}A_{ij} & \text{for} \hspace{2mm} i<r<s<j \hspace{2mm} \text{and} \hspace{2mm} i<j<r<s\\
         P_2) & A_{ir}(A_{is}A_{rs})  & = & (A_{is}A_{rs})A_{ir} & \text{for} \hspace{2mm} i<r<s\\
         P_3) & A_{rs}(A_{ir}A_{is}) & = & (A_{ir}A_{is})A_{rs} & \text{for} \hspace{2mm} i<r<s\\
         P_4) & (A_{rj} A_{rs} A_{rj}^{-1}) A_{ij} &  = & A_{ij} (A_{rj} A_{rs} A_{rj}^{-1}) & \text{for} \hspace{2mm} i<r<j<s
\end{array}
\]
\end{proposition}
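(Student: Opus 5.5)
The plan is to obtain the presentation from Artin's presentation in Theorem~\ref{thm:Ar} by Tietze transformations. The generating set stays the same, and each of Artin's relations 1)--4) is replaced by an equivalent relation. The point to check throughout is equivalence \emph{modulo the other relations kept in the new list}, so that the normal closures of the two relation sets coincide in the free group on the $A_{lm}$.

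For relations 1) and $P_1)$ the step is immediate: multiplying $A_{rs}A_{ij}A_{rs}^{-1}=A_{ij}$ on the right by $A_{rs}$ gives $P_1)$, and conversely. For 2) and 3) I would use the computation preceding Eq.~\ref{cyclicrels}.
\begin{itemize}
\item Relation 2) is equivalent in the free group to $A_{is}A_{rs}A_{ir}=A_{ir}A_{is}A_{rs}$.
\item Relation 3) is equivalent to $A_{ir}A_{is}A_{rs}A_{is}=A_{is}A_{ir}A_{is}A_{rs}$. Modulo 2), this becomes $A_{rs}A_{ir}A_{is}=A_{ir}A_{is}A_{rs}$, which is exactly $P_3)$.
\item Given $P_3)$, relation 2) reads $A_{is}A_{rs}A_{ir}=A_{rs}A_{ir}A_{is}$. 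Conjugating by $A_{ir}$ turns this into $A_{ir}A_{is}A_{rs}=A_{is}A_{rs}A_{ir}$, i.e.\ $P_2)$.
\end{itemize}
Hence the pair $\{2),3)\}$ generates the same normal subgroup as the pair $\{P_2),P_3)\}$, because each step is reversible.

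For relation 4) I would follow the chain of rewritings displayed before Proposition~\ref{alternative}. I would check that every rewriting step invokes only relations 2) and 3) for index triples of the form $i<r<s$ (or their cyclic versions), and never relation 4) itself. The triples that occur are $(i,r,s)$, $(i,r,j)$, $(r,j,s)$ and $(i,j,s)$, all of which satisfy the hypothesis $i<r<s$ up to relabelling. Since each step is an equivalence of words modulo $P_2)$ and $P_3)$, relation 4) for the quadruple $i<r<j<s$ is equivalent, modulo $P_1)$--$P_3)$, to $P_4)$ for the same quadruple. Tietze's theorem then lets us swap 4) for $P_4)$. The derivations run backwards as well, so the new relations are consequences of Artin's and conversely, and Theorem~\ref{thm:Ar} transfers to the stated presentation.

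The main obstacle is the index bookkeeping in the relation 4) chain. Conjugations such as $A_{ir}A_{ij}A_{ir}^{-1}$ and $A_{rs}^{-1}A_{rj}^{-1}\cdots$ are rewritten there using relations 2) and 3) applied to triples like $(i,r,j)$ and $(r,j,s)$. I must verify that each invoked instance really has the required ordering and the correct exponent $\varepsilon$. Where the text labels a step ``rel.~3)'' but in fact uses a conjugated form of relation 2), I would replace it by the corresponding consequence of $P_2)$ or $P_3)$, for example $A_{ir}A_{ij}A_{ir}^{-1}=A_{rj}^{-1}A_{ij}A_{rj}$ for $i<r<j$, which follows from $P_3)$ with $A_{ir}A_{ij}=A_{rj}^{-1}A_{ir}A_{ij}A_{rj}$ combined with the cyclic relation. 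I would write out this short lemma list first, and then verify each line of the chain against it.
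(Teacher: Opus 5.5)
Your proposal is essentially the paper's own proof. You perform Tietze moves on Artin's presentation, merge 2) and 3) into the cyclic relations, and rewrite 4) through the same chain, which indeed uses only cyclic relations on the triples $(i,r,s)$, $(i,r,j)$, $(r,j,s)$; your identity $A_{ir}A_{ij}A_{ir}^{-1}=A_{rj}^{-1}A_{ij}A_{rj}$ is exactly what the paper's steps labelled ``rel.~3)'' use. One small slip: your third bullet is unnecessary, and its ``conjugating by $A_{ir}$'' step does not actually yield $P_2)$; nothing is lost, though, because the form $A_{is}A_{rs}A_{ir}=A_{ir}A_{is}A_{rs}$ from your first bullet is already $P_2)$ read right to left.
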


The presentation for $P_n$ above, with the alternative second, third, and fourth relations, focuses on how neighbouring generators or blocks of generators commute. The second and third ones deal with three indices overall, whereas the first and fourth deal with four total indices: assuming $i<r$, the first considers the case where the two pairs of indices $(i, j)$ and $(r, s)$ do not separate each other (either $i<r<s<j$ or $i<j<r<s$), and the fourth the case of separating (alternating) indices, so $i<r<j<s$. 
In $B_n$, on the contrary, it suffices to inquire the two cases: that of adjacent indices (the second relation in Definition~\ref{braidgrp}), and those farther apart (commutativity, or first relation in Definition~\ref{braidgrp}). 

\begin{remark}
    Consider relation $P_4)$ of the alternative presentation in Proposition~\ref{alternative}. One can extract the equivalent relation:
    \begin{equation} \label{alternativeslideartin}
    \begin{split}
        \underline{A_{rj}} A_{rs} A_{rj}^{-1} A_{ij} = A_{ij} A_{rj} A_{rs} \underline{A_{rj}^{-1}} \stackrel{\text{ to other h.s. }}{\Longleftrightarrow} & A_{rs} \underline{A_{rj}^{-1} A_{ij} A_{rj}} = \underline{A_{rj}^{-1} A_{ij} A_{rj}} A_{rs}\\ \stackrel{Eq.~\ref{cyclicrels}}{\Longleftrightarrow} & A_{rs} (A_{ir} A_{ij} A_{ir}^{-1}) = (A_{ir} A_{ij} A_{ir}^{-1}) A_{rs}
    \end{split}
    \end{equation}
    which corresponds to the right generator $A_{rs}$ sliding above the right generator $A_{ij}$ for $i<r<j<s$ (refer to the right-hand side in Figure~\ref{ArtAlt}).
\end{remark}


\subsection{A redundant presentation for $\mathbf{B_n}$.}

Inspired by a similar result in \cite{Lam} on the left loop generators for the pure braid group $P_n$, the next proposition shows that there is an alternative, redundant yet clarifying, presentation for the braid group $B_n$, which focuses on the normal subgroup $P_n$ and makes use of Proposition~\ref{alternative}. This presentation will be surprisingly useful in linking the bonded braid monoid with the braid group. 

\begin{proposition} \label{redBn}
    The braid group $B_n$ admits the following redundant presentation in the standard braid generators, the right loop generators, and the braid and cyclic relations:
    $$ \langle \, \sigma_1, \dots , \sigma_{n-1}, \, (A_{ij})_{1 \leq i < j \leq n} \quad | \quad \Sigma_1, \, \Sigma_2, \, \Sigma_3, \, P_1, \, P_2, \, P_3, \ P_4, \, M_1, \, M_2, \, M_3, \, M_4, \, M_5 \, \rangle$$
    where the complete set of relations is:
    \[
    \begin{array}{rrcll}
        \Sigma_1 ) & \sigma_i \sigma_j & = & \sigma_j \sigma_i & |i-j| > 1 \\
         \Sigma_2 ) & \sigma_i \sigma_{i+1} \sigma_i & = & \sigma_{i+1} \sigma_i \sigma_{i+1} & 1 \leq i \leq n-2\\
         \Sigma_3 ) & \sigma_i^2 & = & A_{i, i+1} & 1 \leq i \leq n-1\\
         P_1) & A_{ij} \, A_{rs} & = & A_{rs} A_{ij} & i<j<r<s, \ \ i<r<s<j\\
         P_2 ) & A_{ir} \, A_{is} \, A_{rs} & = & A_{is} \, A_{rs} \, A_{ir} & i < r < s  \\
         P_3 ) & A_{rs} \, A_{ir} \, A_{is} & = & A_{ir} \, A_{is} \, A_{rs} & i < r < s\\
         P_4 ) & (A_{rj} \, A_{rs} \, A_{rj}^{-1}) \, A_{ij} & = & A_{ij} \, (A_{rj} \, A_{rs} \, A_{rj}^{-1}) & i < r <j < s \\
         M_1 ) & \sigma_{i-1}^{-1} \, A_{ij} \, \sigma_{i-1} & = & A_{i-1, j} & 1 < i < j \leq n \\
         M_2) & \sigma_i^{-1} \, A_{ij} \, \sigma_i & = & A_{i, i+1}^{-1} \, A_{i+1, j} \, A_{i, i+1} & 1 < i+1 < j \leq n\\
         M_3) & \sigma_{j-1}^{-1} \, A_{ij} \, \sigma_{j-1} & = & A_{i, j-1} & 1 \leq i < j-1 \leq n\\
         M_4) & \sigma_j^{-1} A_{ij} \sigma_j & = & A_{i j} \, A_{i, j+1} \, A_{ij}^{-1} & 1 \leq i < j \leq n-1 \\
         M_5) & \sigma_i^{-1} A_{rs} \sigma_i & = & A_{rs} & i < r-1, \ \ r < i < s-1, \ \ s < i
         \end{array}
    \]
\end{proposition}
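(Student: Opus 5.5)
The presentation in Proposition~\ref{redBn} is obtained from the standard presentation (\ref{braidgrp}) of $B_n$ by Tietze transformations. Let $G$ denote the group defined by the presentation in the statement. The strategy is to show that the natural maps $B_n \to G$ and $G \to B_n$ are mutually inverse isomorphisms. One uses, on one side, that the new generators $A_{ij}$ are defined words in the $\sigma_k$. On the other side, one uses that all the added relations already hold in $B_n$.

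\textbf{Step 1 (relations hold in $B_n$).} Map $\sigma_k \mapsto \sigma_k$ and $A_{ij}\mapsto \sigma_i^{-1}\cdots\sigma_{j-2}^{-1}\sigma_{j-1}^2\sigma_{j-2}\cdots\sigma_i$ as in Eq.~\ref{AG}. We check each relation in turn.
\begin{enumerate}
\item $\Sigma_1,\Sigma_2$ are the braid relations, and $\Sigma_3$ is the definition $A_{i,i+1}=\sigma_i^2$.
\item $P_1$--$P_4$ hold in $P_n\subset B_n$ by Proposition~\ref{alternative}.
\item $M_1$ and $M_3$ are direct: conjugating by $\sigma_{i-1}$ extends the overpassing block by one crossing. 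For $M_3$, use Eq.~\ref{braidsqrels} to move the double twist one strand to the left.
\item $M_5$ holds because $\sigma_i$ commutes with every letter of $A_{rs}$ when $i<r-1$ or $i>s$. When $r<i<s-1$, write $A_{rs}$ in its symmetric form (Figure~\ref{Eqgen}) or slide $\sigma_i$ through the block $\sigma_{i}^{-1}\sigma_{i+1}^{-1}\cdots$ using $\Sigma_2$ in the form $\sigma_{i}\sigma_{i+1}^{-1}\sigma_i^{-1}=\sigma_{i+1}^{-1}\sigma_i^{-1}\sigma_{i+1}$.
\item $M_2$ and $M_4$ are the geometric statements of Section~\ref{onconj}. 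Conjugating $A_{ij}$ by $\sigma_i$ swaps strands $i,i+1$, so the loop now starts from strand $i+1$ and, viewed from strand $i$, is $A_{i+1,j}$ conjugated by the clasp $A_{i,i+1}$. The analogue at the $j$ end gives $M_4$.
\end{enumerate}
I would verify $M_2$ and $M_4$ algebraically by expanding both sides and reducing with Eq.~\ref{braidsqrels} and $\Sigma_2$. This is a short computation, since for $M_2$ the right side $\sigma_i^{2}\,\sigma_{i+1}^{-1}\cdots\sigma_{j-1}^2\cdots\sigma_{i+1}\,\sigma_i^{-2}$ collapses to $\sigma_i\,\sigma_{i+1}^{-1}\cdots\sigma_{j-1}^2\cdots\sigma_{i+1}\,\sigma_i^{-1}$ after cancellation. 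Hence $G\to B_n$ is a well-defined homomorphism, and it is surjective.

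\textbf{Step 2 (eliminating the $A_{ij}$).} In $G$, every $A_{ij}$ can be expressed in the $\sigma$'s. By $\Sigma_3$, $A_{j-1,j}=\sigma_{j-1}^2$. Iterating $M_1$ (descending $i$) gives $A_{ij}=\sigma_i^{-1}\cdots\sigma_{j-2}^{-1}\sigma_{j-1}^2\sigma_{j-2}\cdots\sigma_i$ in $G$. Thus the map $B_n\to G$, $\sigma_k\mapsto\sigma_k$, is well defined, since $\Sigma_1,\Sigma_2$ hold in $G$. It is surjective because the $\sigma_k$ generate $G$.

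\textbf{Step 3 (the maps are inverse).} The composite $B_n\to G\to B_n$ is the identity on the $\sigma_k$. The composite $G\to B_n\to G$ fixes the $\sigma_k$, and it sends $A_{ij}$ to the Artin word, which equals $A_{ij}$ in $G$ by Step 2. Therefore $G\cong B_n$. The redundancy is then clear: after substituting the formulas of Step 2, every remaining relation becomes a consequence of $\Sigma_1,\Sigma_2$, as Step 1 shows.

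\textbf{Main obstacle.} The only real work is the algebraic verification of $M_2$, $M_4$ and the middle case of $M_5$ in $B_n$. I would first try to treat them uniformly via the symmetric placement of the double twist. If that fails, I would use induction on $j-i$ together with the relations in Eq.~\ref{braidsqrels}.
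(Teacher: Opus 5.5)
Your argument is correct, but it takes a different route from the paper. The paper applies the standard recipe for presenting a group extension to $1\to P_n\to B_n\to S_n\to 1$:
\begin{itemize}
\item $P_1)$--$P_4)$ are the relations of $P_n$ from Proposition~\ref{alternative};
\item $\Sigma_1)$--$\Sigma_3)$ are the Coxeter relations of $S_n$ lifted to $B_n$, with $s_i^2=1$ lifting to $\sigma_i^2=A_{i,i+1}$;
\item $M_1)$--$M_5)$, plus an $M_6)$ absorbed into $\Sigma_3)$, record the conjugates $\sigma_k^{-1}A_{ij}\sigma_k$ as words in the $A$'s.
\end{itemize}
Completeness there is inherited from the completeness of the presentations of $P_n$ and $S_n$.

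You instead run a direct Tietze argument. $\Sigma_3)$ and iterated $M_1)$ already turn every $A_{ij}$ into Artin's word, so $G$ is generated by $\sigma$'s obeying the braid relations. Everything else then reduces to soundness in $B_n$. Your route is more elementary: it needs no extension theorem, and it uses only the soundness, not the completeness, of Proposition~\ref{alternative}. It also makes ``redundant'' precise, since $P_1)$--$P_4)$ and $M_2)$--$M_5)$ all follow from $\Sigma_1)$--$\Sigma_3)$ and $M_1)$. Finally, it forces you to verify the right-hand sides of the $M_k)$ in $B_n$, which the paper's recipe equally requires but leaves implicit. The paper's route, in turn, explains structurally why each family of relations is present, and that viewpoint is what is exploited later when bonds are matched with the $A_{ij}$.

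There are two small slips in Step~1, neither affecting validity.
\begin{itemize}
\item For $M_2)$ you reversed the exponents: the right-hand side is $\sigma_i^{-2}A_{i+1,j}\sigma_i^{2}$. Since $A_{ij}=\sigma_i^{-1}A_{i+1,j}\sigma_i$, it is literally the same word as $\sigma_i^{-1}A_{ij}\sigma_i$, so no cancellation is needed. Likewise, $M_4)$ reduces, after commuting $\sigma_j$ past $\sigma_i^{-1}\cdots\sigma_{j-2}^{-1}$, to the single instance $\sigma_j^{-1}\sigma_{j-1}^2\sigma_j=\sigma_{j-1}\sigma_j^2\sigma_{j-1}^{-1}$ of Eq.~\ref{braidsqrels}. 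So neither is a real obstacle.
\item In the middle case of $M_5)$, slide $\sigma_i$ in from the left. The first letters of the increasing block $\sigma_r^{-1}\cdots\sigma_{s-2}^{-1}$ it cannot commute past are $\sigma_{i-1}^{-1}\sigma_i^{-1}$. The identity needed is therefore $\sigma_i\sigma_{i-1}^{-1}\sigma_i^{-1}=\sigma_{i-1}^{-1}\sigma_i^{-1}\sigma_{i-1}$, not the one involving $\sigma_{i+1}$. The resulting $\sigma_{i-1}$ commutes past $\sigma_{i+1}^{-1}\cdots\sigma_{s-2}^{-1}\sigma_{s-1}^2\sigma_{s-2}\cdots\sigma_{i+1}$. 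It then becomes $\sigma_i$ again via $\sigma_{i-1}\sigma_i\sigma_{i-1}=\sigma_i\sigma_{i-1}\sigma_i$.
\end{itemize}
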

\begin{proof} The right pure braid group generators $A_{ij}$  are now seen as elements of $B_n$, in which $P_n$ embeds. 
The relations can be computed from a classical result on short exact sequences, for which we refer back to \cite{Jo}: let 
$$ 1 \rightarrow X \xrightarrow{\iota} Y \xrightarrow{\theta} Z \rightarrow 1
$$
be a short exact sequence. A presentation for $Y$ can be computed from presentations for $X$ and $Z$ in the following way. If $\langle A | R \rangle$ and $\langle B | S \rangle$ are presentations for $X$ and $Z$ respectively, a presentation for $Y$ has as generators a choice of preimages $\theta^{-1}(B)$ and the image set $\iota(A)$, and as relations the union of three sets: the set of relations in $S$ rewritten in the preimage generators $\theta^{-1}(B)$, the set of relations $R$ rewritten in the image generators $\iota(A)$, and the set of all conjugates $\tilde{b}^{-1} \, \tilde{a} \, \tilde{b}$ for any choice $\tilde{a} \in \iota(A)$, $\tilde{b} \in \theta^{-1}(B)$.\\
With reference to the short exact sequence in Eq.~\ref{ses}, $X$ corresponds to $P_n$, $Y$ corresponds to $B_n$, and $Z$ to $S_n$, so the statement follows with inputs the presentations of $P_n$ in Proposition~\ref{alternative} and the classical presentation of $S_n$ in Eq.~\ref{Snpres}. In particular, the relations $\Sigma_i)$ are the preimages of the defining relations of $S_n$, the relations $P_j)$ are the images of the pure braid relations, and the mixed relations $M_k)$  are the conjugate ones.

Note that the original set of relations of type $M)$ obtained following this algorithm, would also include $M_6$):  $\sigma_i^{-1} \, A_{i, i+1} \sigma_i = A_{i, i+1}$,
but as it can be traced back to $\Sigma_3)$, it can be omitted.
\end{proof}
 

\section{The Bonded Braid Monoid}

The main goal of this work is to relate the  braid group on $n$ strands $B_n$ seen as a monoid with the \textit{bonded braid monoid on $n$ strands} $BB_n$. This latter is generated by the elementary braid generators and bonds, horizontal embedded arcs connecting distinct strands. 
The theory of bonded braids is introduced in \cite{DKL} as the algebraic counterpart of the topological theory of bonded knots and links. In Section~\ref{Representations} it will be shown that there is a epirphism of monoids from $BB_n$ to $B_n$ that sends bonds to pure braids. The present section is preparatory to this end. In this section, we present the bonded braid monoid and we derive a reduced presentation with only overpassing bonds (Proposition~\ref{oversuff} and Theorem~\ref{oBB}) making use of Artin's pure braid generators.

\subsection{Topology of bonded knots and bonded links} \label{topologyBB} 

Bonded knots  were introduced in 2017 in the wider context of bonded knotoids \cite{GGLDSK}, in order to answer the question of a suitable model for protein chains. 

A \textit{knotoid diagram} is an immersion in a surface of the unit interval (so an open-ended curve), with only a finite amount of double points, each endowed with an overcrossing/undercrossing datum [Tu]. Spherical knotoids with their endpoints in the same diagrammatic region correspond bijectively to (isotopy classes of) classical knots, cf. [Tu]. 
 A \textit{bonded link} (or \textit{bonded knot}, in case of a single component) is a pair $(L, B)$, where $L$ is a (oriented) link in $S^3$, and $B$  is a set of $k$ disjoint simple arcs, called \textit{bonds}, properly embedded in the complement of $L$ in $S^3$, such  that they present no knotting or linking among themselves, and such that their ends, the \textit{nodes}, intersect $L$ transversally in $2k$ distinct points. The neighbourhood of a node shall be referred to as a \textit{bonding site}. If $B = \emptyset$, $(L, \emptyset)$ is a classical link. From the above, a bonded link diagram contains no crossings or self-crossings of bonded arcs.
  Bonded links undergo either rigid vertex isotopies or topological vertex isotopies \cite{DKL} (see Figure~\ref{TRVI}). Each theory has its proper diagrammatic calculus, as an equivalence generated by the classical Reidemeister moves for the link arcs and local moves reflecting interactions of bonds and arcs of $L$, see Figures~\ref{planisoandvslide} to ~\ref{TRVI}. For an in-depth analysis on bonded link isotopy and allowed/forbidden diagrammatical moves, the reader is invited to consult \cite{DKL}.
 We note that what we call as bonded knot/link, in \cite{DKL} is referred to as `standard bonded knot/link', whereas bonded knots/links are considered there in a wider context. 

\begin{figure}[h]
\begin{center} 
\includegraphics[width=10cm]{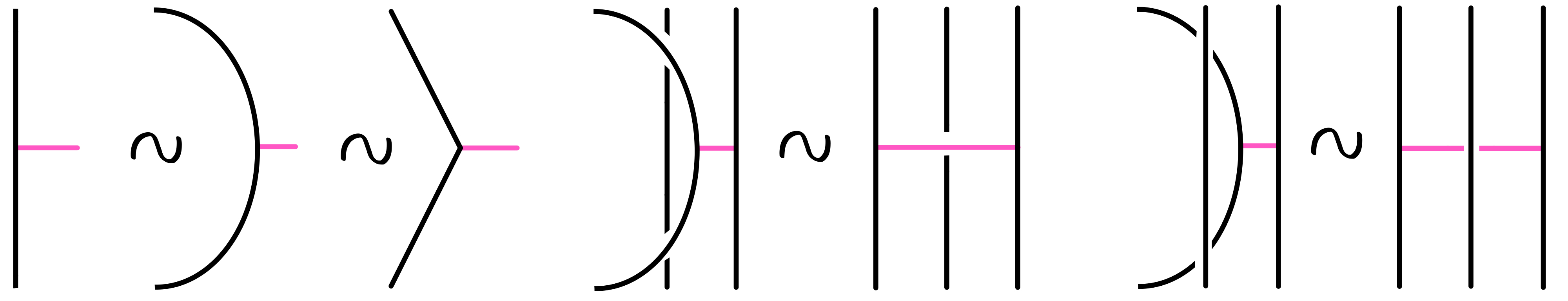}
\end{center}
\caption{Planar isotopies of bonding sites and vertex slide moves.}
\label{planisoandvslide}
\end{figure}

\begin{figure}[h]
\begin{center} 
\includegraphics[width=6.1cm]{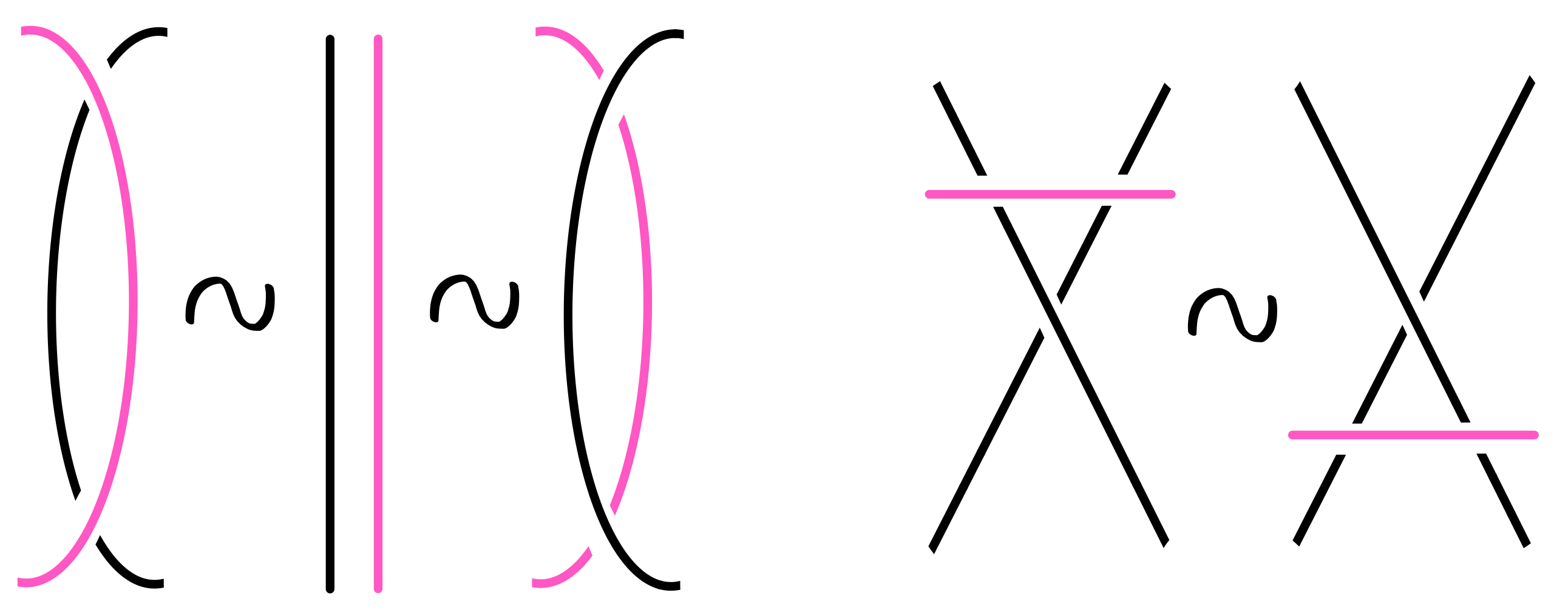}
\end{center}
\caption{Reidemeister type moves on mixed configurations.}
\label{mixedreid}
\end{figure}

\begin{figure}[h]
\begin{center} 
\includegraphics[width=11cm]{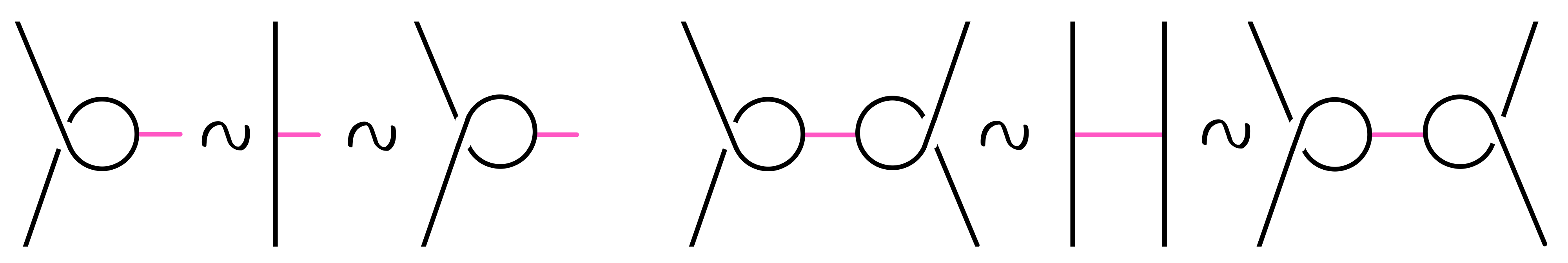}
\end{center}
\caption{Topological vertex twists and rigid vertex twists.}
\label{TRVI}
\end{figure}


\subsection{The defining presentation for the bonded braid monoid}

In \cite{DKL} the theory of bonded braids is introduced as the algebraic counterpart theory of bonded knots/links. 
In classical knot theory, there are two theorems relating knots and braids - the Alexander and Markov theorems - which respectively show that any link is isotopic to the \textit{standard closure} of a geometrical braid, and that there is a list of algebraic operations in the braid groups corresponding to knot isotopy. 
Analogous results hold for bonded links and bonded braids. The interested reader is referred to \cite{DKL}.
As in the classical case, bonded braids have a dual algebraic and geometric nature, as a monoid and as isotopy classes of braided diagrams respectively. An example of a bonded braid and its standard closure to an oriented bonded knot is illustrated in Figure~\ref{closure}.

\begin{definition}[\cite{DKL}, Definition 9] \label{definitionbondedbraid} A {\it bonded braid diagram on $n$ strands} (also referred to as \textit{bonded braid}) is a pair $(\beta, B)$ of a classical braid diagram $\beta$ on $n$ strands and a set $B$ of $k$ disjoint, horizontal immersed simple arcs, the {\it bonds}, with over/under information at every crossing with a strand of $\beta$, and such that no crossing is horizontally aligned with a bond. The boundary points of a bond, called {\it nodes},  intersect $\beta$ transversally in $2$ distinct points, which cannot coincide with endpoints of the braid, and they have local neighbourhoods that are 3-valent graphs with the attaching arcs, like $\vdash$, forming together an \texttt{H}-neighbourhood. If $B=\emptyset$, then $(\beta,\emptyset)$ is just a classical braid.

A bond joining the $i$-th and $j$-th strands with $i<j$  threads transversely around the strands of $\beta$ in between. A bond denoted $b_{ij}$ is assumed to pass \textit{over} any intermediate strand between the $i$-th and the $j$-th. If the bond passes \textit{under} the strands $l_1, \dots, l_k$, with $i < l_1 < \dots < l_k < j$, then it is denoted as $b_{ij}^{(l_1, \dots, l_k)}$ or simply $b_{ij}^{\mathcal{I}}$, where $\mathcal{I} = (l_1, \dots , l_k)$ an ordered subset  of $\left\{i+1, \, \dots  , j-1\right\}$. For $\mathcal{I} = \emptyset$ we get the bond $b_{ij}$.  
If the bond is a uniform underpass, it can also be denoted as $b_{ij}^u$ for brevity. A bond $b_{i, i+1}$ joining two consecutive strands $i$ and $i+1$ shall be called {\it elementary bond} and will be denoted as $b_i$. 
\end{definition}

\begin{figure}[H]
\begin{center} 
    \includegraphics[width=6cm]{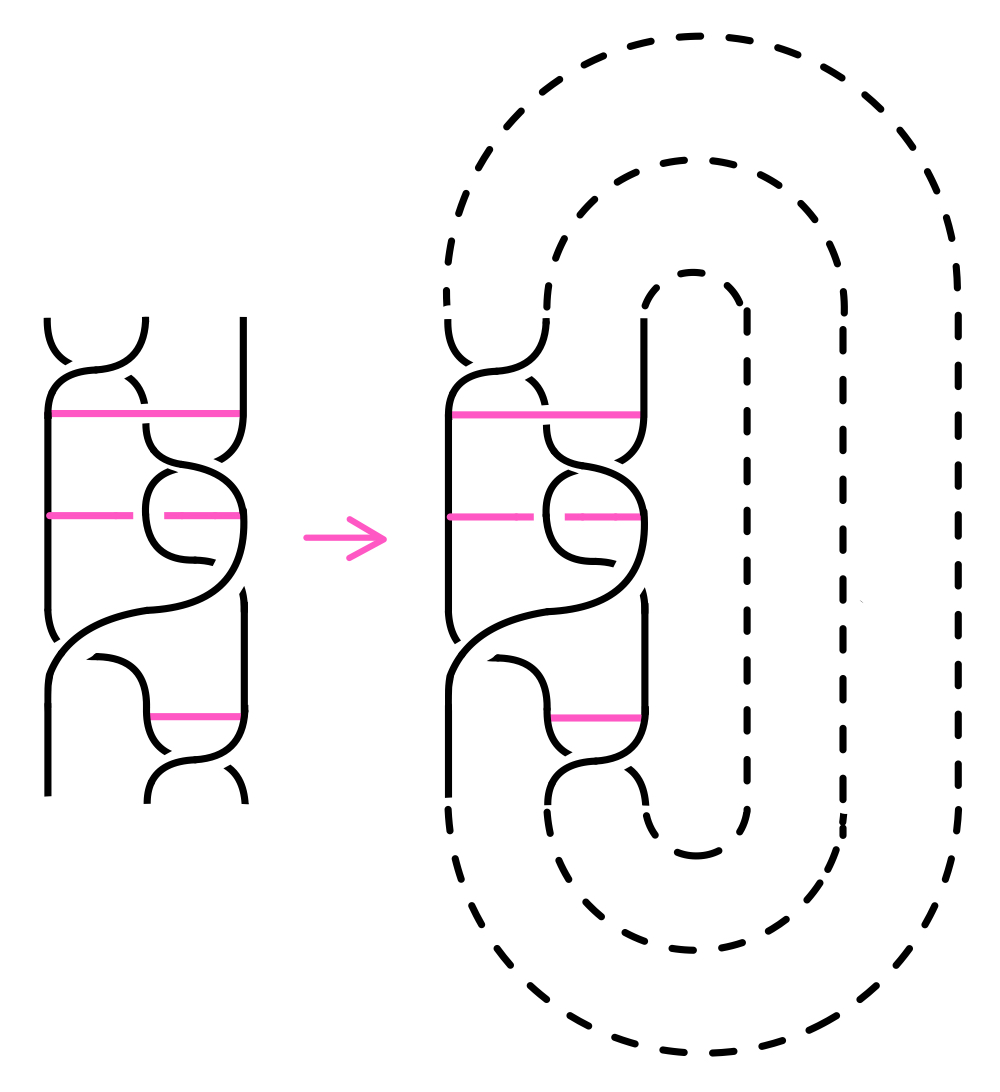}
\end{center}
\caption{An example of a bonded braid and its closure in an oriented bonded link.}
\label{closure}
\end{figure}

The set of bonded braids on $n$ strands has a monoid structure \cite{DKL}, not a group structure, since bonds do not have topological inverses. Under the classical composition, a generating set consists in the bonds $b_{ij}^{(l_1, \dots, l_k)}$ for $1 \leq i <j \leq n$ for any underpass sequence $\mathcal{I} = (l_1, \dots , l_k)$ of intermediate strands, and the elementary braids $\sigma_i$ with their inverses, for $1 \leq i \leq n-1$. The monoid is finitely presented and the relations correspond to the algebraic translation of all possible elementary isotopy moves between bonded braids.

\definition \label{BB} (\cite{DKL}) The \textit{bonded braid monoid} $BB_n$ on $n$ strands is generated by:
\[ 
\begin{array}{lcl}
    \text{- the classical braid generators and their inverses } & \sigma_i^{\pm 1} & \text{for } 0<i<n \vspace{1.5mm} \\  
     \text{- the uniform overpassing bonds } & b_{ij} & \text{for } 0<i<j<n \vspace{1.5mm} \\  
     \text{- the mixed over/underpassing bonds } & b_{ij}^{\mathcal{I}}=b_{ij}^{(l_1, \dots, l_k)} & \text{for } 0<i<l_1< \dots < l_k < j <n, \,  \vspace{1.5mm} \\  
     & &\mathcal{I} = (l_1, \dots, l_k)
\end{array}
\]
subject to the relations:
\[
\begin{array}{rrcll}
        1) & \hspace{4mm} \sigma_i \sigma_j \, & = & \, \sigma_j \sigma_i \hspace{2mm} & \text{for}\hspace{2mm} |i-j|>1 
         \vspace{1.5mm} \\  
        2) & \hspace{4mm}  \sigma_i \sigma_{i+1} \sigma_i \, & = & \, \sigma_{i+1} \sigma_i \sigma_{i+1} \hspace{2mm} & \text{for all } i \vspace{1.5mm} \\
        3) & \hspace{4mm} b_{i j}^{\mathcal{I}} \, b_{r s}^{\mathcal{R}} \, & = & \,  b_{rs}^{\mathcal{R}} \, b_{ij}^{\mathcal{I}} \hspace{2mm} & \text{for all} \hspace{2mm} i<j<r<s, \hspace{2mm} \mathcal{I} \subseteq \left\{ i+1, \dots , j-1\right\},\\
        & & & & \mathcal{R} \subseteq \left\{ r+1, \dots , s-1\right\}  \vspace{1.5mm} \\
        4) & \hspace{4mm} b_{i j}^{\mathcal{I}} \, b_{r s}^{\mathcal{R}} \, & = & \,  b_{rs}^{\mathcal{R}} \, b_{ij}^{\mathcal{I}} \hspace{2mm} & \text{for all} \hspace{2mm} i<r<s<j, \hspace{2mm} \mathcal{I} \subseteq \left\{ i+1, \dots , j-1\right\} \vspace{1.5mm}\\
        & & & & \mathcal{R} \subseteq \left\{ r+1, \dots , s-1\right\}, \text{ such that:}\vspace{1.5mm} \\
        &&&& \text{if } r, s  \in \mathcal{I} \implies \cR \subseteq \cI \cap \{ r+1, \dots , s-1\} \vspace{1.5mm} \\
        &&&& \text{if }  r, s  \notin \mathcal{I}  \vspace{1.5mm} \implies \cI \cap \{ r+1, \dots , s-1\} \subseteq \cR \vspace{1.5mm} \\
        5.1) & \hspace{4mm} \sigma_i^{-1} b_{i+1, j}^{\mathcal{I}} \, \sigma_i & = & b_{ij}^{\mathcal{I}}  \hspace{2mm} & \text{for } 1 \leq i<j \leq n-1, \, |i-j|>1, \vspace{1.5mm} \\
        & & & & \mathcal{I} \subseteq \left\{ i+2, \dots , j-1 \right\} \textit{ (vertex slide move)}\vspace{1.5mm} \\
        5.2) & \hspace{4mm} \sigma_i b_{i+1, j}^{\mathcal{I}} \, \sigma_i^{-1} & = & b_{ij}^{(i+1) \cup \mathcal{I}}  \hspace{2mm} & \text{for } 1 \leq i<j \leq n-1, \, |i-j|>1, \vspace{1.5mm} \\
        & & & & \mathcal{I} \subseteq \left\{ i+2, \dots , j-1 \right\} \textit{ (vertex slide move)} \vspace{1.5mm} \\
        5.3) & \hspace{4mm} \sigma_{j-1} b_{i, j-1}^{\mathcal{I}} \, \sigma_{j-1}^{-1} & = & b_{ij}^{\mathcal{I}}  \hspace{2mm} & \text{for } 1 \leq i<j \leq n-1, \, |i-j|>1, \vspace{1.5mm} \\
        & & & & \mathcal{I} \subseteq \left\{ i+1, \dots , j-2 \right\} \textit{ (vertex slide move)} \vspace{1.5mm} \\
        5.4) & \hspace{4mm} \sigma_{j-1}^{-1} b_{i, j-1}^{\mathcal{I}} \, \sigma_{j-1} & = & b_{ij}^{\mathcal{I} \cup (j-1)}  \hspace{2mm} & \text{for } 1 \leq i<j \leq n-1, \, |i-j|>1, \vspace{1.5mm} \\
        & & & & \mathcal{I} \subseteq \left\{ i+1, \dots , j-2 \right\} \textit{ (vertex slide move)} \vspace{1.5mm} \\
        6) & \hspace{4mm} \sigma_i b_i & = & b_i \sigma_i \hspace{2mm} & \text{for } 1 \leq i \leq n-1 \hspace{2mm} \textit{(bonded flype moves)} \vspace{1.5mm} \\
        7) & \hspace{4mm} \sigma_i \, b_{rs}^{\mathcal{R}} \, & = & \,  b_{rs}^{\mathcal{R}} \, \sigma_i \hspace{2mm} & \text{for} \hspace{2mm} i+1<r \hspace{2mm} \text{or} \hspace{2mm} i > s, \, 1 \leq i \leq n-1, \vspace{1.5mm} \\ & & & & \mathcal{R} \subseteq \left\{ r+1, \dots , s-1\right\} \vspace{1.5mm} \\
        8.1) & \hspace{4mm} \sigma_i \, b_{rs}^{\mathcal{R}} \, & = & \,  b_{rs}^{\mathcal{R}} \, \sigma_i & \text{for} \hspace{2mm} 1 \leq r<i<s-1 \leq n-1 \vspace{1.5mm} \\ &&&& \text{and either } i, i+1  \in \mathcal{R} \text{ or }  i, i+1  \notin \mathcal{R},   \vspace{1.5mm} \\
        8.2) & \hspace{2mm} \sigma_i \, b_{rs}^{\mathcal{R}} & = & b^{\mathcal{R'}}_{rs} \sigma_i & \text{for } 1 \leq r < i < s-1 \leq n-1, \  \mathcal{R},\mathcal{R^\prime} \subseteq \{ r+1, \dots , s-1 \} \vspace{1.5mm} \\ &&&& \text{equal except: }  i \in \cR , \, i +1 \notin \cR \   \& \ i \notin \cR', i+1 \in \cR' \vspace{1.5mm} \\
        9.1) & \hspace{2mm} b_i \sigma_{i+1} \sigma_i \, & = & \, \sigma_{i+1}\sigma_i b_{i+1} \hspace{2mm} & \text{for } 1 \leq i < n-1 \vspace{1.5mm} \\
        9.2) & \hspace{2mm} \sigma_i \sigma_{i+1} b_i \, & = & \, b_{i+1} \sigma_i \sigma_{i+1} \hspace{2mm} & \text{for } 1 \leq i < n-1.
   \end{array}
 \]

\bigbreak
\noindent All of the above relations come from a topological analysis of all possible local isotopy interactions between bonds, braid arcs and crossings, preserving the bonded braid structure. This presentation is the \textit{bonded braid monoid presentation}, which is by no means a reduced one. In Figures~\ref{bondedrels1} and ~\ref{bondedrels2} the above relations are shown with underpassing indices sets $\mathcal{I}$, $\mathcal{R}$, which for graphical easiness are empty when possible. 

\begin{figure}[H]
\begin{center} 
    \includegraphics[width=16cm]{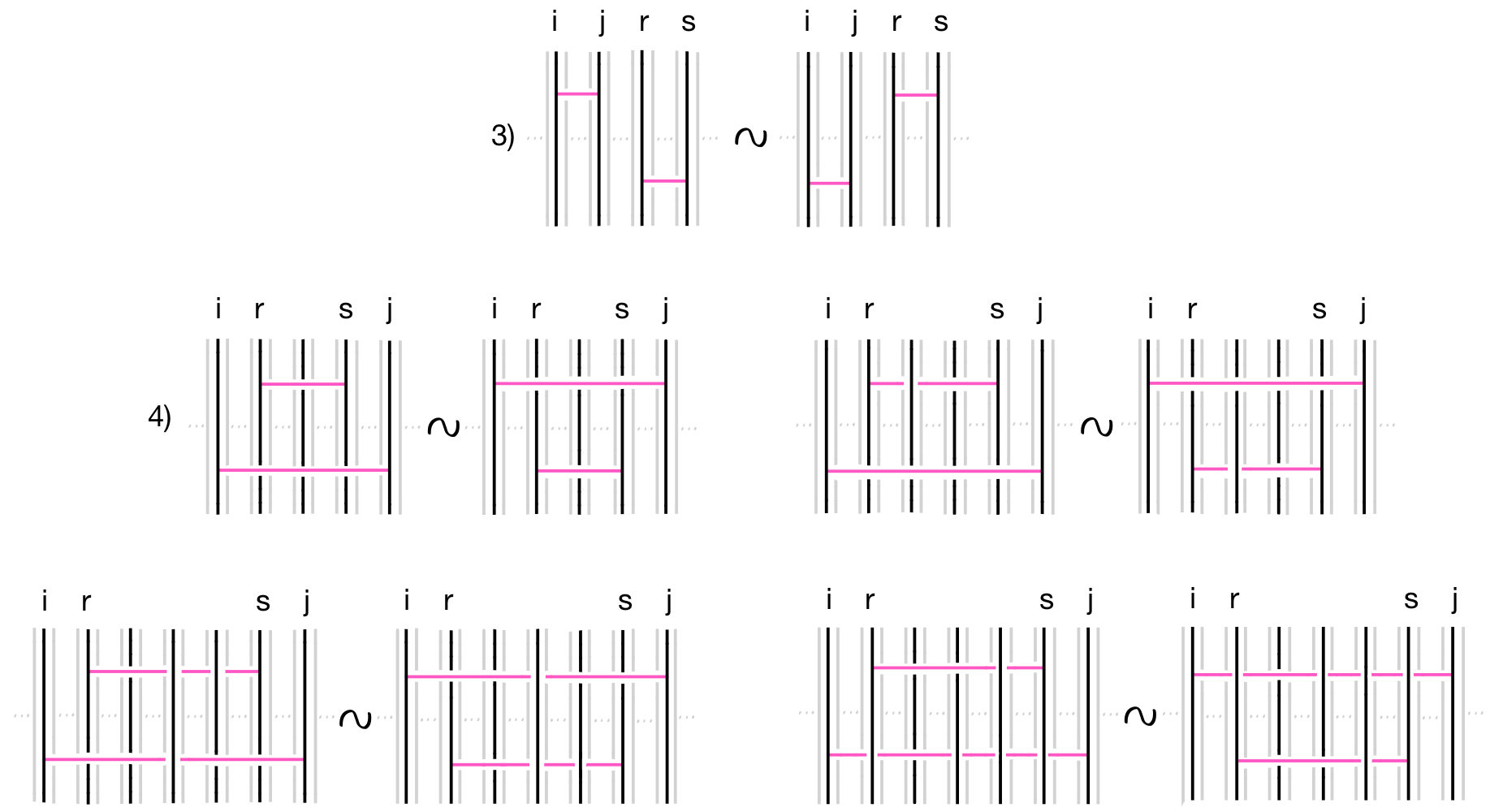}
\end{center}
\caption{Bond-bond interactions (relations 3) and 4)).}
\label{bondedrels1}
\end{figure}

\begin{figure}[H]
\centering
    \includegraphics[width=17cm]{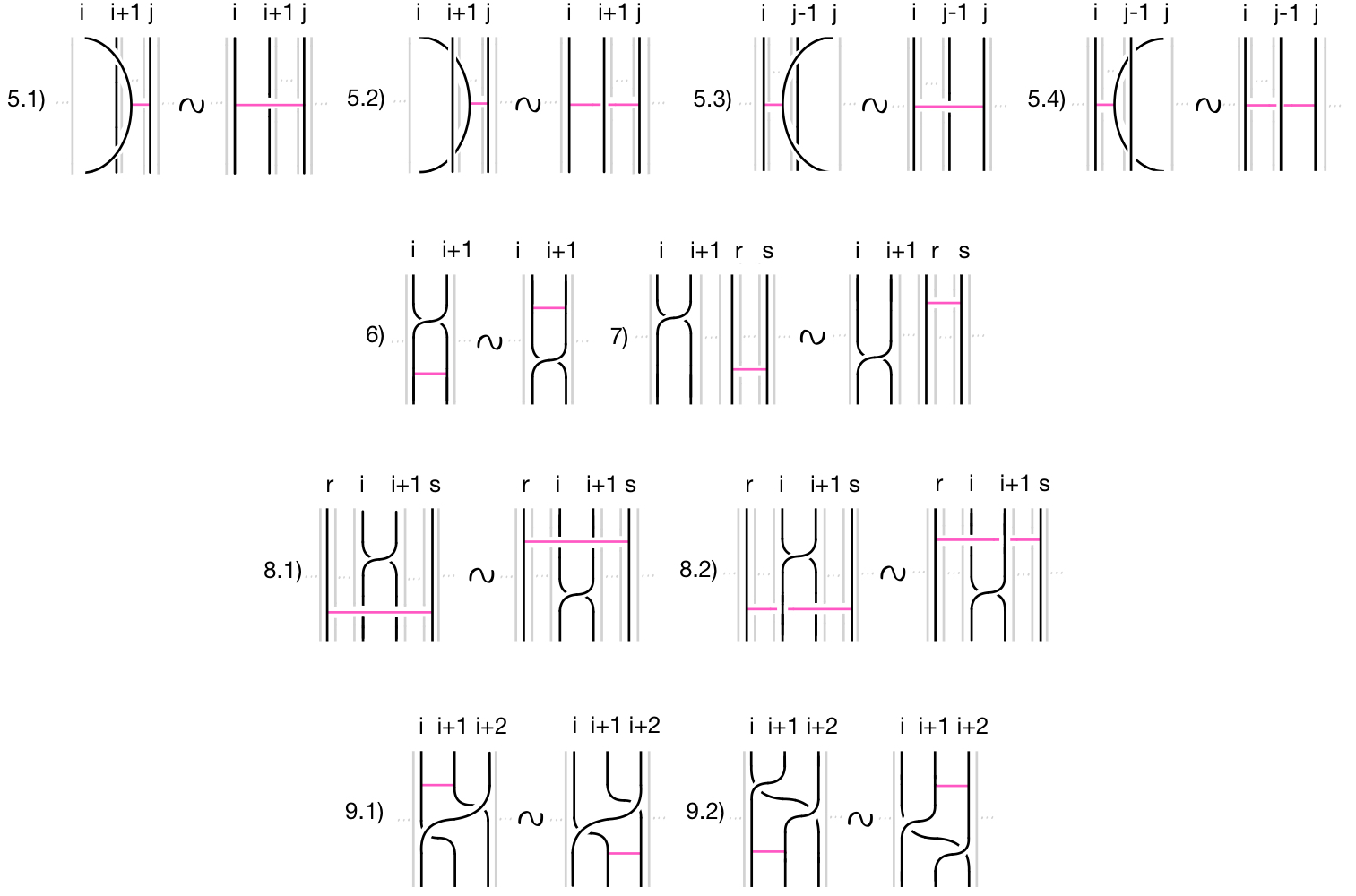}
\caption{Bond-braid interactions (relations 5) to 9.2)).}
\label{bondedrels2}
\end{figure}

\begin{remark}
The topological implication of the conditions for relations 4) in Definition~\ref{BB} is that $b_{ij}^{\cI}$ should either underpass or overpass both the strands $r, \, s$, otherwise  an obstruction  would unavoidably present itself during the isotopy for realising the topological commuting of $b_{ij}^{\cI}$ and $b_{rs}^{\cR}$. This means that either both $r,s \in \cI$ or both $r,s \notin \cI$. The condition on $r,s$ forces then the conditions on the common intermediate braid strands: 

If $r,s \in \cI$ then $b_{ij}^{\cI}$ has to pass under $b_{rs}^{\cR}$. This implies that if $t \in \cR$ then it should also be $t \in \cI$ (otherwise a topological obstruction will occur). While if $t \in \cI$ then it could be $t \in \cR$ or $t \notin \cR$ (because $b_{ij}^{\cI}$ can slide on the bottom anyway: $b_{ij}^{\cI}$ can be either the intermediate or the bottom arc of the local Reidemeister move). Equivalently, $ \cR \subseteq \cI \cap \{ r+1, \dots , s-1\}$.  

If $r,s \notin \cI$ then $b_{ij}$ has to pass over $b_{rs}$. This implies that if $t \in \cI$ then it should also be $t \in \cR$ (otherwise a topological obstruction will occur). While if $t \in \cR$ then it could be $t \in \cI$ or $t \notin \cI$ (because $b_{ij}$ can slide on the top anyway: $b_{ij}$ can be either the intermediate or the top arc of the local Reidemeister move). Equivalently, $\cI \cap \{r+1, \dots , s-1 \} \subseteq \cR$    
\end{remark}

\begin{remark}
    Relations 1) and 2) in Definition~\ref{BB} regulate interactions among crossings.
    Relations 3) and 4) regulate the interactions among bonds. In relations 3) the indices are sequential, while in relations 4) the indices $r,s$ lie within the interval $(i,j)$.  Displayed in Figure~\ref{bondslideFIG}, there is another relation between bonds, with alternating indices: 
    \begin{equation} \label{bondslide}
        b_{ij}\, b_{rs}^{(j)} = b_{rs}^{(j)}\, b_{ij}, \quad \text{for } i<r<j<s.
    \end{equation}

\begin{figure}
\begin{center} 
    \includegraphics[width=4.5cm]{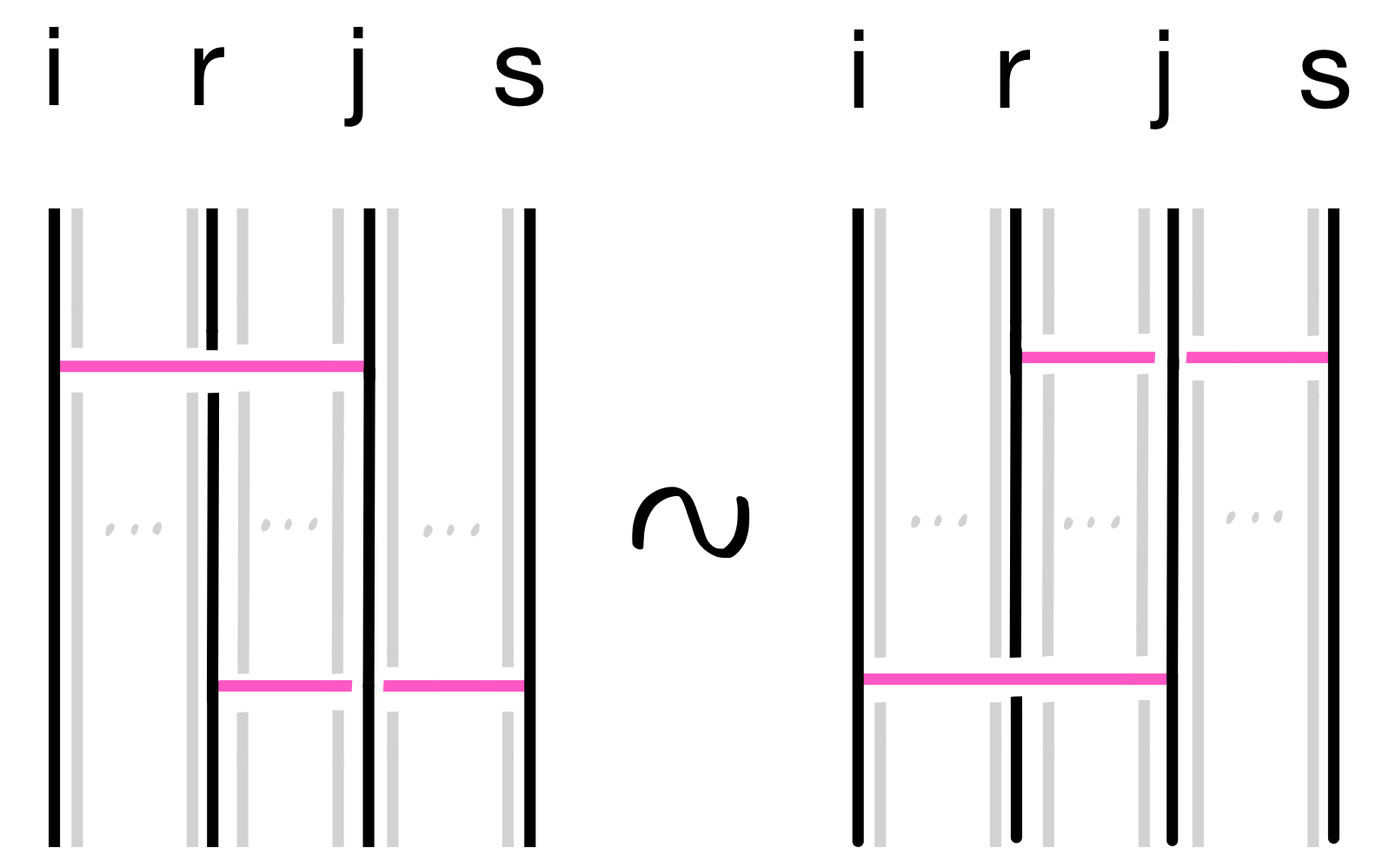}
\end{center}
\caption{A bond slide relation with alternating indices.}
\label{bondslideFIG}
\end{figure}

\noindent This relation follows from other defining relations, as also illustrated in Figure~\ref{bondslide2}. Indeed,
    \begin{equation*}
        \begin{split}
        b_{ij}  b_{rs}^{(j)} & \overset{\text{5.3)}}{=} \sigma_{j-1} \cdots \sigma_r b_{ir} \sigma_r^{-1} \cdots \underline{\sigma_{j-1}^{-1} b_{rs}^{(j)}} \\ &
        \overset{\text{8.2)}}{=} \sigma_{j-1} \cdots \sigma_r b_{ir} \sigma_r^{-1} \cdots \underline{\sigma_{j-2}^{-1} b_{rs}^{(j-1)}} \sigma_{j-1}^{-1} \\
        & \overset{\text{reiterating}}{=} \sigma_{j-1} \cdots \sigma_r b_{ir} \underline{\sigma_r^{-1} b^{(r+1)}_{rs}} \sigma_{r+1}^{-1} \cdots \sigma_{j-1}^{-1} \\ & \overset{\text{5.2)}}{=} \sigma_{j-1} \cdots \sigma_r \underline{b_{ir}  b_{r+1, s}} \sigma_r^{-1} \sigma_{r+1}^{-1} \cdots \sigma_{j-1}^{-1}\\
        & \overset{\text{3)}}{=} \sigma_{j-1} \cdots \underline{\sigma_r  b_{r+1, s}} b_{ir} \sigma_r^{-1} \sigma_{r+1}^{-1} \cdots \sigma_{j-1}^{-1}\\
        & \overset{\text{5.2)}}{=} \sigma_{j-1} \cdots \underline{\sigma_{r+1} b_{r s}^{(r+1)}} \sigma_r b_{ir} \sigma_r^{-1} \sigma_{r+1}^{-1} \cdots \sigma_{j-1}^{-1} \\ & \overset{\text{8.2)}}{=} \sigma_{j-1} \cdots \underline{\sigma_{r+2} b_{rs}^{(r+2)}} \sigma_{r+1} \sigma_r b_{ir} \sigma_r^{-1} \sigma_{r+1}^{-1} \cdots \sigma_{j-1}^{-1}\\
        & \overset{\text{reiterating}}{=} b_{rs}^{(j)} \sigma_{j-1} \cdots \sigma_r b_{ir} \sigma_r^{-1} \cdots \sigma_{j-1}^{-1} = b_{rs}^{(j)}b_{ij}. 
        \end{split}
    \end{equation*}

Where all of the used relations are from Theorem~\ref{BB}.

\begin{figure}[h]
\begin{center} 
    \includegraphics[width=14.5cm]{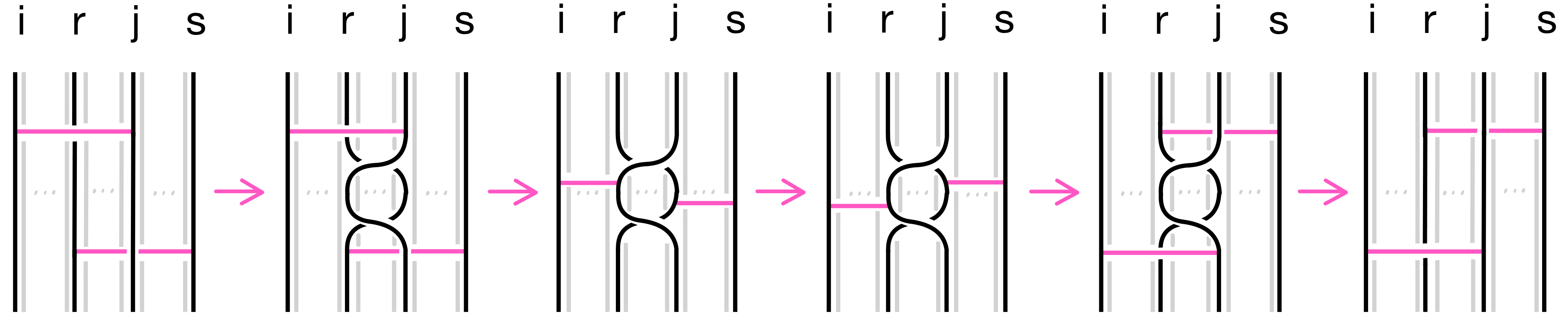}
\end{center}
\caption{Recovering the bond slide relation.}
\label{bondslide2}
\end{figure}
    
\noindent The above relation closely resembles the one in Figure~\ref{ArtAlt}. Generalising to underpasses indices sets for the bonds, this relation behaves similarly to relations 4) when $r, \, s \notin \cI$. The conditions for $\cR$ and $\cI$ are $r \notin \cI$ and $\cR \cap \{ r+1, \dots, j-1 \} \subseteq \cI \cap \{ r+1, \dots, j-1 \}$. The general version of Eq.~\ref{bondslide} is

\begin{equation} \label{bondslidegeneral}
        b_{ij}^{\cI} \, b_{rs}^{\cR} = b_{rs}^{\cR}\, b_{ij}^{\cI}, \quad \text{for } i<r<j<s, \text{ with } j \in \cR, \, r \notin \cI, \, \cI \cap \{ r+1, \dots, j-1 \} \subseteq \cR \cap \{ r+1, \dots, j-1 \}.
    \end{equation}

There is a symmetric version of Eq.~\ref{bondslidegeneral} for $b_{ij}^{\cI}$ sliding above $b_{rs}^{\cR}$, stating 
\begin{equation} \label{bondslidegeneral2}
    b_{ij}^{\cI} \, b_{rs}^{\cR} = b_{rs}^{\cR} \, b_{ij}^{\cI}, \, \text{for } i<r<j<s, \text{ with } r \in \cI, \, j \notin \cR, \, \cR \cap \{ r+1, \dots, j-1 \} \subseteq \cI \cap \{ r+1, \dots, j-1 \}.
\end{equation}

\noindent In this case the generalisation for underpasses indices sets have that $j \notin \cR$ and $\cI \cap \{ r+1, \dots, j-1 \} \subseteq \cR \cap \{ r+1, \dots, j-1 \}$.

Relations 3) and 4) in Definition~\ref{BB} and Eqs.~\ref{bondslidegeneral}, ~\ref{bondslidegeneral2} compile an exhaustive list of all allowed bond-bond interactions, since all possible orderings of indices for the nodes of the bonds have been considered.
All the other relations (from relations 5) on) of Definition~\ref{BB} deal instead with the interactions of bonds with crossings.
\end{remark}


\subsection{A reduced presentation for the bonded braid monoid}

\begin{proposition} \label{oversuff}
    The overpassing bonds $(b_{ij})_{1 \leq i < j \leq n}$ constitute a sufficient set of generators for $BB_n$.
\end{proposition}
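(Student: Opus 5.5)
We show that every mixed bond $b_{ij}^{\mathcal{I}}$ is a word in the $\sigma_k^{\pm1}$ and the overpassing bonds $b_{rs}$; since $BB_n$ is generated by these together with the $b_{ij}^{\mathcal{I}}$, the statement follows. The argument is an induction on $|\mathcal{I}|$, and within fixed $|\mathcal{I}|$ on the position of the underpassed strands, using only the vertex slide moves 5.1)--5.4) and the mixed moves 8.1), 8.2) of Definition~\ref{BB}. The guiding idea is the algebraic analogue of Figure~\ref{conj}: an underpass of the bond below an intermediate strand can be traded for a conjugation by crossings.

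\textbf{Base case.} We first treat underpasses next to an endpoint. Relation 5.4) with $\mathcal{I}=\emptyset$ gives $b_{ij}^{(j-1)} = \sigma_{j-1}^{-1} b_{i,j-1}\sigma_{j-1}$. Relation 5.2) gives $b_{ij}^{(i+1)} = \sigma_i b_{i+1,j}\sigma_i^{-1}$. More generally, 5.2) and 5.4) read
\[
b_{ij}^{(i+1)\cup\mathcal{I}} = \sigma_i\, b_{i+1,j}^{\mathcal{I}}\,\sigma_i^{-1}, \qquad b_{ij}^{\mathcal{I}\cup(j-1)} = \sigma_{j-1}^{-1}\, b_{i,j-1}^{\mathcal{I}}\,\sigma_{j-1}.
\]
So whenever $i+1\in\mathcal{I}$ or $j-1\in\mathcal{I}$, the bond is conjugate by a crossing to a bond with one fewer underpass (and shorter span). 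By induction on $|\mathcal{I}|$, such a bond is expressible in the overpassing bonds.

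\textbf{Reduction step.} Suppose now that $\mathcal{I}\neq\emptyset$ and neither $i+1$ nor $j-1$ lies in $\mathcal{I}$. Let $l=\min\mathcal{I}$, so that $l-1\notin\mathcal{I}$ and $l-1\ge i+1>i$, hence $i<l-1<s-1$ is the range required by 8.2) with $r=i$, $s=j$ and crossing index $l-1$. Relation 8.2) with $\mathcal{R}'=\mathcal{I}$ and $\mathcal{R}=(\mathcal{I}\setminus\{l\})\cup\{l-1\}$ gives
\[
\sigma_{l-1}\, b_{ij}^{\mathcal{R}} = b_{ij}^{\mathcal{I}}\,\sigma_{l-1}, \qquad\text{so}\qquad b_{ij}^{\mathcal{I}} = \sigma_{l-1}\, b_{ij}^{\mathcal{R}}\,\sigma_{l-1}^{-1}.
\]
This moves the smallest underpass one step to the left while keeping $|\mathcal{I}|$ fixed. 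Iterating (a secondary induction on $\min\mathcal{I}-i$), the smallest underpass reaches position $i+1$. The base case then applies and reduces $|\mathcal{I}|$ by one.

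\textbf{Main obstacle.} The difficulty is purely combinatorial bookkeeping: checking that each application of 8.2) satisfies its hypotheses ($i<l-1$, $l-1<j-1$, $l-1\notin\mathcal{I}$) and that the sets stay inside $\{i+1,\dots,j-1\}$. Choosing $l=\min\mathcal{I}$ guarantees $l-1\notin\mathcal{I}$, which avoids collisions. Unwinding the recursion gives an explicit expression of $b_{ij}^{\mathcal{I}}$ as a conjugate of some $b_{rs}$ by a braid word. This is the threading algorithm of Definition~\ref{threadingdef}, and it is consistent with the intended image $\Phi(b_{ij}^{\mathcal{I}})$ as a conjugate of $A_{ij}$.
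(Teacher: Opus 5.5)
Your proof is correct, but it takes a different route from the paper's.

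\textbf{The paper's route.} The paper moves the node of the bond and keeps the underpass fixed. For $b_{ij}^{(l)}$ it uses 5.1) to slide the left node rightwards across $\sigma_i,\dots,\sigma_{l-2}$ until it sits next to strand $l$. It then converts the now-adjacent underpass into an overpass with 5.2), $b_{l-1,j}^{(l)}=\sigma_{l-1}\,b_{lj}\,\sigma_{l-1}^{-1}$. Finally it slides the node back to strand $i$ with 5.1), obtaining $b_{ij}^{(l)}=A_{il}\,b_{ij}\,A_{il}^{-1}$; iterating gives the right-threading formula Eq.~\ref{undertoover}.

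\textbf{Your route.} You keep the nodes fixed and use 8.2) to transport the smallest underpass leftwards along the bond until it is adjacent to the node. You then peel it off with 5.2) or 5.4), which shortens the span. Your check of the hypotheses of 8.2) is right: $i<l-1<j-1$, and $l-1\notin\mathcal{I}$ by minimality of $l$. Your double induction also makes the case of several underpasses fully explicit, whereas the paper treats one underpass and only asserts that the argument extends.

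\textbf{What your route does not give.} The price is the form of the result. For one underpass your recursion yields $b_{ij}^{(l)}=(\sigma_{l-1}\cdots\sigma_i)\,b_{i+1,j}\,(\sigma_i^{-1}\cdots\sigma_{l-1}^{-1})$. This is a conjugate of a different, shorter overpassing bond by a non-pure braid word, not a pure-braid conjugate of $b_{ij}$ itself. That suffices for the proposition.

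Your closing sentence, however, identifies the unwound recursion with the threading algorithm, and that is not justified as written. The paper actually relies on the specific form in Eq.~\ref{undertoover}, both in the proof of Theorem~\ref{oBB} and in defining $\Phi$ on the bonds $b_{ij}^{\mathcal{I}}$. To reach it from your output you need an extra step:
\begin{itemize}
\item re-extend the node with 5.1), writing $b_{i+1,j}=\sigma_i\,b_{ij}\,\sigma_i^{-1}$;
\item use the braid relations to rewrite $\sigma_{l-1}\cdots\sigma_{i+1}\sigma_i^2$ as $A_{il}\,\sigma_{l-1}\cdots\sigma_{i+1}$;
\item absorb the leftover crossings $\sigma_{l-1}\cdots\sigma_{i+1}$ into $b_{ij}$ via 8.1).
\end{itemize}
For several underpasses this regrouping becomes correspondingly longer.
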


\begin{proof}
    Consider, without loss of generality, a bond with one underpass, namely $b_{ij}^{(l)}$, where $i<l<j$. The argument extends naturally to the more general case of a bond $b_{ij}^{(l_1, \dots, l_h)}, \, i < l_1 < \dots < l_h < j$.\\
Using iteratively relation 5.1) of Definition~\ref{BB}, 
$$b_{ij}^{(l)} = \sigma_i^{-1} \, b_{i+1, j}^{(l)} \, \sigma_i = \dots = (\sigma_i^{-1} \ldots \sigma_{l-2}^{-1}) \, b_{l-1, j}^{(l)} \, (\sigma_{l-2} \ldots \sigma_i).$$ 
Using now relation 5.2) of Definition~\ref{BB} instead,
$$ (\sigma_i^{-1} \ldots \sigma_{l-2}^{-1}) \, b_{l-1, j}^{(l)} \, (\sigma_{l-2} \ldots \sigma_i) = (\sigma_i^{-1} \ldots \sigma_{l-2}^{-1}) \sigma_{l-1} \, b_{l j} \, \sigma_{l-1}^{-1} (\sigma_{l-2} \ldots \sigma_i) $$
so that the bond becomes an overpass. The bond $b_{ij}$ can be brought back to subtend on the strands $i$ and $j$ by using again relations 5.1) in  Definition~\ref{BB}:
\[
\begin{split}
    (\sigma_i^{-1} \ldots \sigma_{l-2}^{-1}) \sigma_{l-1} \, \underline{b_{l j}} \, \sigma_{l-1}^{-1} (\sigma_{l-2} \ldots \sigma_i) & = (\sigma_i^{-1} \ldots \sigma_{l-2}^{-1}) \sigma_{l-1}^2 \, \underline{b_{l-1, j}} \, \sigma_{l-1}^{-2} (\sigma_{l-2} \ldots \sigma_i) \\ & \vdots \\
    & = (\sigma_i^{-1} \ldots \sigma_{l-2}^{-1} \sigma_{l-1}^2 \sigma_{l-2} \ldots \sigma_i) \, b_{ij} \, (\sigma_i^{-1} \ldots \sigma_{l-2}^{-1} \sigma_{l-1}^{-2} \sigma_{l-2} \ldots \sigma_i)\\
    & = A_{i,l} \, b_{ij} \, A_{i,l}^{-1}
\end{split}
\]

\noindent The argument generalises to the \textit{right-threading of a generic bond}:
\begin{equation} \label{undertoover}
    b_{ij}^{(l_1, \ldots , l_h) } = A_{i,l_1} \cdots A_{i,l_h} \, b_{ij} \,  A_{i,l_h}^{-1} \cdots A_{i,l_1}^{-1}
\end{equation}
In Eq.~\ref{undertoover} notice the growing order of the indices $l_{-}$ with respect to the conjugation. Refer also to Figure~\ref{overbraid}. 
\end{proof}

\begin{definition} \label{threadingdef}
    The \emph{threading} of a generic bond $b_{ij}^{\cI}$ consists in unravelling its underpasses, thus rewriting is as a pure braid conjugate of an uniform overpassing bond (consult Figure~\ref{overbraid}). It can be done both in the right loop generators and the left loop generators.
\end{definition}

\begin{figure}[h]
\begin{center} 
    \includegraphics[width=14.5cm]{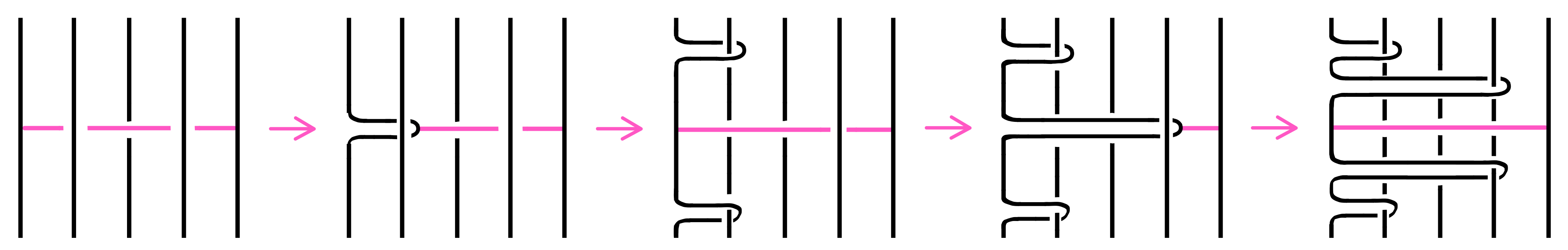}
\end{center}
\caption{ Right-threading a bond to a pure-braid conjugate of an overpassing bond.}
\label{overbraid}
\end{figure}

\noindent Note that the threading isotopy of the bond $b_{ij}^{(l)}$ could equally well take place towards the left end. In that case the left pure braid generators would be involved in the conjugating words, as opposed to the right ones involved in Eq.~\ref{undertoover} above.  Namely, the \textit{left-threading of a generic bond} is given by the formula:

\begin{equation} \label{undertooverleft}
    b_{ij}^{(l_1, \dots , l_h)} = A_{l_h, j}^{-1} \cdots A_{l_1, j}^{-1}\, b_{ij} \, A_{l_1, j} \cdots A_{l_h, j}
\end{equation}

\bigbreak
In view of the reduced set of generators, the relations in Definition~\ref{BB} reduce as well, since all of the underpassings will not appear any more. We shall examine them one-by-one applying Eq.\ref{undertoover} (or Eq.~\ref{undertooverleft} equally well). Our goal is to obtain a uniform overpass presentation. Indeed:

\begin{theorem}
\label{oBB}
    The bonded braid monoid on $n$ strands $BB_n$ admits a reduced presentation, also referred to as the \emph{uniform overpass  presentation}, with generators the elementary braid generators $\sigma_i$ \ for $1 \leq i \leq n-1$, their inverses $\sigma_i^{-1}$ and the uniform overpass bonds $b_{ij}$ \ for $1 \leq i < j \leq n$, with relations:

\[
\begin{array}{lrcll}
    1) & \sigma_i \sigma_j & = & \sigma_j \sigma_i & \text{for }  \ |i-j|>2\\[3pt]
    2) & \sigma_i \sigma_{i+1} \sigma_i & = & \sigma_{i+1} \sigma_i \sigma_{i+1} & \text{for } \ 1 \leq i \leq n-2 \\[3pt]
    3) & b_{ij} \, b_{rs} & = & b_{rs} \, b_{ij} & \text{for } \  i<j<r<s \quad \text{ or } \quad  i<r<s<j \\[3pt]
    4) & \sigma_i^{-1} \, b_{i+1, j} \, \sigma_{i} \, & = & b_{ij} & \text{for }  \ 1 \leq i \leq n-1, \, 1 \leq j \leq n \\[3pt]
    5) & \sigma_{j-1} \, b_{i, j-1} \, \sigma_{j-1}^{-1} & = & b_{ij}  & \text{for } \  1 \leq i \leq n-1, \, 1 \leq j \leq n \\[3pt]
    6) & \sigma_i \, b_i & = & b_i \, \sigma_i & \text{for } \  1 \leq i \leq n-1 \\[3pt]
    7) & \sigma_i \, b_{rs} & = & b_{rs} \, \sigma_i & \text{for } \  i \notin \{r-1, r, s-1, s\}, \, 1 \leq r < s \leq n\\[3pt]
    8) & \hspace{2mm} b_i \sigma_{i+1} \sigma_i \, & = & \, \sigma_{i+1}\sigma_i b_{i+1} \hspace{2mm} & \text{for } \  1 \leq i < n-1 \\[3pt]
    9) & \hspace{2mm} \sigma_i \sigma_{i+1} b_i \, & = & \, b_{i+1} \sigma_i \sigma_{i+1} \hspace{2mm} & \text{for }  \ 1 \leq i < n-1.
\end{array}
\] 
\end{theorem}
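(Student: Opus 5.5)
The plan is a Tietze transformation argument starting from the defining presentation of Definition~\ref{BB}.

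\emph{Step 1: eliminate the generators.} By Proposition~\ref{oversuff}, every mixed bond $b_{ij}^{\cI}$ with $\cI=(l_1,\dots,l_h)\neq\emptyset$ equals the word in Eq.~\ref{undertoover}, written in the $\sigma_k^{\pm1}$ and the overpassing bond $b_{ij}$. We adjoin these equalities as defining relations and then delete the generators $b_{ij}^{\cI}$. This is legitimate because Eq.~\ref{undertoover} was derived using only relations 5.1) and 5.2). Since $BB_n$ is a monoid, each $A_{i,l}^{-1}$ should be read as the explicit positive word $\sigma_i^{-1}\cdots\sigma_{l-1}^{-2}\cdots\sigma_i$ in the invertible generators. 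The Tietze argument is then valid in the monoid setting, because only invertible elements are being conjugated.

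\emph{Step 2: rewrite every remaining relation.} Each relation of Definition~\ref{BB} is now rewritten in the new generators. We sort the results into two groups: those that coincide with relations 1)--9) of the statement, and those that are consequences of relations 1)--9).

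\begin{itemize}
\item Relations 1), 2), 6), 9.1) and 9.2) of Definition~\ref{BB} contain no underpassing data, so they pass over unchanged.
\item For $\cI=\cR=\emptyset$, relations 3), 4), 5.1), 5.3) and 7) give relations 3), 4), 5) and 7) of the statement. For relation 3) we use that every index of an $A_{i,l}$ lies in $[i,j]$, which is disjoint from $[r,s]$.
\item The versions with nonempty $\cI,\cR$ of relations 3), 5.1), 5.3) and 7) are derived by conjugating the empty-index versions.
\begin{itemize}
\item For 3), 5.1) and 7), the conjugating pure braids $A_{i,l}$ commute with the relevant $\sigma$'s or bonds by the braid relations and by relation 7) of the statement.
\item For 5.3), we need $\sigma_{j-1}$ to commute with $A_{i,l}$ when $l\le j-2$; this holds by braid relation 1) because $A_{i,l}$ involves only $\sigma_i,\dots,\sigma_{l-1}$.
\end{itemize}
\item Relations 5.2) and 5.4) are exactly the one-step threading identities already used in Proposition~\ref{oversuff}, so after substitution they become braid-group identities plus relation 4) (respectively 5)) of the statement. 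For 5.4) we use the alternative threading $\sigma_{j-1}^{-1}b_{i,j-1}\sigma_{j-1}=A_{i,j-1}b_{ij}A_{i,j-1}^{-1}$, obtained from relation 5) and the braid relations.
\item Relations 8.1) and 8.2) become statements about how $\sigma_i$ interacts with $A_{r,i}^{\epsilon}A_{r,i+1}^{\epsilon'}$-type conjugators of $b_{rs}$.
\begin{itemize}
\item For 8.1), with $i,i+1\in\cR$, we use $\sigma_i A_{r,i}A_{r,i+1}\sigma_i^{-1}=A_{r,i}A_{r,i+1}$. This holds in $B_n$ because $A_{r,i}A_{r,i+1}=(\sigma_r^{-1}\cdots\sigma_{i-2}^{-1})\sigma_{i-1}\sigma_i^2\sigma_{i-1}(\cdots)$ commutes with $\sigma_i$ by a braid calculation.
\item For 8.2), we use $\sigma_i A_{r,i}\sigma_i^{-1}=A_{r,i+1}$ up to the appropriate correction, which is the mixed relation $M_3)$ of Proposition~\ref{redBn}.
\end{itemize}
\end{itemize}
All of these are identities among the $\sigma$'s, hence consequences of relations 1) and 2), combined with relation 7) of the statement to move $\sigma_i$ past $b_{rs}$ when $i\notin\{r-1,r,s-1,s\}$.

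\emph{Step 3: the main obstacle.} We expect the hardest part to be relation 4) of Definition~\ref{BB} with nested indices $i<r<s<j$ and general $\cI,\cR$ satisfying the stated inclusion conditions. After substitution it reads
\[
(\text{conj}_{\cI}\, b_{ij}\,\text{conj}_{\cI}^{-1})(\text{conj}_{\cR}\, b_{rs}\,\text{conj}_{\cR}^{-1})=(\text{conj}_{\cR}\, b_{rs}\,\text{conj}_{\cR}^{-1})(\text{conj}_{\cI}\, b_{ij}\,\text{conj}_{\cI}^{-1}).
\]
To derive this from relation 3) of the statement, we would proceed as follows.
\begin{itemize}
\item Use left-threading Eq.~\ref{undertooverleft} for $b_{rs}^{\cR}$ and right-threading for $b_{ij}^{\cI}$, so that the conjugators live on disjoint ranges of strands wherever possible.
\item When $r,s\notin\cI$, the conjugators $A_{i,l}$ with $l\in\cI\cap(r,s)\subseteq\cR$ are matched against the corresponding factors of the conjugator of $b_{rs}$.
\item If direct matching fails, fall back on the topological argument: the word pictured is a braid-isotopy of the configuration already covered by relations 3), 5) and 7). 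We would then verify this by sliding $b_{ij}$ to adjacent position via relations 4) and 5) of the statement, reducing to the case $\cI=\emptyset$ and $s=r+1$, where relation 6) and the commutation of $b_{rs}$ with $\sigma_r^2$ apply.
\end{itemize}
If the inclusion condition is used correctly, the remaining discrepancies should be pure braids on strands outside $[r,s]$, which commute with $b_{rs}$ by relation 7).
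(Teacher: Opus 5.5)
Your overall architecture matches the paper's: you eliminate the $b_{ij}^{\cI}$ via the right-threading formula Eq.~\ref{undertoover}, then reduce the relations of Definition~\ref{BB} one at a time. Step~2 is essentially sound apart from one misattribution, noted at the end.

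\textbf{The genuine gap is Step~3.} This is the substance of the theorem, and you only sketch it.
\begin{itemize}
\item ``Matching'' the conjugators is never made precise.
\item Falling back on ``the topological argument'' is circular in a Tietze proof. Topology tells you relation 4) holds in $BB_n$, which is given; what must be shown is that it follows from relations 1)--9).
\item Your closing heuristic is false. Take $\cI=\cR=(l)$ with $i<r<l<s<j$. Moving $A_{rl}$ past $A_{il}^{-1}$ already needs the cyclic relations, in the form $A_{il}^{-1}A_{rl}=A_{rl}(A_{ir}A_{il}^{-1}A_{ir}^{-1})$ of Eq.~\ref{fromstep1}. The leftover word next to $b_{rs}$ is then $A_{ir}A_{il}^{-1}A_{ir}^{-1}$. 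This is a loop of strand $i$ around strand $l\in(r,s)$, passing under strand $r$, not a braid supported outside $[r,s]$.
\item Its expansion contains $\sigma_{r-1}$ and $\sigma_r$, exactly the indices excluded in relation 7). It commutes with $b_{rs}$ only through the vertex slide relation 4) used twice, as in the paper's Step~II: first $\sigma_{r-1}^{-1}b_{rs}=b_{r-1,s}\sigma_{r-1}^{-1}$, then relation 7) for $b_{r-1,s}$, then slide back.
\item Mixing left and right threading does not give disjoint supports. The right conjugator $A_{il}$ and the left loop generator $A_{ls}$ both involve strand $l$. Also, since you eliminated generators with the right-threading formula, you would first have to derive the left-threading formula from 1)--9).
\end{itemize}

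\textbf{The case $r,s\in\cI$ is not treated at all.} There the paper moves each $A_{r,r_k}^{-1}$ across the block $A_{ir}A_{i,i_1}\cdots A_{i,i_h}A_{is}$. It does this using the cyclic relations and Eq.~\ref{alternativeslideartin}. It then uses that $b_{rs}$ commutes with that whole block.

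\textbf{Your justification for 5.4) is wrong.} The identity $\sigma_{j-1}^{-1}b_{i,j-1}\sigma_{j-1}=A_{i,j-1}b_{ij}A_{i,j-1}^{-1}$ does not follow from relation 5) and the braid relations. For $(i,j)=(1,3)$, substituting $b_{13}=\sigma_2b_{12}\sigma_2^{-1}$ turns it into $b_{12}\,\sigma_2\sigma_1^2\sigma_2=\sigma_2\sigma_1^2\sigma_2\,b_{12}$. This fails in the free product of $B_n$ with the free monoid on the tight bonds, which satisfies the braid relations and relation 5). The paper instead pushes $b_{ij}$ inward with relation 4) and then uses relation 8), in the form $\sigma_{j-1}\sigma_{j-2}b_{j-1,j}=b_{j-2,j-1}\sigma_{j-1}\sigma_{j-2}$.
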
 

\smallbreak

\begin{proof}
 We notice that relations 1), 2), 6), 9.1) and 9.2) of Definition~\ref{BB} do not involve bonds or underpasses, so these relations are not affected. We begin with examining relations 3) of Definition~\ref{BB}. 
\bigbreak
\noindent {\it Relations 3):} $b_{i j}^{\mathcal{I}} \, b_{r s}^{\mathcal{R}} \, = \,  b_{rs}^{\mathcal{R}} \, b_{ij}^{\mathcal{I}}$, \quad $i<j<r<s$, \ $\mathcal{I} \subseteq \left\{ i+1, \dots , j-1\right\}$, \ $\mathcal{R} \subseteq \left\{ r+1, \dots , s-1\right\}$. Applying Eq.~\ref{undertoover} (or Eq.~\ref{undertooverleft} equally well) the modified relations read
\[ 
\begin{split}(A_{i, i_1} \ldots A_{i,i_h}) \,  b_{ij} \, \underline{(A_{i,i_h}^{-1} \ldots A_{i,i_1}^{-1})} & \, \underline{(A_{r,r_1} \cdots A_{r,r_l})} \, b_{rs} \, (A_{r,r_l}^{-1} \ldots A_{r,r_1}^{-1}) \\ = & (A_{r,r_1} \ldots A_{r,r_l}) \, b_{rs} \, \underline{(A_{r,r_l}^{-1} \ldots A_{r,r_1}^{-1})} \, \underline{(A_{i, i_1} \ldots A_{i,i_h})} \,  b_{ij} \, (A_{i,i_h}^{-1} \ldots A_{i,i_1}^{-1})  
\end{split}\] where $\mathcal{I} = (i_1, \dots , i_h)$, $\mathcal{R} = (r_1, \dots , r_l)$. Because $i_1 < \dots < i_h < r_1 < \dots < r_l$, the underlined blocks commute.
Since $b_{ij}$ and $b_{rs}$ are sufficiently far apart, using relations 7) of Definition~\ref{BB} for $\cR = \emptyset$ the bonds commute with generators $A_{lm}^{\pm 1}$, and the relation reduces to
\[
\begin{split}
    (A_{i, i_1} \cdots A_{i,i_h}) \, (A_{r,r_1} \cdots A_{r,r_l}) \,  b_{ij} & \, b_{rs} \, (A_{r,r_l}^{-1} \cdots A_{r,r_1}^{-1}) \,  (A_{i,i_h}^{-1} \cdots A_{i,i_1}^{-1}) \\ & = (A_{i, i_1} \cdots A_{i,i_h}) \, (A_{r,r_1} \cdots A_{r,r_l}) \, b_{rs} \,  b_{ij} \, (A_{r,r_l}^{-1} \cdots A_{r,r_1}^{-1}) \,  (A_{i,i_h}^{-1} \cdots A_{i,i_1}^{-1}) \\
    & \implies b_{ij} \, b_{rs} \, = \, b_{rs} \, b_{ij}
\end{split}
\]
So, of the relations 3) can be kept only
\begin{equation} \label{over3}
    b_{ij} \, b_{rs} \, = \, b_{rs} \, b_{ij}, \quad i<j<r<s.
\end{equation}

\bigbreak
\noindent {\it Relations 4):} $b_{i j}^{\mathcal{I}} \, b_{r s}^{\mathcal{R}} \, = \,  b_{rs}^{\mathcal{R}} \, b_{ij}^{\mathcal{I}}$, \quad $i<r<s<j$, \ $\mathcal{I} \subseteq \left\{ i+1, \dots , j-1\right\}, \, \mathcal{R} \subseteq \left\{ r+1, \dots , s-1\right\}, \, \text{ if } r, s \in \cI \implies \mathcal{R} \subseteq \mathcal{I} \cap \{ r+1, \ldots s-1\}, \, \text{ if } r, s \notin \cI \implies  \mathcal{I} \subseteq \mathcal{R} \cap \{ r+1, \ldots s-1\}$:
\smallbreak
We begin with $r, s \notin \cI$.
Consider first the case of $\cI = \emptyset$, so $b_{ij} \, b_{rs}^{\mathcal{R}} \, = \,  b_{rs}^{\mathcal{R}} \, b_{ij}$. The modified relations read
$$
b_{ij} \, (A_{r,r_1} \cdots A_{r,r_l}) \, b_{rs} \, (A_{r,r_l}^{-1} \cdots A_{r,r_1}^{-1}) = (A_{r,r_1} \cdots A_{r,r_l}) \, b_{rs} \, (A_{r,r_l}^{-1} \cdots A_{r,r_1}^{-1}) \, b_{ij}
$$
Using relations 8.1) of Definition~\ref{BB} in the case of $\cR = \emptyset$ for $b_{ij}$, one gets
$$
(A_{r,r_1} \cdots A_{r,r_l}) \, b_{ij} \, b_{rs} \,  (A_{r,r_l}^{-1} \cdots A_{r,r_1}^{-1}) = (A_{r,r_1} \cdots A_{r,r_l}) \, b_{rs} \, b_{ij} \, (A_{r,r_l}^{-1} \cdots A_{r,r_1}^{-1})
$$
So, of this case it can be kept only
\begin{equation} \label{over4}
    b_{ij} \, b_{rs} \, = \, b_{rs} \, b_{ij}, \quad i<r<s<j.
\end{equation}

\smallbreak
Consider now the more general subcase 
 $\cR = \mathcal{I} = (l) $ with $l \neq r, s$. \\ 
 Then relations 4) become $b_{ij}^{(l)} \, b_{rs}^{(l)} = b_{rs}^{(l)} \, b_{ij}^{(l)}$, which are rewritten to 
\[
A_{il} \, b_{ij} \, A_{il}^{-1} \, A_{rl} \, b_{rs} \, A_{rl}^{-1} = A_{rl} \, b_{rs} \, A_{rl}^{-1} \, A_{il} \, b_{ij} \, A_{il}^{-1}.
\] Diagrammatically, the equivalence is quite clear. We proceed with caution in two steps.\\

\smallbreak

\noindent \textit{Step I), Figure~\ref{rel5}: Simplify the word between $b_{ij}$ and $b_{rs}$.} Note that, for $i<r<l<s$, from Relations 2) and 3) of Proposition~\ref{alternative} it can be derived that
\begin{equation} \label{fromstep1}
    A_{il}^{-1}A_{rl} = A_{rl} (A_{ir} A_{il}^{-1} A_{ir}^{-1})
\end{equation}

Then, because $i<r<l<j$ Relations 4) are rewritten in \begin{equation*}
    \begin{split}
        A_{il} \, b_{ij} \, \underline{A_{rl}} (A_{ir} A_{il}^{-1} A_{ir}^{-1}) \, b_{rs} \, A_{rl}^{-1} & = A_{rl} \, b_{rs}\, (A_{ir} A_{il} A_{ir}^{-1}) \underline{A_{rl} ^{-1}} \, b_{ij} \, A_{il}^{-1} \\
        \iff A_{il} \, A_{rl} \, b_{ij}  (A_{ir} A_{il}^{-1} A_{ir}^{-1}) \, b_{rs} \, A_{rl}^{-1} & = A_{rl} \, b_{rs}\, (A_{ir} A_{il} A_{ir}^{-1})  b_{ij} \, A_{rl} ^{-1} \, A_{il}^{-1}
    \end{split}
\end{equation*}
where $A_{rl}$ and its inverse slide under $b_{ij}$ due to relations 8.1) of Definition~\ref{BB}.

\smallbreak

\noindent \textit{Step II): Slide $b_{rs}$ above the simplified word.} Shift the focus on $(A_{ir} A_{il}^{-1} A_{ir}^{-1}) \, b_{rs}$: writing the pure braids in terms of elementary braid generators, the already simplified expression is equal to
\begin{equation*}
    \begin{split}
        \sigma_i^{-1} \cdots  \sigma_{r-2}^{-1} & \sigma_{r-1} \sigma_r^{-1} \cdots \sigma_{l-1}^{-2} \cdots \sigma_r \sigma_{r-1}^{-1} \sigma_{r-2} \cdots \sigma_i \, \underline{b_{rs}} = \\ & \stackrel{\text{rel. 7)}}{=} \sigma_i^{-1} \cdots \sigma_{r-2}^{-1} \sigma_{r-1} \sigma_r^{-1} \cdots \sigma_{l-1}^{-2} \cdots \sigma_r \underline{\sigma_{r-1}^{-1} \, b_{rs}} \, \sigma_{r-2} \cdots \sigma_i \\
        & \stackrel{\text{rel. 5.1)}}{=} \sigma_i^{-1} \cdots \sigma_{r-2}^{-1} \sigma_{r-1} \sigma_r^{-1} \cdots \sigma_{l-1}^{-2} \cdots \sigma_r \, \underline{b_{r-1, s}} \, \sigma_{r-1}^{-1} \sigma_{r-2} \cdots \sigma_i \\
        & \stackrel{\text{rel. 8.1)}}{=} \sigma_i^{-1} \cdots \sigma_{r-2}^{-1} \underline{\sigma_{r-1} \, b_{r-1, s}} \, \sigma_r^{-1} \cdots \sigma_{l-1}^{-2} \cdots \sigma_r \sigma_{r-1}^{-1} \sigma_{r-2} \cdots \sigma_i \\
        & \stackrel{\text{rel. 5.1)}}{=} \sigma_i^{-1} \cdots \sigma_{r-2}^{-1} \, \underline{b_{rs}} \, \sigma_{r-1} \sigma_r^{-1} \cdots \sigma_{l-1}^{-2} \cdots \sigma_r \sigma_{r-1}^{-1} \sigma_{r-2} \cdots \sigma_i \\
        & \stackrel{\text{7)}}{=} b_{rs} \, \sigma_i^{-1} \cdots \sigma_{r-2}^{-1} \sigma_{r-1} \sigma_r^{-1} \cdots \sigma_{l-1}^{-2} \cdots \sigma_r \sigma_{r-1}^{-1} \sigma_{r-2} \cdots \sigma_i \\
    \end{split}
\end{equation*}
which means that $(A_{ir} A_{il}^{-1} A_{ir}^{-1}) \, b_{rs} = b_{rs} (A_{ir} A_{il}^{-1} A_{ir}^{-1})$. Since the exponent of $A_{il}$ plays no role in the proof, it is also true that $(A_{ir} A_{il} A_{ir}^{-1}) \, b_{rs} = b_{rs} (A_{ir} A_{il} A_{ir}^{-1})$. Relation 4) then becomes
\[
\begin{split}
    A_{il} \, A_{rl} \, b_{ij} \, b_{rs} \, \underline{(A_{ir} A_{il}^{-1} A_{ir}^{-1}) \, A_{rl}^{-1}} & = \underline{A_{rl} \,  (A_{ir} A_{il} A_{ir}^{-1})} \, b_{rs}\, b_{ij} \, A_{rl} ^{-1} \, A_{il}^{-1} \\ \iff A_{il} A_{rl} \, b_{ij} \, b_{rs} \, A_{rl}^{-1} A_{il}^{-1} & = A_{il} A_{rl} \, b_{rs} \, b_{ij} \, A_{rl}^{-1} A_{il}^{-1}
\end{split}
\]

\begin{figure}[h]
\begin{center} 
\includegraphics[width=15.5cm]{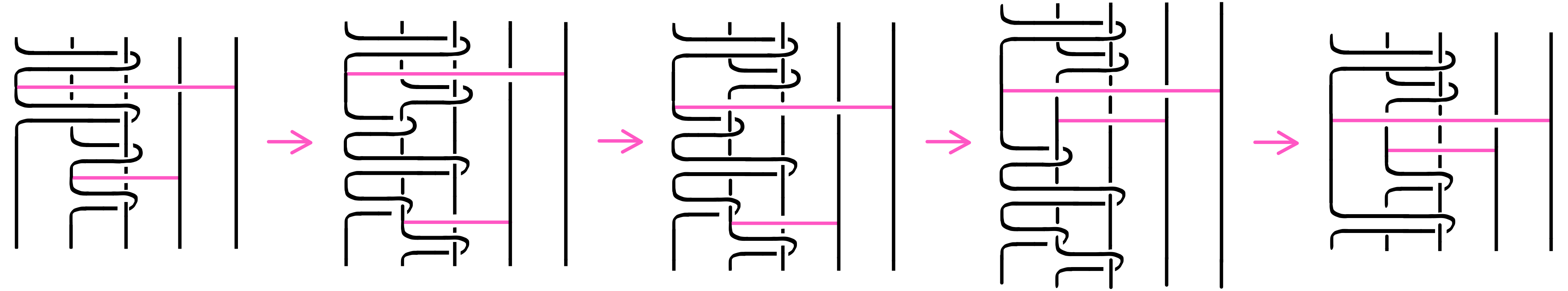}
\end{center}
\caption{Equivalence process on the right-hand side of relation 4).}
\label{rel5}
\end{figure}

\noindent Cancelling the conjugation by $A_{il} A_{rl}$ on both hand sides, it follows that it is sufficient to keep of this case of relations 4) the ones as in Eq.~\ref{over4}:
    $$b_{ij} \, b_{rs} = b_{rs} \, b_{ij}, \quad i<r<j<s.$$

\bigbreak
In the general $\mathcal{I}$ case, Relations 4) read as follows

\[ 
\begin{split}(A_{i, i_1} \ldots A_{i,i_h}) \,  b_{ij} \, (A_{i,i_h}^{-1} \ldots A_{i,i_1}^{-1}) & \, (A_{r,r_1} \cdots A_{r,r_l}) \, b_{rs} \, (A_{r,r_l}^{-1} \ldots A_{r,r_1}^{-1}) \\ = & (A_{r,r_1} \ldots A_{r,r_l}) \, b_{rs} \, (A_{r,r_l}^{-1} \ldots A_{r,r_1}^{-1}) \, (A_{i, i_1} \ldots A_{i,i_h}) \,  b_{ij} \, (A_{i,i_h}^{-1} \ldots A_{i,i_1}^{-1})  
\end{split}\]

Step II) is more or less the same, with the difference that there will be more negative exponents in the extended writing of the braid block with which $b_{rs}$ commutes. Step I) is a tad more stubborn, but not tragic, it is carried out within the block $(A_{i,i_h}^{-1} \cdots A_{i,i_1}^{-1}) \, (A_{r,r_1} \cdots A_{r,r_l})$ and in its inverse in the two hand sides of relations 4), and heavily depends on $\mathcal{I}$ and $\mathcal{R}$. 
If the set $\mathcal{K}$ of the underpasses of $b_{ij}$ happening outside of the strands involved in the bond $b_{rs}$ is non-empty, these underpasses do not obstruct the relations. 

\smallbreak
From the above, we consider the general case of $\mathcal{I} \cap \{r+1, \ldots , s-1 \} \subseteq \mathcal{R}$. Without loss of generality we can impose that $\cI \subseteq \cR$, since any other underpasses of $b_{ij}$ happening outside of $\{r+1, \ldots , s-1 \}$ can be brought below or above $b_{rs}$ and eliminated. 

\smallbreak
Step I) is carried out in the block $(A_{i,i_h}^{-1} \cdots A_{i,i_1}^{-1}) \, (A_{r,r_1} \cdots A_{r,r_l})$ and its inverse in both hand sides of relations 4). Since $\mathcal{I} \subseteq \mathcal{R}$, either $i_1  > r_1$ or $i_1 = r_1$. Indeed, if one had that $i_1 < r_1$, since $r_1$ is the smallest index in $\mathcal{R}$, this latter set would not contain $\mathcal{I}$. 

\smallbreak
If $r_1 < i_1$ then $r_1$ is smaller than any other index $i_p$: then, because $i<r<r_1<i_p<j$, in the left-hand side of relations 4) $A_{r,r_1}$ slides to the left of $b_{ij}$. Remind that $i<r<r_1<i_1<i_p$ for any $i_p$. Using pure braid relations,
\[
\begin{split}
A_{i, i_1} \cdots A_{i,i_h} \,  b_{ij} \, A_{i,i_h}^{-1} \cdots A_{i,i_1}^{-1} \, & \underline{A_{r,r_1}} \cdots A_{r,r_l} \, b_{rs} \, A_{r,r_l}^{-1} \cdots A_{r,r_1}^{-1} \\ & = A_{i, i_1} \cdots A_{i,i_h} A_{r,r_1} \,  b_{ij} \, A_{i,i_h}^{-1} \cdots A_{i,i_1}^{-1} \, A_{r,r_2}\cdots A_{r,r_l} \, b_{rs} \, A_{r,r_l}^{-1} \cdots A_{r,r_1}^{-1}
\end{split}
\]

Proceed analogously on the right-hand side: $A_{r,r_1}^{-1}$ slides to the right of $b_{ij}$:

\[
\begin{split}
A_{r,r_1} \cdots A_{r,r_l} \, b_{rs} \, A_{r,r_l}^{-1} \cdots \underline{A_{r,r_1}^{-1}} & \, A_{i, i_1} \cdots A_{i,i_h} \,  b_{ij} \, A_{i,i_h}^{-1} \cdots A_{i,i_1}^{-1} \\ & = A_{r,r_1} \cdots A_{r,r_l} \, b_{rs} \, A_{r,r_l}^{-1} \cdots A_{r,r_2}^{-1} \, A_{i, i_1} \cdots A_{i,i_h} \,  b_{ij} \, A_{r,r_1}^{-1} A_{i,i_h}^{-1} \cdots A_{i,i_1}^{-1} 
\end{split}
\]
This is repeated until we reach an $r'$ such that $r' = i_1$.

\smallbreak
Else, for $i_1 = r_1$ in the left-hand side $A_{r, r_1}$ can slide up unitl a certain point, and the block of interest looks like 
\[
\begin{split}
     A_{i,i_h}^{-1} \cdots A_{i,i_2}^{-1} \underline{A_{i,r_1}^{-1} \, A_{r,r_1}} A_{r,r_2} \cdots A_{r,r_l}
    \stackrel{Eq.~\ref{fromstep1}}{=} & A_{i,i_h}^{-1} \cdots A_{i,i_2}^{-1} A_{r,r_1} \underline{(A_{ir} A_{i,r_1}^{-1} A_{ir}^{-1}) A_{r,r_2}} \cdots A_{r,r_l}
\end{split}
\] 

Now, $A_{r, r_1}$ can overcome any $A_{i,i'}^{-1}$ on its left by standard (pure) braid commutativity, but to check that $(A_{ir} A_{i,r_1}^{-1} A_{ir}^{-1})$ is free to do the same with any $A_{r,r'}$ on its right we use relations 4) of Theorem~\ref{alternative} and that, for $i<r<l<s$,
\begin{equation} \label{usefulinrel4}
    A_{rl}^{-1}A_{il}^{-1}A_{rl} = A_{ir} A_{il}^{-1} A_{ir}^{-1}
\end{equation}
We proceed modifying relations 4):
\begin{equation*} 
    \begin{split}
        & A_{r,r_1} A_{r,r'} \underline{A_{r,r_1}^{-1} A_{i,r_1}} = \underline{A_{i,r_1} A_{r,r_1}} A_{r,r'} A_{r,r_1}^{-1}\\
        & \stackrel{\text{Invert}}{\iff} \underline{A_{r,r_1}^{-1} A_{i,r_1}^{-1} A_{r,r_1}} A_{r,r'} = A_{r,r'} \underline{A_{r,r_1}^{-1} A_{i,r_1}^{-1} A_{r,r_1}} \\ & \stackrel{\text{Eq.~\ref{usefulinrel4}}}{\iff} (A_{ir} A_{i,r_1}^{-1} A_{ir}^{-1}) A_{r,r'} = A_{r,r'} (A_{ir} A_{i,r_1}^{-1} A_{ir}^{-1})
    \end{split}
\end{equation*}
Then,
\[
A_{i,i_h}^{-1} \cdots A_{i,i_2}^{-1} \underline{A_{r,r_1}} \underline{(A_{ir} A_{i,r_1}^{-1} A_{ir}^{-1}) A_{r,r_2}} \cdots A_{r,r_l} = A_{r,r_1} A_{i,i_h}^{-1} \cdots A_{i,i_2}^{-1} A_{r,r_2} \cdots A_{r,r_l} (A_{ir} A_{i,r_1}^{-1} A_{ir}^{-1})
\]
The analogous process is carried out on the right-hand side.
The relation now looks like
\[
\begin{split}
    A_{i, r_1} \cdots A_{i,i_h} & \underline{b_{ij} \, A_{r,r_1}} \,  A_{i,i_h}^{-1} \cdots A_{i,i_2}^{-1} A_{r,r_2} \cdots A_{r,r_l} (\underline{A_{ir} A_{i,r_1}^{-1} A_{ir}^{-1}}) \, b_{rs} \, A_{r,r_l}^{-1} \cdots A_{r,r_1}^{-1} \\ & = A_{r,r_1} \cdots A_{r,r_l} \, b_{rs} \, (\underline{A_{ir} A_{i, r_1} A_{ir}^{-1}}) A_{r,r_l}^{-1} \cdots A_{r,r_2}^{-1} \, A_{i, r_1} \cdots A_{i,i_h} \, \underline{ A_{r,r_1}^{-1} \, b_{ij}} \, A_{i,i_h}^{-1} \cdots A_{i,r_1}^{-1} 
\end{split}
\]

For Step II) of the general case  we argue as follows: Step II) of the previous case ($\cR = \cI = (l)$) ensures that $(A_{ir} A_{i,r_1}^{-1} A_{ir}^{-1})$ commutes with $b_{rs}$, and clearly $A_{r,r_1}$ slides under $b_{ij}$. Apply the analogous process on the other hand side, and then use the inverse processes of Step I) on both of them to bring $(A_{ir} A_{i,r_1}^{-1} A_{ir}^{-1})$ and $(A_{ir} A_{i,r_1} A_{ir}^{-1})$ at the top and bottom respectively of their relative hand sides (we can however leave $A_{r,r_1}^{\pm 1}$ next to $b_{rs}$). In the left-hand side, $(A_{ir} A_{i,r_1}^{-1} A_{ir}^{-1})$ meets $A_{r, r_1}^{-1}$, and by pure braid relations their product is equal to $A_{r, r_1}^{-1}A_{i, r_1}^{-1}$. Analogously on the right-hand side. Relations 4) becomes
\[
\begin{split}
    A_{i, i_1} \cdots A_{i,i_h} A_{r,r_1} \,  b_{ij} \, A_{i,i_h}^{-1} \cdots & \underline{A_{i,i_2}^{-1} \, A_{r,r_2}} \cdots A_{r,r_l} \, b_{rs} \, A_{r,r_l}^{-1} \cdots A_{r,r_1}^{-1} A_{i,i_1}^{-1} \\ & = A_{i, i_1} A_{r,r_1} \cdots A_{r,r_l} \, b_{rs} \, A_{r,r_l}^{-1} \cdots \underline{A_{r,r_2}^{-1} A_{i, i_2}} \cdots A_{i,i_h} \,  b_{ij} A_{r,r_1}^{-1} \, A_{i,i_h}^{-1} \cdots A_{i,i_1}^{-1}
\end{split}
\]

The same algorithm Step I) + Step II) applied on the pairs $(r,r_1), (i,i_1)$ is now applied on $(r,r_2), (i, i_2)$ and so on. Since $\cI \subseteq \cR$, it can happen that there are more pairs $(r, r')$ than $(i, i')$. In this case, the pairs of $\cR$ left over at the end of the algorithm are covered by the “if $r_1 < i_1$” case of relations 4). In the end, we get to

\[
\begin{split}
    A_{i, i_1} \cdots A_{i,i_h} A_{r,r_1} \, A_{r,r_2} \cdots A_{r,r_l} \, b_{ij} &  \, b_{rs} \, A_{r,r_l}^{-1} \cdots A_{r,r_1}^{-1} A_{i,i_h}^{-1} \cdots A_{i,i_2}^{-1} A_{i,i_1}^{-1} \\ & = A_{i, i_1} \cdots A_{i,i_h} A_{r,r_1} \, A_{r,r_2} \cdots A_{r,r_l} \, b_{rs} \, b_{ij} \,A_{r,r_l}^{-1} \cdots A_{r,r_1}^{-1} A_{i,i_h}^{-1} \cdots A_{i,i_2}^{-1} A_{i,i_1}^{-1}
\end{split}
\]

\noindent Removing the conjugation terms from both hand sides, we finally have that we only need to keep from relations 4) with $r, s \notin \cI$ the relations $b_{ij} \, b_{rs} \, = \, b_{rs} \, b_{ij}$, as in Eq.~\ref{over4}.\\

\smallbreak

For the case $r, s \in \cI$, since we have already gotten acquainted with the machinery, we move directly to the general case of the intervals $\cI$ and $\cR$. The condition on the intervals for this case has that $\cR \subseteq \cI \cap \{r+1, \dots , s-1 \}$. Without loss of generality, as we did for the general case of $r, s \notin \cI$, we can assume that $\cI$ sits entirely in the interval $[ r, s ]$, since what happens outside of it won't affect the relation. 
\smallbreak
Fix $\cR = (r_1, \dots , r_l) \subseteq \cI = (r, i_1, \dots i_h, s)$. The modified relations look like:
\[
\begin{split}
    A_{ir} & A_{i, i_1} A_{i, i_2} \cdots A_{i, r_1} \cdots A_{i,i_h} A_{is} \, b_{ij} \, A_{is}^{-1} A_{i,i_h}^{-1} \cdots A_{i, r_1}^{-1} \cdots A_{i, i_2}^{-1} A_{i,i_1}^{-1} A_{ir}^{-1} \, A_{r,r_1} \cdots A_{r,r_l} \, b_{rs} \, A_{r,r_l}^{-1} \cdots A_{r,r_1}^{-1} \\ & = A_{r,r_1} \cdots A_{r,r_l} \, b_{rs} \, A_{r,r_l}^{-1} \cdots \underline{A_{r,r_1}^{-1} \, A_{ir} A_{i, i_1}} A_{i, i_2} \cdots A_{i, r_1} \cdots A_{i,i_h} A_{is} \, b_{ij} \, A_{is}^{-1} A_{i,i_h}^{-1} \cdots A_{i, r_1}^{-1} \cdots A_{i, i_2}^{-1} A_{i,i_1}^{-1} A_{ir}^{-1}.
\end{split}
\]

We focus computations on the right-hand side, those on the left-hand side will follow by symmetry. 
\smallbreak
Step I): There are two possible cases: either $i_1 = r_1$ \ or \ $i_1 < r_1$.
\smallbreak
If $i_1 = r_1$ the underlined block is $A_{r,r_1}^{-1} \, A_{ir} A_{i, r_1} = A_{ir} A_{i, r_1} \, \underline{A_{r, r_1}^{-1}}$ from the cyclic relations of Eq.~\ref{cyclicrels} with $i < r < r_1$. Then, $A_{r, r_1}^{-1}$ slides all the way to $b_{ij}$, commutes with it, and goes to the bottom of the expression. There it will form the block $A_{r, r_1}^{-1} A_{i,r_1}^{-1} A_{ir}^{-1} = A_{i,r_1}^{-1} A_{ir}^{-1} A_{r, r_1}^{-1}$ again from the cyclic relations. Now the relation's right-hand side looks like
\begin{equation} \label{rsnotinIfirstpart}
    A_{r,r_1} \cdots A_{r,r_l} \, b_{rs} \, A_{r,r_l}^{-1} \cdots A_{r,r_2}^{-1} \, A_{ir} A_{i, i_1} \cdots A_{i,i_h} A_{is} \, b_{ij} \, A_{is}^{-1} A_{i,i_h}^{-1} \cdots A_{i,i_1}^{-1} A_{ir}^{-1} \underline{A_{r,r_1}^{-1}}.
\end{equation}
We have achieved that $A_{r, r_1}^{-1}$ sits at the end of the expression.
\smallbreak
Otherwise, if $i_1 < r_1$ there are a few steps to take before reaching an index $i_g = r_1$. Note that this index must exist, since $\cR \subseteq \cI$.
 We apply the following algorithm to the underlined block of the modified relation above (insertion of $(A_{ir}^{-1} A_{ir})$ together with Eq.~\ref{alternativeslideartin}): 

\begin{equation} \label{step1rsinI.2}
    A_{r,r_1}^{-1} \, A_{ir} A_{i, i_1} = \underline{A_{r,r_1}^{-1} \, A_{ir} A_{i, i_1} (A_{ir}^{-1}} A_{ir}) = A_{ir} A_{i, i_1} \underline{A_{ir}^{-1} A_{r, r_1}^{-1} A_{ir}}
\end{equation}

Note that $A_{i, i_1}$ comes now before $A_{r, r_1}^{-1}$.
If after $i_1$ there is another index $i_2 < r_1$, this algorithm can be reiterated on the underlined word in the right-hand side of Eq.~\ref{step1rsinI.2}:
\[
\begin{split}
    (A_{ir} A_{i, i_1}) A_{ir}^{-1} A_{r, r_1}^{-1} A_{ir} A_{i, i_2} & = (A_{ir} A_{i, i_1}) A_{ir}^{-1} \underline{A_{r, r_1}^{-1} A_{ir} A_{i, i_2} (A_{ir}^{-1}} A_{ir}) \\ &= (A_{ir} A_{i, i_1}) \underline{A_{ir}^{-1} A_{ir}} A_{i, i_2} A_{ir}^{-1} A_{r, r_1}^{-1} A_{ir} = A_{ir} A_{i, i_1} A_{i, i_2} \underline{A_{ir}^{-1} A_{r, r_1}^{-1} A_{ir}}
\end{split}
\]
Note that $A_{i, i_2}$ comes now before $A_{r, r_1}^{-1}$. The above procedure is illustrated in Figure~\ref{rsinIalgo}.

Repeat this for all intermediate indices in order to approach $A_{i, i_g = r_1}$. Then, operating on the underlined part,
$$A_{ir}^{-1} A_{r, r_1}^{-1} A_{ir} A_{i, r_1} = A_{i, r_1} A_{r, r_1}^{-1} $$
Plugging this in the modified relation's right-hand side we obtain:
\[
\begin{split}
    A_{r,r_1} \cdots A_{r,r_l} \, b_{rs} & \, A_{r,r_l}^{-1} \cdots  A_{r, r_2}^{-1} A_{ir} A_{i, i_1} A_{i, i_2} \cdots A_{i, i_{g-1}} \cdot \\ & \cdot A_{i, r_1} \underline{A_{r, r_1}^{-1}} A_{i, i_{g+1}} \cdots A_{i,i_h} A_{is} \, b_{ij} \, A_{is}^{-1} A_{i,i_h}^{-1} \cdots \underline{A_{i, r_1}^{-1} \cdots A_{i,i_1}^{-1} A_{ir}^{-1}}
\end{split}
\]
\begin{figure}[H]
\begin{center} 
    \includegraphics[width=16.5cm]{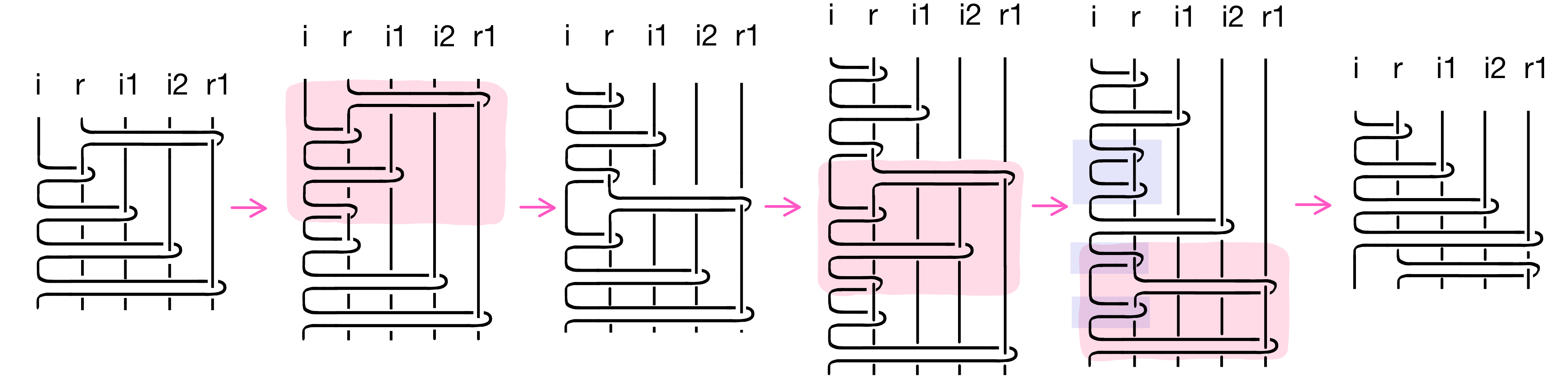}
\end{center}
\caption{Algorithm for Step I) when $r, s \in \cI$ and $i < r < i_1 < i_2 < r_1$. The blue sections cancel each other.}
\label{rsinIalgo}
\end{figure}

Since $i < r < r_1 = i_g < i_{g+1} < \dots < i_h$, \ $A_{r, r_1}^{-1}$ can slide under $b_{ij}$ to the bottom of the expression, where the inverse of the proposed algorithm is going to be applied on the block $A_{i, r_1}^{-1} \cdots A_{i,i_1}^{-1} A_{ir}^{-1}$ to ensure the commutation. In the end, the expression will look like Eq.~\ref{rsnotinIfirstpart}, that is $A_{r, r_1}^{-1}$ will sit at the end of the expression.
\smallbreak
Apply the analogous process on the left-hand side by symmetry, and repeat all steps on $A_{r, r_2}^{-1}$. In the end the modified relations will look like
\[
\begin{split}
    A_{r,r_1} \cdots A_{r,r_l} \, A_{ir} A_{i, i_1} \cdots & A_{i,i_h} A_{is} \, b_{ij} \, A_{is}^{-1} A_{i,i_h}^{-1} \cdots A_{i,i_1}^{-1} A_{ir}^{-1} \, b_{rs} \, A_{r,r_l}^{-1} \cdots A_{r,r_1}^{-1} \\ & = A_{r,r_1} \cdots A_{r,r_l} \, b_{rs} \, A_{ir} A_{i, i_1} \cdots A_{i,i_h} A_{is} \, b_{ij} \, A_{is}^{-1} A_{i,i_h}^{-1} \cdots A_{i,i_1}^{-1} A_{ir}^{-1} \, A_{r,r_l}^{-1} \cdots A_{r,r_1}^{-1}
\end{split}
\]
Equivalently,
\[
   A_{ir} A_{i, i_1} \cdots A_{i,i_h} A_{is} \, b_{ij} \, A_{is}^{-1} A_{i,i_h}^{-1} \cdots A_{i,i_1}^{-1} A_{ir}^{-1} \, b_{rs} \, = \, b_{rs} \, A_{ir} A_{i, i_1} \cdots A_{i,i_h} A_{is} \, b_{ij} \, A_{is}^{-1} A_{i,i_h}^{-1} \cdots A_{i,i_1}^{-1} A_{ir}^{-1}
\]

\smallbreak
Step II) follows immediately since $b_{rs}$ commutes with $A_{ir} A_{i, i_1} \cdots A_{i,i_h} A_{is}$ and its inverse. This can be seen by applying a few vertex slide moves on the expanded writing of $A_{ir} A_{i, i_1} \cdots A_{i,i_h} A_{is}$ and its inverse in terms of elementary braid generators. Then, by removing conjugations by the same words on the hand sides, the modified relations 4) for $r, s \in \cI$ are reduced to
\[ b_{ij} \, b_{rs} \, = \, b_{rs} \, b_{ij}
\]

which boil down to the cases already studied for relations 4).

\bigbreak
\noindent {\it Relations 5.1):}   $\sigma_i^{-1} b_{i+1, j}^{\mathcal{I}} \sigma_i = b_{ij}^{\mathcal{I}}$ for $ 1 \leq i<j \leq n-1, \, |i-j|>1$, where $\mathcal{I} = ( i_1, \dots , i_h) \subseteq \left\{ i+2, \dots , j-1 \right\}$ is an ordered set. The modifications read
$$
\sigma_i^{- 1} \, (A_{i+1, i_1} \cdots A_{i+1, i_h}) \, b_{i+1, j} \, (A_{i+1, i_h}^{-1} \cdots A_{i+1, i_1}^{-1}) \sigma_i = (A_{i, i_1} \cdots A_{i,i_h}) \, b_{i j} \, (A_{i,i_h}^{-1} \cdots A_{i,i_1}^{-1})
$$
Placing in between all terms of the left-hand side $\sigma_i \sigma_i^{-1}$ it becomes:
\[
\begin{split}
 & \underline{\sigma_i^{- 1} \, (A_{i+1, i_1} \sigma_i }\sigma_i^{-1} \cdots \sigma_i \underline{\sigma_i^{-1} A_{i+1, i_h}) \sigma_i} \sigma_i^{-1} \, b_{i+1, j} \, \sigma_i \underline{\sigma_i^{-1} (A_{i+1, i_h}^{-1} \sigma_i} \sigma_i^{-1} \cdots \sigma_i \underline{\sigma_i^{-1} A_{i+1, i_1}^{-1})\,  \sigma_i} \\
 & \stackrel{M_1) \text{ of Prop. } \ref{redBn}}{=} \, (A_{i, i_1} \cdots A_{i, i_h}) \, \sigma_i^{-1} \, b_{i+1, j} \, \sigma_i \, (A_{i, i_h}^{-1} \cdots A_{i, i_1}^{-1}) 
\end{split}
\]
Comparing this last expression with the modified right-hand side 
\[(A_{i, i_1} \cdots A_{i, i_h}) \, \sigma_i^{-1} b_{i+1, j} \sigma_i \, (A_{i, i_h}^{-1} \cdots A_{i, i_1}^{-1}) = (A_{i, i_1} \cdots A_{i, i_h}) \, b_{ij} \, (A_{i, i_h}^{-1} \cdots A_{i, i_1}^{-1})
\]
and simplifying the conjugates, we obtain
\begin{equation} \label{over51}
    \sigma_i^{-1} \, b_{i+1, j} \, \sigma_i = b_{ij}.
\end{equation}

\bigbreak
\noindent {\it Relations 5.2):}  $\sigma_i b_{i+1, j}^{\mathcal{I}} \, \sigma_i^{-1} = b_{ij}^{(i+1) \cup \mathcal{I}}$ for $1 \leq i<j \leq n-1, \, |i-j|>1$ and $ \mathcal{I} = (i_1, \dots , i_h) \subseteq \left\{ i+2, \dots , j-1 \right\}$ an ordered subset. The modifications read
$$
\sigma_i (A_{i+1, i_1} \cdots A_{i+1, i_h}) \, b_{i+1, j} \, (A_{i+1, i_h}^{-1} \cdots A_{i+1, i_1}^{-1}) \sigma_i^{-1} = (A_{i, i+1} A_{i, i_1} \cdots A_{i, i_h}) \, b_{ij} \, (A_{i, i_h}^{-1} \cdots A_{i, i_1}^{-1} A_{i, i+1}^{-1})
$$
Placing $\sigma_i \sigma_i^{-1}$ between all terms in the left-hand side is equal to conjugating all pure braid generators and the $b_{i+1, j}$ by $\sigma_i$. Since conjugates of pure braid generators by elementary braidings are words of pure braid generators and conjugates of bonds are bonds, using $M_1)$ and $\Sigma_3)$ of Proposition.~\ref{redBn} we have: 
\[
\begin{split}
& \sigma_i (A_{i+1, i_1}  \cdots A_{i+1, i_h}) \, b_{i+1, j} \, (A_{i+1, i_h}^{-1} \cdots A_{i+1, i_1}^{-1}) \sigma_i^{-1} \\  & = \underline{\sigma_i \sigma_i} (\underline{\sigma_i^{-1} A_{i+1, i_1} \sigma_i} \sigma_i^{-1} \cdots \sigma_i \underline{\sigma_i^{-1} A_{i+1, i_h} \sigma_i} ) \sigma_i^{-1} \, b_{i+1, j} \, \sigma_i (\underline{\sigma_i^{-1} A_{i+1, i_h}^{-1} \sigma_i} \sigma_i^{-1} \cdots \sigma_i \underline{ \sigma_i^{-1} A_{i+1, i_1}^{-1} \sigma_i}) \underline{\sigma_i^{-1} \sigma_i^{-1}} \\
& = A_{i, i+1} (A_{i, i_1} \cdots A_{i, i_h}) \, \sigma_i^{-1} \, b_{i+1, j} \, \sigma_i \, (A_{i, i_h}^{-1} \cdots A_{i, i_1}^{-1}) A_{i, i+1}^{-1}.
\end{split}
\]

Comparing with the right-hand side and removing the same conjugating terms on both hand sides, the relation reduces to 
$$ \sigma_i^{-1} \, b_{i+1, j} \, \sigma_i = b_{ij}$$
and is absorbed in Eq.~\ref{over51}.

\bigbreak
\noindent {\it Relations 5.3):}   $\sigma_{j-1} b_{i, j-1}^{\mathcal{I}} \sigma_{j-1}^{-1} = b_{ij}^{\mathcal{I}}$ for $ 1 \leq i<j \leq n-1, \, |i-j|>1$, where $\mathcal{I} = ( i_1, \dots , i_h) \subseteq \left\{ i+1, \dots , j-2 \right\}$ is an ordered set. The modifications read
\[
(A_{i, i_1} \cdots A_{i, i_h}) \, b_{ij} \, (A_{i, i_h}^{-1} \cdots A_{i, i_1}^{-1}) = \sigma_{j-1} \, (A_{i, i_1} \cdots A_{i, i_h}) \, b_{i, j-1} \, (A_{i, i_h}^{-1} \cdots A_{i, i_1}^{-1}) \, \sigma_{j-1}^{-1}
\]
Since $i_h$ is at most $j-2$, $\sigma_{j-1}$ is free to slide inside of the conjugating blocks: we obtain
\begin{equation} \label{over53}
     \sigma_{j-1} \, b_{i, j-1} \, \sigma_{j-1}^{-1} \, = \, b_{ij}.
\end{equation}

\bigbreak
\noindent {\it Relations 5.4):} 
$\sigma_{j-1}^{-1} \, b_{i, j-1}^{ \mathcal{I}} \, \sigma_{j-1} = b_{i j}^{\mathcal{I} \cup (j-1)}$ for $ 1 \leq i<j \leq n-1, \, |i-j|>1$, where $\mathcal{I} =  (i_1, \dots , i_h) \subseteq \{ i+1, \dots , j-2 \}$ is an ordered set. The modifications read
$$
\sigma_{j-1}^{-1} (A_{i, i_1} \cdots A_{i, i_h}) \, b_{i, j-1} \, (A_{i, i_h}^{-1} \cdots A_{i, i_1}^{-1})\sigma_{j-1} = (A_{i, i_1} \cdots A_{i, i_h}  A_{i, j-1}) \, b_{i j} \, ( A_{i, j-1}^{-1} \, A_{i, i_h}^{-1} \cdots A_{i, i_1}^{-1})
$$
Remember that $i_h$ is at most $j-2$. In the left-hand side, move $\sigma_{j-1}$ and its inverse closer to the $b_{i, j-1}$ and cancel the conjugation terms in common on both sides to remain with:
$$ \sigma_{j-1}^{-1} \, b_{i, j-1} \sigma_{j-1} \, = \, A_{i, j-1} \, b_{i j} \, A_{i, j-1}^{-1} \iff b_{i, j-1} = \sigma_{j-1} A_{i, j-1} \, b_{i j} \, A_{i, j-1}^{-1} \sigma_{j-1}^{-1}$$

Focus on the right-hand side. It can be expanded in terms of the elementary braid generators

\[
\begin{split}
    \sigma_{j-1} \underline{A_{i, j-1}} \, b_{i j} \, & \underline{A_{i, j-1}^{-1}} \sigma_{j-1}^{-1}  = \sigma_{j-1} (\sigma_i^{-1} \cdots \sigma_{j-3}^{-1} \sigma_{j-2}^2 \sigma_{j-3} \cdots \underline{\sigma_i) \, b_{i j} \, (\sigma_i^{-1}} \cdots \sigma_{j-3}^{-1} \sigma_{j-2}^{-2} \sigma_{j-3} \cdots \sigma_i) \sigma_{j-1}^{-1} \\
    & \stackrel{Rel. \, 5.1)}{=} \sigma_{j-1} (\sigma_i^{-1} \cdots \sigma_{j-3}^{-1} \sigma_{j-2}^2 \sigma_{j-3} \cdots \underline{\sigma_{i+1}) \, b_{i+1, j} \, (\sigma_{i+1}^{-1}} \cdots \sigma_{j-3}^{-1} \sigma_{j-2}^{-2} \sigma_{j-3} \cdots \sigma_i) \sigma_{j-1}^{-1} \\
    & \hspace{6mm} \vdots
    \\
    & \stackrel{Rel. \, 5.1)}{=} \sigma_{j-1} (\sigma_i^{-1} \cdots \sigma_{j-3}^{-1} \underline{\sigma_{j-2}^2) \, b_{j-2, j} \, (\sigma_{j-2}^{-2}} \sigma_{j-3} \cdots \sigma_i) \sigma_{j-1}^{-1} \\
    & \stackrel{Rel. \, 5.1)}{=} \underline{\sigma_{j-1}} (\sigma_i^{-1} \cdots \sigma_{j-3}^{-1} \sigma_{j-2}) \, b_{j-1, j} \, (\sigma_{j-2}^{-1} \sigma_{j-3} \cdots \sigma_i) \underline{\sigma_{j-1}^{-1}} \\
    & \stackrel{commutation}{=}  (\sigma_i^{-1}\cdots \sigma_{j-3}^{-1} \underline{\sigma_{j-1} \sigma_{j-2}) \, b_{j-1, j}} \, (\sigma_{j-2}^{-1} \sigma_{j-1}^{-1} \sigma_{j-3} \cdots \sigma_i)  \\
    \\
    & \stackrel{Rel. \, 9.1)}{=}  \sigma_i^{-1} \cdots \sigma_{j-3}^{-1} \, b_{j-2, j-1} \, \underline{\sigma_{j-1} \sigma_{j-2}} \underline{\sigma_{j-2}^{-1} \sigma_{j-1}^{-1}} \sigma_{j-3} \cdots \sigma_i  \\
    & \stackrel{cancel}{=} \sigma_i^{-1} \cdots \underline{\sigma_{j-3}^{-1}  \, b_{j-2, j-1} \, \sigma_{j-3}} \cdots \sigma_i \\
    & \hspace{6mm} \vdots
    \\ & \stackrel{Rel. \, 5.1)}{=} b_{i, j-1}
\end{split}
\]
giving a tautology with the left-hand side.

\bigbreak
\noindent {\it Relations 7):}  $\sigma_i b_{rs}^{\mathcal{R}} \, = \,  b_{rs}^{\mathcal{R}} \sigma_i$ for $i<r-1 $ or $ i > s$, $1 \leq i \leq n-1$, $\mathcal{R} = (r_1, \dots, r_h) \subseteq \left\{ r+1, \dots , s-1\right\}$: the modified relations read
$$\sigma_i \, A_{r, r_1} \cdots A_{r, r_h} b_{rs} A_{r, r_h}^{-1} \cdots A_{r, r_1}^{-1} = A_{r, r_1} \cdots A_{r, r_h} b_{rs} A_{r, r_h}^{-1} \cdots A_{r, r_1}^{-1} \, \sigma_i$$

Given that $i$ sits outside of the interval $[ r, s ]$, $\sigma_i$ can slide next to $b_{rs}$, and simplifying the two hand sides the residual relations read
\begin{equation} \label{over7}
    \sigma_i \, b_{rs} = b_{rs} \, \sigma_i.
\end{equation}

\bigbreak
\noindent {\it Relations 8.1):}   $\sigma_i b_{rs}^{\mathcal{R}} \, = \, b_{rs}^{\mathcal{R}} \sigma_i$ for $1 \leq r<i<s-1 \leq n$, with either $i, i+1 \notin \mathcal{R} = (r_1, \dots, r_h) \subseteq \left\{ r+1, \dots , s-1\right\}$ or $i, i+1 \in \cR$. The relations are modified into:
$$
\sigma_i (A_{r, r_1} \cdots A_{r, r_h}) \, b_{rs} \, (A_{r, r_h}^{-1} \cdots A_{r, r_1}^{-1}) = (A_{r, r_1} \cdots A_{r, r_h}) \, b_{rs} \, (A_{r, r_h}^{-1} \cdots A_{r, r_1}^{-1}) \sigma_i. $$

If $i, i+1 \in \cR$, then in the conjugations appears the pair $A_{ri} A_{r, i+1}$, with whom $\sigma_i$ commutes. The generator $\sigma_i$ can slide on both hand sides adjacent to the bond.

Otherwise, for $i, i+1 \notin \mathcal{R}$, either $A_{r, r_j}^{\pm 1}$ slides at the right or at the left of $\sigma_i$ without obstructions, so that the latter can be brought closer to the bond. In both cases the relation then reduces to:
\begin{equation} \label{over81}
    \sigma_i \, b_{rs} = b_{rs} \, \sigma_i.
\end{equation}

\bigbreak
\noindent {\it Relations 8.2):}  $\sigma_i \, b_{rs}^{\mathcal{R}} = b^{\mathcal{R'}}_{rs} \sigma_i$ for $1 \leq r < i < s-1 < n, \  \mathcal{R},\mathcal{R^\prime} \subseteq \{ r+1, \dots , s-1 \}$ equal except $i \in \cR , \, i +1 \notin \cR \   \& \ i \notin \cR', i+1 \in \cR'$. 
The relations are modified into:
\[
\begin{split}
    \underline{\sigma_i} (A_{r, r_1}  \cdots A_{r, r'} \, \underline{A_{r, i}} \, A_{r, r''} &\cdots A_{r, r_h}) \, b_{rs} \,  (A_{r, r_h}^{-1} \cdots  A_{r, i}^{-1} \cdots A_{r, r_1}^{-1}) \\ & = (A_{r, r_1} \cdots  A_{r, i+1} \cdots A_{r, r_h}) \, b_{rs} \, (A_{r, r_h}^{-1} \cdots A_{r, r''}^{-1} \, \underline{A_{r, i+1}^{-1}} \, A_{r, r'}^{-1} \, \cdots A_{r, r_1}^{-1}) \underline{\sigma_i}
\end{split}
\]

In the braid group we have that, from relation $M_3)$ of Proposition~\ref{redBn}, $\sigma_i A_{r, i} = A_{r, i+1} \sigma_i$. Then $\sigma_i$ and its inverse can slide closer to $b_{rs}$. They pass the outer blocks $(A_{r, r_1} \dots A_{r, r'})^{\pm 1}$ with whom they commute because the indices are separated, then they interact with $A_{r, i}$ and $A_{r, i+1}^{-1}$, and in conclusion they pass the innermost blocks $(A_{r, r''}\cdots A_{r, r_h})^{\pm 1}$.
The relation becomes:
\begin{equation} \label{over82}
    \begin{split}
        (A_{r, r_1} \cdots A_{r, i+1} \cdots A_{r, r_h}) \sigma_i \, &b_{rs} \, (A_{r, r_h}^{-1} \cdots  A_{r, i}^{-1} \cdots A_{r, r_1}^{-1}) \\  = (A_{r, r_1} \cdots  A_{r, i+1} \cdots A_{r, r_h}) & \, b_{rs}\, \sigma_i (A_{r, r_h}^{-1} \cdots A_{r, i}^{-1} \cdots A_{r, r_1}^{-1}) \\
        & \iff \sigma_i \, b_{rs}  = b_{rs} \, \sigma_i.
    \end{split}
\end{equation}

For concluding the proof of the Theorem combine the relations unaffected by threading the underpassing bonds with Equations ~\ref{over3}, ~\ref{over4}, ~\ref{over51}, ~\ref{over53}, ~\ref{over7}, ~\ref{over81}, and ~\ref{over82}.  
\end{proof}


\section{Homomorphisms of the Bonded Braid Monoid to the Artin Braid Group} \label{Representations}

In this section we construct a homomorphism $\Phi$ of the bonded braid monoid $BB_n$ to the classical braid group $B_n$, which is surjective but not injective. We will then generalise it to a family of homomorphisms $\Phi_p$, $p \in \mathbb{Z}$. This is, in particular, an algebraic analogous of the tangle insertion, which is a topological tool used in constructing invariants in various applications. 
 The underlying idea is that bonds preserve the induced permutation of the braid strands. A tangle insertion of a bond then calls for a local pure braid. We now state the following result, which is one of the main targets of this work:

\begin{theorem} \label{mainthm} The map $\Phi \colon BB_n \to B_n$ that maps
\[
\begin{array}{lcl}
     \sigma_k^{\pm 1} & \mapsto  & \sigma_k^{\pm 1} \vspace{1.5mm}\\
     b_{ij} & \mapsto  & A_{ij} \vspace{1.5mm} \\
     b_{ij}^{(l_1,\dots , l_h)} & \mapsto  & A_{i,l_1} \cdots A_{i,l_h} A_{ij} A_{i,l_h}^{-1} \cdots A_{i,l_1}^{-1}
\end{array}
\]
with $A_{im}$ the right pure braid generator for $1 \leq i < m \leq n$, induces a surjective but not injective homomorphism of monoids. 
\end{theorem}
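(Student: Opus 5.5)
Since a monoid homomorphism out of a finitely presented monoid is determined by images of generators that satisfy the defining relations, the plan is to work with the uniform overpass presentation of $BB_n$ from Theorem~\ref{oBB}. By Proposition~\ref{oversuff} and the right-threading formula Eq.~\ref{undertoover}, the image assigned to $b_{ij}^{(l_1,\dots,l_h)}$ is forced once $\Phi(b_{ij})=A_{ij}$ and $\Phi(\sigma_k^{\pm1})=\sigma_k^{\pm1}$ are fixed. So the third line of the definition is consistent with the first two and imposes nothing new. It then suffices to check that the assignment $\sigma_k^{\pm1}\mapsto\sigma_k^{\pm1}$, $b_{ij}\mapsto A_{ij}$ respects relations 1)--9) of Theorem~\ref{oBB}. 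Here $\sigma_k^{-1}\mapsto\sigma_k^{-1}$ is compatible because $\sigma_k\sigma_k^{-1}=1$ holds in $B_n$.

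For the check, I will use the redundant presentation of $B_n$ in Proposition~\ref{redBn}, taking the relations one at a time.
\begin{itemize}
\item Relations 1) and 2) go to $\Sigma_1)$ and $\Sigma_2)$.
\item Relation 3) goes to $A_{ij}A_{rs}=A_{rs}A_{ij}$ for non-separating pairs, which is $P_1)$.
\item Relation 4), $\sigma_i^{-1}A_{i+1,j}\sigma_i=A_{ij}$, is $M_1)$ with shifted indices. Relation 5), $\sigma_{j-1}A_{i,j-1}\sigma_{j-1}^{-1}=A_{ij}$, is $M_3)$ inverted, i.e.\ $\sigma_{j-1}^{-1}A_{ij}\sigma_{j-1}=A_{i,j-1}$.
\item Relation 6) becomes $\sigma_iA_{i,i+1}=A_{i,i+1}\sigma_i$, which is immediate from $\Sigma_3)$, $A_{i,i+1}=\sigma_i^2$.
\item Relation 7), for $i\notin\{r-1,r,s-1,s\}$, becomes $\sigma_iA_{rs}=A_{rs}\sigma_i$, which is $M_5)$. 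When $r<i<s-1$ I will verify it directly from the word $A_{rs}=\sigma_r^{-1}\cdots\sigma_{s-2}^{-1}\sigma_{s-1}^2\sigma_{s-2}\cdots\sigma_r$: push $\sigma_i$ through using $\Sigma_1)$ and $\Sigma_2)$ in the form $\sigma_{i-1}^{-1}\sigma_i^{-1}\,\sigma_i\ldots$, i.e.\ $\sigma_i\sigma_{i-1}^{-1}\sigma_i^{-1}=\sigma_{i-1}^{-1}\sigma_i^{-1}\sigma_{i-1}$. This is the algebraic form of the fact that the loop passes over the intermediate strands.
\item Relations 8) and 9) become $\sigma_i^2\sigma_{i+1}\sigma_i=\sigma_{i+1}\sigma_i\sigma_{i+1}^2$ and $\sigma_i\sigma_{i+1}\sigma_i^2=\sigma_{i+1}^2\sigma_i\sigma_{i+1}$. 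Each follows from two applications of $\Sigma_2)$.
\end{itemize}

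The main obstacle is bookkeeping. I need to make sure the index ranges in Theorem~\ref{oBB} are matched by the conjugation relations $M_1)$--$M_5)$, with the correct exponent signs. Relation 7) is the most delicate, because its range covers $i$ strictly between $r$ and $s-1$ while $M_5)$ is stated for the same range. As a fallback there I would use the geometric argument that the overpassing loop commutes with crossings beneath it, translated into the braid-word manipulation above.

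Surjectivity is immediate, since every $\sigma_k^{\pm1}$ lies in the image. Non-injectivity follows because $b_{12}$ and $\sigma_1^2$ both map to $A_{12}$. They are distinct in $BB_n$: the forgetful map to $B_n$ sending bonds to $1$ is compatible with all the relations, and it separates them, sending $b_{12}\mapsto 1$ and $\sigma_1^2\mapsto\sigma_1^2\neq1$.
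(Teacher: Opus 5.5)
Your proposal is correct and follows the paper's proof. Both reduce to checking the relations of the uniform overpass presentation (Theorem~\ref{oBB}) against the redundant presentation of $B_n$ in Proposition~\ref{redBn}, with the image of $b_{ij}^{\mathcal{I}}$ forced by right-threading, and you match relations 3)--7) with $P_1)$, $M_1)$, $M_3)$, $\Sigma_3)$, $M_5)$ exactly as the paper does. The only differences are small improvements: you verify relations 8) and 9) directly from $\Sigma_2)$ using $A_{i,i+1}=\sigma_i^2$ (the paper detours through $M_4)$ and $M_2)$), and you justify $b_{12}\neq\sigma_1^2$ in $BB_n$ via the forgetful map, which the paper only asserts.
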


\begin{proof} On a word $w = w_1 \cdots w_n \in BB_n$, $\Phi$ is defined as $\Phi(w) = \Phi(w_1) \cdots \Phi(w_n)$. By Proposition~\ref{oversuff} and Theorem~\ref{oBB} it suffices to prove the validity of the Theorem for the mapping of the $\sigma_k$'s and the uniform overpass generators  $b_{ij}$. Then, by virtue of Eq.~\ref{undertoover}, the Theorem will be also valid for the mapping of the generic bonds $b_{ij}^{(l_1, \dots , l_h)}$. 

\smallbreak
\noindent \textit{Relations  1), 2)} of Theorem~\ref{oBB} are the usual braid  relations. We shall verify that relations 3) to 9) of Theorem~\ref{oBB} are still satisfied when $b_{ij}$ is replaced by $A_{ij}$.  For this purpose it is convenient to use for the braid group $B_n$ the redundant presentation given in Proposition~\ref{redBn}, with generators $\sigma_k$ and $A_{ij}$. 
In fact, as we shall see, six relations of Theorem~\ref{oBB} are mapped to six distinct defining relations of the presentation in Proposition~\ref{redBn}. The other three are mapped in particular cases of the remaining ones. 

\bigbreak
\noindent \textit{Relations 3):} $b_{ij} \, b_{rs} \, = \, b_{rs} \, b_{ij} \, \mapsto \, A_{ij} A_{rs} = A_{rs}A_{ij}$ : both the cases  $i<j<r<s$ and $i<r<s<j$ are included in relations $P_1)$ of Proposition~\ref{redBn}.

\bigbreak
\noindent \textit{Relations 4):} $\sigma_i^{-1} \, b_{i+1, j} \, \sigma_{i} \, = \, b_{ij} \,  \mapsto \, \sigma_i^{-1} A_{i+1, j} \sigma_i = A_{ij}$ : rescaling $i \to i-1$ the relations become relations $M_1)$ of Proposition~\ref{redBn}.

\bigbreak
\noindent \textit{Relations 5):} $\sigma_{j-1} \, b_{i, j-1} \, \sigma_{j-1}^{-1} = b_{ij} \, \mapsto \, \sigma_{j-1} A_{i, j-1} \sigma_{j-1}^{-1} = A_{ij}$  : moving the $\sigma_{j-1}$ and $\sigma_{j-1}^{-1}$ to the other hand side the relations become relations $M_3)$ of Proposition~\ref{redBn}.

\bigbreak
\noindent \textit{Relations 6):} $\sigma_i \, b_i \, = \, b_i \, \sigma_i \, \mapsto \, \sigma_i A_{i, i+1} = A_{i, i+1} \sigma_i$ : clearly valid by relations $\Sigma_3)$ of Proposition~\ref{redBn}.

\bigbreak
\noindent \textit{Relations 7):} $\sigma_i \, b_{rs} \, = \, b_{rs} \, \sigma_i \mapsto \sigma_i A_{rs} = A_{rs} \sigma_i$ : these are relations $M_5)$ of Proposition~\ref{redBn}.

\bigbreak
\noindent \textit{Relations 8):} $b_i \sigma_{i+1} \sigma_i \, = \, \sigma_{i+1}\sigma_i b_{i+1} \, \mapsto \, A_{i, i+1} \, \sigma_{i+1} \sigma_i \, = \, \sigma_{i+1}\sigma_i \, A_{i+1, i+2}$ : rewrite it as: \\ 
$\sigma_{i+1}^{-1} A_{i, i+1} \sigma_{i+1}  \, = \, \sigma_i A_{i+1, i+2} \sigma_i^{-1}$. Insert a pair of $\sigma_i \sigma_i^{-1}$ in the right-hand side as follows and proceed:
\[ 
    \sigma_i \, A_{i+1, i+2} \, \sigma_i^{-1} = \underline{\sigma_i (\sigma_i} \, \underline{\sigma_i^{-1}) A_{i+1, i+2} (\sigma_i} \, \underline{\sigma_i^{-1}) \sigma_i^{-1}} = A_{i, i+1} A_{i, i+2} A_{i, i+1}^{-1}
 \]
where in the last equality we have used relations $\Sigma_3)$ of Proposition~\ref{redBn} on $\sigma_i^{\pm 2}$ and  relations $M_1)$ on $\sigma_i^{-1} \, A_{i+1, i+2} \, \sigma_i$. Recomposing, we get
$$\sigma_{i+1}^{-1} A_{i, i+1} \sigma_{i+1} = A_{i, i+1} A_{i, i+2} A_{i, i+1}^{-1}$$
which is relations $M_4)$ of Proposition~\ref{redBn} in the case of $j = i+1$.

\bigbreak
\noindent \textit{Relations 9):} $\sigma_i \sigma_{i+1} b_i \, = \, b_{i+1} \sigma_i \sigma_{i+1} \, \mapsto \, \sigma_i \sigma_{i+1} \, A_{i, i+1} \, = \, A_{i+1, i+2} \, \sigma_i \sigma_{i+1}$ : rewrite it as:\\
$ \sigma_{i+1} A_{i, i+1} \sigma_{i+1}^{-1} \, = \, \sigma_i^{-1} A_{i+1, i+2} \sigma_i $ and conjugate both hand sides by $\sigma_i^{-1}$:
$$ \sigma_i^{-1} \underline{\sigma_{i+1} A_{i, i+1} \sigma_{i+1}^{-1}} \sigma_i \, = \, \underline{\sigma_i^{-1} \sigma_i^{-1}} A_{i+1, i+2} \underline{\sigma_i \sigma_i} \iff \sigma_i^{-1} A_{i, i+2} \sigma_i = A_{i, i+1}^{-1} A_{i+1, i+2} A_{i, i+1}$$
where for obtaining the last equality we have used a variation of  relations $M_3)$ of Proposition~\ref{redBn}   on the left-hand side and relations $\Sigma_3)$ on the right-hand side. This last equation is relations $M_2)$ of Proposition~\ref{redBn} when $j = i+2$.

\smallbreak
The previous part of the proof shows that the images via $\Phi$ of the relations in the presentation of $BB_n$ in Theorem~\ref{oBB} are relations in $B_n$. 
This proves that $\Phi$ is indeed a homomorphism of monoids. Moreover, since $\Phi$ restricted on the braid generators is the identity map, we get that $\Phi$ is surjective. Furthermore, $\Phi$ is not injective, since $b_{ij}$ and $A_{ij}$ are mapped to the same element.
\end{proof}

\begin{remarks} \label{cyclicnotbond}
There are relations in $B_n$ which do not come from relations in $BB_n$ involving exclusively bonds.
For example, the cyclic relations $P_2)$ and $P_3)$ of Theorem~\ref{redBn} (recall illustrations in Figure~\ref{cyclicrels}) are not included in the image set of the relations of the bonded braid monoid involving exclusively bonds. However, allowing a mix of bonds and pure braid generators, this can be the case. For example, $P_2)$ is the image under $\Phi$ of the following relation in $BB_n$, illustrated in Figure~\ref{kernelphi}:
$$
A_{ir} A_{is} \, b_{rs} \, = \, b_{rs} \, A_{ir} A_{is}
$$ 
 In the same spirit relations $P_4)$ are in the image set under $\Phi$ of relations in Theorem~\ref{oBB}.
Consider, for example, Eq.~\ref{bondslide} and apply first the threading and then the map $\Phi$:
$$b_{ij} \, b_{rs}^{(j)} \, = \, b_{rs}^{(j)} \, b_{ij} \quad \mapsto \quad b_{ij} \, A_{rs} \, b_{rs} \, A_{rj}^{-1} = A_{rs} \, b_{rs} \, A_{rj}^{-1} \, b_{ij} \quad \stackrel{\Phi}{\mapsto} \quad A_{ij} \, A_{rs} \, A_{rs} \, A_{rj}^{-1} = A_{rs} \, A_{rs} \, A_{rj}^{-1} \, A_{ij}$$

\begin{figure}[H]
\begin{center} 
    \includegraphics[width=11cm]{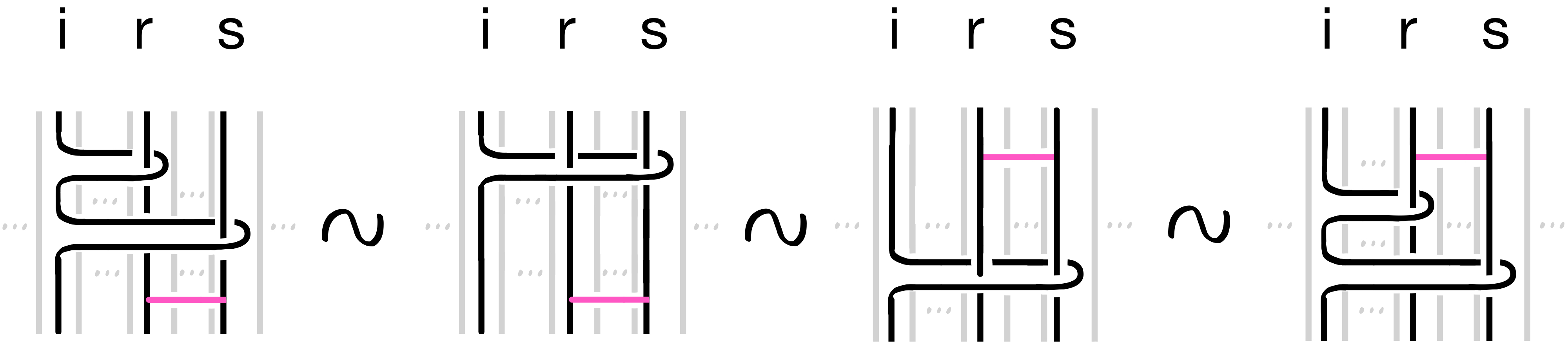}
\end{center}
\caption{ The relation $A_{ir} \, A_{is} \, b_{rs} \, = \, b_{rs} \, A_{ir} \, A_{is}$: the bond $b_{rs}$ passes through the loop formed by $A_{ir} \, A_{is}$}
\label{kernelphi}
\end{figure}

The above remark leads to considerations about the kernel  $\mathrm{ker} \, \Phi$ of $\Phi$.  $\mathrm{ker} \, \Phi$ contains elements in the \textit{pure bonded braid monoid}, which is the submonoid of all bonded braids inducing the trivial permutation. For example, all expressions of the form $b_{ij}\,A_{ij}^{-1} \in \mathrm{ker} \, \Phi$.

Along the same thread, it is worth pointing out that the element $A_{ij}^{-1}$ can only appear identically in the image set of $\Phi$. Enhancing a bond with a
formal inverse leads to extending the bonded braid monoid to the \textit{bonded braid group} $EBB_n$ \cite{DKL}. Then, in the extension of $\Phi$ to the bonded braid group, $A_{ij}^{-1}$ can also be the image of $b_{ij}^{-1}$.

The main differences between pure braid generators and bonds are: first that the former have interesting three-elements interactions (in the form of the cyclic relations) which bonds do not have. Second, in $B_n$ pure braid generators admit expressions in terms of elementary braid generators, as $A_{i, i+1} = \sigma_i^2$. This is not true for bonds, for which the \textit{flype move} holds instead (relation 6) in Definition~\ref{BB}). As noted in the previous Theorem's proof, the image under $\Phi$ of the flype move holds as a relation in $B_n$ because we have $A_{i, i+1} = \sigma_i^2$ (relation $\Sigma_3$ in Proposition~\ref{redBn}).

The above considerations will be the subject of study of sequel work. 

\end{remarks}

\begin{remark} \label{alternativemainthm}
An alternative proof of Theorem~\ref{mainthm} consists in applying $\Phi$ directly on the full set of generators and relations of $BB_n$ in the presentation of Definition~\ref{BB}. The proof would work as in Theorem~\ref{oBB}: relations are first mapped via $\Phi$ and then threaded to the familiar forms of relations in Proposition~\ref{redBn}.
Following this approach, some computations in the application of the threading algorithm are shortened (but the majority would get much more complicate).

Take for example relations 4) in Definition~\ref{BB}. In Step II) of the general case $r, s \in \cI$  of the proof of Theorem~\ref{oBB}, the vertex slide moves (relations $5.1)-5.4)$) are used to prove the equality $(A_{ir} A_{il}^{-1} A_{ir}^{-1}) \, b_{rs} = b_{rs} \, (A_{ir} A_{il}^{-1} A_{ir}^{-1})$. The vertex slide moves involve elementary braid generators. However, by first mapping relations 4) via $\Phi$ and then threading, the above equality becomes $(A_{ir} A_{il}^{-1} A_{ir}^{-1}) \, A_{rs} = A_{rs} \, (A_{ir} A_{il}^{-1} A_{ir}^{-1})$, which corresponds to relations $P_4)$ of Proposition~\ref{redBn}. 
Therefore, the verification that the image via $\Phi$ of relation 4) in Definition~\ref{BB} holds as a relation in $B_n$ can be computed within the subgroup $P_n$ without interacting with the elementary braid generators. 
Something similar is true for Eq.~\ref{bondslidegeneral}. Map it first to $B_n$ via $\Phi$, then thread. The relation is mapped to: 
\[
\begin{split}
    A_{i, i_1} \dots A_{i, ih} A_{ij} A_{i, i_h}^{-1} & \dots A_{i, i_1}^{-1} \, A_{r, r_1} \dots A_{rj} \dots A_{r, r_l} A_{rs} A_{r, r_l}^{-1} \dots A_{rj}^{-1} \dots A_{r, r_1}^{-1} \\ & = A_{r, r_1} \dots A_{rj} \dots A_{r, r_l} A_{rs} A_{r, r_l}^{-1} \dots A_{rj}^{-1} \dots A_{r, r_1}^{-1} \, A_{i, i_1} \dots A_{i, ih} A_{ij} A_{i, i_h}^{-1} \dots A_{i, i_1}^{-1}
\end{split}
\]
where $\cR =(r_1, \dots , j , \dots r_l) \subseteq \cI = (i_1 , \dots, i_h)$ in the interval $\{ r+1 , \dots , j-1 \}$. With a bit of work it is possible to prove that this relation is equivalent to Eq.~\ref{sliderelA} only using pure braid relations. 
Similarly, the bond threading of relations 3) in Theorem~\ref{oBB} did not make use of computations involving elementary braid generators.  As a consequence, by first mapping via $\Phi$ relations 3), all computations of the threading in the image are carried out in $P_n$.
\end{remark}

Relations 3), 4) in Definition~\ref{BB}, and Eq.~\ref{bondslidegeneral} and its symmetric, Eq.~\ref{bondslidegeneral2} showcase every possible bond-bond interaction. We can then define:

\begin{definition} 
The \textit{submonoid of bonds} in $BB_n$, denoted $bB_n$ is the monoid generated by all $b_{ij}^{\cI}$, for $\mathcal{I} = (l_1, \dots , l_k)$, $i < l_1 < \dots < l_k < j$, subject to Relations 3), 4) in Definition~\ref{BB}, and Eq.~\ref{bondslidegeneral} and its symmetric, Eq.~\ref{bondslidegeneral2}. Furthermore, $bB_n$ extends to the \textit{subgroup  of bonds} in the bonded braid group $EBB_n$, denoted $EbB_n$. 
\end{definition}

Therefore, we have obtained the following: 
\begin{corollary} 
The restriction of $\Phi$ to the submonoid of bonds $bB_n$ in $BB_n$ is a homomorphism of monoids to the pure braid group $P_n$: 
$$
\Phi : bB_n \mapsto P_n
$$
This is not an injective morphism since for $i < r < s$ the words $b_{is}^{(r)} \, b_{ir}$ and $b_{ir} \, b_{is}$ have both as image $A_{ir} \, A_{is}$. Moreover, as the inverses of pure braid generators are not included in the image of bonds, it is also not surjective. Yet, it becomes surjective from the group of bonds $EbB_n$.
\end{corollary}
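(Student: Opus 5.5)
The corollary has four parts: the restriction is a homomorphism, its image lies in $P_n$, it is not injective, and it is not surjective (though it becomes surjective on $EbB_n$). I will treat them in that order.

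\emph{Homomorphism.} This is the restriction of the monoid homomorphism $\Phi$ of Theorem~\ref{mainthm} to the submonoid $bB_n\subseteq BB_n$. All defining relations of $bB_n$ (relations 3), 4) of Definition~\ref{BB}, Eq.~\ref{bondslidegeneral} and Eq.~\ref{bondslidegeneral2}) hold in $BB_n$; the latter two were derived there in the Remark following Definition~\ref{BB}. So the canonical map $bB_n\to BB_n$ is a monoid homomorphism, and the restriction is its composite with $\Phi$.

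\emph{Image in $P_n$.} Each generator $b_{ij}^{\cI}$ is sent to $A_{i,l_1}\cdots A_{i,l_k}A_{ij}A_{i,l_k}^{-1}\cdots A_{i,l_1}^{-1}$. This is a word in Artin's generators, so it lies in $P_n$ by Theorem~\ref{thm:Ar}. Since $P_n$ is a subgroup, every product of such images also lies in $P_n$.

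\emph{Non-injectivity.} I compute the two images directly. By definition, $\Phi(b_{ir}b_{is})=A_{ir}A_{is}$. For the other word, $\Phi(b_{is}^{(r)})=A_{ir}A_{is}A_{ir}^{-1}$, so
$$\Phi(b_{is}^{(r)}\,b_{ir})=A_{ir}A_{is}A_{ir}^{-1}A_{ir}=A_{ir}A_{is}.$$
The two words are distinct in $bB_n$. To see this, I plan to use an invariant preserved by every defining relation of $bB_n$. Each such relation is a commutation of two letters with index pairs that are either nested, disjoint or alternating. None of them relates a pair of bonds sharing the endpoint $i$ with the same left index. So the ordered sequence of the bonds attached at strand $i$, recorded together with their underpass data, is invariant. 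In $b_{is}^{(r)}b_{ir}$ this sequence starts with $b_{is}^{(r)}$, and in $b_{ir}b_{is}$ it starts with $b_{ir}$, so the words differ. This distinctness check is the only step needing care; a topological alternative would be to note that the two bonded braids are not isotopic.

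\emph{Non-surjectivity.} I will show that $A_{12}^{-1}$ is not in the image. Consider the homomorphism $P_n\to\mathbb{Z}$ obtained by restricting the exponent sum (abelianization) map $B_n\to\mathbb{Z}$, $\sigma_k\mapsto1$. Every $\Phi(b_{ij}^{\cI})$ is a conjugate of $A_{ij}$, which has exponent sum $2$. Hence every nonempty word in $bB_n$ maps to an element of positive exponent sum, and the empty word maps to $1$. But $A_{12}^{-1}=\sigma_1^{-2}$ has exponent sum $-2$, so it is not in the image.

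\emph{Surjectivity on $EbB_n$.} In $EbB_n$ the formal inverses $b_{ij}^{-1}$ exist. The extended map sends them to $A_{ij}^{-1}$, so its image contains all of Artin's generators and their inverses, and therefore all of $P_n$.
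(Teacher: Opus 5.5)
Your proposal is correct and follows the paper's route. The corollary is obtained by restricting $\Phi$ from Theorem~\ref{mainthm}: the same pair $b_{is}^{(r)}b_{ir}$, $b_{ir}b_{is}$ witnesses non-injectivity, the inverses $A_{ij}^{-1}$ witness non-surjectivity, and $\Phi(b_{ij}^{-1})=A_{ij}^{-1}$ gives surjectivity on $EbB_n$. You also supply two checks the paper leaves implicit: that the two words really are distinct in $bB_n$, and the exponent-sum argument showing that no product of conjugates of the $A_{ij}$ equals $A_{12}^{-1}$. For the distinctness, the precise fact behind your strand-$i$ invariant is that every defining relation commutes two bonds with four distinct endpoints (your phrase ``with the same left index'' is narrower than what the invariant needs); and since all relations are commutations, even the multiset of letters is invariant, which already separates $b_{is}^{(r)}$ from $b_{is}$.
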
 

We conclude this section with a generalisation  of Theorem~\ref{mainthm}.

\begin{theorem} \label{mainthm2} The map $\Phi_p \colon BB_n \to B_n$, $p \in \mathbb{Z}$, that sends
\[
\begin{array}{lcl}
\sigma_k^{\pm 1} & \mapsto & \sigma_k^{\pm 1} \vspace{1.7mm}   \\ 

b_{ij} & \mapsto  & A_{ij}^p \vspace{1.5mm}  \\ 

b_{ij}^{(l_1,\cdots , l_k)} & \mapsto  & A_{i,l_1} \cdots A_{i,l_k} \, A_{ij}^p \, A_{i,l_k}^{-1} \cdots A_{i,l_1}^{-1}, \quad i<l_1<\dots<l_k<j
\end{array}
\]
with $A_{ij}^p$ the $p$-th power of the right pure braid generator $A_{ij}$, defines a surjective but not injective homomorphism of monoids for all $p \in \mathbb{Z}$. For $p=0$, the map $\Phi_0$ is the forgetful map trivializing all bonds.
\end{theorem}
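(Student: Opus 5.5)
The plan is to mirror the proof of Theorem~\ref{mainthm}. By Proposition~\ref{oversuff} and Theorem~\ref{oBB}, it suffices to define $\Phi_p$ on the generators $\sigma_k^{\pm1}$ and $b_{ij}$. We then check that every relation 1)--9) of the uniform overpass presentation is sent to a valid identity in $B_n$. The value on the mixed bonds $b_{ij}^{\cI}$ is forced by the threading formula Eq.~\ref{undertoover}: $\Phi_p$ applied to $A_{i,l_1}\cdots A_{i,l_k}\, b_{ij}\, A_{i,l_k}^{-1}\cdots A_{i,l_1}^{-1}$ gives exactly the stated conjugate of $A_{ij}^p$. Here we must note that $A_{i,l}$, viewed as a word in the $\sigma$'s, is fixed by $\Phi_p$. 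Relations 1) and 2) are untouched. For the remaining relations, the key observation is that each relation verified in the proof of Theorem~\ref{mainthm} has the shape ``$X A Y = Z B W$'' or ``$g A g^{-1} = B$'' with $A,B$ single pure generators. Replacing $A_{ij}$ by $A_{ij}^p$ preserves such identities, because conjugation is a homomorphism and commuting elements have commuting powers.

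Concretely, the verification runs through the relations as follows. Relation 3) maps to $A_{ij}^pA_{rs}^p=A_{rs}^pA_{ij}^p$, which follows from $P_1)$, since $A_{ij}$ and $A_{rs}$ commute. Relation 4) maps to $\sigma_i^{-1}A_{i+1,j}^p\sigma_i=(\sigma_i^{-1}A_{i+1,j}\sigma_i)^p=A_{ij}^p$ by $M_1)$. Relation 5) follows in the same way from $M_3)$, and relation 7) follows from $M_5)$ by raising both sides of a conjugation identity to the $p$-th power. Relation 6) becomes $\sigma_i\sigma_i^{2p}=\sigma_i^{2p}\sigma_i$ via $\Sigma_3)$, which is trivial. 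For relations 8) and 9), the proof of Theorem~\ref{mainthm} established the identities $A_{i,i+1}\sigma_{i+1}\sigma_i=\sigma_{i+1}\sigma_iA_{i+1,i+2}$ and $\sigma_i\sigma_{i+1}A_{i,i+1}=A_{i+1,i+2}\sigma_i\sigma_{i+1}$ in $B_n$. Equivalently, $A_{i+1,i+2}=(\sigma_{i+1}\sigma_i)^{-1}A_{i,i+1}(\sigma_{i+1}\sigma_i)$, and similarly for 9). Taking $p$-th powers of these conjugation identities gives the images of 8) and 9) under $\Phi_p$. The same works for negative $p$, since conjugation respects inverses.

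Surjectivity follows exactly as before, because $\Phi_p$ restricts to the identity on the $\sigma_k^{\pm1}$. For non-injectivity, take $p\neq 0$: the bonded braid $b_{ij}^{\,}$ and the braid word $A_{ij}^p$ in $\sigma$'s are distinct elements of $BB_n$ with the same image. For $p=0$, the image of $b_{ij}$ is $1$, so $b_{ij}$ and the empty word are identified, and $\Phi_0$ is precisely the forgetful map that deletes all bonds, since every mixed bond then maps to $A\cdot 1\cdot A^{-1}=1$. The only point requiring care, and the step I expect to be the main obstacle, is making sure that no relation of Theorem~\ref{oBB} was verified in Theorem~\ref{mainthm} by a manipulation that genuinely uses the first power. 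One candidate is the flype relation 6), which relied on $A_{i,i+1}=\sigma_i^2$. The other candidates are 8) and 9), which used $\Sigma_3)$ internally, but only on the conjugating $\sigma_i^{\pm2}$ and not on the bond image. I would therefore recheck these three explicitly by rewriting each as a conjugation identity before raising it to the $p$-th power, as sketched above.
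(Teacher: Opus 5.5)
Your proposal is correct and follows essentially the same route as the paper: you reduce via Proposition~\ref{oversuff} and Theorem~\ref{oBB} to the overpass generators, then check relations 3)--9) using $P_1)$, $M_1)$, $M_3)$, $M_5)$ and $\Sigma_3)$, and for 8) and 9) the paper likewise ends by raising a conjugation identity (obtained from $M_4)$, resp.\ $M_2)$) to the $p$-th power. The obstacle you anticipated at 6), 8) and 9) does not arise, since each of these is a commutation or conjugation identity and so survives passage to $p$-th powers; the only tacit point, which the paper shares, is that $b_{ij}\neq A_{ij}^p$ (and $b_{ij}\neq 1$) in $BB_n$, and this holds because every defining relation preserves the number of bonds.
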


\begin{proof}
As in Theorem~\ref{mainthm}, we shall define $\Phi_p$ on a word $w = w_1 \cdots w_n \in BB_n$ as $\Phi_p(w) = \Phi_p(w_1) \cdots \Phi_p(w_n)$. The image of a generic bond bonds $b_{ij}^{\cI}$ is compatible with Eq.~\ref{undertoover}, so by Proposition~\ref{oversuff} and Theorem~\ref{oBB} it suffices to prove the Theorem based on the images of the $\sigma_k$ and the overpassing bonds. Similarly to Theorem~\ref{mainthm}, we shall verify the statement only on relations $3)$ to $9)$ of Theorem~\ref{oBB}.

    \bigbreak
    \noindent \textit{Relations 3):} $b_{ij} \, b_{rs} \, = \, b_{rs} \, b_{ij} \, \mapsto \, A_{ij}^p \, A_{rs}^p = A_{rs}^p \, A_{ij}^p$ : using relations $P_1)$ of Proposition~\ref{redBn}, the relation can be rewritten as $(A_{ij} \, A_{rs})^p = (A_{rs} \, A_{ij})^p$, which follows from relations $P_1)$ in both  cases $i<j<r<s$ and $i<r<s<j$.

    \bigbreak
\noindent \textit{Relations 4):} $\sigma_i^{-1} \, b_{i+1, j} \, \sigma_{i} \, = \, b_{ij} \,  \mapsto \, \sigma_i^{-1} A_{i+1, j}^p \sigma_i = A_{ij}^p$ : placing in between every two factors in the left-hand side the pair $\sigma_i^{-1} \sigma_i$ the relations are proven true by relations $M1)$ of Proposition~\ref{redBn}.

\bigbreak
\noindent \textit{Relations 5):} $\sigma_{j-1} \, b_{i, j-1} \, \sigma_{j-1}^{-1} = b_{ij} \, \mapsto \, \sigma_{j-1} A_{i, j-1}^p \sigma_{j-1}^{-1} = A_{ij}^p$ : placing in between every two factors in the right-hand side the pair $\sigma_{j-1} \sigma_{j-1}^{-1}$ and moving the $\sigma_{j-1}$ and $\sigma_{j-1}^{-1}$ from the left to the right-hand side, the relations are proven true by relations $M_3)$ of Proposition~\ref{redBn}.

\bigbreak
\noindent \textit{Relations 6):} $\sigma_i \, b_i \, = \, b_i \, \sigma_i \, \mapsto \, \sigma_i A_{i, i+1}^p = A_{i, i+1}^p \sigma_i$ : clearly valid by relations $\Sigma_3)$ of Proposition~\ref{redBn}.

\bigbreak
\noindent \textit{Relations 7):} $\sigma_i \, b_{rs} \, = \, b_{rs} \, \sigma_i \mapsto \sigma_i A_{rs}^p = A_{rs}^p \sigma_i$ : they follow from relations $M_5)$ of Proposition~\ref{redBn}.

\bigbreak
\noindent \textit{Relations 8):} $b_i \sigma_{i+1} \sigma_i \, = \, \sigma_{i+1}\sigma_i b_{i+1} \, \mapsto \, A_{i, i+1}^p \, \sigma_{i+1} \sigma_i \, = \, \sigma_{i+1}\sigma_i \, A_{i+1, i+2}^p$ : rewrite it as \\ 
$\sigma_{i+1}^{-1} A_{i, i+1}^p \sigma_{i+1}  \, = \, \sigma_i A_{i+1, i+2}^p \sigma_i^{-1}$. Insert $p+1$ pairs of $\sigma_i \sigma_i^{-1}$ in the right-hand side as below and proceed:
\[ 
    \sigma_i \, A_{i+1, i+2}^p \, \sigma_i^{-1} = \underline{\sigma_i (\sigma_i} \, \underline{\sigma_i^{-1}) A_{i+1, i+2} (\sigma_i} \, \underline{\sigma_i^{-1}) A_{i+1, i+2} (\sigma_i} \underline{\sigma_i^{-1} ) } \cdots \underline{ A_{i+1, i+2} (\sigma_i} \underline{\sigma_i^{-1} ) \sigma_i^{-1}} = A_{i, i+1} A_{i, i+2}^p A_{i, i+1}^{-1}
 \]
where in the last equality we have used relations $\Sigma_3)$ of Proposition~\ref{redBn} on $\sigma_i^{\pm 2}$ and relations $M_1)$ $p$ times on all the terms $\sigma_i^{-1} \, A_{i+1, i+2} \, \sigma_i$. Recomposing, we get
$$\sigma_{i+1}^{-1} A_{i, i+1}^p \sigma_{i+1} = A_{i, i+1} A_{i, i+2}^p A_{i, i+1}^{-1} \iff (\sigma_{i+1}^{-1} A_{i, i+1} \sigma_{i+1})^p = (A_{i, i+1} A_{i, i+2} A_{i, i+1}^{-1})^p$$
where the last equation is true from relations $M_4)$ of Proposition~\ref{redBn} in the case of $j = i+1$.

\bigbreak
\noindent \textit{Relations 9):} $\sigma_i \sigma_{i+1} b_i \, = \, b_{i+1} \sigma_i \sigma_{i+1} \, \mapsto \, \sigma_i \sigma_{i+1} \, A_{i, i+1}^p \, = \, A_{i+1, i+2}^p \, \sigma_i \sigma_{i+1}$ : rewrite it as:\\
$ \sigma_{i+1} A_{i, i+1}^p \sigma_{i+1}^{-1} \, = \, \sigma_i^{-1} A_{i+1, i+2}^p \sigma_i $ and conjugate both hand sides by $\sigma_i^{-1}$:
\[  \begin{split}
    \sigma_i^{-1} \underline{\sigma_{i+1} A_{i, i+1}^p \sigma_{i+1}^{-1}} \sigma_i \, = \, \underline{\sigma_i^{-1} \sigma_i^{-1}} A_{i+1, i+2}^p \underline{\sigma_i \sigma_i} & \iff \sigma_i^{-1} A_{i, i+2}^p \sigma_i = A_{i, i+1}^{-1} A_{i, i+2}^p A_{i, i+1} \\ & \iff (\sigma_i^{-1} A_{i, i+2} \sigma_i)^p = (A_{i, i+1}^{-1} A_{i, i+2} A_{i, i+1})^p
\end{split}\]
where for obtaining the last equality we have used a variation of relations  $M_3)$ of Proposition~\ref{redBn} for $p$ times on the left-hand side and $\Sigma_3)$ on the right-hand side. The last equation comes from relations $M_2)$ when $j = i+2$.

\smallbreak
The previous part of the proof shows that the images via $\Phi_p$ of the relations in the bonded braid monoid presentation in Theorem~\ref{oBB} are true by relations in $B_n$. 
This proves that $\Phi_p$ is indeed a homomorphism of monoids. Moreover, since $\Phi_p$ restricted on the braid generators is the identity map, we get that $\Phi_p$ is surjective. Furthermore, $\Phi_p$ is not injective, since $b_{ij}$ and $A_{ij}^p$ are mapped to the same element for $p\neq 0$. For $p=0$ the homomorphism $\Phi_0$ is the forgetful map sending all bonds to the identity element.
\end{proof} 

\begin{remark}
    Theorems~\ref{mainthm} and~\ref{mainthm2} hold analogously for using the left pure braid generators and Eq.~\ref{undertooverleft}. In this case the map $\Phi_p^l \colon BB_n \to B_n$, $p \in \mathbb{Z}$,  sends
\[
\begin{array}{lcl}
\sigma_k^{\pm 1} & \mapsto & \sigma_k^{\pm 1} \vspace{1.7mm}   \\ 

b_{ij} & \mapsto  & A_{ij}^p \vspace{1.5mm}  \\ 

b_{ij}^{(l_1,\cdots , l_k)} & \mapsto  &  A_{l_k, j}^{-1} \cdots A_{l_1, j}^{-1} \, A_{ij}^p \, A_{l_1, j} \cdots A_{l_k, j} , \quad i<l_1<\dots<l_k<j
\end{array}
\]
with $A_{lm}$ the left pure braid generator of indices $1 \leq l < m \leq n$, defines a surjective but not injective homomorphism of monoids for all $p \in \mathbb{Z}$.  
\end{remark}


\section{A new presentation for $P_n$ using the clasp generators} \label{symm}

The pure braid group on $n$ strands is generated by all elements $A_{ij} = \sigma_i^{-1} \sigma_{i+1}^{-1} \cdots \sigma_{j-2}^{-1} \sigma_{j-1}^{2} \sigma_{j-2} \cdots \sigma_i, \, 1 \leq i < j \leq n$,  which including all $A_i \coloneqq A_{i, i+1} = \sigma_i^{2}$, which we call \textit{clasps}. From now on we set 
 $$
 p_i \coloneqq \sigma_i^2
 $$ 
 so $p_i^{-1} = \sigma_i^{-2}$. The goal is to give a presentation for $P_n$ as a subgroup of $B_n$ with normal generators $p_i$ for all $i$. This presentation will be useful when relating to the (tight) bonded braid monoid.

\begin{figure}[H]
\begin{center} 
    \includegraphics[width=2cm]{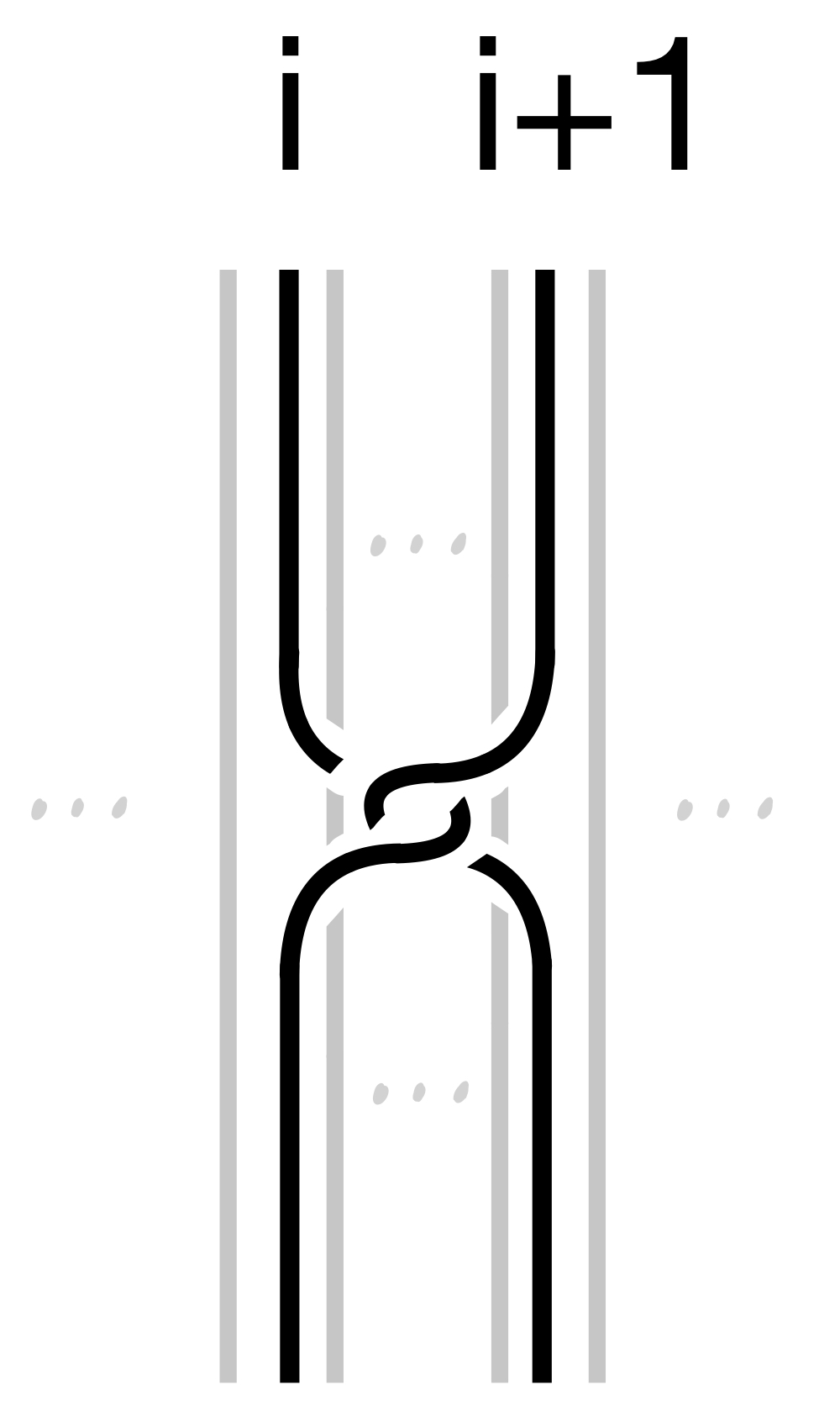}
\end{center}
\caption{The clasp generator $p_i$.}
\label{Pgen}
\end{figure}

\subsection{Rewriting pure braid generators in terms of clasps.}

The usual right loop generators $A_{ij}$ of Eq.~\ref{AG} can be obtained acting by conjugation with a string $\sigma_i \cdots \sigma_{j-2}$ on $p_{j-1}$, namely they can be rewritten as 
\begin{equation} \label{claspgen}
    A_{ij} = \sigma_i^{-1} \ldots \sigma_{j-2}^{-1} \, p_{j-1} \sigma_{j-2} \ldots \sigma_i  :=  p_{ij}  
\end{equation}
 This is not new, however writing them in terms of the $p_i$ helps in focusing on the clasp. 
 
 \smallbreak

The clasp generators $p_i$ and the classical generators $\sigma_i$,  for all $i$, form now an overabundant set for presenting $P_n$, since
\begin{equation} \label{Y}
    p_i \, \sigma_{i+1}^{-1} \sigma_i^{-1} = \sigma_i^2 \sigma_{i+1}^{-1} \sigma_i^{-1} = \sigma_i \sigma_{i+1}^{-1} \sigma_i^{-1} \sigma_{i+1} = \sigma_{i+1}^{-1} \sigma_i^{-1} \sigma_{i+1}^2 = \sigma_{i+1}^{-1} \sigma_i^{-1} p_{i+1}
\end{equation}
equivalently,
\begin{equation} \label{Prel1}
   \sigma_i^{-1} p_{i+1} \sigma_i = \sigma_{i+1} p_i \sigma_{i+1}^{-1} \ \Longleftrightarrow \ p_{i+1} \sigma_i \sigma_{i+1} = \sigma_i \sigma_{i+1} \, p_i \ \Longleftrightarrow \ p_{i+1}  = \sigma_i \sigma_{i+1} p_i \, \sigma_{i+1}^{-1} \sigma_i^{-1}.
\end{equation}
In particular, all generators $p_{k}$ can be obtained from suitable conjugation of $p_1$ (or $p_{n-1}$ or of any fixed $p_i$).
This means that one really needs only \textit{one} clasp generator. See Subsection~\ref{irredundant} further below.

\smallbreak

Using braid generators instead of their inverses in Eq.~\ref{Y} leads to the following alternative for Eq.~\ref{Prel1}:
\begin{equation} \label{Prel3}
    \sigma_i \, p_{i+1} \sigma_i^{-1} = \sigma_{i+1}^{-1}  \, p_i  \, \sigma_{i+1} \ \Longleftrightarrow \ \sigma_{i+1} \sigma_i  \, p_{i+1} = p_i  \,  \sigma_{i+1} \sigma_i  \ \Longleftrightarrow \  p_{i+1} = \sigma_i^{-1} \sigma_{i+1}^{-1} \, p_i \, \sigma_{i+1} \sigma_i. 
\end{equation}

All equations come directly from the fact that $p_k = \sigma_k^2$. 
From Eqs.~\ref{Prel1} and~\ref{Prel3} we obtain the following relations, involving the clasp generators and the classical braid group generators: 
\begin{equation} \label{Prel2}
    (\sigma_{i+1} \sigma_i) (\sigma_i \sigma_{i+1}) \, p_i = p_i \, (\sigma_{i+1} \sigma_i) (\sigma_i \sigma_{i+1}) \, , \quad
    (\sigma_i \sigma_{i+1})(\sigma_{i+1} \sigma_i) \, p_{i+1} = p_{i+1} \, (\sigma_i \sigma_{i+1})(\sigma_{i+1} \sigma_i) 
\end{equation}
Other useful relations are: 
\begin{equation} \label{Prel4}
    \sigma_i p_i \sigma_i^{-1} = p_i \quad \text{and} \quad p_i \sigma_i^{-1} = \sigma_i \quad \text{and} \quad \sigma_i^{-1} p_i  = \sigma_i \quad \text{and} \quad p_i \sigma_j = \sigma_j p_i \quad \text{for} \quad  |i-j| \geq 2.
\end{equation}
In order to obtain a presentation for $P_n \unlhd B_n$ using the clasp generators $p_i$ in Eq.~\ref{claspgen}, we will examine how the relations in Proposition~\ref{alternative} are affected when rewriting the $A_{ij}$'s as conjugates of clasp generators and apply Tietze transformations \cite{Tit}. We will use the interaction of clasps with the elementary braid generators $\sigma_i$, from Eq.~\ref{Prel4}.

\subsection{Rewriting Relations 1) in Proposition~\ref{alternative} using the clasp generators.}

Relations 1) of Proposition~\ref{alternative}, $A_{ij} \, A_{rs} = A_{rs} \, A_{ij}$ \ for \ $i<r<s<j$ \ and \ $i<j<r<s$, comprise two subcases, which should be treated separately.

\noindent \textit{Case 1:} $i<j<r<s$. 
\begin{equation*} 
\begin{split}
    (\sigma_i^{-1} & \cdots \sigma_{j-2}^{-1} \, p_{j-1} \,\underline{\sigma_{j-2} \cdots \sigma_i}) (\underline{ \underline{\sigma_r^{-1}} \cdots \sigma_{s-2}^{-1}} \, p_{s-1} \, \sigma_{s-2} \cdots \sigma_r)\\ & = (\sigma_r^{-1} \cdots \sigma_{s-2}^{-1} \, p_{s-1} \, \underline{\sigma_{s-2} \cdots \sigma_r})  (\underline{\sigma_i^{-1} \cdots \sigma_{j-2}^{-1}} \, p_{j-1} \, \sigma_{j-2} \cdots \sigma_i)
\end{split}
\end{equation*}
Since the two blocks between the clasp generators are separated indices-wise in either side of the equation, they commute freely. The equation can then be reorganised into:
\begin{equation*}
\begin{split}
    \sigma_i^{-1} \cdots & \sigma_{j-2}^{-1} \sigma_r^{-1} \cdots \sigma_{s-2}^{-1} \, p_{j-1} \, p_{s-1} \, \sigma_{j-2} \cdots \sigma_i \sigma_{s-2} \cdots \sigma_r\\ &= \sigma_i^{-1} \cdots \sigma_{j-2}^{-1} \sigma_r^{-1} \cdots \sigma_{s-2}^{-1} \, p_{s-1} \, p_{j-1} \, \sigma_{j-2} \cdots \sigma_i \sigma_{s-2} \cdots \sigma_r
\end{split}
\end{equation*}
Deleting the two equal blocks on the right and left of both hand sides, the relation becomes 
\begin{equation} \label{clasp11}
    p_{j-1} \, p_{s-1} \, = \, p_{s-1} \, p_{j-1}.
\end{equation}

\noindent \textit{Case 2:} $i<r<s<j$. 
\begin{equation} \label{altPrel1}
\begin{split}
    (\sigma_i^{-1} \cdots & \sigma_r^{-1} \cdots \sigma_{j-2}^{-1} \, p_{j-1} \,\underline{\sigma_{j-2} \cdots \sigma_r \cdots \sigma_i}) (\underline{ \underline{\sigma_r^{-1}} \cdots \sigma_{s-2}^{-1}} \, p_{s-1} \, \sigma_{s-2} \cdots \sigma_r)\\ & = (\sigma_r^{-1} \cdots \sigma_{s-2}^{-1} \, p_{s-1} \, \underline{\sigma_{s-2} \cdots \sigma_r})  (\underline{\sigma_i^{-1} \cdots \sigma_r^{-1} \cdots \sigma_{j-2}^{-1}} \, p_{j-1} \, \sigma_{j-2} \cdots \sigma_r \cdots \sigma_i)
\end{split}
\end{equation}

The situation is quite similar, but it is not granted that the two central braid blocks commute and can be pushed to the sides of the expressions. The first thing to do is to rewrite the two central blocks. In the left-hand side of Eq.~\ref{altPrel1}, pass $\sigma_r^{-1}$ to the left of $\sigma_{j-2} \cdots \sigma_r \cdots  \sigma_i$
\begin{equation*}
\begin{split}
    \sigma_{j-2} \cdots \sigma_i \underline{\sigma_r^{-1}} \cdots \sigma_{s-2}^{-1} & = \sigma_{j-2} \cdots \sigma_{r+1} \underline{\sigma_r \sigma_{r-1} \sigma_r^{-1}} \sigma_{r-2} \cdots \sigma_i \sigma_{r+1}^{-1} \cdots \sigma_{s-2}^{-1} \\
    & \stackrel{\text{2nd braid rel.}}{=} \sigma_{j-2} \cdots \sigma_{r+1} \underline{\sigma_{r-1}^{-1}} \sigma_{r} \sigma_{r-1} \sigma_{r-2}\cdots \sigma_i \sigma_{r+1}^{-1} \cdots \sigma_{s-2}^{-1} \\ 
    & \stackrel{\text{pass left}}{=} \sigma_{r-1}^{-1} \sigma_{j-2} \cdots \sigma_{r+1}  \sigma_{r} \sigma_{r-1} \sigma_{r-2}\cdots \sigma_i \sigma_{r+1}^{-1} \cdots \sigma_{s-2}^{-1} \\ 
    & \cdots \\
\end{split}
\end{equation*}
Reiterating the algorithm, one gets
\[
\sigma_{j-2} \cdots \sigma_i \sigma_r^{-1} \cdots \sigma_{s-2}^{-1} = \sigma_{r-1}^{-1} \cdots \sigma_{s-3}^{-1}  \sigma_{j-2} \cdots \sigma_i
\]
Analogously, for its inverse in the right-hand side one gets
\[
\sigma_{s-2} \cdots \sigma_r  \sigma_i^{-1} \cdots \sigma_{j-2}^{-1} = \sigma_i^{-1} \cdots \sigma_{j-2}^{-1} \sigma_{s-3} \cdots \sigma_{r-1}
\]
Equation~\ref{altPrel1} can, thus, be rewritten as: 
\begin{equation*}
\begin{split}
    \sigma_i^{-1} \cdots & \sigma_{j-2}^{-1} \, p_{j-1} \, \sigma_{r-1}^{-1} \cdots \sigma_{s-3}^{-1}  \sigma_{j-2} \cdots \sigma_i \, p_{s-1} \, \sigma_{s-2} \cdots \sigma_r\\ & = \sigma_{r}^{-1} \cdots \sigma_{s-2}^{-1} \, p_{s-1} \, \sigma_i^{-1} \cdots \sigma_{j-2}^{-1} \sigma_{s-3} \cdots \sigma_{r-1} \, p_{j-1} \, \sigma_{j-2}  \cdots \sigma_i
\end{split}
\end{equation*} 
In the left-hand side, the block $\sigma_{r-1}^{-1} \cdots \sigma_{s-3}^{-1}$ slides to the left of $p_{j-1}$, since the indices are far apart ($r<s<j$). However, because $i<s<j$, the process of passing $\sigma_{j-2} \cdots \sigma_i$ to the right of $p_{s-1}$ is a bit longer:
\[
\begin{split}
    (\sigma_{j-2} \cdots \sigma_{s+1} \sigma_s \sigma_{s-1} \sigma_{s-2} \cdots\sigma_i) \, \underline{p_{s-1}} = & \sigma_{j-2} \cdots \sigma_{s+1} \sigma_s \underline{\sigma_{s-1} \sigma_{s-2} \,  p_{s-1}} \, \sigma_{s-3} \cdots \sigma_i \\ \stackrel{\text{Eq.}\ref{Prel3}}{=} & \sigma_{j-2} \cdots \sigma_{s+1} \sigma_s \, \underline{p_{s-2}} \, \sigma_{s-1} \sigma_{s-2} \sigma_{s-3} \cdots\sigma_i\\ \stackrel{\text{slide left}}{=} & \, p_{s-2} \, (\sigma_{j-2} \cdots \sigma_{s+1} \sigma_s \sigma_{s-1} \cdots\sigma_i)
\end{split}
\]
The same algorithm can be carried out on the right-hand side, so as to obtain the equivalent relation:

\begin{equation}
    \begin{split}
    \sigma_i^{-1} \cdots & \sigma_{j-2}^{-1} \sigma_{r-1}^{-1} \cdots \sigma_{s-3}^{-1} \, p_{j-1}  \, p_{s-2} \, \sigma_{j-2} \cdots \sigma_i \sigma_{s-2} \cdots \sigma_r \\ & = \sigma_r^{-1} \cdots \sigma_{s-2}^{-1} \sigma_i^{-1} \cdots \sigma_{j-2}^{-1} \, p_{s-2} \, p_{j-1} \, \sigma_{s-3} \cdots \sigma_{r-1} \sigma_{j-2} \cdots \sigma_i
\end{split}
\end{equation}
which simplifies as 
\begin{equation} \label{clasp12}
    p_{j-1}\, p_{s-2} \, = \, p_{s-2} \, p_{j-1}
\end{equation} 
since the braiding sub-blocks on the left and right satisfy
$\sigma_{j-2} \cdots \sigma_i \sigma_{s-2} \cdots \sigma_r = \sigma_{s-3} \cdots \sigma_{r-1} \sigma_{j-2} \cdots \sigma_i$ and can be mutually cancelled.

 A comment on the indices is due. The constraints on the indices in relations 1) of Proposition~\ref{alternative} impose that these new relations are true for $|j-s|>1$. Indeed, we had $i<j<r<s$, while in Case 2), we had $i<r<s<j$, so that in Case 1) the new relations became $p_{j-1} \, p_{s-2} \, = \, p_{s-2} \, p_{j-1}$, rather than $\, p_{j-1} \, p_{s-1} \, = \, p_{s-1} \, p_{j-1}$, so that the distance between $j$ and $s$ was in either case at least $2$ to begin with anyway. 
 
In addition, $j$ was supposed to be greater than $i$, and thus  in Case 1) it could not be chosen to be equal to $1$. 
However, the new relations hold for indices $j-1$ and $s-1$, so choosing $j=2$ produced a relation involving a clasp between the first and the second strands. 
 
 Something analogous happens for Case 2), where $s$ is strictly smaller than $j$ and strictly greater than both $i$ and $r$, with the relation holding for $s-2$. An analogous argument works for the choice $j = n-1$: the initial assumption was $j \leq n$, because $j$ denotes the last strand involved in the braid, the one around which the moving strand would clasp. Subtracting $1$ on both sides, the assumption on the indices becomes $j-1 \leq n-1$, so, when replacing $j-1$ with $j$ the inequality becomes $j \leq n-1$.

\smallbreak
 Therefore, we have proven that, bringing together Eq.~\ref{clasp11} and Eq.~\ref{clasp12}, Relations 1) of Proposition~\ref{alternative} are transformed to the following commuting relations between clasp generators: 
\begin{equation} \label{newrel1}
    p_{j} \, p_{s} \, = \, p_{s} \, p_{j} \quad \text{for } \ |j-s| > 1.
\end{equation}

These relations can be further reduced using the so-called contraction. 


\subsection{Contraction} \label{contractionsubsection}

The conjugation blocks in the new writings of the generators (Eq.~\ref{claspgen}) make it possible to put aside intermediate strands between two clasps and pull the clasps close together. This process shall be called \textit{contraction}, and works as an algorithm in $B_n$. The contraction of a pure braid written on conjugates of clasps consists in bringing all of the clasps as close as possible (which in the word representing it means manipulating the conjugation blocks around a $p_i$, cancelling them reciprocally, and/or taking them to the sides of the word), and then isolating the block of clasps from the rest of the braid (which means cancelling from the word the conjugation blocks at the sides). This algorithm is particularly useful for reducing relations, since the removal of the conjugation blocks can be done on both hand sides at the same time while maintaining the relation true. The algorithm uses the two relations in the braid group.
 We shall demonstrate contraction on Relations 1). Using Eq.~\ref{Prel3} on Relation 1) in the case of $i<j$: 
\begin{equation*}
    \begin{split}
        p_i p_j = p_j p_i \stackrel{\text{Eq.}~\ref{Prel3}}{\implies} & \underline{p_i} \underline{(\sigma_{j-1}^{-1} \sigma_j^{-1})(\sigma_{j-2}^{-1}  \sigma_{j-1}^{-1}) \cdots (\sigma_{i+2}^{-1} \sigma_{i+3}^{-1})} p_{i+2} (\sigma_{i+3} \sigma_{i+2}) \cdots (\sigma_{j-1} \sigma_{j-2}) (\sigma_j \sigma_{j-1})\\ = & (\sigma_{j-1}^{-1} \sigma_j^{-1})(\sigma_{j-2}^{-1}  \sigma_{j-1}^{-1}) \cdots (\sigma_{i+2}^{-1} \sigma_{i+3}^{-1}) p_{i+2} \underline{(\sigma_{i+3} \sigma_{i+2}) \cdots (\sigma_{j-1} \sigma_{j-2}) (\sigma_j \sigma_{j-1})} \underline{p_i} \\ 
        \overset{\text{commute}}{\implies} & \underline{(\sigma_{j-1}^{-1} \sigma_j^{-1})(\sigma_{j-2}^{-1}  \sigma_{j-1}^{-1}) \cdots  (\sigma_{i+2}^{-1} \sigma_{i+3}^{-1})} p_i \, p_{i+2} \underline{(\sigma_{i+3} \sigma_{i+2}) \cdots (\sigma_{j-1} \sigma_{j-2}) (\sigma_j \sigma_{j-1})}\\ = & \underline{(\sigma_{j-1}^{-1} \sigma_j^{-1})(\sigma_{j-2}^{-1}  \sigma_{j-1}^{-1}) \cdots  (\sigma_{i+2}^{-1} \sigma_{i+3}^{-1})} p_{i+2} \, p_i \underline{(\sigma_{i+3} \sigma_{i+2}) \cdots (\sigma_{j-1} \sigma_{j-2}) (\sigma_j \sigma_{j-1})} \\
        \overset{\text{cancel}}{\implies} & p_{i+2} \, p_i = p_i \, p_{i+2}
    \end{split} 
\end{equation*}
The number of relations of type 1) reduces from $\frac{n(n-1)}{2}$ to $n-2$, and relations 1) can be replaced with
\begin{equation} \label{newclasprel1}
    p_i \, p_{i+2} = p_{i+2} \, p_i, \quad 1 \leq i \leq n-3
\end{equation}


\subsection{Rewriting Relations 2) and 3) using the clasp generators}

 We shall next examine Relations 2) and 3) in Proposition~\ref{alternative}. Contraction is going to be applied in the reduction of both relations 2) and 3), in the sense that they can be reduced to relations involving three consecutive indices.   Expanding one hand side of both relations 2) and 3), we obtain the expression: 

\smallbreak
\noindent $A_{ir}A_{is}A_{rs} =$

\smallbreak
\noindent $\sigma_i^{-1} \cdots \sigma_{r-2}^{-1} p_{r-1}^{2} \underline{\sigma_{r-2} \cdots \sigma_i \sigma_i^{-1} \cdots \sigma_{r-2}^{-1}} \sigma_{r-1}^{-1} \sigma_r^{-1} \cdots \sigma_{s-2}^{-1} p_{s-1}^{2} \sigma_{s-2} \cdots \sigma_i \sigma_r^{-1} \cdots \sigma_{s-2}^{-1} p_{s-1}^2 \sigma_{s-2} \cdots \sigma_r $

\smallbreak
\noindent The block $\sigma_{r-2} \cdots \sigma_i \sigma_i^{-1} \cdots \sigma_{r-2}^{-1}$ between $A_{ir}$ and $A_{is}$ can be cancelled. Using repeatedly Eq.~\ref{braidsqrels} on the rest of $A_{is}$ and $A_{rs}$, and highlighting the resulting blocks in the second line below with parentheses, for visual easiness, the expression becomes: 
\begin{equation*}\begin{split}
& \sigma_i^{-1} \cdots \sigma_{r-2}^{-1} p_{r-1}^{2}  \sigma_{r-1}^{-1} \underline{\sigma_r^{-1} \cdots \sigma_{s-2}^{-1} p_{s-1}^{2} \sigma_{s-2} \cdots \sigma_{r+1} \sigma_r} \sigma_{r-1} \cdots \sigma_i \underline{\sigma_r^{-1} \cdots \sigma_{s-2}^{-1} p_{s-1}^2 \sigma_{s-2} \cdots \sigma_r} \\
    & \stackrel{\text{Eq.}~\ref{braidsqrels}}{=} \sigma_i^{-1} \cdots \sigma_{r-2}^{-1} p_{r-1}^{2} \sigma_{r-1}^{-1}(\sigma_{s-1} \cdots \sigma_{r+1} p_r^{2} \underline{\sigma_{r+1}^{-1} \cdots \sigma_{s-1}^{-1}}) \underline{\sigma_{r-1} \cdots \sigma_i} (\sigma_{s-1} \cdots \sigma_{r+1} p_r^{2} \sigma_{r+1}^{-1} \cdots \sigma_{s-1}^{-1}) \\
    & \overset{\text{switch}}{=} \sigma_i^{-1} \cdots \sigma_{r-2}^{-1} p_{r-1}^{2} \sigma_{r-1}^{-1} \sigma_{s-1} \cdots \sigma_{r+1} p_r^{2} \sigma_{r-1} \cdots \sigma_i \underline{\sigma_{r+1}^{-1} \cdots \sigma_{s-1}^{-1} \sigma_{s-1} \cdots \sigma_{r+1}} p_r^{2} \sigma_{r+1}^{-1} \cdots \sigma_{s-1}^{-1} \\
    & \overset{\text{cancel}}{=} \sigma_i^{-1} \cdots \sigma_{r-2}^{-1} p_{r-1}^{2} \sigma_{r-1}^{-1} \underline{\sigma_{s-1} \cdots \sigma_{r+1}} p_r^{2} \sigma_{r-1} \cdots \sigma_i \underline{p_r^{2}} \sigma_{r+1}^{-1} \cdots \sigma_{s-1}^{-1}\\
    & \overset{\text{slide left}}{=} (\sigma_{s-1} \cdots \sigma_{r+1} \sigma_i^{-1} \cdots \sigma_{r-2}^{-1}) \, p_{r-1}^{2} \sigma_{r-1}^{-1} p_r^{2} \sigma_{r-1} p_r^{2} \, (\sigma_{r-2} \cdots \sigma_i  \sigma_{r+1}^{-1} \cdots \sigma_{s-1}^{-1}).
\end{split}
\end{equation*}
The last expression is divided in three pieces: two conjugating blocks at the sides and a central braid block. 

\smallbreak
 An analogous procedure can be carried out on both $A_{is} A_{rs} A_{ir}$ and $A_{rs} A_{ir}A_{is}$ (the other hand sides of relations 2) and 3) in Proposition~\ref{alternative}, respectively), so as to obtain accordingly the expressions: 
\begin{equation*}
    \begin{split}
    & (\sigma_{s-1} \cdots \sigma_{r+1} \sigma_i^{-1} \cdots \sigma_{r-2}^{-1}) \, \sigma_{r-1}^{-1} p_r^{2} \sigma_{r-1} p_r^{2} p_{r-1}^{2} \, (\sigma_{r-2} \cdots \sigma_i  \sigma_{r+1}^{-1} \cdots \sigma_{s-1}^{-1})\\
    & (\sigma_{s-1} \cdots \sigma_{r+1} \sigma_i^{-1} \cdots \sigma_{r-2}^{-1}) \, p_r^{2} p_{r-1}^{2} \sigma_{r-1}^{-1} p_r^{2} \sigma_{r-1} \,(\sigma_{r-2} \cdots \sigma_i \sigma_{r+1}^{-1} \cdots \sigma_{s-1}^{-1})
    \end{split}
\end{equation*}
Conjugation by $(\sigma_{s-1} \cdots \sigma_{r+1} \sigma_i^{-1} \cdots \sigma_{r-2}^{-1})$ can be removed from all hand sides, so relations 2) contract to:
\begin{equation*}
    p_{r-1, r} \, p_{r-1, r+1} \, p_{r, r+1} = p_{r-1, r+1} \, p_{r, r+1} \, p_{r-1, r} 
\end{equation*}
whereas relations 3) contract to: 
\begin{equation*}
    p_{r-1, r} \, p_{r-1, r+1} \, p_{r, r+1} = p_{r, r+1} \, p_{r-1, r} \, p_{r-1, r+1}.
\end{equation*}

\noindent  A change of indices ($r \to i+1$) gives the more familiar forms. Therefore, Relations 2) of Proposition~\ref{alternative}  can be contracted  to the relations: 
\begin{equation} \label{P2}
p_{i, i+1} \, p_{i, i+2} \, p_{i+1, i+2} = p_{i, i+2}\, p_{i+1, i+2} \, p_{i, i+1}.
\end{equation}
Which can be rewritten as:
\begin{equation} \label{P21}
    p_i \, (\sigma_i^{-1} \, p_{i+1} \, \sigma_i) \, p_{i+1} = (\sigma_i^{-1} \, p_{i+1} \, \sigma_i) \, p_{i+1} \, p_i
\end{equation}
We further note that we can simplify the left-hand side of Eq.~\ref{P21} using the interaction of clasps with classical braid generators Eq.~\ref{Prel4}, to obtain: 
\[
\underline{p_i \, \sigma_i^{-1}} \, p_{i+1} \, \sigma_i \, p_{i+1} = \sigma_i \, p_{i+1} \, \sigma_i \, p_{i+1}
\]

\noindent  The right-hand side can be also simplified to obtain:
\begin{equation*}
    \sigma_i^{-1} \, \underline{p_{i+1} \, \sigma_i \, p_{i+1} \, \sigma_i^2 }\stackrel{\text{Eq.}~\ref{Prel2}}{=} \underline{\sigma_i^{-1} \sigma_i} \, p_{i+1} \, \sigma_{i} \, p_{i+1} \, \sigma_{i} = p_{i+1} \, \sigma_{i} \, p_{i+1} \,\sigma_{i}.
\end{equation*}
    
The recomposition of the two hand sides of Eq.~\ref{P2} is equivalent to:
\begin{equation} \label{Anotherclaspgen}
    (\sigma_i \, p_{i+1} \, \sigma_i) \, p_{i+1} = p_{i+1} \, (\sigma_{i} \, p_{i+1} \,\sigma_{i})
\end{equation}

\noindent which is of the form of Eq.~\ref{Prel2}. 

Another comment on the indices is due. The bounds for Relations 2) in Proposition~\ref{alternative} were $1 \leq i<r<j \leq n$. In these relations' new version in Eq.~\ref{P2}, the index $r-1$ is replaced by $i$ and $r$ by $i+1$. This means that one still needs $1 \leq i$ for the lower bound. For the upper bound, we have $i+1$ as highest index in Eq.~\ref{P2}. Since $i + 1 = r < j \leq n$, we have $i + 1 = r < n$ and hence $ i < n-1$, so that $p_{i+1}$ can be at most $p_{n-1}$. 

\bigbreak

 In the same spirit of Relations 2), Relations 3) of Proposition~\ref{alternative} can be contracted, using the classical braid relations, to the following:  
 \begin{equation} \label{Againclaspgen3}
 p_{i, i+1} \, p_{i, i+2} \, p_{i+1, i+2} = p_{i+1, i+2} \, p_{i, i+1} \, p_{i, i+2}.
 \end{equation}
 Rewriting the equation in terms of clasp generators we obtain:
\begin{equation} \label{Againclaspgen31}
    p_i \, (\sigma_i^{-1} \, p_{i+1} \, \sigma_i) \, p_{i+1} = p_{i+1} \, p_i \, (\sigma_i^{-1} \, p_{i+1} \, \sigma_i)
\end{equation}
which is again a relation among three new generators, on the same index set as Relations 2). Additionally, using the interaction of clasp generators with classical braid generators Eq.~\ref{Prel4} on both hand sides of Eq.~\ref{Againclaspgen3}, we recover Eq.~\ref{Anotherclaspgen}:
\begin{equation*}
   (\sigma_i \, p_{i+1} \, \sigma_i) \, p_{i+1} = p_{i+1} \, (\sigma_i \, p_{i+1} \, \sigma_i)
\end{equation*}

\noindent which is again of the form of Eq.~\ref{Prel2}, and surprisingly implies that Relations 3) can be recovered from Relations 2) in this formalism.

\subsection{Rewriting Relations 4) using the clasp generators}

 Inspired by Relations 2) and 3), Relations 4) of Proposition~\ref{alternative} are contracted into the following, assuming the classical braid relations: 
\begin{equation}\label{Againclaspgen4}
p_{i, i+1} \, p_{i, i+2} \, p_{i, i+1}^{-1} \, p_{i-1, i+1} = p_{i-1, i+1} \, p_{i, i+1} \, p_{i, i+2} \, p_{i, i+1}^{-1} \quad \text{for } i<r<j<s 
\end{equation}  
Rewriting in terms of the $p_i$'s:
\begin{equation} \label{Againclaspgen42}
    p_i \, (\sigma_i^{-1} p_{i+1} \sigma_i) \, p_{i}^{-1} \, (\sigma_{i-1}^{-1} p_i \sigma_{i-1})  = (\sigma_{i-1}^{-1} p_i \sigma_{i-1}) p_i \, (\sigma_i^{-1} p_{i+1} \sigma_i) p_i^{-1}
\end{equation} 
Using further the interaction between classical braid generators and clasp generators, we obtain:
\begin{equation*}
    \begin{split}
        \underline{\sigma_i} p_{i+1} \sigma_i^{-1} \sigma_{i-1}^{-1} p_i \sigma_{i-1} & = \sigma_{i-1}^{-1} p_i \sigma_{i-1} \sigma_i p_{i+1} \underline{\sigma_i^{-1}} \Longleftrightarrow \\
        p_{i+1} \underline{\sigma_i^{-1} \sigma_{i-1}^{-1} p_i \sigma_{i-1} \sigma_i} & = \underline{\sigma_i^{-1} \sigma_{i-1}^{-1} p_i \sigma_{i-1} \sigma_i} p_{i+1} \stackrel{Eq.~\ref{Prel1}}{\Longleftrightarrow} \\
        p_{i+1} p_{i-1} & = p_{i-1} p_{i+1}
    \end{split}
\end{equation*}
recovering Relations 1).

\subsection{A presentation for $P_n$ using the clasp generators}

Combining the relations we have found using the classical braid relations (so Eqs.~\ref{Prel3}, ~\ref{newclasprel1},~\ref{P2},~\ref{Againclaspgen3}, and~\ref{Againclaspgen4}), we have proven the following:

\begin{theorem} \label{Pclaspres} The pure braid group $P_n$ on $n$ strands admits a presentation with generators of normal closure conjugates of the clasp generators \begin{equation*}
   p_{ij}  = \sigma_i^{-1} \cdots \sigma_{j-2}^{-1} p_{j-1} \sigma_{j-2} \cdots \sigma_i , \quad 1 \leq i<j \leq n
\end{equation*} 
and the following contracted relations, assuming the defining relations for $B_n$, Eq.~\ref{Prel3}, and that $\sigma_j \, p_i \, \sigma_j^{-1} = p_i$ for $|i-j| > 1$: 
\[ \begin{array}{lrcll}
    1) & p_i \, p_{i+2} & = & p_{i+2} \, p_i & \text{for} \hspace{2mm} 1 \leq i \leq n-3\\
    2) & p_i \, p_{i, i+2} \, p_{i+1} & = & p_{i, i+2} \, p_{i+1} \, p_i & \text{for} \hspace{2mm} 1 \leq i \leq n-2\\
    3) & p_i \, p_{i, i+2} \, p_{i+1} & = & p_{i+1} \, p_i \, p_{i, i+2} & \text{for} \hspace{2mm} 1 \leq i \leq n-2\\
    4) & (p_i \, p_{i, i+2} \, p_{i}^{-1}) \, p_{i-1, i+1} & = & p_{i-1, i+1} \, (p_i \, p_{i, i+2} \, p_i^{-1}) & \text{for} \hspace{2mm} 2 \leq i \leq n-2 \\
\end{array} \] 
\end{theorem}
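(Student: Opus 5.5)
The plan is to start from the cyclic presentation of $P_n$ in Proposition~\ref{alternative} and transform it by Tietze moves \cite{Tit}. Throughout, I work inside $B_n$ and take the braid relations, Eq.~\ref{Prel3} and $\sigma_j p_i \sigma_j^{-1}=p_i$ for $|i-j|>1$ as ambient. First I would substitute $A_{ij}=p_{ij}$ using Eq.~\ref{claspgen}. This is only a renaming of generators, so the relations $P_1)$–$P_4)$ become the same relations in the $p_{ij}$.

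Second, I would show that, modulo the ambient relations, each family of relations is equivalent to its \emph{contracted} representative at consecutive indices. The mechanism is the contraction algorithm of Subsection~\ref{contractionsubsection}. For $P_1)$, Eqs.~\ref{clasp11} and~\ref{clasp12} reduce every instance to $p_jp_s=p_sp_j$ with $|j-s|>1$. Conjugating by blocks $(\sigma_{k}^{-1}\sigma_{k+1}^{-1})$ via Eq.~\ref{Prel3} then reduces it further to $p_ip_{i+2}=p_{i+2}p_i$, giving relation 1).

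For $P_2)$ and $P_3)$, I would use the explicit computation already carried out. It writes both sides of $A_{ir}A_{is}A_{rs}$, and of its cyclic permutations, as a common conjugate by $(\sigma_{s-1}\cdots\sigma_{r+1}\sigma_i^{-1}\cdots\sigma_{r-2}^{-1})$ of a word on the consecutive triple. Removing this common conjugation gives Eqs.~\ref{P2} and~\ref{Againclaspgen3}. Since $p_{i,i+1}=p_i$ and $p_{i+1,i+2}=p_{i+1}$, these are exactly relations 2) and 3).

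For $P_4)$ with $i<r<j<s$, I would proceed analogously. I would first pull all four indices together by conjugating with the blocks that slide strands $i$ and $s$ toward $r$ and $j$, using $M_1)$ and $M_3)$ of Proposition~\ref{redBn}. This yields Eq.~\ref{Againclaspgen4}, which after reindexing is relation 4) with range $2\le i\le n-2$.

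The converse direction is necessary for a genuine Tietze equivalence, and I would argue it explicitly. Every original relation is a conjugate by some $\beta\in B_n$ of a contracted one, and the $p_{ij}$ are closed under such conjugation up to words in the $p$'s, by $M_1)$–$M_5)$. Hence the contracted relations, together with the ambient relations, imply all of $P_1)$–$P_4)$. Only the ambient relations are used here, which is why the statement describes the $p_{ij}$ as normal-closure generators rather than claiming an intrinsic presentation of $P_n$. Finally, I would check the index ranges as in the comments following Eq.~\ref{newrel1} and Eq.~\ref{Anotherclaspgen}.

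The main obstacle is the contraction of $P_4)$. The four indices alternate, so the conjugating blocks for $p_{ij}$ and $p_{rs}$ overlap. Moreover, the inner conjugate $A_{rj}A_{rs}A_{rj}^{-1}$ must be tracked while strands are pushed aside. I would first handle the case $i=r-1$, $s=j+1$ directly, then reduce the gaps $r-i$ and $s-j$ one at a time by conjugating with $\sigma_{i}$ or $\sigma_{s-1}$, checking at each step, via $M_1)$/$M_3)$, that the relation maps to the same relation with a smaller gap. If this bookkeeping becomes unwieldy, the fallback is to use the equivalent form Eq.~\ref{alternativeslideartin}. There only one conjugated generator appears, and the same shifting argument applies more transparently.
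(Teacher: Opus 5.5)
Your proposal is correct and follows the paper's own route. You rename $A_{ij}=p_{ij}$ via Eq.~\ref{claspgen}, then contract each family of relations of Proposition~\ref{alternative} by stripping common conjugating blocks, arriving at Eqs.~\ref{newclasprel1}, \ref{P2}, \ref{Againclaspgen3} and~\ref{Againclaspgen4}. Your gap-by-gap conjugation for $P_4)$ and the explicit converse direction supply details the paper only asserts. Note also that the middle gap $j-r$ closes the same way: for $r<j-1$, the map $x\mapsto\sigma_{j-1}^{-1}x\,\sigma_{j-1}$ sends $A_{ij}\mapsto A_{i,j-1}$ and $A_{rj}\mapsto A_{r,j-1}$ by $M_3)$ and fixes $A_{rs}$ by $M_5)$, so the case $i=r-1$, $s=j+1$ needs no separate direct computation.
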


Furthermore, assuming also Eq.~\ref{Prel1} we obtain the following result, based on the reduced Eqs.~\ref{newclasprel1}, ~\ref{Anotherclaspgen}, and~\ref{Againclaspgen4}, which was the main target of this section: 

\begin{theorem} \label{braidpreswclasp}
 The braid group $B_n$ on $n$ strands admits a redundant presentation with the classical braid generators $\sigma_i$ and the clasp generators $p_i$, satisfying the over-abundant set of relations:
\[ \begin{array}{lrcll} \\
    1) & \sigma_i \sigma_j & = & \sigma_j \sigma_i & \text{for} \hspace{2mm} |i-j|>1\\[3pt]
    2) & \sigma_i \sigma_{i+1} \sigma_i & = & \sigma_{i+1} \sigma_i \sigma_{i+1} & \text{for} \hspace{2mm} 1 \leq i < n - 1 \\[3pt]
    3) & p_i \, p_{i+2} & = & p_{i+2} \, p_i & \text{for} \hspace{2mm} 1 \leq i < n-2\\[3pt]
   4) &  (\sigma_i \, p_{i+1} \, \sigma_i) \, p_{i+1} & = & p_{i+1} \, (\sigma_{i} \, p_{i+1} \,\sigma_{i}) & \text{for} \hspace{2mm} 1 \leq i < n-1\\[3pt]
   5)& p_i \, \sigma_j & = & \sigma_j \,  p_i  & \text{for} \hspace{2mm} |i-j|>1 \\[3pt]
6) &  p_i \, \sigma_i & = & \sigma_i  \,  p_i   & \text{for} \hspace{2mm} 1 \leq i < n \\[3pt]
7) & p_i \sigma_{i+1} \sigma_i & = & \sigma_{i+1} \sigma_i p_{i+1} & \text{for } 1 \leq i < n - 1 \\[3pt]
8) & \sigma_i \sigma_{i+1} p_i & = & p_{i+1} \sigma_i \sigma_{i+1} & \text{for all } 1 \leq i < n - 1 \\[3pt]
 9) & p_i \, & = & \sigma_i^2 & \text{for all } 1 \leq i < n.  
\end{array} \] 
\end{theorem}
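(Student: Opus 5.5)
The plan is to show that the group $G$ defined by the presentation in the statement is isomorphic to $B_n$ via Tietze transformations. The map $B_n \to G$ sends $\sigma_i \mapsto \sigma_i$. The map $G \to B_n$ sends $\sigma_i \mapsto \sigma_i$ and $p_i \mapsto \sigma_i^2$.

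The direction $G \to B_n$ is a well-defined homomorphism once every relation 1)--9) holds in $B_n$ under $p_i = \sigma_i^2$. Relations 1), 2) are the defining braid relations (\ref{braidgrp}). Relation 9) is the definition of $p_i$. Relations 3), 5), 6) are immediate from far commutativity and the fact that $\sigma_i$ commutes with its own powers. Relations 7) and 8) are exactly Eqs.~\ref{Prel3} and~\ref{Prel1}, which were derived from $p_k=\sigma_k^2$ and the braid relations. Relation 4) is Eq.~\ref{Anotherclaspgen}, the form of Eq.~\ref{Prel2}; it can be checked directly: $\sigma_i\sigma_{i+1}^2\sigma_i$ commutes with $\sigma_{i+1}^2$ because $\sigma_i\sigma_{i+1}^2\sigma_i\sigma_{i+1}^2=(\sigma_i\sigma_{i+1})^3\sigma_i^{-1}\sigma_{i+1}$ and similarly on the other side, using centrality-type identities of $(\sigma_i\sigma_{i+1})^3$ in $B_3$.

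Conversely, relation 9) lets us eliminate every $p_i$ by a Tietze transformation. What remains is the generators $\sigma_i$, the relations 1), 2), and the images of 3)--8) with $p_i$ replaced by $\sigma_i^2$. By the previous paragraph these images are consequences of 1), 2), so they can be deleted. Hence $G\cong B_n$ by the identity on the $\sigma_i$, and the presentation is redundant precisely because 3)--8) are derivable.

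The statement also presents this as the outcome of the preceding analysis: $P_n$ is normally generated inside $B_n$ by the $p_i$ (Theorem~\ref{Pclaspres}), and its contracted relations are 3), 4) (the reductions via Eqs.~\ref{newclasprel1},~\ref{Anotherclaspgen},~\ref{Againclaspgen4}). The mixed relations $M_k)$ of Proposition~\ref{redBn} reduce, via Eq.~\ref{claspgen}, to 5)--8). I would remark on this but rely on the short Tietze argument for rigor.

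The main obstacle is only bookkeeping: verifying relation 4) (and 7), 8)) purely from the braid relations, with correct index ranges. I would first try the rewriting via Eq.~\ref{braidsqrels}, namely $\sigma_i\sigma_{i+1}^2\sigma_i^{-1}=\sigma_{i+1}^{-1}\sigma_i^2\sigma_{i+1}$. This gives $\sigma_i p_{i+1}\sigma_i=\sigma_{i+1}^{-1}p_i\sigma_{i+1}\sigma_i^2$, and commuting with $\sigma_{i+1}^2$ then becomes an identity in $B_3$ that can be settled by expanding both sides.
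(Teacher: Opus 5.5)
Your proof is correct, but it takes a different and much shorter route than the paper. The paper does not check the relations after the fact. It produces the list by rewriting Artin's generators as conjugates of clasps (Eq.~\ref{claspgen}). It then contracts the cyclic presentation of Proposition~\ref{alternative} to the local relations Eqs.~\ref{newclasprel1}, \ref{Anotherclaspgen}, \ref{Againclaspgen4}. Finally it combines these with the mixed relations from the short exact sequence in Proposition~\ref{redBn}.

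You instead note that relation 9) is part of the presentation, so one Tietze move removes every $p_i$. After that, the only task is to check that 3)--8) hold in $B_n$ with $p_i=\sigma_i^2$. This gives two advantages:
\begin{itemize}
\item the argument is self-contained, independent of Theorem~\ref{Pclaspres} and of the contraction computations;
\item it shows the theorem amounts to the claim that 3)--8) follow from 1), 2) and 9).
\end{itemize}
What the paper's route gives in exchange is an explanation of where each relation comes from inside $P_n$: 3) from far commutativity, 4) from the cyclic relations. That interpretation is what the paper uses later to match the tight bonded braid monoid presentation in Theorem~\ref{Tightphi}. Your argument establishes the presentation but does not supply that interpretation, which is fine for the statement at hand.

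One correction to your check of 4). The identity $\sigma_i\sigma_{i+1}^2\sigma_i\sigma_{i+1}^2=(\sigma_i\sigma_{i+1})^3\sigma_i^{-1}\sigma_{i+1}$ is false: it would force $\sigma_i=\sigma_{i+1}$. The right computation has no extra factor and needs no centrality. Apply the braid relation once to a subword $\sigma_{i+1}\sigma_i\sigma_{i+1}$ on each side:
\[
\sigma_i\sigma_{i+1}^2\sigma_i\sigma_{i+1}^2=(\sigma_i\sigma_{i+1})^3, \qquad \sigma_{i+1}^2\sigma_i\sigma_{i+1}^2\sigma_i=(\sigma_{i+1}\sigma_i)^3 .
\]
Both right-hand sides equal $(\sigma_i\sigma_{i+1}\sigma_i)^2$, again by the braid relation. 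With this fix, or with your alternative route via Eq.~\ref{braidsqrels}, the verification of 4) is complete. The checks of 3), 5), 6), 7), 8) are correct as you stated them.
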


\begin{figure}[H]
\begin{center} 
    \includegraphics[width=17cm]{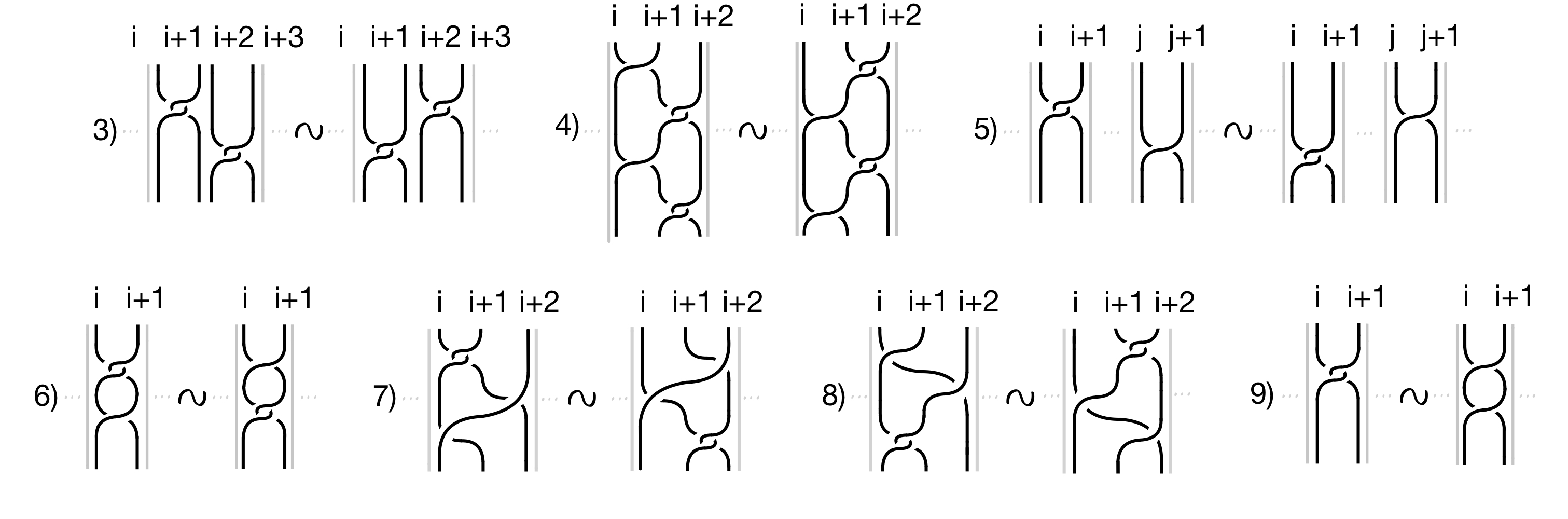}
\end{center}
\caption{Pictures of the clasp relations 3) to 9) in Theorem~\ref{braidpreswclasp}}
\label{figclaspres}
\end{figure}

We may see this presentation as a modification of Proposition~\ref{redBn}.

\begin{remark} \label{claspremarkrel4}
    The off-looking relations 4) of Theorem~\ref{braidpreswclasp} encode the cyclic relations of the alternative presentation of $P_n$ in Proposition~\ref{alternative}. Note that they can be derived by relations 7), 8) and 9) of the Theorem, as similarly Eq.~\ref{Prel2} is derived from Eqs.~\ref{Prel1} \& ~\ref{Prel3}. In the next section, this presentation will be related to that of the bonded braid monoid, and keeping relations 4) in the statement will underline the fact that bonds do not permute cyclically.
\end{remark}


\subsection{A reduced presentation for $P_n$ and for $B_n$ with one clasp generator} \label{irredundant}

Using Eq.~\ref{Prel1}, this presentation of $B_n$ in Theorem~\ref{braidpreswclasp} can be further reduced: every generator $p_i$ can be obtained as a suitable conjugate of $p_1$:
\begin{equation} \label{W}
   \begin{split}
        p_i & = (\sigma_{i-1} \sigma_i)(\sigma_{i-2} \sigma_{i-1}) \cdots (\sigma_1 \sigma_2) \, p_1 \, (\sigma_1 \sigma_2)^{-1} \cdots (\sigma_{i-2} \sigma_{i-1})^{-1} (\sigma_{i-1} \sigma_i)^{-1} \\
        & = (\sigma_{i-1} \cdots \sigma_1) (\sigma_i \cdots \sigma_2) \, p_1 \,  (\sigma_i \cdots \sigma_2)^{-1} (\sigma_{i-1} \cdots \sigma_1)^{-1}
   \end{split}
\end{equation}

The two conjugates of $p_1$ boil down to the same word, as:

\begin{equation} \label{sameconjugates}
    \begin{split}
    (\sigma_{i-1} \sigma_{i-2} \cdots \sigma_1) (\underline{\sigma_i} \sigma_{i-1} \sigma_{i-2} \cdots \sigma_2) & = (\sigma_{i-1} \sigma_i) ( \sigma_{i-2} \sigma_{i-3} \cdots \sigma_1) ( \underline{\sigma_{i-1}} \sigma_{i-2} \cdots \sigma_1)\\
    & = (\sigma_{i-1} \sigma_i) (\sigma_{i-2} \sigma_{i-1}) (\sigma_{i-3} \cdots \sigma_1) ( \underline{\sigma_{i-2}} \cdots \sigma_1)\\
    & \cdots \\
    & = (\sigma_{i-1} \sigma_i)(\sigma_{i-2} \sigma_{i-1}) \cdots (\sigma_1 \sigma_2)
\end{split}
\end{equation}

These expressions can be used to express all relations in Theorem~\ref{braidpreswclasp} in terms of $p_1$. So, we shall prove the following:

\begin{theorem} \label{thm:irredundant}
The braid group \( B_n \) admits the following reduced presentation: it is generated by the classical braid generators   and a single clasp generator \( p_1 \), subject to the relations:
\[
\begin{array}{lrcll}
1) & \sigma_i\, \sigma_j & = & \sigma_j\, \sigma_i & \quad \text{for } |i-j|>1, \\[3pt]
2) & \sigma_i\, \sigma_{i+1}\, \sigma_i  & = &  \sigma_{i+1}\, \sigma_i\, \sigma_{i+1} & \quad \text{for } 1 \leq i \leq n-2, \\[3pt]
3) & p_1 \, (\sigma_2 \sigma_1 \sigma_3 \sigma_2)^{-1}  \, p_1\, (\sigma_2 \sigma_1 \sigma_3 \sigma_2) & = & (\sigma_2 \sigma_1 \sigma_3 \sigma_2)^{-1}  \, p_1\, (\sigma_2 \sigma_1 \sigma_3 \sigma_2) \, p_1  \\[3pt]
4) & (\sigma_2 p_1 \sigma_2^{-1}) \sigma_1 (\sigma_2 p_1 \sigma_2^{-1}) \sigma_1  & = & \sigma_1 (\sigma_2 p_1 \sigma_2^{-1}) \sigma_1 (\sigma_2 p_1 \sigma_2^{-1})\\[3pt]
5) & p_1 \sigma_j & = & \sigma_j p_1 & \quad \text{for } j > 2 \\[3pt]
6) & p_1 \sigma_1 & = & \sigma_1 p_1 \\[3pt]
7) & p_1\, (\sigma_2 \sigma_1)(\sigma_1\, \sigma_2) & = &  (\sigma_2 \sigma_1)(\sigma_1\, \sigma_2) \, p_1 & \\[3pt] 
8) & p_1 \sigma_1^{-1} & = & \sigma_1
\end{array}
\]
\end{theorem}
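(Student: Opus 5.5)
The plan is to start from the presentation of Theorem~\ref{braidpreswclasp} and apply Tietze transformations. The generators $p_2,\dots,p_{n-1}$ are eliminated using Eq.~\ref{W}, that is, $p_i = W_i\, p_1\, W_i^{-1}$ with $W_i = (\sigma_{i-1}\sigma_i)(\sigma_{i-2}\sigma_{i-1})\cdots(\sigma_1\sigma_2)$. This definition is legitimate because relation 7) of Theorem~\ref{braidpreswclasp}, in the form of Eq.~\ref{Prel1}, gives $p_{i+1} = \sigma_i\sigma_{i+1}p_i\sigma_{i+1}^{-1}\sigma_i^{-1}$. Iterating it, the relations 7) for all $i$ become tautologies once each $p_{i+1}$ is \emph{defined} as this conjugate. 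Every remaining relation of Theorem~\ref{braidpreswclasp} involving some $p_i$ is then conjugated by $W_i^{-1}$ (or by the appropriate product of such blocks). The aim is to show that it becomes a consequence of relations 1)--8) of the statement, all of which concern only $p_1$ or the local conjugate $\sigma_2 p_1\sigma_2^{-1}$.

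\emph{Converse direction.} First I would check the easy direction: relations 1)--8) hold in $B_n$ when $p_1=\sigma_1^2$.

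\begin{itemize}
\item Relation 8) is equivalent to $p_1=\sigma_1^2$, i.e.\ relation 9) for $i=1$.
\item Relations 5) and 6) are immediate.
\item Relation 7) is Eq.~\ref{Prel2} for $i=1$.
\item Relation 3) is relation 3) of Theorem~\ref{braidpreswclasp} for $i=1$, since $(\sigma_2\sigma_1\sigma_3\sigma_2)^{-1} p_1 (\sigma_2\sigma_1\sigma_3\sigma_2) = p_3$ by Eq.~\ref{W} with the ordering of Eq.~\ref{sameconjugates}, up to the orientation convention.
\item Relation 4) is relation 4) of Theorem~\ref{braidpreswclasp} for $i=1$ after rewriting $p_2$ as a conjugate of $p_1$. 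Here I must check that $\sigma_2 p_1\sigma_2^{-1}$ equals $\sigma_1^{-1}p_2\sigma_1$ rather than $p_2$. Using Eq.~\ref{Prel3}, the relation becomes $\sigma_1 (\sigma_1^{-1}p_2\sigma_1)\sigma_1 \ldots$, which should be matched with the form $\sigma_1 p_2\sigma_1 p_2 = p_2\sigma_1p_2\sigma_1$ after conjugating by $\sigma_1$.
\end{itemize}

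\emph{Forward direction.} Next, each family of Theorem~\ref{braidpreswclasp} for general $i$ must be derived from its $i=1$ instance.

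\begin{itemize}
\item Relation 9), $p_i=\sigma_i^2$, follows from 8) by conjugation. Since $W_i\sigma_1W_i^{-1}=\sigma_i$ (the standard identity $(\sigma_1\sigma_2)\sigma_1(\sigma_1\sigma_2)^{-1}=\sigma_2$, iterated), we get $p_i = W_i\sigma_1^2W_i^{-1} = \sigma_i^2$.
\item Relation 6) for general $i$ follows in the same way.
\item Relation 8) of Theorem~\ref{braidpreswclasp} follows from 7) and 9) once $p_i=\sigma_i^2$ is known, since it is then a braid identity.
\end{itemize}

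In fact, once 9) holds for all $i$, every clasp relation reduces to an identity in the Artin group, so the whole check collapses. That is the cleanest route: derive $p_1=\sigma_1^2$ from 8), define $p_i$ by Eq.~\ref{W}, deduce $p_i=\sigma_i^2$, and observe that all other relations are braid consequences. The only subtlety is that the presentation of the statement must remain a presentation of $B_n$ rather than of a quotient, and this is guaranteed by the converse check above.

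\emph{Main obstacle.} The main obstacle is bookkeeping the conjugating words. Specifically:

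\begin{itemize}
\item I must verify that $W_i\sigma_jW_i^{-1}$ is the expected generator when deriving 5) for general $i,j$.
\item I must verify that the explicit conjugators in 3) and 4) agree with Eq.~\ref{W} and Eq.~\ref{sameconjugates}, rather than with their inverses.
\end{itemize}

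For these I would use Eq.~\ref{sameconjugates} together with the braid relations $\sigma_{k}\sigma_{k+1}\sigma_k\,(\sigma_k\sigma_{k+1})^{-1}=\sigma_{k+1}$, and I would double-check the cases $j=i\pm1$ by hand.
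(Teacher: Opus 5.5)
Your argument is correct, but the route you settle on is not the paper's.

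\textbf{The paper's route.} The paper carries out a genuine Tietze reduction of Theorem~\ref{braidpreswclasp}. It eliminates $p_2,\dots,p_{n-1}$ via Eq.~\ref{W} and, family by family, strips the conjugating words from relations 3)--8) of Theorem~\ref{braidpreswclasp} until only their versions in $p_1$ remain. In doing so it essentially never uses $p_i=\sigma_i^2$; relation 9) is contracted separately and becomes relation 8) of the statement.

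\textbf{Your route.} You use relation 8) of the statement, $p_1=\sigma_1^2$, to make $p_1$ a redundant generator, so relations 3)--7) become mere identities in $B_n$. Stated this way you need neither Theorem~\ref{braidpreswclasp} nor Eq.~\ref{W}, and the bookkeeping you list as the main obstacle disappears. Let $G$ be the presented group. Once 1)--8) are checked in $B_n$, the assignment $\sigma_i\mapsto\sigma_i$, $p_1\mapsto\sigma_1^2$ defines $G\to B_n$. The assignment $\sigma_i\mapsto\sigma_i$ defines $B_n\to G$ by 1)--2). The two maps are mutually inverse because $p_1=\sigma_1^2$ in $G$.

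\textbf{The converse checks.} These go through as follows.
\begin{itemize}
\item For 3): $X=\sigma_2\sigma_1\sigma_3\sigma_2$ satisfies $\sigma_1X=X\sigma_3$ and $X\sigma_1=\sigma_3X$. Hence both $X^{-1}\sigma_1^2X$ and $X\sigma_1^2X^{-1}$ equal $\sigma_3^2$, and your orientation hedge is harmless.
\item For 7): $\sigma_2\sigma_1^2\sigma_2=\sigma_1^{-2}(\sigma_1\sigma_2)^3$.
\item For 4): put $Z=\sigma_2\sigma_1^2\sigma_2^{-1}=\sigma_1^{-1}\sigma_2^2\sigma_1$. Then $\sigma_1Z\sigma_1Z=(\sigma_2\sigma_1)^3$, while $Z\sigma_1Z\sigma_1=\sigma_1^{-1}(\sigma_2\sigma_1)^3\sigma_1$.
\end{itemize}
So 4) and 7) both follow from $(\sigma_1\sigma_2)^3=(\sigma_2\sigma_1)^3$ commuting with $\sigma_1$.

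\textbf{Two harmless slips.} Eq.~\ref{Prel1} is relation 8) of Theorem~\ref{braidpreswclasp}, not 7); relation 7) is Eq.~\ref{Prel3}. Also, $\sigma_2p_1\sigma_2^{-1}=\sigma_1^{-1}p_2\sigma_1$ comes from Eq.~\ref{Prel1}, not Eq.~\ref{Prel3}.

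\textbf{What each route buys.} Yours is short, fully rigorous and free of index gymnastics. It also makes plain that 3)--7) are consequences of 1), 2), 8) and could be deleted by Tietze moves. The paper's route instead records how each clasp relation contracts to a relation in $p_1$ without leaning on $p_1=\sigma_1^2$. That is what lets Theorem~\ref{Tight1phi} match the one-bond relations of Proposition~\ref{thm:tightbonded} term by term with relations 3), 5), 6), 7) here. This matters in the bonded setting, where no analogue of relation 8) exists.
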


\begin{proof}
The relations in one clasp generator are pictured in Figure~\ref{fig1claspres}. 

\begin{figure}
\begin{center} 
    \includegraphics[width=16cm]{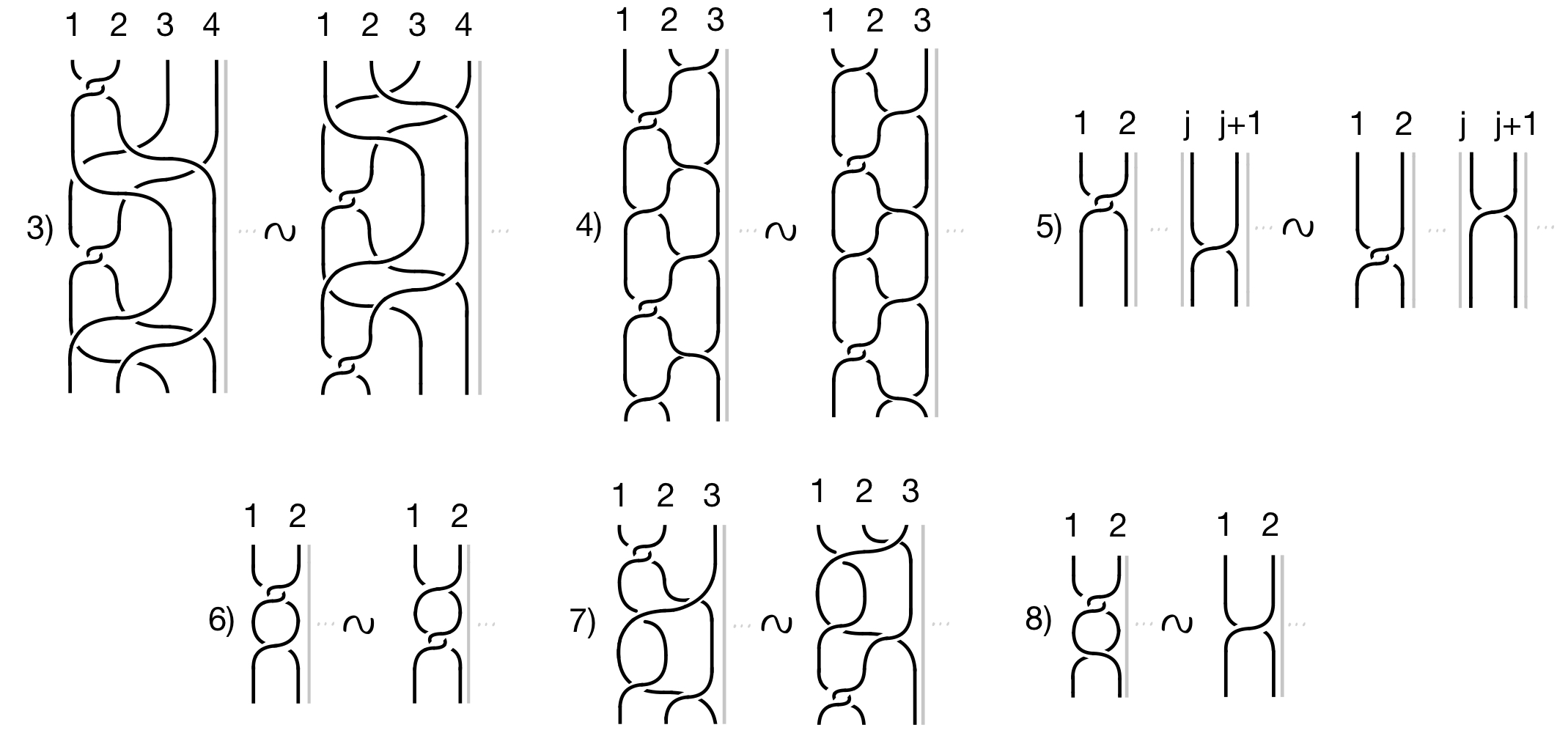}
\end{center}
\caption{Pictures of the clasp relations 3) to 8) in Theorem~\ref{thm:irredundant}.}
\label{fig1claspres}
\end{figure}

We begin with relations 3) of Theorem~\ref{braidpreswclasp} (since 1) and 2) do not involve  clasps). The idea is, using Eq.~\ref{W}, to write $p_i$ in terms of $p_1$ and $p_{i+2}$ in terms of $p_3$, and then to expand the latter as a conjugate of $p_1$.

\smallbreak
\noindent \textit{Relations 3): $p_i \, p_{i+2} = p_{i+2} \, p_i$}.
The left-hand side of relations 3) can be rewritten as: 
\begin{equation*}
    \begin{split}
        p_i \, p_{i+2} & \stackrel{\text{Eq.~\ref{W}}}{=} (\sigma_{i-1} \sigma_i ) \cdots (\sigma_2 \sigma_1) p_1 (\sigma_2 \sigma_1)^{-1} \cdots (\sigma_{i-1} \sigma_i )^{-1} \underline{p_{i+2}} \\
        & \stackrel{\text{shift}}{=} (\sigma_{i-1} \sigma_i ) \cdots (\sigma_2 \sigma_1) p_1  \underline{p_{i+2}} (\sigma_2 \sigma_1)^{-1} \cdots (\sigma_{i-1} \sigma_i )^{-1} \\
        & \stackrel{\text{Eq.~\ref{W}}}{=} (\sigma_{i-1} \sigma_i ) \cdots (\sigma_2 \sigma_1) \underline{p_1}  (\sigma_{i+1} \sigma_{i+2} ) \cdots (\sigma_4 \sigma_3) p_3 (\sigma_4 \sigma_3)^{-1} \cdots (\sigma_{i+1} \sigma_{i+2} )^{-1} (\sigma_2 \sigma_1)^{-1} \cdots (\sigma_{i-1} \sigma_i )^{-1} \\
        & \stackrel{\text{shift}}{=} (\sigma_{i-1} \sigma_i ) \cdots (\sigma_2 \sigma_1) \, (\sigma_{i+1} \sigma_{i+2} ) \cdots (\sigma_4 \sigma_3) p_1 \, p_3 (\sigma_4 \sigma_3)^{-1} \cdots (\sigma_{i+1} \sigma_{i+2})^{-1} \, (\sigma_2 \sigma_1)^{-1} \cdots (\sigma_{i-1} \sigma_i )^{-1}
    \end{split}
\end{equation*}

Similarly, the right-hand side yields
\[
(\sigma_{i-1} \sigma_i ) \cdots (\sigma_2 \sigma_1) \, (\sigma_{i+1} \sigma_{i+2} ) \cdots (\sigma_4 \sigma_3) p_3 \, p_1 (\sigma_4 \sigma_3)^{-1} \cdots (\sigma_{i+1} \sigma_{i+2})^{-1} \, (\sigma_2 \sigma_1)^{-1} \cdots (\sigma_{i-1} \sigma_i )^{-1}
\]

Removing the two conjugating expressions at the extremities of both hand sides, and writing $p_3$ in terms of $p_1$, relations 3) are transformed into the relation:
\begin{equation*}
    p_1 \, p_3 \, = \, p_3 \, p_1 \implies p_1 (\sigma_2 \sigma_3) (\sigma_1 \sigma_2) p_1 (\sigma_1 \sigma_2)^{-1} (\sigma_2 \sigma_3)^{-1} = (\sigma_2 \sigma_3) (\sigma_1 \sigma_2) p_1 (\sigma_1 \sigma_2)^{-1} (\sigma_2 \sigma_3)^{-1} p_1
\end{equation*}
which can be slightly manipulated into the form of relation 3) of the statement of the theorem: 
\begin{equation}
    p_1 \, (\sigma_2 \sigma_1 \sigma_3 \sigma_2)^{-1}  \, p_1\, (\sigma_2 \sigma_1 \sigma_3 \sigma_2) = (\sigma_2 \sigma_1 \sigma_3 \sigma_2)^{-1}  \, p_1\, (\sigma_2 \sigma_1 \sigma_3 \sigma_2) \, p_1.  
\end{equation}

\smallbreak
\noindent \textit{Relations 4): $\sigma_i \, p_{i+1} \, \sigma_i \, p_{i+1} = p_{i+1} \, \sigma_{i} \, p_{i+1} \,\sigma_{i}$.}
For relations 8) of Theorem~\ref{braidpreswclasp}, the clasp shifting should also involve  $\sigma_i$, which we should locally conjugate to $\sigma_1$. As a consequence, $p_{i+1}$ will be locally conjugated to  $p_2$, and later expressed in terms of $p_1$. The process of properly doing so is particularly long. 
 Consider first the piece $\sigma_i \, p_{i+1}$: the clasp $p_{i+1}$ is substituted by the usual conjugate of $p_1$. Then, the first relation in the braid group allows to insert the expression of inverses $(\sigma_{i+1} \cdots \sigma_3) (\sigma_2^{-1} \sigma_1^{-1})(\sigma_1 \sigma_2) (\sigma_3^{-1} \cdots \sigma_{i+1}^{-1})$, once before and once after $p_1$.

\[ \begin{split}
    \sigma_i \, p_{i+1} & = \sigma_i \, (\sigma_i \cdots \sigma_1) (\sigma_{i+1} \cdots \sigma_2) \, p_1 \,  (\sigma_2^{-1} \cdots \sigma_{i+1}^{-1}) (\sigma_1^{-1} \cdots \sigma_i^{-1}) \\
    & = \sigma_i \, (\sigma_i \cdots \sigma_1) (\sigma_{i+1} \cdots \sigma_2) (\sigma_{i+1} \cdots \sigma_3) (\sigma_2^{-1} \sigma_1^{-1}) (\sigma_1 \sigma_2) \underline{(\sigma_3^{-1} \cdots \sigma_{i+1}^{-1})} \, p_1 \, \underline{(\sigma_{i+1} \cdots \sigma_3)} (\sigma_2^{-1} \sigma_1^{-1}) \cdot \\ & \quad \cdot (\sigma_1 \sigma_2) (\sigma_3^{-1} \cdots \sigma_{i+1}^{-1}) (\sigma_2^{-1} \cdots \sigma_{i+1}^{-1}) (\sigma_1^{-1} \cdots \sigma_i^{-1}) \\
    & \stackrel{\text{cancel}}{=} \sigma_i \, (\sigma_i \cdots \sigma_1) (\sigma_{i+1} \cdots \sigma_2) (\sigma_{i+1} \cdots \sigma_3) (\sigma_2^{-1} \sigma_1^{-1}) \underline{(\sigma_1 \sigma_2) \, p_1 \, (\sigma_2^{-1} \sigma_1^{-1})} (\sigma_1 \sigma_2) \cdot \\ 
    & \quad \cdot (\sigma_3^{-1} \cdots \sigma_{i+1}^{-1}) (\sigma_2^{-1} \cdots \sigma_{i+1}^{-1}) (\sigma_1^{-1} \cdots \sigma_i^{-1}) \\
     & \stackrel{\text{Eq.}~\ref{Prel1}}{=} \sigma_i \, (\sigma_i \cdots \sigma_1) \underline{(\sigma_{i+1} \cdots \sigma_2) (\sigma_{i+1} \cdots \sigma_3) (\sigma_2^{-1}} \sigma_1^{-1}) \, p_2 \, (\sigma_1 \underline{\sigma_2) (\sigma_3^{-1} \cdots \sigma_{i+1}^{-1}) (\sigma_2^{-1} \cdots \sigma_{i+1}^{-1})} (\sigma_1^{-1} \cdots \sigma_i^{-1}) \\
     & \stackrel{\text{ braid rels.}}{=} \sigma_i \, \underline{(\sigma_i \cdots \sigma_1) (\sigma_i \cdots \sigma_2) \sigma_1^{-1}} (\sigma_{i+1} \cdots \sigma_3) \, p_2 \, (\sigma_3^{-1} \cdots \sigma_{i+1}^{-1}) \underline{\sigma_1 (\sigma_2^{-1} \cdots \sigma_i^{-1}) (\sigma_1^{-1} \cdots \sigma_i^{-1})} \\
     & \stackrel{\text{ braid rels.}}{=} \sigma_i \, \underline{(\sigma_{i-1} \cdots \sigma_1) (\sigma_i \cdots \sigma_2)} (\sigma_{i+1} \cdots \sigma_3) \, p_2 \, (\sigma_3^{-1} \cdots \sigma_{i+1}^{-1}) \underline{(\sigma_2^{-1} \cdots \sigma_i^{-1}) (\sigma_1^{-1} \cdots \sigma_{i-1}^{-1})} \\
     & \stackrel{\text{ Eq.}~\ref{sameconjugates}}{=} \underline{\sigma_i \, (\sigma_{i-1} \sigma_i)(\sigma_{i-2} \sigma_{i-1}) \cdots (\sigma_1 \sigma_2)} (\sigma_{i+1} \cdots \sigma_3) \, p_2 \, (\sigma_3^{-1} \cdots \sigma_{i+1}^{-1}) \cdot (\sigma_1 \sigma_2)^{-1} \cdots (\sigma_{i-2} \sigma_{i-1})^{-1} (\sigma_{i-1} \sigma_i)^{-1} \\
     & \stackrel{\text{ braid rels.}}{=} (\sigma_{i-1} \sigma_i)(\sigma_{i-2} \sigma_{i-1}) \cdots (\sigma_1 \sigma_2) \underline{\sigma_1} \underline{(\sigma_{i+1} \cdots \sigma_3)} \, p_2 \, (\sigma_3^{-1} \cdots \sigma_{i+1}^{-1}) (\sigma_1 \sigma_2)^{-1} \cdots (\sigma_{i-2} \sigma_{i-1})^{-1} (\sigma_{i-1} \sigma_i)^{-1} \\
     & \stackrel{\text{commute}}{=} (\sigma_{i-1} \sigma_i)(\sigma_{i-2} \sigma_{i-1}) \cdots (\sigma_1 \sigma_2) (\sigma_{i+1} \cdots \sigma_3) \sigma_1 \, p_2 \, (\sigma_3^{-1} \cdots \sigma_{i+1}^{-1}) (\sigma_1 \sigma_2)^{-1} \cdots (\sigma_{i-2} \sigma_{i-1})^{-1} (\sigma_{i-1} \sigma_i)^{-1}
\end{split}
\]
As a result,
\[
\begin{split}
     \sigma_i \, p_{i+1} & \sigma_i \, p_{i+1} \\
     & = (\sigma_{i-1} \sigma_i)(\sigma_{i-2} \sigma_{i-1}) \cdots (\sigma_1 \sigma_2) (\sigma_{i+1} \cdots \sigma_3) \sigma_1 \, p_2 \, \sigma_1 \, p_2 \, (\sigma_3^{-1} \cdots \sigma_{i+1}^{-1}) (\sigma_1 \sigma_2)^{-1} \cdots (\sigma_{i-2} \sigma_{i-1})^{-1} (\sigma_{i-1} \sigma_i)^{-1}
\end{split}
\]
Similarly,
\[
\begin{split}
    p_{i+1} \sigma_i & \, p_{i+1} \, \sigma_i \\
     & = (\sigma_{i-1} \sigma_i)(\sigma_{i-2} \sigma_{i-1}) \cdots (\sigma_1 \sigma_2) (\sigma_{i+1} \cdots \sigma_3) \, p_2 \, \sigma_1 \, p_2 \,\sigma_1 \, (\sigma_3^{-1} \cdots \sigma_{i+1}^{-1}) (\sigma_1 \sigma_2)^{-1} \cdots (\sigma_{i-2} \sigma_{i-1})^{-1} (\sigma_{i-1} \sigma_i)^{-1}
\end{split}
\]
Removing the conjugating expressions and recomposing the two hand sides to the original relation, we obtain
\[
\begin{split}
    & \sigma_1 p_2 \sigma_1 p_2 = p_2 \sigma_1 p_2 \sigma_1 \stackrel{\text{Eq.}~\ref{Prel1}}{\Longleftrightarrow} \\ & \sigma_1\, (\sigma_1 \sigma_2 p_1 \sigma_2^{-1} \sigma_1^{-1}) \sigma_1\, (\sigma_1 \sigma_2 p_1 \sigma_2^{-1} \sigma_1^{-1}) =  (\sigma_1 \sigma_2 p_1 \sigma_2^{-1} \sigma_1^{-1}) \sigma_1\, (\sigma_1 \sigma_2 p_1 \sigma_2^{-1} \sigma_1^{-1}) \, \sigma_1 \Longleftrightarrow  \\
     & \sigma_1 \, \sigma_1 \sigma_2 p_1 \sigma_2^{-1} \sigma_1 \sigma_2 p_1 \sigma_2^{-1} \sigma_1^{-1} = \sigma_1 \sigma_2 p_1 \sigma_2^{-1} \sigma_1 \sigma_2 p_1 \sigma_2^{-1} \Longleftrightarrow \\
      & \sigma_1 (\sigma_2 p_1 \sigma_2^{-1}) \sigma_1 (\sigma_2 p_1 \sigma_2^{-1}) = (\sigma_2 p_1 \sigma_2^{-1}) \sigma_1 (\sigma_2 p_1 \sigma_2^{-1}) \sigma_1
\end{split}
\]
which is precisely the desired expression for relations 4) of Theorem~\ref{thm:irredundant}.

\smallbreak
\noindent \textit{Relations 5): $p_i \, \sigma_j = \sigma_j \, p_i$}. We have to consider the two possible cases:

\noindent \textit{If j>i+1:} In this case $\sigma_j$ does not interfere with the clasp, so the left-hand side can be rewritten as
\begin{equation*}
    \begin{split}
        p_i  \, \sigma_j & = (\sigma_{i-1} \sigma_i)(\sigma_{i-2} \sigma_{i-1}) \cdots (\sigma_1 \sigma_2) \, p_1 \, \underline{(\sigma_1 \sigma_2)^{-1} \cdots (\sigma_{i-2} \sigma_{i-1})^{-1} (\sigma_{i-1} \sigma_i)^{-1}} \sigma_j\\
        & = (\sigma_{i-1} \sigma_i)(\sigma_{i-2} \sigma_{i-1}) \cdots (\sigma_1 \sigma_2) \, p_1 \, \sigma_j (\sigma_1 \sigma_2)^{-1} \cdots (\sigma_{i-2} \sigma_{i-1})^{-1} (\sigma_{i-1} \sigma_i)^{-1}
    \end{split}
\end{equation*}
And similarly, the right-hand side can be rewritten as
\[
(\sigma_{i-1} \sigma_i)(\sigma_{i-2} \sigma_{i-1}) \cdots (\sigma_1 \sigma_2) \, \sigma_j \, p_1 (\sigma_1 \sigma_2)^{-1} \cdots (\sigma_{i-2} \sigma_{i-1})^{-1} (\sigma_{i-1} \sigma_i)^{-1}
\]

Removing conjugation from both hand sides, we obtain
\begin{equation} \label{eq4W1}
    p_1 \, \sigma_j = \sigma_j \,  p_1.
\end{equation}

\noindent \textit{Else j<i-1:} Bringing $\sigma_j$ close to $p_1$ is a bit longer. The left-hand side follows using the second line of Eq.~\ref{W}:

\begin{equation*}
    \begin{split}
        p_i \, \sigma_j & = (\sigma_{i-1} \cdots \sigma_1) (\sigma_i \cdots \sigma_2) p_1 (\sigma_i \cdots \sigma_{j+1} \sigma_j \sigma_{j-1} \cdots \sigma_2)^{-1} (\sigma_{i-1} \cdots \sigma_{j+1} \sigma_j \sigma_{j-1} \cdots \sigma_1)^{-1} \sigma_j\\
        & = (\sigma_{i-1} \cdots \sigma_1) (\sigma_i \cdots \sigma_2) p_1 (\sigma_2^{-1} \cdots \sigma_{j-1}^{-1} \sigma_j^{-1} \sigma_{j+1}^{-1} \cdots \sigma_i^{-1}) (\sigma_{1}^{-1} \cdots \sigma_{j-1}^{-1} \sigma_j^{-1} \sigma_{j+1}^{-1} \cdots \sigma_{i-1}^{-1}) \underline{\sigma_j}\\
        & \stackrel{\text{shift}}{=} (\sigma_{i-1} \cdots \sigma_1) (\sigma_i \cdots \sigma_2) p_1 (\sigma_2^{-1} \cdots \sigma_{j-1}^{-1} \sigma_j^{-1} \sigma_{j+1}^{-1} \cdots \sigma_i^{-1}) (\sigma_{1}^{-1} \cdots \sigma_{j-1}^{-1} \underline{\sigma_j^{-1} \sigma_{j+1}^{-1} \sigma_j} \cdots \sigma_{i-1}^{-1}) \\
        & \stackrel{\text{braid rel.}}{=} (\sigma_{i-1} \cdots \sigma_1) (\sigma_i \cdots \sigma_2) p_1 (\sigma_2^{-1} \cdots \sigma_{j-1}^{-1} \sigma_j^{-1} \sigma_{j+1}^{-1} \cdots \sigma_i^{-1}) (\sigma_{1}^{-1} \cdots \sigma_{j-1}^{-1} \underline{\sigma_{j+1}} \sigma_j^{-1} \sigma_{j+1}^{-1} \cdots \sigma_{i-1}^{-1}) \\
        & \stackrel{\text{shift}}{=} (\sigma_{i-1} \cdots \sigma_1) (\sigma_i \cdots \sigma_2) p_1 (\sigma_2^{-1} \cdots \sigma_{j-1}^{-1} \sigma_j^{-1} \underline{\sigma_{j+1}^{-1} \sigma_{j+2}^{-1} \sigma_{j+1}} \cdots \sigma_i^{-1}) (\sigma_{1}^{-1} \cdots \sigma_{j-1}^{-1} \sigma_j^{-1} \sigma_{j+1}^{-1} \cdots \sigma_{i-1}^{-1}) \\
        & \stackrel{\text{braid rel.}}{=} (\sigma_{i-1} \cdots \sigma_1) (\sigma_i \cdots \sigma_2) p_1 (\sigma_2^{-1} \cdots \sigma_{j-1}^{-1} \sigma_j^{-1} \underline{\sigma_{j+2}} \sigma_{j+1}^{-1} \sigma_{j+2}^{-1} \cdots \sigma_i^{-1}) (\sigma_{1}^{-1} \cdots \sigma_{j-1}^{-1} \sigma_j^{-1} \sigma_{j+1}^{-1} \cdots \sigma_{i-1}^{-1})\\
        & \stackrel{\text{shift}}{=} (\sigma_{i-1} \cdots \sigma_1) (\sigma_i \cdots \sigma_2) p_1 \, \sigma_{j+2} (\sigma_2^{-1} \cdots \sigma_{j-1}^{-1} \sigma_j^{-1} \sigma_{j+1}^{-1} \sigma_{j+2}^{-1} \cdots \sigma_i^{-1}) (\sigma_{1}^{-1} \cdots \sigma_{j-1}^{-1} \sigma_j^{-1} \sigma_{j+1}^{-1} \cdots \sigma_{i-1}^{-1})
    \end{split}
\end{equation*}
Analogously, the right-hand side yields:
\[
(\sigma_{i-1} \cdots \sigma_1) (\sigma_i \cdots \sigma_2) \sigma_{j+2} \, p_1 (\sigma_2^{-1} \cdots \sigma_i^{-1}) (\sigma_{1}^{-1} \cdots \sigma_{i-1}^{-1})
\]
So removing the conjugating expressions we obtain the same relation of the previous case:
\begin{equation} \label{eq4W}
    p_1 \, \sigma_{j+2} = \sigma_{j+2} \, p_1
\end{equation}
Since the admissible indices for $j<i-1$ are $j = 1, \dots , i-2$, we have that $j+2$ can assume all values from $3$ to $i$. Hence, Eqs.~\ref{eq4W1} and~\ref{eq4W} establish relations 5) of the statement. 

\smallbreak
\noindent \textit{Relations 6): $p_i \, \sigma_i = \sigma_i p_i$.} They are a bit mischievous, because not only the clasp needs to be brought on to subtend on the first and second strands, but $\sigma_i$ too.
As always, the right and left-hand sides are going to be symmetric, so we will illustrate the left one only.
\begin{equation*}
    \begin{split}
        p_i \sigma_i = & \, (\sigma_{i-1} \sigma_i)(\sigma_{i-2} \sigma_{i-1}) \cdots (\sigma_1 \sigma_2) \, p_1 \, (\sigma_1 \sigma_2)^{-1} \cdots (\sigma_{i-2} \sigma_{i-1})^{-1} (\sigma_{i-1} \sigma_i)^{-1} \sigma_i \\ 
        = & \, (\sigma_{i-1} \sigma_i)(\sigma_{i-2} \sigma_{i-1}) \cdots (\sigma_1 \sigma_2) \, p_1 \, (\sigma_2^{-1} \sigma_1^{-1}) \cdots (\sigma_{i-1}^{-1} \sigma_{i-2}^{-1}) \underline{(\sigma_i^{-1} \sigma_{i-1}^{-1}) \sigma_i} \\
        \stackrel{\text{braid rel.}}{=} & \, (\sigma_{i-1} \sigma_i)(\sigma_{i-2} \sigma_{i-1}) \cdots (\sigma_1 \sigma_2) \, p_1 \, (\sigma_2^{-1} \sigma_1^{-1}) \cdots \underline{(\sigma_{i-1}^{-1} \sigma_{i-2}^{-1}) \sigma_{i-1}} (\sigma_i^{-1} \sigma_{i-1}^{-1}) \\ 
        \stackrel{\text{iterating}}{=} & \, \cdots = (\sigma_{i-1} \sigma_i)(\sigma_{i-2} \sigma_{i-1}) \cdots (\sigma_1 \sigma_2) \, p_1 \, \sigma_{1} (\sigma_2^{-1} \sigma_1^{-1}) \cdots  (\sigma_{i-1}^{-1} \sigma_{i-2}^{-1}) (\sigma_i^{-1} \sigma_{i-1}^{-1})  \\ 
    \end{split}
\end{equation*}

Reiterating, the elementary braiding eventually comes next to $p_1$.
The right-hand side is manipulated with the analogous steps. 
Comparing with the left-hand side and removing the conjugating expressions, the relation boils down to relation 6) of the statement:
\[
p_1 \, \sigma_1 = \sigma_1 \, p_1.
\]

\smallbreak
\noindent \textit{Relations 7): $p_i \, \sigma_{i+1} \, \sigma_i = \sigma_{i+1} \, \sigma_i \, p_{i+1}$.} We will work on the relation's more convenient writing $p_i \, = \, \sigma_{i+1} \, \sigma_i \, p_{i+1} \, \sigma_i^{-1} \, \sigma_{i+1}^{-1}$. By rewriting $p_i$ and $p_{i+1}$ as conjugates of $p_1$, the relation becomes:

$$(\sigma_{i-1} \sigma_i)(\sigma_{i-2} \sigma_{i-1}) \cdots (\sigma_1 \sigma_2) \, p_1 \, (\sigma_1 \sigma_2)^{-1} \cdots (\sigma_{i-2} \sigma_{i-1})^{-1} (\sigma_{i-1} \sigma_i)^{-1} \, = \, (\sigma_{i+1} \sigma_i) (\sigma_i \sigma_{i+1}) (\sigma_{i-1} \sigma_i) \cdots (\sigma_1 \sigma_2) \, p_1 \, (\sigma_1 \sigma_2)^{-1} \cdots (\sigma_{i-1} \sigma_i)^{-1} (\sigma_i \sigma_{i+1})^{-1} (\sigma_{i+1} \, \sigma_i)^{-1}$$

We will operate on each hand side separately, and then reunite them for a final simplification. The left-hand side is evidently equal to
$$(\sigma_{i-1} \cdots \sigma_1) (\sigma_i \cdots \sigma_2) (\sigma_{i+1} \cdots \sigma_3) \, p_1 \, (\sigma_{i+1} \cdots \sigma_3)^{-1} (\sigma_i \cdots \sigma_2)^{-1} (\sigma_{i-1} \cdots \sigma_1)^{-1}$$
which simply means that we have rearranged the writing and that, together with the clasp $p_i$, we have also dragged along the $i+1$-th strand with us to lie at the left (in position $3$) of the bonded braid.

We slide also the conjugation by $(\sigma_{i+1} \sigma_i)$ in the right-hand side around $p_1$. On the right-hand side, we work on $w = \underline{(\sigma_{i+1} \sigma_i) (\sigma_i \sigma_{i+1}) (\sigma_{i-1} \sigma_i)} (\sigma_{i-2} \sigma_{i-1}) \cdots (\sigma_1 \sigma_2)$, which is the word at the left of $p_1$. The same results hold by symmetry on the word at its right, $w^{-1} = (\sigma_1 \sigma_2)^{-1} \cdots (\sigma_{i-1} \sigma_i)^{-1} (\sigma_i \sigma_{i+1})^{-1} (\sigma_{i+1} \, \sigma_i)^{-1}$. 

Focus on the underlined subword of $w$: it can be modified as following.

\begin{equation} \label{algorithm7}
    \begin{split}
        \sigma_{i+1} \sigma_i \sigma_i \sigma_{i+1} \sigma_{i-1} \sigma_i \, & = \, \sigma_{i+1} \sigma_i \, \sigma_{i-1} \sigma_{i-1}^{-1} \, \sigma_i \underline{\sigma_{i+1}} \, \underline{\sigma_{i-1}} \sigma_i \\
        & \stackrel{\text{switch}}{=} \, \sigma_{i+1} \sigma_i \, \sigma_{i-1} \underline{\sigma_{i-1}^{-1} \, \sigma_i \sigma_{i-1}} \, \sigma_{i+1} \sigma_i \\
        & = \, \sigma_{i+1} \sigma_i \, \sigma_{i-1} \sigma_i \, \sigma_{i-1} \underline{\sigma_i^{-1} \, \sigma_{i+1} \sigma_i} \\
        & = \, \sigma_{i+1} \underline{\sigma_i \, \sigma_{i-1} \sigma_i} \, \sigma_{i-1} \sigma_{i+1} \sigma_i \sigma_{i+1}^{-1}\\
        & = \, \underline{\sigma_{i+1}} \, \underline{\sigma_{i-1}} \sigma_i \underline{\sigma_{i-1}  \, \sigma_{i-1}} \, \underline{\sigma_{i+1}} \sigma_i \sigma_{i+1}^{-1}\\
        & \stackrel{\text{switch}}{=} \, \sigma_{i-1} \underline{\sigma_{i+1} \sigma_i \sigma_{i+1}} \sigma_{i-1}  \, \sigma_{i-1} \sigma_i \sigma_{i+1}^{-1}\\
        & = \, (\sigma_{i-1} \sigma_i \sigma_{i+1}) (\sigma_i \sigma_{i-1}) \, (\sigma_{i-1} \sigma_i) \sigma_{i+1}^{-1}\\
    \end{split}
\end{equation}

This final writing can be plugged back in the word $w$, which now looks like 
$$(\sigma_{i-1} \sigma_i \sigma_{i+1}) \, (\sigma_i \sigma_{i-1}) \, (\sigma_{i-1} \sigma_i) \, \sigma_{i+1}^{-1} \, (\sigma_{i-2} \sigma_{i-1}) \, \cdots \, (\sigma_1 \sigma_2) \, = \, (\sigma_{i-1} \sigma_i \sigma_{i+1}) \, \underline{(\sigma_i \sigma_{i-1}) \, (\sigma_{i-1} \sigma_i) \, (\sigma_{i-2} \sigma_{i-1})} \, \cdots \, (\sigma_1 \sigma_2) \, \sigma_{i+1}^{-1}$$

The algorithm can be repeated on the underlined subword, and then reiterated. In the end, after having rearranged the writing, $w$ will read:

$$(\sigma_{i-1} \cdots \sigma_1) \, (\sigma_i \cdots \sigma_2) \, (\sigma_{i+1} \cdots \sigma_3) \, (\sigma_2 \, \sigma_2 \, \sigma_1 \, \sigma_2 ) \, (\sigma_3^{-1} \cdots \sigma_{i+1}^{-1})$$

Recombining with $p_1$ and $w^{-1}$, the right-hand side of relation 7 has been modified in 

\begin{equation*}
    \begin{split}
    (\sigma_{i-1} \cdots \sigma_1) & \, (\sigma_i \cdots \sigma_2) \, (\sigma_{i+1} \cdots \sigma_3) \, (\sigma_2 \, \sigma_1 \, \sigma_1 \, \sigma_2 ) \, \underline{(\sigma_3^{-1} \cdots \sigma_{i+1}^{-1})} \, p_1 \, \underline{(\sigma_{i+1} \, \cdots \, \sigma_3)} \cdot \\ \cdot & (\sigma_2 \, \sigma_1 \, \sigma_1 \, \sigma_2 )^{-1} \, (\sigma_{i+1} \cdots \sigma_3)^{-1} \,  (\sigma_i \cdots \sigma_2)^{-1} \, (\sigma_{i-1} \cdots \sigma_1)^{-1}
\end{split}
\end{equation*}

The two underlined pieces cancel each other. If we now compare the two hand sides, we notice the same structure: a threefold conjugation of a central core, with clasp $p_1$. By placing the hand sides on both ends of an equation, and removing the same conjugating words, we obtain

$$p_1 \, = \, (\sigma_2 \, \sigma_1 \, \sigma_1 \, \sigma_2 ) \, p_1 \, (\sigma_2 \, \sigma_1 \, \sigma_1 \, \sigma_2 )^{-1} \implies p_1 \, (\sigma_2 \, \sigma_1 \, \sigma_1 \, \sigma_2 ) \, = \, (\sigma_2 \, \sigma_1 \, \sigma_1 \, \sigma_2 ) \, p_1$$

which is relation 7) of the statement.

\smallbreak
\noindent \textit{Relations 8): $\sigma_i \, \sigma_{i+1} \, p_i = p_{i+1} \, \sigma_i \, \sigma_{i+1}$.} These relations are trivialised by the chosen rewriting in terms of $p_1$. In fact, by re-expressing relation 8) as $p_i \, = \, (\sigma_i \, \sigma_{i+1})^{-1} \, p_{i+1} \, (\sigma_i \, \sigma_{i+1})$, the rewriting looks like 
\begin{equation*}
    \begin{split}
        (\sigma_{i-1} \sigma_i) & (\sigma_{i-2} \sigma_{i-1}) \cdots (\sigma_1 \sigma_2) \, p_1 \, (\sigma_1 \sigma_2)^{-1} \cdots (\sigma_{i-2} \sigma_{i-1})^{-1} (\sigma_{i-1} \sigma_i)^{-1} \, \\ & = \underline{(\sigma_i \, \sigma_{i+1})^{-1} \, (\sigma_i \sigma_{i+1})}(\sigma_{i-1} \sigma_i) \cdots (\sigma_1 \sigma_2) \, p_1 \, (\sigma_1 \sigma_2)^{-1} \cdots (\sigma_{i-1} \sigma_i)^{-1} \underline{(\sigma_i \sigma_{i+1})^{-1} \, (\sigma_i \, \sigma_{i+1})}
    \end{split}
\end{equation*}
Where the underlined bits are self-cancellative expressions. The remaining equation is tautological, thus is removed from the relation set.

\smallbreak

\smallbreak
 \noindent \textit{Relations 9): $p_i = \sigma_i^2$.} The left-hand side can be manipulated as in relations 6) to have 
$$p_i = (\sigma_{i-1} \sigma_i)(\sigma_{i-2} \sigma_{i-1}) \cdots (\sigma_1 \sigma_2) \, p_1 \, (\sigma_1 \sigma_2)^{-1} \cdots (\sigma_{i-2} \sigma_{i-1})^{-1} (\sigma_{i-1} \sigma_i)^{-1}$$

The right-hand side of relations 9), $\sigma_i^2$, can be multiplied on the right by the identity expression \[(\sigma_{i-1} \sigma_i)(\sigma_{i-2} \sigma_{i-1}) \cdots (\sigma_1 \sigma_2) (\sigma_2^{-1} \sigma_1^{-1}) \cdots  (\sigma_{i-1}^{-1} \sigma_{i-2}^{-1}) (\sigma_i^{-1} \sigma_{i-1}^{-1})\]  so that $\sigma_i^2$ can slide in via the iterated braid relation, becoming in the end the core $\sigma_1^2$ of the expression:
\[
(\sigma_{i-1} \sigma_i)(\sigma_{i-2} \sigma_{i-1}) \cdots (\sigma_1 \sigma_2) \, \sigma_1^2 \, (\sigma_2^{-1} \sigma_1^{-1}) \cdots  (\sigma_{i-1}^{-1} \sigma_{i-2}^{-1}) (\sigma_i^{-1} \sigma_{i-1}^{-1})
\]

Removing the conjugating expressions from both hand sides, the relations reduce to
\[
p_1= \sigma_1^2 \iff p_1 \, \sigma_1^{-1} = \sigma_1
\]
which is relation 8) of Theorem~\ref{thm:irredundant}.
 The proof of Theorem~\ref{thm:irredundant} is now concluded. 
\end{proof}


\section{The tight bonded braid monoid and the singular braid monoid}

The presentation for the bonded braid monoid in Definition~\ref{BB} can be brought to a \textit{tight form}, as shown in~\cite{DKL}:

\begin{proposition}[\cite{DKL}] \label{tBB} 
The bonded braid monoid $BB_n$ admits a reduced presentation with a reduced set of generators $\sigma_i, \sigma_i^{-1}, b_i$, for $1 \leq i \leq n-1$, where $b_i := b_{i, i+1}$, and relations: 
    \[
    \begin{array}{crcll}
        1) & \sigma_i \sigma_j \, & = & \, \sigma_j \sigma_i & \text{for} \hspace{2mm} 1 \leq i <j \leq n-1, \, |i-j|>1 \vspace{1mm} \\
        2) & \sigma_i \sigma_{i+1} \sigma_i \, & = & \, \sigma_{i+1} \sigma_i \sigma_{i+1} & \text{for } 1 \leq i \leq n-2 \vspace{1mm} \\
        3) & b_i b_j \, & = & \,  b_j b_i & \text{for}\hspace{2mm} 1 \leq i <j \leq n-1, \,  |i-j|>1 \vspace{1mm} \\
        4) & b_i \sigma_j^{\pm 1} \, & = &  \, \sigma_{j}^{\pm1} b_i & \text{for} \hspace{2mm} 1 \leq i <j \leq n-1, \, |i-j|>1 \vspace{1mm} \\
        5) & b_i \sigma_i^{\pm 1} \, & = & \, \sigma_i^{\pm 1} b_i & \text{for } 1 \leq i \leq n-1 \vspace{1mm} \\
        6) & b_i \sigma_{i+1} \sigma_i \, & = & \,  \sigma_{i+1}\sigma_i b_{i+1} & \text{for } 1 \leq i \leq n-2 \vspace{1mm} \\
        7) & \sigma_i \sigma_{i+1} b_i \, & = & \, b_{i+1} \sigma_i \sigma_{i+1} & \text{for } 1 \leq i \leq n-2 \vspace{1mm} \\
        \end{array}
    \]
\end{proposition}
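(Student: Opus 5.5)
We pass from the uniform overpass presentation of Theorem~\ref{oBB} to the tight one by Tietze transformations. The first step removes every non-elementary overpassing bond. Iterating relation 4) of Theorem~\ref{oBB}, $b_{ij}=\sigma_i^{-1}b_{i+1,j}\sigma_i$, we obtain
\[
b_{ij}=(\sigma_i^{-1}\cdots\sigma_{j-2}^{-1})\,b_{j-1}\,(\sigma_{j-2}\cdots\sigma_i).
\]
So every $b_{ij}$ is a conjugate of a tight bond by an invertible braid word. This is legitimate inside the monoid, since the $\sigma_k^{\pm1}$ are invertible there. Taking these formulas as definitions, the generators $b_{ij}$ with $j>i+1$ are eliminated, and relations 4) of Theorem~\ref{oBB} become tautologies.

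Next we rewrite each remaining relation of Theorem~\ref{oBB} in the $b_i$ and check that it follows from relations 1)--7) of the statement, or conversely. Relations 1), 2), 6), 8), 9) of Theorem~\ref{oBB} are exactly relations 1), 2), 5) (with the $+$ sign), 6), 7) of the statement. The $-$ sign in 5) follows by multiplying $\sigma_ib_i=b_i\sigma_i$ on both sides by $\sigma_i^{-1}$.

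Relation 5) of Theorem~\ref{oBB}, $\sigma_{j-1}b_{i,j-1}\sigma_{j-1}^{-1}=b_{ij}$, becomes a statement about two different conjugates of $b_{j-2}$ and $b_{j-1}$. After cancelling the common prefix $\sigma_i^{-1}\cdots\sigma_{j-3}^{-1}$ and suffix, and using relation 1), it reduces to
\[
\sigma_{j-1}\sigma_{j-2}^{-1}\,b_{j-2}\,\sigma_{j-2}\sigma_{j-1}^{-1}=\sigma_{j-2}^{-1}b_{j-1}\sigma_{j-2}.
\]
This is equivalent to $\sigma_{j-2}\sigma_{j-1}\sigma_{j-2}^{-1}b_{j-2}=b_{j-1}\sigma_{j-2}\sigma_{j-1}\sigma_{j-2}^{-1}$. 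Using the braid relation $\sigma_{j-2}\sigma_{j-1}\sigma_{j-2}^{-1}=\sigma_{j-1}^{-1}\sigma_{j-2}\sigma_{j-1}$ together with 5) and 6)/7) of the statement, this can be checked; it is essentially the computation done for relation 5.4) in the proof of Theorem~\ref{oBB}.

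Relation 7) of Theorem~\ref{oBB}, where $\sigma_k$ commutes with $b_{rs}$ for $k\notin\{r-1,r,s-1,s\}$, needs a case split. If $k$ lies outside $[r-1,s]$, then $\sigma_k$ commutes with the conjugating word and with $b_{s-1}$ by relations 1) and 4). If $r<k<s-1$, we push $\sigma_k$ through $\sigma_r^{-1}\cdots\sigma_{s-2}^{-1}$ by the braid relation, which turns it into $\sigma_{k-1}$ on the other side. The resulting $\sigma_{k-1}$ is far from $b_{s-1}$ and commutes with it by relation 4).

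The main obstacle is the bond--bond relation 3) of Theorem~\ref{oBB}. The disjoint case $i<j<r<s$ is easy: the two conjugating words have separated indices, and we reduce to $b_{j-1}b_{s-1}=b_{s-1}b_{j-1}$, which is relation 3) of the statement. The nested case $i<r<s<j$ is harder, because the conjugator of $b_{ij}$ interacts with that of $b_{rs}$. Here we first conjugate the whole relation by $\sigma_r^{-1}\cdots\sigma_{s-2}^{-1}$ to make $b_{rs}$ tight. We then slide this tight bond through the braid block of $b_{ij}$. Relations 6) and 7) of the statement let a tight bond travel across a $\sigma_{k}\sigma_{k+1}$ pair while shifting its index, and relation 4) handles the generators far from it. After this sliding the bond sits far from $b_{j-1}$, and we can conclude by relations 3) and 4).

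Finally, for the converse direction, every relation of the statement is an instance of, or a consequence of, the relations of Theorem~\ref{oBB} restricted to tight bonds. Hence the two presentations define the same monoid.
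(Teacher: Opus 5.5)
Your argument is correct in substance, but it takes a different route from the one the paper points to. The paper gives no proof of its own. It quotes the tight presentation from \cite{DKL} and only remarks that it comes from pulling the bonds of Definition~\ref{BB} tight with the vertex slides 5.1)--5.4). In that route each $b_{ij}^{\cI}$ becomes $b_{j-1}$ conjugated by a word $\sigma_i^{\pm1}\cdots\sigma_{j-2}^{\pm1}$ whose signs record $\cI$. Every defining relation, including 4), 8.1) and 8.2) for arbitrary underpass sets, then has to be rechecked. You instead start from the uniform overpass presentation of Theorem~\ref{oBB}, where the underpasses have already been absorbed. The elimination is then a single Tietze step, and the only non-trivial checks are relation 5), relation 7) and the nested case of relation 3) of Theorem~\ref{oBB}. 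This is short and non-circular, since the proof of Theorem~\ref{oBB} never uses tight bonds. The price is that your result rests on Theorem~\ref{oBB}, whose general-$\cI,\cR$ cases are the laborious part, whereas the \cite{DKL} route works directly from the defining presentation.

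A few spots should be tidied.
\begin{itemize}
\item In the reduction of relation 5), the conjugator of $b_{i,j-1}$ is $\sigma_i^{-1}\cdots\sigma_{j-3}^{-1}$, not $\sigma_i^{-1}\cdots\sigma_{j-2}^{-1}$. Done correctly, the relation reduces to $\sigma_{j-1}b_{j-2}\sigma_{j-1}^{-1}=\sigma_{j-2}^{-1}b_{j-1}\sigma_{j-2}$, which is literally relation 7) of the statement with $i=j-2$. Your version differs from this only by relation 5), so nothing breaks, and no braid relation is needed.
\item In the nested case $i<r<s<j$, you should say explicitly why conjugating by $X=\sigma_{s-2}\cdots\sigma_r$ leaves $b_{ij}$ unchanged. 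Every letter $\sigma_k$ of $X$ satisfies $i<k<j-1$, so it commutes with $b_{ij}$ by the case of relation 7) you already treated. Then, with $Y=\sigma_{j-2}\cdots\sigma_i$, one has $Y\,b_{s-1}=b_{s-2}\,Y$ (relation 6) at the pair $\sigma_{s-1}\sigma_{s-2}$, relation 4) for the other letters). The relation becomes $Y^{-1}b_{j-1}b_{s-2}Y=Y^{-1}b_{s-2}b_{j-1}Y$, i.e.\ relation 3), since $j-1-(s-2)\geq 2$.
\item You use relation 4) with the crossing to the left of the bond as well, while the statement lists it only for $i<j$. Either read it as $|i-j|>1$, as the paper does when it matches it with relation 5) of Theorem~\ref{braidpreswclasp} in the proof of Theorem~\ref{Tightphi}, or note that the other order follows from the listed one by conjugating with relation 6) and the braid relations.
\end{itemize}
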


\begin{figure}[H]
\begin{center} 
    \includegraphics[width=15.5cm]{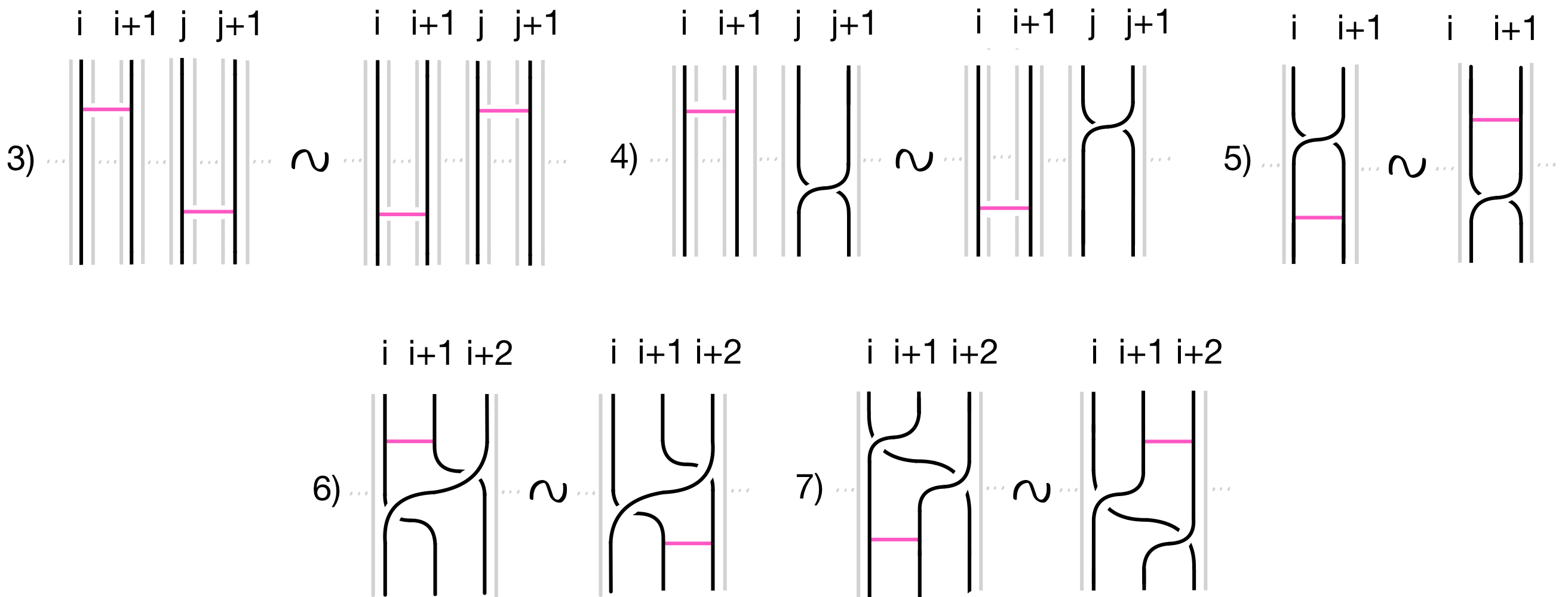}
\end{center}
\caption{Relations 3) to 7) of the tight bonded braid monoid.}
\label{figtightbonded}
\end{figure}

This presentation is also referred to as the \textit{tight bonded braid monoid presentation}. It is named \textit{tight} because in this presentation a sufficient set of generators comprises just the elementary braids and their inverses, and the \textit{tight} bonds $b_i$, which subtend on adjacent strands. They can be seen as a consequence of pulling standard bonds tight, via relations $5.1) - 5.4)$ in Definition~\ref{BB}.

\smallbreak
In the tight presentation, the connection of the bonded braid monoid with the singular braid monoid is more transparent. Singular braids are braids including a special kind of crossing, called \textit{singular}, in which the two strands actually intersect and switch disposition, see \cite{Baez, Birman}.

\definition [\cite{Baez, Birman}] \label{singular} The \textit{singular braid monoid} $SB_n$ is generated by the usual braid generators $\sigma_1^{\pm 1}, \, \dots , \, \sigma_{n-1}^{\pm 1}$ together with singular crossing generators $\tau_1 , \, \dots , \, \tau_{n-1}$, subject to the following relations
\[
\begin{array}{lrcll}
    1) & \sigma_i \sigma_j & = & \sigma_j \sigma_i & \text{for } |i-j|>1 \\
     2) & \sigma_i \sigma_{i+1} \sigma_i & = & \sigma_{i+1} \sigma_i \sigma_{i+1} & \text{for all } i\\
     3) & \tau_i \tau_j & = & \tau_j \tau_i & \text{for } |i-j|>1 \\
     4) & \tau_i \sigma_j & = & \sigma_j \tau_i & \text{for } |i-j|>1 \\
     5) & \tau_i \sigma_i & = & \sigma_i \tau_i & \text{for all } i\\
     6) & \tau_i \sigma_{i+1} \sigma_i & = & \sigma_{i+1} \sigma_i \tau_{i+1} & \text{for all } i \\
     7) & \sigma_i \sigma_{i+1} \tau_i & = & \tau_{i+1} \sigma_i \sigma_{i+1} & \text{for all } i
\end{array}
\]

\begin{figure}[H]
\begin{center} 
    \includegraphics[width=14cm]{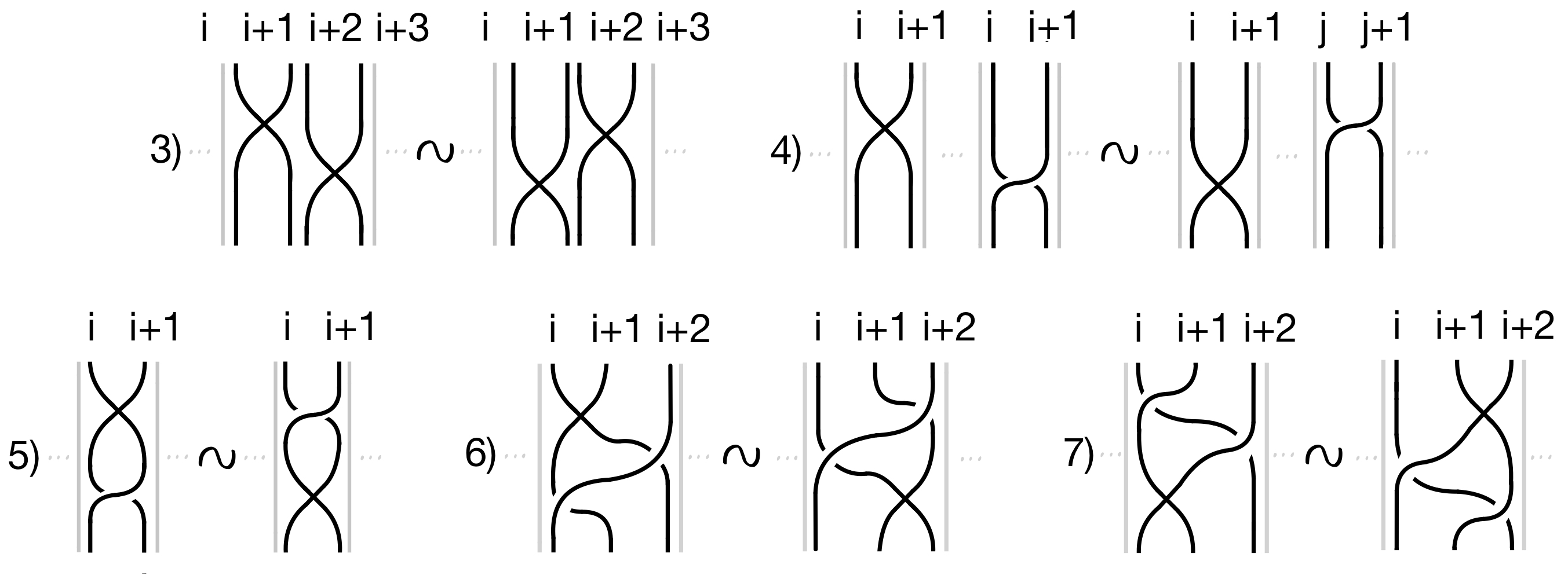}
\end{center}
\caption{Relations 3) to 7) of the singular braid monoid.}
\label{figsingbraids}
\end{figure}

\begin{remark} \label{singisotight}
    As observed in \cite{DKL} there is an isomorphism $s \colon SB_n \to B_n$ which mapps $\tau_i$ to $b_i$ and is the identity on elementary braidings. We can see this in the tight presentation of $BB_n$ in Proposition~\ref{tBB} compared with Definition~\ref{singular}.
\end{remark}

\noindent We can sharpen the presentation in Proposition~\ref{tBB}: using iteratively relations 6) and 7), we have that $$b_{i+1} = (\sigma_{i+1} \sigma_i)^{-1} b_i (\sigma_{i+1} \sigma_i) \qquad , \, \ldots,  \qquad b_{i+1} = (\sigma_{i+1} \sigma_i)^{-1} \ldots (\sigma_2 \sigma_1)^{-1} \, b_1 \, (\sigma_2 \sigma_1) \ldots (\sigma_{i+1} \sigma_i)$$
\smallbreak
Then one comes to the following irredundant presentation for the tight bonded braid monoid.

\begin{proposition}[\cite{DKL}] \label{thm:tightbonded} 
    The tight bonded braid monoid $BB_n$ admits the following irredundant presentation: it is generated by the classical braid generators $\sigma_1, \, \dots, \, \sigma_{n-1}$, their inverses, and a single bond generator $b_1$, subject to the following relations: 
    \[
    \begin{array}{crcll}
        1) &  \sigma_i \sigma_j \, & = & \, \sigma_j \sigma_i& \text{for} \hspace{2mm} 1 \leq i <j \leq n-1, \, |i-j|>1 \vspace{1mm} \\ 
        2) &  \sigma_i \sigma_{i+1} \sigma_i \, & = &  \, \sigma_{i+1} \sigma_i \sigma_{i+1} & \text{for } 1 \leq i \leq n-2 \vspace{1mm} \\
        3) & b_1 (\sigma_2 \sigma_1 \sigma_3 \sigma_2)^{-1}  b_1 (\sigma_2 \sigma_1 \sigma_3 \sigma_2) \, & = & (\sigma_2 \sigma_1 \sigma_3 \sigma_2)^{-1}  b_1 (\sigma_2 \sigma_1 \sigma_3 \sigma_2) b_1 & \text{for}\hspace{2mm} 1 \leq i <j \leq n-1, \,  |i-j|>1 \vspace{1mm} \\
        4) & b_1 \sigma_j \, & = & \, \sigma_j b_1 & \text{for}\hspace{2mm} j>2 \vspace{1mm} \\
        5) &  b_1 \sigma_1 \, & = & \, \sigma_1 b_1  & \\
        6) & b_1 (\sigma_2 \sigma_1) (\sigma_1 \sigma_2) \, & = & \,  (\sigma_2 \sigma_1) (\sigma_1 \sigma_2) b_1 & \vspace{1mm}
        \end{array}
    \]
\end{proposition}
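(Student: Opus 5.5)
We derive this presentation from the tight presentation of Proposition~\ref{tBB} by Tietze transformations, following the same strategy as in the proof of Theorem~\ref{thm:irredundant} for the braid group with one clasp generator; the proof for $BB_n$ is formally identical, with $p_1$ replaced by $b_1$, except that there is no analogue of $p_1=\sigma_1^2$.

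First, we use relations 6) of Proposition~\ref{tBB}, which read $b_{i+1}=(\sigma_{i+1}\sigma_i)^{-1} b_i (\sigma_{i+1}\sigma_i)$, to express each $b_i$, $i\ge 2$, as a conjugate of $b_1$ by an invertible braid word $w_i$. We then delete $b_2,\dots,b_{n-1}$ from the generating set and substitute this expression into every remaining relation. This is legitimate in a monoid because the conjugating words are units (the $\sigma_k^{\pm1}$). By Eq.~\ref{sameconjugates}, the word $w_i$ may be taken in either of the two equivalent forms of Eq.~\ref{W}, with $b_1$ in place of $p_1$. After substitution, relations 6) themselves become tautologies.

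Next, we treat the remaining relations of Proposition~\ref{tBB} one family at a time, each time removing a common conjugating braid word from both sides, exactly as in the proof of Theorem~\ref{thm:irredundant}.
\begin{itemize}
\item Relations 3), $b_ib_j=b_jb_i$ with $|i-j|>1$, reduce by the contraction of Subsection~\ref{contractionsubsection} to $b_1b_3=b_3b_1$. Writing $b_3$ as a conjugate of $b_1$ then gives relation 3) of the statement.
\item Relations 4), $b_i\sigma_j^{\pm1}=\sigma_j^{\pm1}b_i$ with $|i-j|>1$, split into the cases $j>i+1$ and $j<i-1$. They become $b_1\sigma_j=\sigma_jb_1$ for $j>2$ by the same braid-relation sliding used for Eqs.~\ref{eq4W1} and~\ref{eq4W}. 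The version with $\sigma_j^{-1}$ follows by inversion of the unit.
\item Relations 5) contract to $b_1\sigma_1=\sigma_1b_1$, as in the treatment of relations 6) of Theorem~\ref{braidpreswclasp}.
\item Relations 7), $\sigma_i\sigma_{i+1}b_i=b_{i+1}\sigma_i\sigma_{i+1}$, reduce after substitution via the rewriting algorithm Eq.~\ref{algorithm7} to $b_1(\sigma_2\sigma_1\sigma_1\sigma_2)=(\sigma_2\sigma_1\sigma_1\sigma_2)b_1$, which is relation 6) of the statement.
\end{itemize}
This last step requires care: in the clasp case the role was played by relation 7) of Theorem~\ref{braidpreswclasp}, and we must check that the roles of 6) and 7) are swapped consistently with the substitution direction.

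Finally, for irredundancy we argue that no relation of the statement follows from the others. Using quotient maps, we would kill $b_1$, or impose a commutation, and exhibit models, for instance via the isomorphism $s$ with $SB_n$ or via the maps $\Phi_p$, in which exactly one relation fails. The main obstacle is this irredundancy check, together with the bookkeeping in relation 7): the conjugating words on the two sides must match exactly before they are cancelled. That cancellation is valid only because those words are invertible, since $BB_n$ is not a group.
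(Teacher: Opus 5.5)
Your overall route matches the paper's. You eliminate $b_2,\dots,b_{n-1}$ by iterating relation 6) of Proposition~\ref{tBB}, then cancel unit conjugators family by family as in the proof of Theorem~\ref{thm:irredundant}; the paper only sketches this, deferring to DKL.

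\textbf{The conjugator bookkeeping is wrong.} Iterating relation 6) gives $b_i=u_i\,b_1\,u_i^{-1}$ with the \emph{negative} word $u_i=(\sigma_i\sigma_{i-1})^{-1}\cdots(\sigma_2\sigma_1)^{-1}$. This is the conjugator appearing in relation 3) of the statement, since $u_3^{-1}=\sigma_2\sigma_1\sigma_3\sigma_2$. The two forms in Eq.~\ref{W} are \emph{positive} words coming from relation 7), the analogue of Eq.~\ref{Prel1}. Eq.~\ref{sameconjugates} only identifies those two positive words with each other; it says nothing about $u_i$.

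Already for $i=2$, the equality $\sigma_1^{-1}\sigma_2^{-1}b_1\sigma_2\sigma_1=\sigma_1\sigma_2b_1\sigma_2^{-1}\sigma_1^{-1}$ is equivalent to $b_1$ commuting with $\sigma_2\sigma_1\sigma_1\sigma_2$. That is relation 6) of the statement, which is exactly what relation 7) is supposed to reduce to. So you cannot switch conjugators before that reduction, and your pairing of relations with computations is inconsistent:
\begin{itemize}
\item With the $u_i$, relation 6) is indeed the tautology.
\item Relation 7) then becomes ``$b_i$ commutes with $\sigma_{i+1}\sigma_i^2\sigma_{i+1}$'' and has to be pushed through the negative words $u_i$.
\item Eq.~\ref{algorithm7} is built for the positive words of Eq.~\ref{W}, so it does not apply there as written. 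The sliding behind Eq.~\ref{eq4W} must also be redone.
\end{itemize}

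You need to commit to one convention. The cleanest is to eliminate with relation 7) instead:
\begin{itemize}
\item Relation 7) becomes the tautology.
\item Relation 6) is what Eq.~\ref{algorithm7} reduces to $b_1(\sigma_2\sigma_1\sigma_1\sigma_2)=(\sigma_2\sigma_1\sigma_1\sigma_2)b_1$, exactly as the paper handles relations 8) and 7) of Theorem~\ref{braidpreswclasp}. The other computations then transfer directly.
\item At the end you must turn $Xb_1X^{-1}$ into $X^{-1}b_1X$ in relation 3), where $X=\sigma_2\sigma_1\sigma_3\sigma_2$. Write $\Delta_4$ for the half twist on the first four strands. Then $X\sigma_1\sigma_3=\Delta_4$ gives $X^2=\Delta_4^2\sigma_1^{-2}\sigma_3^{-2}$. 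Since $\Delta_4^2=\sigma_1^2(\sigma_2\sigma_1^2\sigma_2)\,\sigma_3(\sigma_2\sigma_1^2\sigma_2)\sigma_3$, the element $X^2$ commutes with $b_1$ by 4), 5), 6).
\end{itemize}

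\textbf{Your irredundancy plan cannot work as stated.} Quotients of $BB_n$ (killing $b_1$, imposing commutations) and images under homomorphisms such as $s$ or $\Phi_p$ satisfy \emph{every} relation of the presentation. They can never witness one relation failing while the others hold; you need models of the presentation with one relation deleted. The paper itself proves no such independence; its justification is only the Tietze reduction.

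One invariant that does work is the following.
\begin{itemize}
\item For a word $g_0b_1g_1\cdots b_1g_k$ with braid words $g_m$, let $h_m\in B_n$ be the braid represented by $g_0\cdots g_{m-1}$.
\item Let $C\le B_n$ be the subgroup generated by the braids with which $b_1$ is still required to commute, and record the cosets $h_mC$.
\item The multiset of these cosets is invariant under the remaining relations, since relation 3) only swaps two of them.
\end{itemize}
For instance, dropping 5) gives $b_1\sigma_1\neq\sigma_1b_1$, because $\sigma_1\notin\langle\sigma_3,\dots,\sigma_{n-1},\sigma_2\sigma_1^2\sigma_2\rangle$ (compare permutations). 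For 3), use the ordered sequence of cosets with $C=\langle\sigma_1,\sigma_3,\dots,\sigma_{n-1},\sigma_2\sigma_1^2\sigma_2\rangle$, which does not contain $X$.
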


\begin{figure}[H]
\begin{center} 
    \includegraphics[width=17cm]{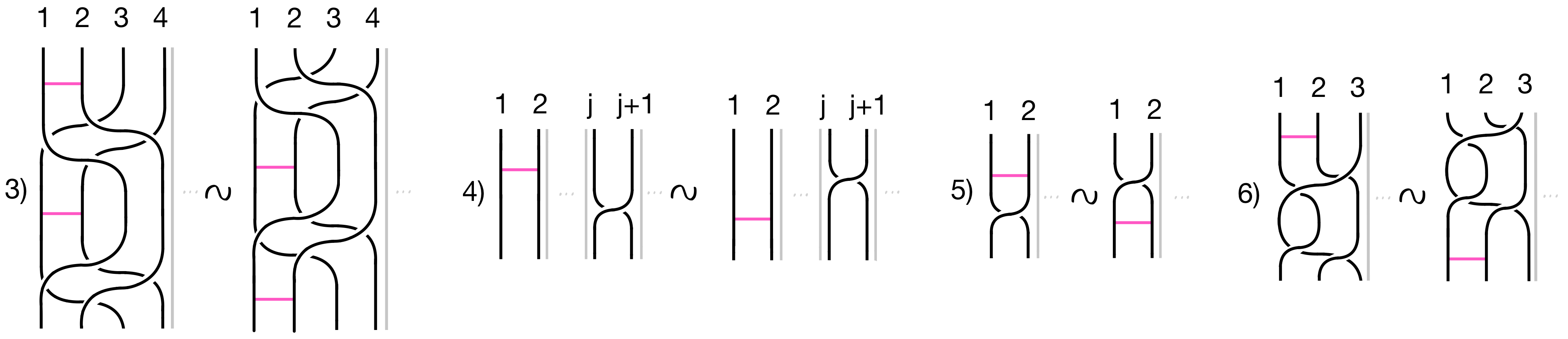}
\end{center}
\caption{Relations 3) to 6) of the tight bonded braid monoid on one bond generator (Proposition~\ref{thm:tightbonded}).}
\label{figtightbonded1}
\end{figure}

\begin{remark} \label{irredsing}
    As a corollary to the isomorphism $SB_n \to BB_n$, the analogous statement of Proposition~\ref{thm:tightbonded} holds for the singular braid monoid $SB_n$, which has an irreduntant presentation as above by replacing $b_1$ with~$\tau_1$.
\end{remark}


\section{Homomorphisms of the tight bonded braid monoid and the singular braid monoid} 

\subsection{Homomorphisms of the tight bonded braid monoid}

The homomorphisms in Theorems~\ref{mainthm} and~\ref{mainthm2} in Section~\ref{Representations} can be reformulated for the tight bonded braid monoid.

\begin{theorem} \label{Tightphi} 
The restriction of $\Phi$ to elementary braidings and tight bonds $b_i$, which maps $\sigma_i^{\pm1} \stackrel{\Phi}{\mapsto} \sigma_i^{\pm1}$ and $b_i \stackrel{\Phi}{\mapsto} p_i$, defines a homomorphism of monoids from $BB_n$ to $B_n$. The homomorphism is surjective but not injective. 
\end{theorem}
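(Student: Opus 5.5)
The natural route is to check the images of the defining relations of the tight presentation in Proposition~\ref{tBB}, using the clasp presentation of Theorem~\ref{braidpreswclasp} on the target side. An alternative is to observe that $\Phi(b_{i,i+1}) = A_{i,i+1} = \sigma_i^2 = p_i$ by $\Sigma_3)$ of Proposition~\ref{redBn}. Then the assignment in the statement coincides with the restriction of the homomorphism $\Phi$ of Theorem~\ref{mainthm} to the generating set $\{\sigma_i^{\pm1}, b_i\}$. That set generates $BB_n$ by Proposition~\ref{tBB}, so well-definedness would already follow. For a self-contained argument, I will nevertheless verify the relations directly, which also makes transparent how each tight relation lands on a clasp relation.

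First, relations 1) and 2) of Proposition~\ref{tBB} map identically to the braid relations. For relation 3), $b_ib_j=b_jb_i$ with $|i-j|>1$ maps to $p_ip_j=p_jp_i$. This holds trivially since $p_i=\sigma_i^2$ and $\sigma_i,\sigma_j$ commute; equivalently it is Eq.~\ref{newrel1}. Relation 4), $b_i\sigma_j^{\pm1}=\sigma_j^{\pm1}b_i$, maps to relation 5) of Theorem~\ref{braidpreswclasp} (together with its inverted form). Relation 5), $b_i\sigma_i^{\pm1}=\sigma_i^{\pm1}b_i$, maps to relation 6) of Theorem~\ref{braidpreswclasp}, which is immediate from $p_i=\sigma_i^2$. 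Relations 6) and 7) of Proposition~\ref{tBB} map respectively onto relations 7) and 8) of Theorem~\ref{braidpreswclasp}. These are Eqs.~\ref{Prel3} and~\ref{Prel1}, derived from $p_k=\sigma_k^2$ via Eq.~\ref{Y}.

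The only point requiring care is that the monoid $BB_n$ contains both $\sigma_i$ and $\sigma_i^{-1}$ as formal generators. I will therefore also note that the defining relations implicit in the monoid presentation (such as $\sigma_i\sigma_i^{-1}=1$) are preserved, which is trivial because $\Phi$ is the identity on braid generators. Having checked every relation, $\Phi$ descends to a monoid homomorphism $BB_n\to B_n$.

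Surjectivity follows because the image contains all $\sigma_i^{\pm1}$, which generate $B_n$. For non-injectivity, $b_i$ and $\sigma_i^2$ are distinct in $BB_n$: for instance, the forgetful homomorphism $\Phi_0$ of Theorem~\ref{mainthm2} sends $b_i\mapsto 1$ and $\sigma_i^2\mapsto\sigma_i^2\neq1$. Both are nonetheless mapped by $\Phi$ to $p_i$. I do not expect a real obstacle; the only step needing attention is matching each tight relation with the correct clasp relation and tracking index ranges, especially $1\le i\le n-2$ in relations 6) and 7).
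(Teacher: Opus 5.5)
Your proposal is correct and matches the paper's proof. The paper also notes that the statement follows from Theorem~\ref{mainthm} by restriction, then checks that relations 3)--7) of Proposition~\ref{tBB} land on relations 3), 5), 6), 7), 8) of Theorem~\ref{braidpreswclasp}, with surjectivity coming from the identity on the $\sigma_i^{\pm1}$ and non-injectivity from $b_i\sigma_i^{-2}$ mapping to $1$. Your use of $\Phi_0$ from Theorem~\ref{mainthm2} to certify that $b_i\neq\sigma_i^2$ in $BB_n$ is a worthwhile extra step, since the paper's kernel remark tacitly assumes $b_i\sigma_i^{-2}\neq 1$.
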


\begin{proof} 
The result can be derived from Theorem~\ref{mainthm}, since the proposed map is a restriction of $\Phi$. However, a direct proof can be obtained by checking that $\Phi$ maps \textit{verbatim} the relations for the tight bonded braid monoid in Proposition~\ref{tBB} to relations for the clasps in the presentation of $B_n$ in Theorem~\ref{braidpreswclasp}. Thus, the map $\Phi$ defines a homomorphism of monoids. 

Relations 1)-2) of Proposition~\ref{tBB} are not affected by this or any choice for the image of $b_i$ as opposed to relations 3-7), which need to be checked. As expected, on the braid side, we will use the clasp presentation of Theorem~\ref{braidpreswclasp}.

\smallbreak
\noindent \textit{Relations 3): $b_i \, b_j \, = \,  b_j \, b_i \to p_i \, p_j \, = \, p_j \, p_i$.}
Relation 3) rewrites to $p_i \, p_j = p_j \, p_i$, which can be contracted to relations 3) of Theorem~\ref{braidpreswclasp}.

\smallbreak
\noindent \textit{Relations 4): $b_i \, \sigma_j^{\pm 1} \, = \,  \sigma_j^{\pm 1} \, b_i \to p_i \, \sigma_j^{\pm 1} \, = \, \sigma_j^{\pm 1} \, p_i$.} This is immediately relations 5) of Theorem~\ref{braidpreswclasp}.

\smallbreak
\noindent\textit{Relations 5): $b_i \, \sigma_i^{\pm 1} \, = \,  \sigma_i^{\pm 1} \, b_i \to p_i \, \sigma_i^{\pm 1} \, = \, \sigma_i^{\pm 1} \, p_i$.} This is immediately relations 6) of Theorem~\ref{braidpreswclasp}.

\smallbreak
\noindent \textit{Relations 6): $b_i \, \sigma_{i+1} \, \sigma_i \, = \,  \sigma_{i+1} \, \sigma_i \,  b_{i+1} \to p_i \, \sigma_{i+1} \, \sigma_i \, = \,  \sigma_{i+1} \, \sigma_i \, \, p_{i+1}$.} This is immediately relations 7) of Theorem~\ref{braidpreswclasp}.

\smallbreak
\noindent \textit{Relations
7): $ \sigma_{i+1} \, \sigma_i \, b_i \, =  \, b_{i+1} \, \sigma_{i+1} \, \sigma_i \, \to  \sigma_{i+1} \, \sigma_i \, p_i \, = \, p_{i+1} \, \sigma_{i+1} \, \sigma_i $.} This is immediately relations 8) of Theorem~\ref{braidpreswclasp}.

The second assertion of the Theorem follows by considerations analogous to those adopted in Theorems~\ref{mainthm},~\ref{mainthm2}: since the map restricted to non-bonded braids is the identity, it is surjective. Moreover, as $b_i$ is mapped to $\sigma_i^2$, we have $b_i \, \sigma_i^{-2} \in \text{ker}(\Phi)$, thus $\Phi$ is not injective.
\end{proof}


\begin{remark} \label{relation4remark}
The only relations of Theorem~\ref{braidpreswclasp} in which no relation of the tight bonded braid monoid presentation is mapped, are relation 9) and relation 4):
$ (\sigma_i \, p_{i+1} \, \sigma_i) \, p_{i+1} = p_{i+1} \, (\sigma_i \, p_{i+1} \, \sigma_i)$, which is obtained by combining relations 7), 8) and 9) of Theorem~\ref{braidpreswclasp}, recall Remark~\ref{claspremarkrel4}. Relation 4) of Theorem~\ref{braidpreswclasp} was derived from the cyclic relation Eq.~\ref{cyclicrels} in the pure braid group. So, due to the absence of relation 9), the cyclic bonded relation $\sigma_i \, b_{i+1} \, \sigma_i \, b_{i+1} = b_{i+1} \, \sigma_i \, b_{i+1} \, \sigma_i$, which would  be mapped by $\Phi$ to relation 4), is not valid in the bonded braid monoid. See left-hand side of Figure~\ref{figtightbonded1}.
   
However, we have the mixed relation (consisting of both bonds and clasps): $\sigma_i \, p_{i+1} \, \sigma_i \, b_{i+1} = b_{i+1} \, \sigma_i \, p_{i+1} \, \sigma_i$    which is mapped by $\Phi$ to relation 4) of Theorem~\ref{braidpreswclasp}. In this mixed relation a tight bond passes through a loop formed by a free strand around its two subtending strands. See right-hand side of Figure~\ref{figtightbonded1}. 

A similar instance for the map $\Phi \colon BB_n \to B_n$ in Theorem~\ref{mainthm} was remarked in Remarks~\ref{cyclicnotbond}: the only relations of $B_n$ not in the image set via $\Phi$ of relations of  $BB_n$ involving exclusively bonds were the cyclic relations. 
\end{remark}

\begin{figure}[H]
\begin{center} 
    \includegraphics[width=9cm]{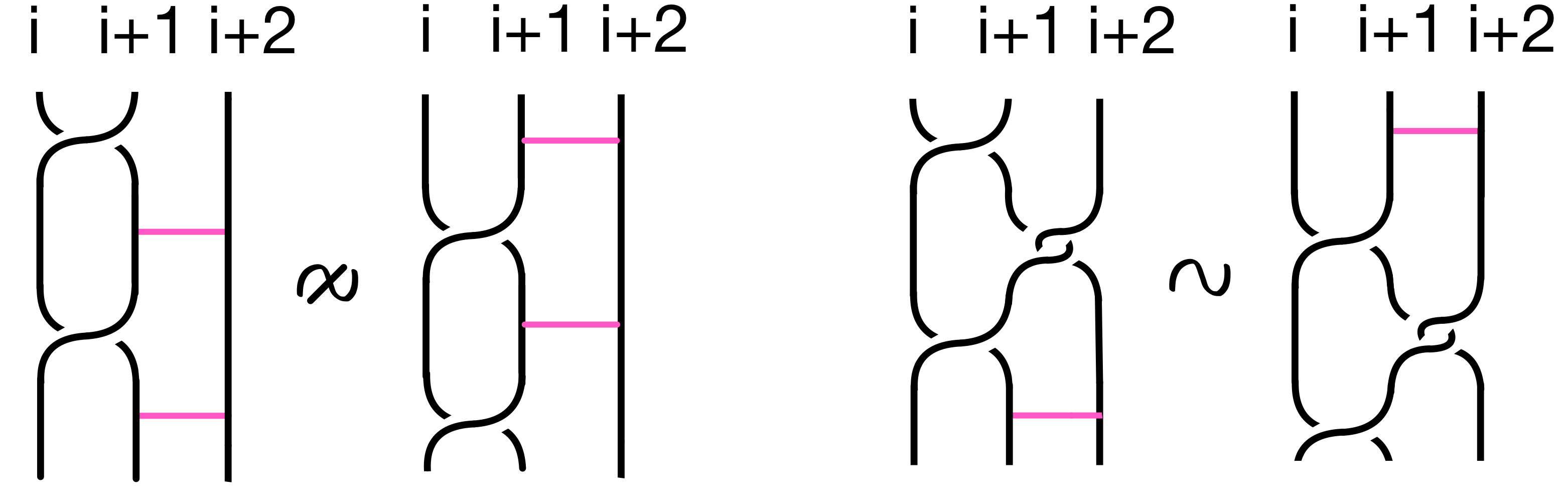}
\end{center}
\caption{The analogue of relation 4) of Theorem~\ref{braidpreswclasp} is not valid in the tight bonded braid monoid. However it is valid in a mixed version.}
\label{figtightbonded1}
\end{figure}

We can further specialise our result to cater for the tight bonded braid monoid presentation with only one bond generator (recall Proposition~\ref{thm:tightbonded}).

\begin{theorem} \label{Tight1phi}
The restriction of $\Phi$ to the first bond generator $b_1$, which maps $b_1 \stackrel{\Phi}{\mapsto} p_1$ defines a homomorphism of monoids from $BB_n$ to $B_n$. The homomorphism is surjective but not injective. 

\end{theorem}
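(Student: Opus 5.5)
The plan is to check directly that the map $\sigma_i^{\pm1}\mapsto\sigma_i^{\pm1}$, $b_1\mapsto p_1$ sends each defining relation of the irredundant presentation of $BB_n$ in Proposition~\ref{thm:tightbonded} to a relation that holds in $B_n$. For the target we use the one-clasp presentation of Theorem~\ref{thm:irredundant}. Once this is done, the map extends to a monoid homomorphism $BB_n\to B_n$, defined on words letter by letter as in Theorem~\ref{mainthm}. As a consistency check we note that it agrees with the map of Theorem~\ref{Tightphi}. Indeed, $b_{i+1}=(\sigma_{i+1}\sigma_i)^{-1}\cdots(\sigma_2\sigma_1)^{-1}b_1(\sigma_2\sigma_1)\cdots(\sigma_{i+1}\sigma_i)$ in $BB_n$, and by Eq.~\ref{W} (equivalently Eq.~\ref{Prel3} iterated) the same conjugate of $p_1$ equals $p_{i+1}$. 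Hence this restriction is genuinely the restriction of $\Phi$, and in particular of Theorem~\ref{mainthm}'s $\Phi$.

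The verification goes relation by relation, and each case is a verbatim match. Relations 1) and 2) are braid relations and are untouched. Relation 3),
\[
b_1(\sigma_2\sigma_1\sigma_3\sigma_2)^{-1}b_1(\sigma_2\sigma_1\sigma_3\sigma_2)=(\sigma_2\sigma_1\sigma_3\sigma_2)^{-1}b_1(\sigma_2\sigma_1\sigma_3\sigma_2)b_1,
\]
maps to relation 3) of Theorem~\ref{thm:irredundant}. Relation 4), $b_1\sigma_j=\sigma_jb_1$ for $j>2$, maps to relation 5). Relation 5), $b_1\sigma_1=\sigma_1b_1$, maps to relation 6). Relation 6), $b_1(\sigma_2\sigma_1)(\sigma_1\sigma_2)=(\sigma_2\sigma_1)(\sigma_1\sigma_2)b_1$, maps to relation 7). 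The only mild subtlety is that $BB_n$ is a monoid containing the letters $\sigma_i^{-1}$, so relation 4) with $\sigma_j^{-1}$, if it is needed, is obtained in $B_n$ from the $\sigma_j$ version by inverting $\sigma_j$ on both sides. Alternatively, one can simply invoke Theorem~\ref{Tightphi} and observe that $\Phi(b_1)=p_1$ there.

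Surjectivity holds because $\Phi$ is the identity on the $\sigma_i^{\pm1}$, which generate $B_n$. Non-injectivity follows from the pair $b_1$ and $\sigma_1^2$. Both map to $p_1=\sigma_1^2$ by relation 8) of Theorem~\ref{thm:irredundant}. However, they are distinct in $BB_n$: the forgetful map $\Phi_0$ of Theorem~\ref{mainthm2} sends $b_1\mapsto 1$ and $\sigma_1^2\mapsto\sigma_1^2\neq 1$.

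I expect no genuine obstacle. The only point needing care is making sure relation 3) of Proposition~\ref{thm:tightbonded} matches relation 3) of Theorem~\ref{thm:irredundant} exactly, with the same conjugating word $\sigma_2\sigma_1\sigma_3\sigma_2$. Since both were derived by the same rewriting of $p_i p_{i+2}=p_{i+2}p_i$ and $b_ib_{i+2}=b_{i+2}b_i$ via Eq.~\ref{W}, this should hold. If it somehow did not, the fallback is to derive that image relation in $B_n$ from Theorem~\ref{Tightphi}.
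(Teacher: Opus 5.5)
Your proposal is correct and essentially follows the paper's proof. It uses the same relation-by-relation match of relations 3)--6) of Proposition~\ref{thm:tightbonded} with relations 3), 5), 6), 7) of Theorem~\ref{thm:irredundant}, the same fallback to Theorem~\ref{Tightphi}, and the same surjectivity and non-injectivity arguments. Your extra use of the forgetful map $\Phi_0$ to show $b_1\neq\sigma_1^2$ in $BB_n$ is a good addition, since the paper's claim $b_1\sigma_1^{-2}\in\ker(\Phi)$ tacitly assumes this element is nontrivial.
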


\begin{proof} 
The result can be derived from Theorem~\ref{Tightphi}, since the proposed map is a restriction of $\Phi$ to $b_1$. However, a direct proof can be obtained by checking that the restriction maps \textit{verbatim} the relations for the tight bonded braid monoid in Proposition~\ref{thm:tightbonded} to relations for the clasps in the presentation of $B_n$ in Proposition~\ref{thm:irredundant}, thus defining a homomorphism of monoids. 

Relations 1)-2) of Proposition~\ref{thm:tightbonded} identically mapped by $\Phi$ as opposed to relations 3-6), that need to be checked. On the braid side we will use the clasp presentation of Proposition~\ref{thm:irredundant}.

\smallbreak
\noindent \textit{Relation 3):} \begin{equation*}
    \begin{split}
        b_1 \, (\sigma_2 \sigma_1 \sigma_3 \sigma_2)^{-1} \, b_1 \, (\sigma_2 \sigma_1 \sigma_3 \sigma_2) & = (\sigma_2 \sigma_1 \sigma_3 \sigma_2)^{-1} \, b_1 \, (\sigma_2 \sigma_1 \sigma_3 \sigma_2) \, b_1 \\ \mapsto p_1 \, (\sigma_2 \sigma_1 \sigma_3 \sigma_2)^{-1} \, p_1 \, (\sigma_2 \sigma_1 \sigma_3 \sigma_2) & = (\sigma_2 \sigma_1 \sigma_3 \sigma_2)^{-1} \, p_1 \, (\sigma_2 \sigma_1 \sigma_3 \sigma_2) \, p_1. 
    \end{split}
\end{equation*} This relation is perfectly mapped in relations 3) of Theorem~\ref{thm:irredundant}.

\smallbreak
\noindent \textit{Relations 4): $b_1 \, \sigma_j = \sigma_j \, b_1 \mapsto p_1 \, \sigma_j = \sigma_j \, p_1$.} These relations are perfectly mapped in relations 5) of Theorem~\ref{thm:irredundant}.

\smallbreak
\noindent \textit{Relation 5): $b_1 \, \sigma_1 = \sigma_1 \, b_1 \mapsto p_1 \, \sigma_1 = \sigma_1 \, p_1$.} This relation is perfectly mapped in relations 6) of Theorem~\ref{thm:irredundant}.

\smallbreak
\noindent \textit{Relation 6): $b_1 \, (\sigma_2 \sigma_1) ( \sigma_1 \sigma_2) = (\sigma_2 \sigma_1) ( \sigma_1 \sigma_2) \, b_1 \mapsto p_1 \, (\sigma_2 \sigma_1) ( \sigma_1 \sigma_2) = (\sigma_2 \sigma_1) ( \sigma_1 \sigma_2) \, p_1$.} This relation is perfectly mapped in relations 7) of Theorem~\ref{thm:irredundant}.

\noindent \smallbreak
The second assertion of the Theorem follows by considerations analogoues to those adopted in Theorems~\ref{mainthm}, ~\ref{mainthm2}. Surjectivity follows as usual, since the restriction of $\Phi$ to braids is the identity. The map is not injective, as $b_1 \, \sigma_1^{-2} \in \text{ker} (\Phi)$. 
\end{proof}

\begin{remark} Like in Remark~\ref{relation4remark}, only one relation in the presentation of Proposition~\ref{thm:irredundant} is not in the image set of any relation in the source presentation by the restriction of $\Phi$, namely relation 4): $(\sigma_2 p_1 \sigma_2^{-1}) \sigma_1 (\sigma_2 p_1 \sigma_2^{-1}) \sigma_1  = \sigma_1 (\sigma_2 p_1 \sigma_2^{-1}) \sigma_1 (\sigma_2 p_1 \sigma_2^{-1})$. Again, we have a mixed relation in the tight bonded braid monoid which is mapped onto it: that is, $(\sigma_2 b_1 \sigma_2^{-1}) \sigma_1 (\sigma_2 p_1 \sigma_2^{-1}) \sigma_1 = \sigma_1 (\sigma_2 p_1 \sigma_2^{-1}) \sigma_1 (\sigma_2 b_1 \sigma_2^{-1})$. This relation for $B_n$ with one clasp generator was derived by relation 4) of the presentation for $B_n$ with $n-1$ clasp generators.
\end{remark}

\begin{remark} \label{Phik}
    Theorems~\ref{Tightphi} and ~\ref{Tight1phi} accept their own version of Theorem~\ref{mainthm2}: the restrictions of $\Phi_p$, $p \in \mathbb{Z}$, to elementary braidings and tight bonds (resp. the first bond) define surjective but not injective homomorphisms from the tight bonded braid monoid (resp. on one bond generator) to the braid group.
\end{remark}

\subsection{Homomorphisms of the singular braid monoid}

By virtue of the isomorphism $s$ from the singular braid monoid to the tight bonded braid monoid in Remark~\ref{singisotight}), we have completely analogous results for $SB_n$. Namely, by composing $\Phi \colon BB_n \to B_n$ with $s \colon SB_n \to BB_n$ we obtain the following result:

\begin{theorem} \label{mapsingular}
    The map $s\Phi \colon SB_n \to B_n$, where $\sigma_i^{\pm1} \stackrel{s\Phi}{\mapsto} \sigma_i^{\pm1}$ and $\tau_i \stackrel{s\Phi}{\mapsto} p_i$ defines a homomorphism of monoids from $SB_n$ to $B_n$, mapping the singular generators to the clasp generators. The homomorphism is surjective but not injective. 
    Moreover, $s\Phi$ can be restricted to the elementary braidings and the first singular crossing generator $\tau_1 \stackrel{s\Phi}{\mapsto} p_1$. This restriction defines a surjective but not injective homomorphism of monoids from $SB_n$ to $B_n$. 
    In addition, for every integer $p \in \mathbb{Z}$ we define a non-injective epimorphism of monoids $s\Phi_p \colon SB_n \to B_n$ by imposing $s\Phi_p$ to be the identity on elementary braidings and $s\Phi_p(\tau_i) = \sigma_i^{2p}$, which can restrict to only the first singular crossing $\tau_1$. 
\end{theorem}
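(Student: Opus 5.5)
The approach is to realise $s\Phi$ as the composite $\Phi\circ s$, where $s\colon SB_n\to BB_n$ is the isomorphism of Remark~\ref{singisotight}, with $s(\tau_i)=b_i$ and $s$ the identity on the $\sigma_i^{\pm1}$. A composite of monoid homomorphisms is a monoid homomorphism. By Theorem~\ref{Tightphi}, the restriction of $\Phi$ to the tight generators sends $\sigma_i^{\pm1}\mapsto\sigma_i^{\pm1}$ and $b_i\mapsto p_i$, so the composite acts exactly as prescribed in the statement. As an independent check, one can compare the relations of Definition~\ref{singular} with those of Proposition~\ref{tBB}: replacing $\tau_i$ by $b_i$ turns them into each other verbatim. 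Hence the relation-by-relation verification in the proof of Theorem~\ref{Tightphi} against Theorem~\ref{braidpreswclasp} applies word for word. Explicitly, relations 3)--7) of $SB_n$ are sent to relations 3), 5), 6), 7), 8) of Theorem~\ref{braidpreswclasp}, with the commutation $p_ip_j=p_jp_i$ for $|i-j|>1$ contracted as in Subsection~\ref{contractionsubsection}. The only point needing care is that relation 5) of Definition~\ref{singular} is stated for $\sigma_i$ alone. The version with $\sigma_i^{-1}$ follows formally in the monoid by multiplying by $\sigma_i^{-1}$ on both sides, so no extra relation is needed.

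Surjectivity is immediate, since $s\Phi$ is the identity on the generators $\sigma_i^{\pm1}$ of $B_n$. Non-injectivity I would show by exhibiting two distinct elements of $SB_n$ with the same image. Take $\tau_i$ and $\sigma_i^2$: both map to $p_i=\sigma_i^2$. They are distinct in $SB_n$ because the number of singular crossings is an invariant. Every defining relation of $SB_n$ has the same number of $\tau$'s on each side, so there is a well-defined monoid homomorphism $SB_n\to(\mathbb{N},+)$ counting the $\tau$'s, and it separates $\tau_i$ from $\sigma_i^2$. (Alternatively, transport the kernel element $b_i\sigma_i^{-2}$ from Theorem~\ref{Tightphi} through $s^{-1}$.)

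For the restriction to $\tau_1$, I would use the irredundant presentation of $SB_n$ from Remark~\ref{irredsing}, which is Proposition~\ref{thm:tightbonded} with $b_1$ replaced by $\tau_1$. The composite of $s$ with the map of Theorem~\ref{Tight1phi} sends relations 3)--6) to relations 3), 5), 6), 7) of Theorem~\ref{thm:irredundant}, exactly as in that proof. Surjectivity and non-injectivity follow as above, with $\tau_1$ and $\sigma_1^2$ as the witnesses.

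For $s\Phi_p$, I would compose $s$ with the tight restriction of $\Phi_p$ from Remark~\ref{Phik} (itself derived from Theorem~\ref{mainthm2}), under which $b_i\mapsto A_{i,i+1}^p=\sigma_i^{2p}$. Relation checking reduces to the $p$-th power arguments in the proof of Theorem~\ref{mainthm2}. For instance, $\sigma_{i+1}\sigma_i\sigma_i^{2p}=(\sigma_{i+1}\sigma_i\sigma_i^2(\sigma_{i+1}\sigma_i)^{-1})^p\,\sigma_{i+1}\sigma_i$, which follows from the $p=1$ case by conjugation. Non-injectivity is witnessed by $\tau_i$ versus $\sigma_i^{2p}$, which are again separated by the $\tau$-counting map, and this works even for $p=0$. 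The main obstacle is modest: it is confirming that the tight restriction in Remark~\ref{Phik} is genuinely a homomorphism for every $p$, including negative $p$. I would handle this directly, since every relation of Proposition~\ref{tBB} is of the form ``$b_i$ commutes with, or is conjugated by, a braid word into $b_j$''. Such relations are preserved when $b_i$ is replaced by any power $\sigma_i^{2p}$ of a clasp satisfying the $p=1$ versions.
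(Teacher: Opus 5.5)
Your proposal is correct and is essentially the paper's proof. The paper writes $s\Phi=\Phi\circ s$ with $s$ the isomorphism of Remark~\ref{singisotight} and inherits everything from Theorems~\ref{Tightphi}, \ref{Tight1phi} and Remark~\ref{Phik}; you additionally spell out the relation checks, and your $\tau$-counting homomorphism $SB_n\to(\mathbb{N},+)$ proves $\tau_i\neq\sigma_i^{2p}$, which the paper never justifies (it only names a ``kernel'' element without showing it is nontrivial). One small slip: in your sample identity the prefix should be $\sigma_i\sigma_{i+1}$, which gives the image of relation 7), $\sigma_i\sigma_{i+1}\sigma_i^{2p}=\bigl(\sigma_i\sigma_{i+1}\sigma_i^{2}(\sigma_i\sigma_{i+1})^{-1}\bigr)^p\sigma_i\sigma_{i+1}=\sigma_{i+1}^{2p}\sigma_i\sigma_{i+1}$; with $\sigma_{i+1}\sigma_i$ the conjugate is not a clasp.
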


\begin{proof}
    The statements follow by virtue of the isomorphism $s \colon SB_n \to BB_n$ in Remark~\ref{singisotight}, and Theorem~\ref{Tight1phi} and Remark~\ref{Phik}. 
\end{proof}

\begin{remark}
    The maps $s\Phi$, and in general $s\Phi_p$, are a new family of maps from the singular braid monoid to the braid group, whose distinctive feature is to not switch the strands involved.
    The path to singular braids is paved with good maps to classical crossings. A naive map consists in forgetting the singularity of the braid and replacing all singular crossings with classical crossings, either positive or negative. Albeit trivial, this is a well-defined homomorphism from singular braids to classical braids, since all relations in $SB_n$ carry through to $B_n$. 
    
    The crossing replacement, and in general the theory of singular knots/links, are related to the theory of Vassiliev invariants, cf. for example \cite{chm}. For a singular link $L$, one applies the \textit{Vassiliev skein relation}: $\mathcal{V}(L_{\times}) = \mathcal{V}(L_+) - \mathcal{V}(L_-)$, whereby a singular crossing in a diagram is replaced by the difference of a positive and a negative crossing. Set $\mathcal{L}$ as the set of all classical links and $\mathcal{S}$ as the set of all singular links: then, the map induced by the Vassiliev skein relation goes from $\mathcal{S}$ to the algebra $\mathbb{Z}\mathcal{L}$, and sends a singular link to its resolutions according to the Vassiliev skein relation. The map carries through analogously for singular braids, where $SB_n$ is mapped to $\mathbb{C}B_n$ via the Vassiliev skein relation.  Both maps in the examples replace a singular crossing with a parity-reversing (combination of) crossing(s). The maps defined in Theorem~\ref{mapsingular} consider instead a non-trivial replacement for the singular crossing which switches the parity.
\end{remark}

\section*{Acknowledgements}
The first author gratefully acknowledges hospitality and support of the Institut Henri Poincaré (UAR 839 CNRS-Sorbonne Université), and LabEx CARMIN (ANR-10-LABX-59-01). 
The second author wishes to express her gratitude towards Scuola Internazionale Superiore di Studi Avanzati (SISSA - Trieste) and the National Technical University of Athens (NTUA - Athens).

\section*{Use of AI and conflict of interest declaration}
The authors declare no use of AI and no potential conflicts of interest.



\end{document}